\documentclass[twoside,11pt]{article}

\usepackage[utf8]{inputenc}

\usepackage[top=1in,bottom=1in,left=1in,right=1in]{geometry}
\usepackage[numbers,sort&compress]{natbib} \setlength{\bibsep}{0.0pt}
\usepackage{amsfonts, amsmath, amssymb, amsthm, bbm,bm}
\usepackage{comment}

\usepackage{float}


\usepackage{hyperref}
\usepackage{listings}
\usepackage{algorithm2e}
\usepackage[noend]{algpseudocode}

\theoremstyle{definition}

\theoremstyle{plain}
\newtheorem{theorem}{Theorem}
\newtheorem{corollary}{Corollary}
\newtheorem{proposition}{Proposition}

\newtheorem{lemma}{Lemma}

\theoremstyle{remark}

\usepackage[usenames,dvipsnames]{xcolor}
\usepackage[textwidth=1.5cm, textsize=scriptsize]{todonotes}
\usepackage{pdfpages}






\DeclareMathOperator{\Esp}{\mathbb{E}}
\DeclareMathOperator{\Prob}{\mathbb{P}}

\DeclareMathOperator{\Tr}{Tr}

\def\beq{\begin{equation}}
\def\eeq{\end{equation}}
\def\beqn{\begin{eqnarray*}}
\def\eeqn{\end{eqnarray*}}
\def\bitem{\begin{itemize}}
\def\eitem{\end{itemize}}
\def\benum{\begin{enumerate}}
\def\eenum{\end{enumerate}}
\def\bmult{\begin{multline*}}
\def\emult{\end{multline*}}
\def\bcenter{\begin{center}}
\def\ecenter{\end{center}}



\newcommand{\argmin}{\mathop{\mathrm{arg\,min}}}



\def\cA{\mathcal{A}}
\def\cB{\mathcal{B}}

\def\cD{\mathcal{D}}

\def\cG{\mathcal{G}}
\def\cH{\mathcal{H}}
\def\cI{\mathcal{I}}

\def\cK{\mathcal{K}}

\def\cN{\mathcal{N}}

\def\cP{\mathcal{P}}

\def\cR{\mathcal{R}}

\def\cT{\mathcal{T}}

\def\cV{\mathcal{V}}

\def\cY{\mathcal{Y}}
\def\cZ{\mathcal{Z}}


\def\nnu{u}

\def\nx{x}



\def\bA{\mathbf{A}}

\def\bC{\mathbf{C}}
\def\bD{\mathbf{D}}

\def\bI{\mathbf{I}}

\def\bN{\mathbf{N}}

\def\bS{\mathbf{S}}


\def\1{{\mathbf 1}}



\def\bbE{\mathbb{E}}

\def\bbN{\mathbb{N}}

\def\bbP{\mathbb{P}}

\def\bbR{\mathbb{R}}


\renewcommand{\P}{\operatorname{\mathbb{P}}}

\newcommand{\Esper}[2][]{\mathbb{E}_{#1} \left[ #2 \right] }
\newcommand{\Proba}[2][]{\mathbb{P}_{#1} \left( #2 \right) }

\newcommand{\Log}[1]{\log \left( #1 \right) }

\newcommand{\norm}[1]{\left\lVert#1\right\rVert}
\newcommand{\floor}[1]{\left\lfloor#1\right\rfloor}
\newcommand{\ceil}[1]{\left\lceil#1\right\rceil}
\newcommand{\abs}[1]{\left\lvert#1\right\rvert}
\newcommand{\poubelle}[1]{}


\DeclareMathOperator{\Noise}{\varepsilon}



\newcommand{\CP}{\mathcal T} 
\newcommand{\CPd}{\mathcal T^*} 
\newcommand{\SCP}{\mathcal S} 
\newcommand{\SCPd}{\mathcal S^*} 

\newcommand{\sm}{\bar s} 
\newcommand{\Cg}{\mathcal C}
\newcommand{\Cgc}{\mathrm C}

\newcommand{\Thetarv}{\tilde{\Theta}} 
\newcommand{\Thetarvs}{\tilde \Theta_{(1)}} 
\newcommand{\Thetarvd}{\tilde{\Theta}_{(2)}} 
\newcommand{\Thetarvm}{\tilde{\Theta}_{(3)}} 
\newcommand{\pis}{\pi_1} 
\newcommand{\pid}{\pi_2} 
\newcommand{\pim}{\pi_3} 

\newcommand{\Lrv}{\nu } 
\newcommand{\Crvs}{a } 
\newcommand{\Crvd}{b } 
\newcommand{\Crvm}{c} 

\newcommand{\tg}{\bar{\tau}}  
\newcommand{\rg}{\bar{r}} 

\newcommand{\tgD}{\bar{\tau}^{(\mathrm{d})}}  
\newcommand{\tgSp}{\bar{\tau}^{(\mathrm{s})}}  

\newcommand{\rgD}{\bar{r}^{(\mathrm{d})}} 
\newcommand{\rgSp}{\bar{r}^{(\mathrm{s})}} 


\newcommand{\xDense}{ \xi^{(\mathrm{d})}} 
\newcommand{\xSparse}{ \xi^{(\mathrm{s})}} 

\newcommand{\xPN}{ \xi^{(\mathrm{p})}} 
\newcommand{\xBJ}{ \xi^{(\mathrm{BJ})}} 


\newcommand{\Tg}{T}
\newcommand{\TD}{T^{(\mathrm{d})}} 
\newcommand{\psiD}{\Psi^{(\mathrm{d})}} 
\newcommand{\tD}{x^{(\mathrm{d})}} 

\newcommand{\Tbj}{T^{(\mathrm{BJ})}}
\newcommand{\tbj}{x^{(\mathrm{BJ})}}
\newcommand{\Tsparse}{T^{(\mathrm{s})}}

\newcommand{\tsup}{x^{(\mathrm{p})}}
\newcommand{\Ssup}{\Psi^{(\mathrm{p})}} 
\newcommand{\Tsup}{T^{(\mathrm{p})}}



\newcommand{\Pginf}{\bar {\mathcal P}( u)} 
\newcommand{\Pginfrs}{\bar {\mathcal P}(u,r,s)}

\newcommand{\Csto}{c_0}
\newcommand{\alphabj}{\delta}

\newcommand{\SetS}{S}
\newcommand{\Choose}{\bar C} 

\newcommand{\lo}{\gamma_r}  




\newcommand{\xsgD}{{\xi}^{(\mathrm{d})}} 
\newcommand{\xsgSp}{{\xi}^{(\mathrm{p})}} 


\newcommand{\TDsg}{{T}^{(\mathrm{d})}}  
\newcommand{\psiDsg}{{\Psi}^{(\mathrm{d})}}  
\newcommand{\tDsg}{{x}^{(\mathrm{d})}} 

\newcommand{\TDmsg}{{T}^{(\mathrm{p})}}
\newcommand{\psiDmsg}{{\Psi}^{(\mathrm{p})}}
\newcommand{\tDmsg}{{x}^{(\mathrm{p})}}



\newcommand{\Cxi}{{\bar c}_{\mathrm{conc}}}
\newcommand{\thresh}{{\bar c}_{\mathrm{thresh}}}
\newcommand{\Diffcarremoy}[2]{\norm{\overline{\Noise}_{#1,+#2} - \overline{\Noise}_{#1,-#2} }^2}
\newcommand{\Diffcarre}[2]{\norm{\Noise_{#1,+#2} - \Noise_{#1,-#2} }^2}


\newcommand{\yS}{\bar{y}^{(S)}}
\newcommand{\NS}{\bar{\Noise}^{(S)}}
\newcommand{\tS}{\bar{\theta}^{(S)}}

\newcommand{\NSk}{\bar{\Noise}^{(S_k)}}
\newcommand{\tSk}{\bar{\theta}^{(S_k)}}

\def\NoiseBoundsg<#1,#2,#3>{\left( \sqrt{#1 \Log{\frac{#2}{#3}}} + \Log{ \frac{#2}{#3}}  \right)}

\def\proscal<#1,#2>{\langle #1~,~#2\rangle}
\title{Optimal multiple change-point detection for high-dimensional data}

\def\NoiseBound<#1,#2,#3,#4>{\left( \sqrt{#1 \Log{\frac{#2}{#3#4} }} + \Log{\frac{#2}{#3#4} }  \right)}

\def\NoiseBoundsparse<#1,#2,#3,#4, #5>{ \left(#5\Log{
\frac{#1}{#5^2} \Log{\frac{#2}{#3 #4} }}  + \Log{\frac{#2}{#3 #4}}\right)}

\def\NoiseBoundd<#1,#2>{\left( \sqrt{#1 \Log{#2}} + \Log{ #2} \right)}

\usepackage{authblk}

\author[1]{Emmanuel Pilliat}
\author[2]{Alexandra Carpentier}
\author[3]{Nicolas Verzelen}

\affil[1]{Université de Montpellier, Montpellier, France}
\affil[2]{Institut für Mathematik, Universität Potsdam, Germany}
\affil[3]{INRAE Montpellier, Montpellier, France}

\usepackage{hyperref} 
\hypersetup{
    colorlinks=true,
    linkcolor=blue,
    filecolor=magenta,
    urlcolor=cyan,
}
\usepackage[capitalise,nameinlink]{cleveref} 
\crefformat{equation}{(#2#1#3)}
\crefformat{proposition}{Proposition #2#1#3}
\crefformat{theorem}{Theorem #2#1#3}
\crefformat{lemma}{Lemma #2#1#3}
\crefformat{corollary}{Corollary #2#1#3}
\crefformat{section}{Section #2#1#3}
\crefformat{algorithm}{Algorithm #2#1#3}
\crefformat{appendix}{Appendix #2#1#3}
\crefformat{figure}{Figure #2#1#3}

\begin{document}
\sloppy

\lstset{language=Python}
\maketitle

\begin{abstract}
  This manuscript makes two contributions to the field of change-point detection. In a general change-point setting, we provide a generic algorithm for aggregating local homogeneity tests into an estimator of change-points in a time series. Interestingly, we establish that the error rates of the collection of tests directly translate into detection properties of the change-point estimator. This generic scheme is then applied to various problems including covariance change-point detection, nonparametric change-point detection and sparse  multivariate mean change-point detection. For the latter, we derive minimax optimal rates that are adaptive to the unknown sparsity and to the distance between change-points when the noise is Gaussian. For sub-Gaussian noise, we introduce a variant that is optimal in almost all sparsity regimes.
\end{abstract}

\section{Introduction}
\label{sec:introduction}

Change-point detection has a long history since the seminal work of Wald \cite{Wald1945} that lead to flourishing lines  (see \cite{Niu2016,truong2020selective} for recent surveys). Earlier contributions focused on the problems of detecting and localizing change-points in a univariate time series. Spurred by applications in genomics~\cite{olshen2004circular} and finance, there has been a recent trend in the literature towards the analysis of more complex time series for instance in a high-dimensional linear space~\cite{jirak2015uniform} or even belonging to a non-Euclidean space~\cite{chu2019asymptotic}.

\medskip

In this work, we study high-dimensional time series whose mean may change possibly on a few number of coordinates. See the introduction of~\cite{Wang2018} for an account of possible applications and practical motivations. In particular, we build a procedure which is able to detect and localize change-points under minimal assumptions on the height of these change-points. Along the way towards this optimal procedure, we define and analyze a scheme for general change-point problems that aggregates a collection of local tests into an estimator change-points. This generic scheme is of independent interest and easily allows to derive optimal change-point procedure in other complex settings such as covariance change-points problems or nonparametric change-point problems. 
In this introduction, we first describe this generic scheme before turning to our results in high-dimensional sparse change-point detection and finally discussing other applications.

\subsection{General change-point setting}
\label{sec:setting_general}
In the most general form of a change-point problem, we consider a random sequence $Y= (y_1,y_2,\ldots, y_n)$ in some measured space $\cY^n$ and, for $t=1,\ldots, n$, we write $\bbP_t$ for the marginal distribution of $y_t$. We are also given  a functional $\Gamma$ mapping the probability distribution $\bbP_t$ to some space $\cV$. Then, the purpose of change-point detection is to detect changes in the sequence $(\Gamma(\bbP_1),\Gamma(\bbP_2),\ldots, \Gamma(\bbP_n))$ in $\cV^n$ and to estimate the positions of these changes. This setting is really general and does not require that the random variables
$(y_t)$ are independent.
\medskip

Let us shortly explain how this general framework encompasses most offline change-point detection problems. In the Gaussian mean univariate change-point setting, we have $\cY=\bbR$, the distribution $\bbP_t$ corresponds to the normal distribution with mean $\theta_t\in \bbR$ and variance $\sigma^2$ and $\Gamma(\bbP_t)=\theta_t$.
In the (heteroscedastic) mean univariate change-point problem, the distribution $\bbP_t$ is not necessarily Gaussian and, in particular, the variance of $y_t$ is allowed to vary with $t$. Still, one is only interested in detecting variations of $\Gamma(\bbP_t)= \int x d\bbP_t= \bbE[y_t]$. By contrast, in the {\it variance} univariate change-point problems, one focuses on changes in the  variance of $y_t$. This can be done by taking  $\Gamma(\bbP_t)= \int x^2d\bbP_t - [\int xd\bbP_t]^2 = \mathrm{Var}(y_t)$. If  one is interested in possibly nonparametric changes in the distributions, then the functional $\Gamma$ is simply taken to be the identity map. In semi-parametric quantile change-point detection~\cite{jula2021multiscale}, the univariate distributions $\mathbb{P}_t$ can be arbitrary whereas $\Gamma(\mathbb{P}_t)$ is a quantile of $\mathbb{P}_t$.

\medskip

To further formalize the change-point detection problem in the sequence  $(\Gamma(\bbP_1),\Gamma(\bbP_2),\ldots, \Gamma(\bbP_n))$, we define an integer $0 \leq K\leq n - 1$ and a vector of integers $\tau = (\tau_1, \dots, \tau_{K})$ satisfying $1 = \tau_0 < \tau_1 < \dots < \tau_K < \tau_{K+1} = n+1$ such that $\Gamma(\bbP_t)$ is constant over each interval $[\tau_k,\tau_{k+1}-1]$ and $\Gamma(\bbP_{\tau_k-1})\neq \Gamma(\bbP_{\tau_k})$. Hence, $\tau_k$ corresponds to the \emph{position} of the $k^{th}$ change-point. We shall often refer to $\tau_k$ as a \emph{change-point}. Equipped with this notation, we are interested in building an estimator $\hat\tau = (\hat \tau_1, \dots, \hat \tau_{\hat K})$ of $\tau$ from the time series $Y$. Here, $\hat \tau_{1}, \dots, \hat \tau_{\hat K}$ correspond to the \emph{estimated change-points} of $\tau$ and $\hat K$ to the number of the estimated change-points.

\subsubsection{Desirable  Guarantees of an estimator.} Before describing the generic scheme for estimating $\tau$, let us first formalize the desired properties of a good change-point procedure. Informally, the primary objectives are to detect most if not all change-points while estimating no (or at least very few) spurious change-points.

Regarding the latter objective, it is usually required that the number of change-points $K$ is not overestimated by $\hat{\tau}$. Here, we require a slightly stronger local property introduced in~\cite{verzelenoptimal_change_point}. An estimator $\hat{\tau}$ of size $\hat{K}$ is said to detect no spurious change-points  {\bf (NoSp)}  if
\begin{equation}\label{eq:def:nosp}
\left\{
	\begin{array}{cc}
\abs{\{\hat \tau_{k'},\, 1\leq  k' \leq \hat K\} \cap \left[ \tau_{k} - \frac{\tau_{k} - \tau_{{k}-1}}{2},\tau_{k} + \frac{\tau_{k+1} - \tau_{{k}}}{2} \right]} \leq 1\ , &\quad \text{ for all }1\leq k\leq K \ ; \\
		 \{\hat \tau_{k'}, 1\leq k' \leq \hat K\} \subset \left[ \tau_{1} - \frac{\tau_{1} - 1}{2},\tau_{K} + \frac{n+1 - \tau_{{K}}}{2} \right]\ .&
	\end{array}
\right.
\end{equation}
The second condition simply ensures that no change-point is estimated near the boundaries of the time series. The first condition entails that, for each change-point $\tau_k$ there is at most one estimated change-point $\hat \tau_k$ in the interval $\left[ \tau_{k} - (\tau_{k} - \tau_{{k}-1})/2,\tau_{k} + (\tau_{k+1} - \tau_{{k}})/2 \right]$. In other words, ({\bf NoSp}) requires that, on each sub-interval, the number of change-points is not overestimated.

Let us now formalize the objective of detecting the change-points. In this work, we consider as in \cite{verzelenoptimal_change_point} realistic settings where some change-points are so close or their  heights are so small that they are impossible to detect. As a consequence,  we can only hope to detect the subset of \emph{significant} change-points.
In what follows, we define a subset $\cK^* \subset [K]$ of change-point indices that correspond to \emph{significant} change-points. Obviously, the significance of a particular change-point is relative to the problem under consideration - data distribution, nature of change-points - and the definition is  problem dependent. As an example, we define in the next subsection the suitable notion of energy and significance of a change-point in  the mean multivariate change-point setting. In Section~\ref{sec:othermodels}, we formalize this notion for covariance and univariate nonparametric change-point problems. In light of this discussion, the second guarantee we aim for is to {\bf detect} all significant change-points. A change-point $\tau_k$ is said to be detected if there is at least one estimated change-point $\hat{\tau}_l$  in the interval $\left[ \tau_{k} - (\tau_{k} - \tau_{{k}-1})/{2},\tau_{k} + (\tau_{k+1} - \tau_{{k}})/2 \right]$. Equivalently, this means that at least one of the estimated change-points is closer to $\tau_k$ than to any other true change-point.

Aside from  ({\bf NoSp}) and ({\bf detect}) properties, one may additionally aim at localizing the change-points as well as possible -- see the discussions in~\cite{wang2018optimal}. Given a specific change-point $\tau_k$ detected by an estimator $\widehat{\tau}$, its localization error $d_{H,1}(\widehat{\tau},\tau_k)$ is defined by
\[
d_{H,1}(\widehat{\tau},\tau_k)= \min_{l=1,\ldots, |\widehat{\tau}|}|\widehat{\tau}_l-\tau_k| \ ,
\]
which is the smallest distance between $\tau_k$ and one of the estimated change-points. While this work mainly focused on the detection problem, we shall also provide localization bounds along the way.

\subsubsection{A generic roadmap for change-point detection.}
In this manuscript, our first contribution is a generic procedure for aggregating a collection of tests into an estimator $\widehat{\tau}$ of $\tau$. For two positive integers $(l,r)$, we consider the time interval $[l-r,l+r)$.  Suppose we are given a collection $\cG$ of such $(l,r)$. For each $(l,r)\in \cG$, we are also given a homogeneity test $\Tg_{l,r}$  of the null hypothesis $\cH_0$: \{($\Gamma(\bbP_t))$ is constant over the segment $[l-r,l+r)$\}. This hypothesis is equivalent to the absence of any change-point on the interval $(l-r,l+r)$. Given such a collections of homogeneity tests $(\Tg_{l,r})$, $(l,r)\in \cG$, we build in this manuscript an  estimator $\widehat{\tau}$ that satisfies the following properties.
  If the multiple testing procedure does not reject any true null hypothesis (no false positives), then $\widehat{\tau}$ does not estimate any spurious change-point, that is, it satisfies ({\bf No Sp}). Furthermore,  any change-point $\tau_k$ that is detected by some test $\Tg_{\bar \tau_k,\overline{r}_k}$, where $\bar \tau_k$ is close enough to $\tau_k$ and  $\overline{r}_k$ is small enough is {\bf detected} by the estimator $\widehat{\tau}$. In other words, we establish a completely generic result that translates properties of the multiple testing procedure into {\bf detection} properties.
Thus, the construction of a change-point procedure boils down to building a suitable multiple testing procedure $(\Tg_{l,r})$, $(l,r)\in \cG$ whose family-wise error rate (FWER) is controlled, while being able to detect all the significant change-points. In turn, this allows us to reduce the problem of change-point detection under minimal distance between the change-points to the well-established field of minimax testing.

\subsubsection{Related Work and possible applications.}

In the last years, there has been a growing interest into the extension of univariate mean change-point procedures such as wild binary segmentation (WBS)~\cite{fryzlewicz2014wild} to other problems such as covariance change-point~\cite{wang2017optimal}, network change-point~\cite{wang2018optimal}, or nonparametric change-point~\cite{padilla2019optimal1}. 
For each of these problems (and for others), it turns out that the general ideas of WBS can be instantiated. However, for each setting, the proofs need to be fully adapted in a case by case manner. Besides, the resulting procedures are only optimal up to logarithmic terms.

Recently, Chan and Chen~\cite{chan2017multi} and Kov{\'a}cs et al.~\cite{kovacs2020seeded} have introduced bottom-up aggregation procedures for mean change-point segmentation (see also~\cite{kovacs2020optimistic} for localization improvements). Moreover, Kov{\'a}cs et al.~\cite{kovacs2020seeded,kovacs2020optimistic} illustrate the numerical performances to other change-point models, such as graphical models or multivariate mean-change point models. In fact, one may extend their procedures to generic problems, but the theoretical guarantees are only provided for univariate models and it remains unclear whether one can extend them beyond very specific cases.

In contrast, it is quite straightforward to adapt our generic procedure to any new setting once suitable homogeneity multiple tests have been crafted.  As the most prominent example, we consider the  sparse high-dimensional mean change-point detection and establish the optimality of our procedure -- see the next subsection for details. In Section~\ref{sec:othermodels}, we also handle the covariance change-point detection and the univariate nonparametric change-point detection problems. In each case, we pinpoint the first tight minimal conditions for detection.

Besides, we could apply our strategy to other problems such changes in auto-regressive models~\cite{wang2019localizing}, changes in the inverse covariance matrix of $y_i$~\cite{gibberd2017multiple,kovacs2020seeded} or changes in a high-dimensional regression model~\cite{rinaldo2021localizing}. All such change-point problems can be addressed through the construction and careful analysis of two-sample tests for auto-regressive models, inverse covariance matrices, and linear regression models respectively. Similarly, we can build  Kernel change-point  procedures~\cite{Arlot2019,Garreau2018} from kernel two-sample tests~\cite{gretton2012kernel}.



\subsection{Sparse Multivariate Change-point Setting}
\label{sec:setting}
As explained above, our primary application of our generic scheme is the multivariate mean change-point detection problem with sparse variations where one observes a time series  $Y= (y_1, \dots, y_n) \in \bbR^{p \times n}$ with unknown means  $\Theta = (\theta_1, \dots, \theta_n) \in \bbR^{p \times n}$ so that we have the decomposition
\beq\label{eq:model_multivariate}
	y_t=\theta_t+\Noise_t\quad \quad t=1,\ldots,n\enspace,
\eeq
where the noise matrix $\Noise= (\Noise_1,\ldots, \Noise_n)$ is made of independent and mean zero random vectors of size $p$. In this manuscript, we make two distributional assumptions on the noise. Either we suppose that all random vectors $\Noise_i$ follow independent normal distribution with variance $\sigma^2 \bI_p$ (see \cref{sec:multi_scale_change_point_estimation_in_gaussian_noise}) or that the components of $\Noise_i$ follow independent sub-Gaussian distributions with variance $\sigma^2$ (see \cref{sec:multi_scale_change_point_estimation_in_sub_gaussian_noise}). In either case, we assume that $\sigma^2$ is known.

\medskip

Here, we are interested in the variations of the {\it mean} vector $\theta_t$ so that, relying on the formalism of the previous subsection, we have $\Gamma(\bbP_t)= \theta_t$. Considering the vector of change-points  $\tau = (\tau_1, \dots, \tau_{K})$, we can define $K+1$ vectors $\mu_0, \dots, \mu_{K}$ in $\bbR^p$ satisfying $ \mu_k \neq \mu_{k+1}$ for all $k = 0, \dots, K-1$ such that
$$ \theta_t = \sum_{k = 0}^{K} \mu_k \1_{\tau_{k} \leq t < \tau_{k+1}}\enspace .$$

Equivalently, $\mu_k$ is the constant mean of $y$ over the interval $[\tau_k, \tau_{k+1}-1]$. The difference $\mu_k - \mu_{k-1}$ in $\bbR^p$ measures the variation of $\Theta$ at the change-point $\tau_k$ and can possibly have many null coordinates. In this possibly sparse multi-dimensional setting, the significance of a change-point is measured through three quantities $\Delta_k$, $r_k$, and $s_k$. First, the \emph{height} $\Delta_k$ of the change-point $\tau_k$ is defined as the Euclidean norm of the signal difference. The \emph{length} $r_k$ of the change-point $\tau_k$ is the minimal distance from $\tau_k$ to another change-point, $\tau_{k-1}$ or $\tau_{k+1}$. More precisely,
\beq\label{eq:definition_Delta_k_r_k}
\Delta_k = \norm{\mu_k - \mu_{k-1}}\ ; \quad \quad r_k = \min(\tau_{k+1} - \tau_{k}, \tau_k - \tau_{k-1} ) \enspace .
\eeq
As a simple example,  \Cref{fig:setting} depicts a one dimensional piece-wise constant sequence $\Theta$ with $3$ change-points illustrating the setting presented above. In the univariate change-point literature (e.g. \cite{fryzlewicz2014wild,fryzlewicz2018tail,cho2021data})  the height and the length of a change-point characterize the significance of a change-point. In the multivariate setting, where the change-points can be sparse, meaning the number of non null coordinates of the vector $\mu_k - \mu_{k-1}$ is possibly small, one also considers the \emph{sparsity}  $s_k$ of change-point $\tau_k$, defined by
\beq\label{eq:definition_s_k}
s_k = \norm{\mu_k - \mu_{k-1}}_0 \enspace ,
\eeq
where, for any $v \in \bbR^p$, $\norm{v}_0 = \sum_{1\leq i \leq p}\1\{v_i\neq 0\}$.

\begin{figure}[ht]
	\centering
	\includegraphics[scale = 0.15]{./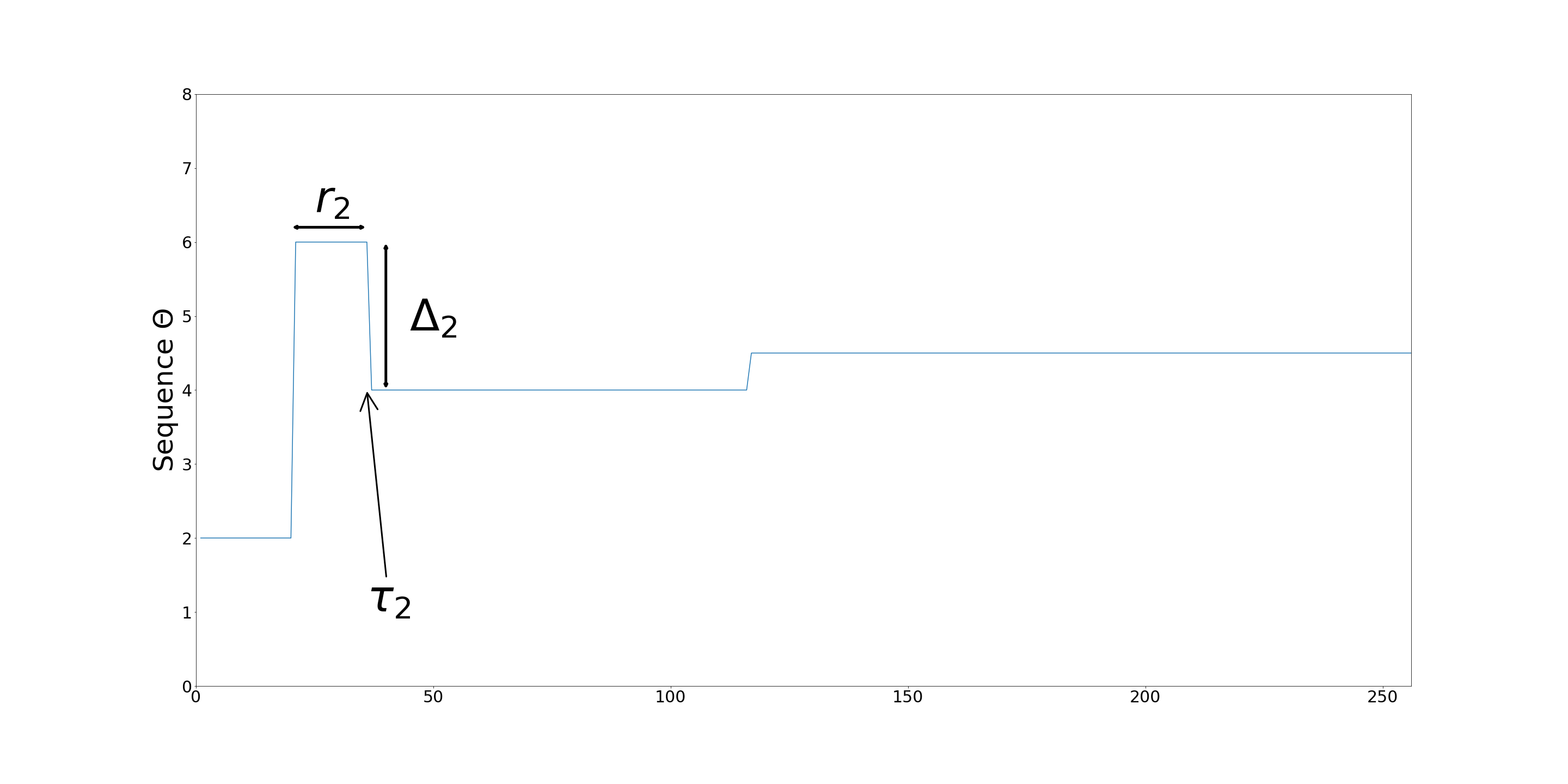}
	\caption{\label{fig:setting}An example of a piece-wise constant sequence $\Theta$ with $3$ change-points and $p = 1$.}
\end{figure}

\subsubsection{Two-sample tests and CUSUM statistics} Our objective is to detect and recover positions $(\tau_k)_{k \leq K}$ under minimal conditions on the change-point height $\Delta_k$, change-point length $r_k$ and sparsity $s_k$. In view of the generic change-point procedure discussed in the previous subsection, this mainly boils down to building suitable tests of the assumptions \{$\Theta$ is constant over $[l-r,l+r)$\} versus \{$\Theta$ is not constant on this segment\}. Following the literature on binary and wild binary segmentation, we consider the CUSUM statistic
\[
\bC_{l,r}(Y) = \sqrt{\frac{r}{2 \sigma^2}}\left(\frac{1}{r}\sum_{i = l}^{l+r-1}y_i - \frac{1}{r}\sum_{i = l-r}^{l-1}y_i\right)\enspace .
\]
This statistic computes the normalized difference of empirical mean of $y_i$ on $[l-r,l)$ and $[l,l+r)$. If the noise is Gaussian and if $\Theta$ is constant on $[l-r,l+r)$, then $\bC_{l,r}(Y)$ simply follows a standard $p$-dimensional normal distribution. To simplify, consider a specific instance of our testing problem where we want to test whether \{$\Theta$ is constant over $[l-r,l+r)$\} versus \{$\Theta$ contains exactly one change-point at $l$ on the segment $[l-r,l+r)$\}. This corresponds to a  two-sample mean testing problem, for which the CUSUM statistic $\bC_{l,r}(Y)$ is a sufficient statistic if the noise is Gaussian. Then, given $\bC_{l,r}(Y)$, one wants to test whether its expectation is $0$ (no change-point on $[l-r,l+r)$) versus its expectation is non-zero
but is $s$-sparse for some unknown $s$. This classical detection problem is well understood \cite{jin2004} and it is well known that a combination of a $\chi^2$-type test with a higher-criticism-type test is optimal. Here, the challenge stems from the fact that we do not want to perform a single such test, but a large collection of tests over a collection of $(l,r)\in \mathcal{G}$.

\subsubsection{Our contribution} As usual in the mean change-point literature, we consider  the energy $r_k\Delta_k^2$ of the change-point $\tau_k$. Up to a possible factor in $[1/2,1]$, $r_k\Delta_k^2$ is the square distance between $\Theta$ and its projection on the space of vectors $\Theta'$  with change-point at $(\tau_1,\ldots, \tau_{k-1},\tau_{k+1},\ldots,\tau_K)$ --see e.g. \cite{verzelenoptimal_change_point} for a discussion in the univariate setting. In other words, the energy $r_k\Delta_k^2$ characterizes the significance of the change-point $\tau_k$. In Section~\ref{sec:multi_scale_change_point_estimation_in_gaussian_noise}, we introduce a multi-scale change-point detection procedure detecting any change-point $\tau_k$ whose energy is higher, up to a numerical constant, than $ \sigma^2s_k\log(1+\tfrac{\sqrt{p}}{s_k}\sqrt{\log(n/r_k)}) + \sigma^2\log(n/r_k)$. This result is valid for arbitrary length $r_k$ and sparsity $s_k$, and does not require the knowledge of these two quantities. In summary, our procedure does not estimate any spurious change-point ({\bf NoSp}) and {\bf detects} all the
change-points whose energy are higher than the latter threshold.
In Section~\ref{sec:lower}, we establish that, as soon as the unknown number $K$ of the change-points is larger than $1$, the  condition $\sigma^2s_k\log(1+\tfrac{\sqrt{p}}{s_k}\sqrt{\log(n/r_k)}) + \sigma^2 \log(n/r_k)$ on the energy is tight with respect to $n$, $p$, $r_k$ and $s_k$, in the sense that no procedure achieving ({\bf NoSp}) is able to detect with high probability a change-point whose energy is smaller (up to some constant) than the latter threshold. In Section~\ref{sec:multi_scale_change_point_estimation_in_sub_gaussian_noise}, we consider the more general setting where the  noise is $L$-sub-gaussian with known variance, and we establish a similar result to the Gaussian case up to a logarithmic loss in some regimes. Finally,  we illustrate in Section~\ref{sec:numerical} the behavior of our procedure on numerical experiments.

\subsubsection{Related work}
For dense change-points ($s_k=p$) but with unknown covariance for the noise, Wang et al.~\cite{wang2019inference} (see also \cite{wang2020dating}) study the behavior of a procedure based on $U$-statistics of the CUSUM. Jirak~\cite{jirak2015uniform} and Yu and Chen~\cite{yu2017finite} introduce binary segmentation procedures based on the $l_{\infty}$ norm of the CUSUMs. Although those work explicitly characterize the asymptotic distribution of the test statistics and, for some of them, allow temporal dependencies in the data, the corresponding energy requirements for change-point detection are either not studied or turn out to be suboptimal.

Closest to our work, Chan and Chen \cite{chan2017multi} study a bottom-up approach to detect change-points of a Gaussian multivariate time series in an asymptotic setting. More specifically, the authors consider an asymptotic regime where the size of the time series is exponential in the dimension: $n = e^{p^\zeta}$ with $\zeta \in  (0,1)$. The authors also assume that the number $K$ of change-points remains finite when $n,p \to \infty$ and that the minimal sparsity $s$ of these change-points is polynomial is $p$.  In this specific regime, their procedures provably recover change-points under a near-minimal (up to logarithmic factors with respect to $n$) condition on the energy. In contrast, our results provide non-asymptotic and tight results for all scaling with respect to $n$ and $p$, allow for arbitrarily large number $K$ of change-points and allow for the presence of non-significant change-points.
In the same specific asymptotic setting,~\cite{hu2021sparsity} introduce a so called score test statistic used in a change-point detection procedure which is shown to achieve the same performance as \cite{chan2017multi} in the gaussian model but also handle Poisson observations.

Recently, Liu et al.~\cite{liu2019minimax} have characterized the optimal detection rate of a possibly sparse change-point in the specific case where there is at most one change-point, but the optimal rates are significantly slower in the multiple change-point setting. See also \cite{EnikeevaHarchaoui2019} and~\cite{dette2018relevant} for earlier results. Wang and Samworth~\cite{Wang2018} have proposed the INSPECT method based on sparse projection to handle sparse change-points, but INSPECT provably detects the change-points under strong assumption on the energy; see Section~\ref{sec:multi_scale_change_point_estimation_in_gaussian_noise} for a precise comparison.

In the univariate setting ($p=1$), minimal energy requirements for change-point detection are well understood~\cite{frick2014multiscale,fryzlewicz2018tail,wang2020univariate, verzelenoptimal_change_point} and are nearly achieved by a wide range of procedures including penalized least-square and  multi-scale tests methods.



\section{A Generic algorithm for multiscale change-point detection on a grid}
\label{sec:meta_algorithm_for_multi_scale_change_point_detection_on_a_grid}
In this section, we study the problem of change-point detection in the general setting defined in \Cref{sec:setting_general}. We introduce a bottom-up algorithm that aggregates a collection of homogeneity tests, performed at many positions, and for many scales, of our data. Then, we establish that, under some conditions on these tests, the procedure detects significant change-points.

\subsection{Grid and multiscale statistics}

Since our purpose is to translate a collection of local tests $\Tg = (\Tg_{l,r})_{(l,r) \in \cG}$  indexed by a grid $\cG$ into a change-point detection procedure, we first need to formalize what we mean by a grid.
Henceforth, we call a grid $\cG$ of $[n]$ a collection of locations and scales where a scale $r$ is a positive integer smaller or equal to $\floor{n /2}$ and a location  $l$  is an integer between $r+1$ and $n-r$. This couple $(l,r)$ refers to the segment $[l-r,l+r)$ centered at $l$ and with radius $r$.  Formally, $\cG$ is therefore a subset of $J_n = \left\{(l,r):~ r = 1,\dots, \floor{\frac n 2} \text{ and } l = r+1, \dots, n - r + 1\right\}$. Given a grid $\cG$, we call $\cR$ its collection of scales, that is
$\cR = \{r:~\exists l \text{ s.t. } (l,r) \in \cG\}$. Finally, for a scale $r\in \cR$,  $\cD_r$ stands for the  corresponding collection of locations, that is $\cD_r = \{l:~ (l,r) \in \cG\}$.
Although we do not make any assumption on the grid $\cG$ for the time being, we will mainly consider two specific grids in this section: the {\bf complete} grid $\cG_F=J_n$ and the {\bf dyadic} grid $\cG_D$ defined by $\cR=\{1,2, 4,\dots, 2^{\lfloor \log_2(n)\rfloor-1}\}$, $\cD_1 = [2,n]$, and
\beq\label{eq:def_dyadic}
\cD_r = \left\{r+1, 3\lfloor r/2\rfloor +1, 4\lfloor r/2\rfloor+1, \dots, \Big(\frac{n}{\lfloor r/2\rfloor}-2\Big)\lfloor \frac{r}{2}\rfloor  + 1,n-r+1\right\} \quad \mbox{ for } r\in \cR \setminus \{1\}\ .
\eeq
See \Cref{figure_diadic_grid} for a visual representation of the dyadic grid. At some points, we shall also mention $a$-adic grids $\cG_a$. For any $a\in (0,1)$, $\cG_a$ is defined by $\cR=\{1,\lfloor a^{-1}\rfloor, \lfloor a^{-2}\rfloor,\ldots,  \lfloor a^{1- \lfloor \log(n)/\log(a)\rfloor}\rfloor \}$
and $\cD_r$ as in~\eqref{eq:def_dyadic}. Interestingly, the cardinality of the dyadic grid or more generally of the $a$-adic grid is order $O(n)$, whereas the complete grid $\cG_D$ is quadratic.
\begin{figure}[h]
	\centering
	\includegraphics[scale = 0.15]{./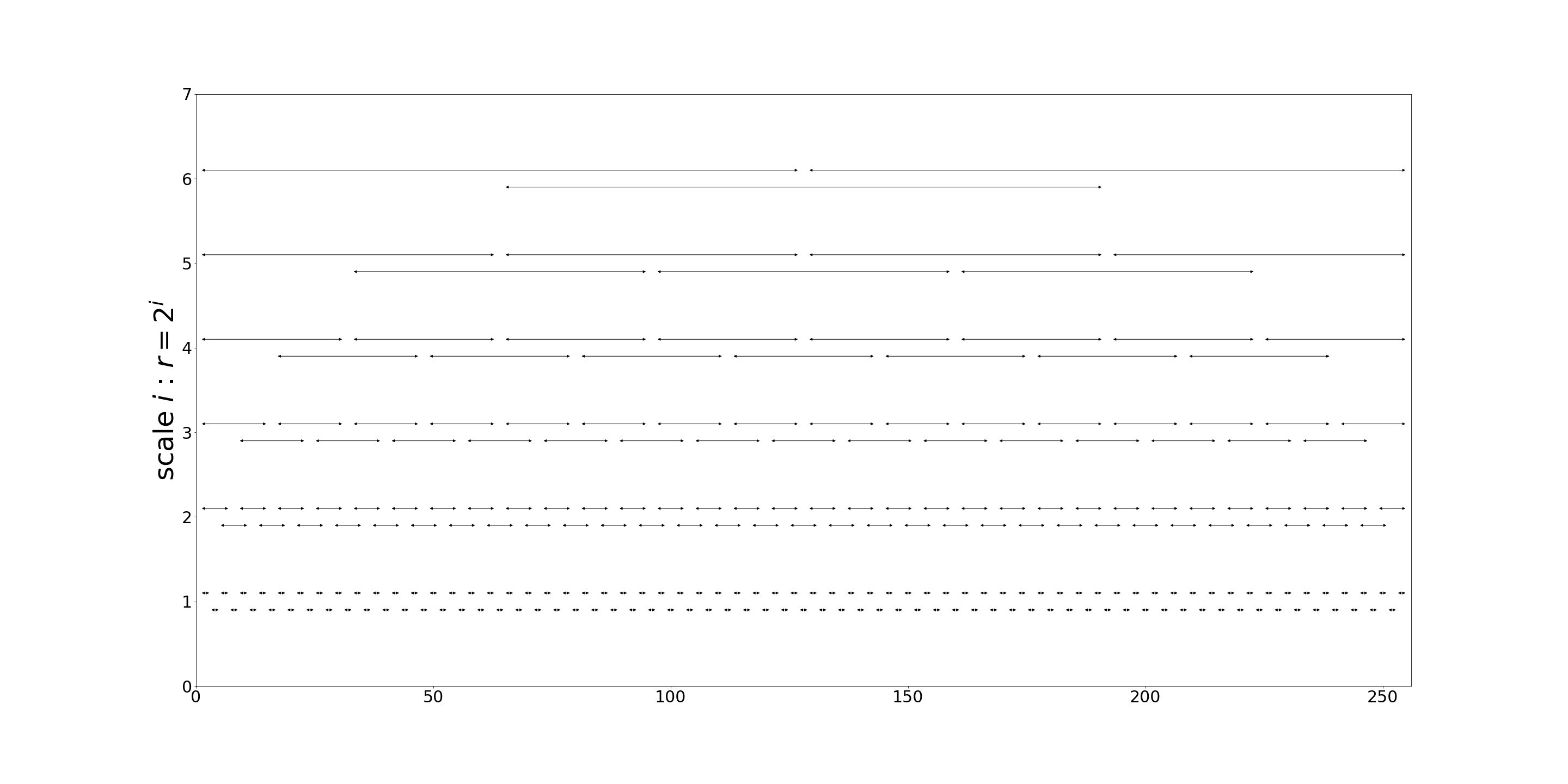}
	\caption{\label{figure_diadic_grid} The dyadic grid is represented as follows : for each $r = 2^i$ and $l \in \cD_r$, we draw the interval $[l-r+1,l+r-1]$ at position $(l,\log_2(r))$.}
\end{figure}

Grids are reminiscent of the $c$-normal systems of intervals introduced by Nemirovsky~\cite{nemirovskiy1985nonparametric} (see also \cite{li2019multiscale} for a definition) although our definition allows for non-necessarily normal intervals.

Given a fixed grid $\cG$, a multiscale test is simply a collection of test $\Tg = (\Tg_{l,r})_{(l,r) \in \cG}$ indexed by the elements of $\cG$, which  amounts to testing at all scales
$r\in \cR$ and all locations $l\in \cD_r$ whether the functional $\Gamma(\mathbb{P}_t)$ is constant over the segment $[l-r,l+r)$. Equivalently, $T_{l,r}$ tests whether there exists a change-point in $[l-r+1,l+r-1]$.




\subsection{From a multiscale test to a change-point detection procedure}

Our purpose is to introduce a generic procedure to translate a multiscale procedure into a vector of change-points. Intuitively,  if, for some $(l,r)\in \cG$, we have $T_{l,r}=1$, then the functional $\Gamma(\mathbb{P}_t)$ is certainly not constant over $[l-r,l+r)$ which entails that there is possibly at least one change-point in $[l-r+1,l+r-1]$. As a consequence, the multiscale test gives a collection  $\cI(T)=\{[l-r+1,l+r-1] \text{ s.t. } T_{l,r}=1\}$ of intervals that tentatively contain at least one change-point.

\medskip

If all these intervals were disjoint, then one simply would take $\widehat{\tau}$ as the sequence of centers of these intervals.  Unfortunately, when two intervals $[l_1-r_1+1,l_1+r_1-1]$ and $[l_2-r_2+1,l_2+r_2-1]$ in $\cI(T)$ have a non-empty intersection, one cannot necessarily decipher  whether there is only one change-point in the intersection of both intervals or if each interval contains a specific change-point. Hence, our general objective is to transform the collection $\cI(T)$ into a collection of non-intersecting intervals by either discarding or merging some of them.

We propose the following bottom-up iterative procedure for building a collection of non-intersecting intervals. Start with $\cT_0= \SCP_0=\emptyset$. For any scale $r\in \cR$, we compute the  collections $\SCP_r$ of intervals of scale $r$ and the collection $\cT_r$ of locations based on the following
\begin{align*}
\cT_r &= \left\{
    l \in \cD_r, \quad
    \Tg_{l,r} = 1 \quad \text{and }\quad [l-r+1,l+r-1] \bigcap \big(\underset{r' < r,\  r'\in \cR}{\bigcup} \SCP_{r'}\big) = \emptyset\enspace ;
 \right\}\\
\SCP_r & = \underset{l \in \cT_r}{\bigcup} [l-r+1,l+r-1]\enspace .
\end{align*}
The sets $\cT_1$ and $\SCP_1$ are made of all positions $l$ such that $T_{l,1}=1$. More generally, $\cT_r$ contains all locations $l$ such that
$T_{l,r}=1$ and the corresponding interval  $[l-r+1,l+r-1]$ does not intersect with any of the detected intervals at a smaller scale $r'<r$. The set $\SCP_r$ contains all intervals associated to $\cT_r$.

One can easily check that $\SCP=\bigcup_r \SCP_r$ is a union of closed non-intersecting intervals. Denote  $\Cg = \{ \Cgc_1, \dots , \Cgc_{\hat K}  \}$   the partition of $\SCP$ into connected components such that, for all
$1\leq i<j \leq {\hat{K}}$, $\max \Cgc_i < \min \Cgc_j$. Finally, we estimate the vector of change-points $\widehat{\tau}$ by taking the center of each segment $\Cgc_k$. In other words, we take  $\hat \tau_k := \tfrac{1}{2}(\min \Cgc_k + \max \Cgc_k)$ for any $1\leq k\leq \widehat{K}$. This bottom-up  aggregation procedure is summarized in \Cref{algo_1}  and illustrated in \Cref{fig:3} below.

\medskip

\noindent
{\bf Remark}: If, for some $r\in \cR$ and some $l_1<l_2\in \cD_r$, we have $T_{l_1,r}=1$, $T_{l_2,r}=1$, and $l_1+r-1\geq l_2-r+1$, then $\SCP_r$ contains the segment $[l_1-r+1,l_2+r-1]$. In other words, our aggregation procedure merges two intervals if and only if they correspond to the same scales.  In Section~\ref{sec:algo2}, we also introduce a variant of the algorithm where, instead of merging these two intersecting with identical scale, we discard one of them.

\begin{algorithm}
	\caption{\label{algo_1} Bottom-up aggregation procedure of multiscale tests}
	\KwData{$y_t, t = 1 \dots n$ and local test statistics $(\Tg_{l,r})_{(l,r) \in \cG}$}
	\KwResult{$(\hat \tau_k)_{k\leq \hat K}$}
	$\cT_r,\SCP_r  = \emptyset$ for all $r \in \cR$ and $\SCP=\emptyset$\;
	\For{increasing $r \in \cR$}{
		\For{$l \in \cD_r$ s.t. $\Tg_{l,r} = 1$}{
		\If{$[l-r+1,l+r-1]\bigcap \SCP = \emptyset$}{
				$\cT_r \gets \cT_r \cup \{l\}$\;
				$\SCP_r \gets \SCP_r \cup [l-r+1,l+r-1]$\;
		 }
	}
$\SCP = \SCP \bigcup \SCP_r$\;
  }
	Let $(\Cgc_k)_{k= 1,\dots,\hat K}$ be the connected components of $\SCP$ sorted in increasing order\;
	\Return $\left(\hat \tau_k = \frac{1}{2}(\min \Cgc_k + \max \Cgc_k )\right)_{k = 1,\dots,\hat K}$
\end{algorithm}
\begin{figure}[ht]
	\centering
	\includegraphics[scale = 0.15]{./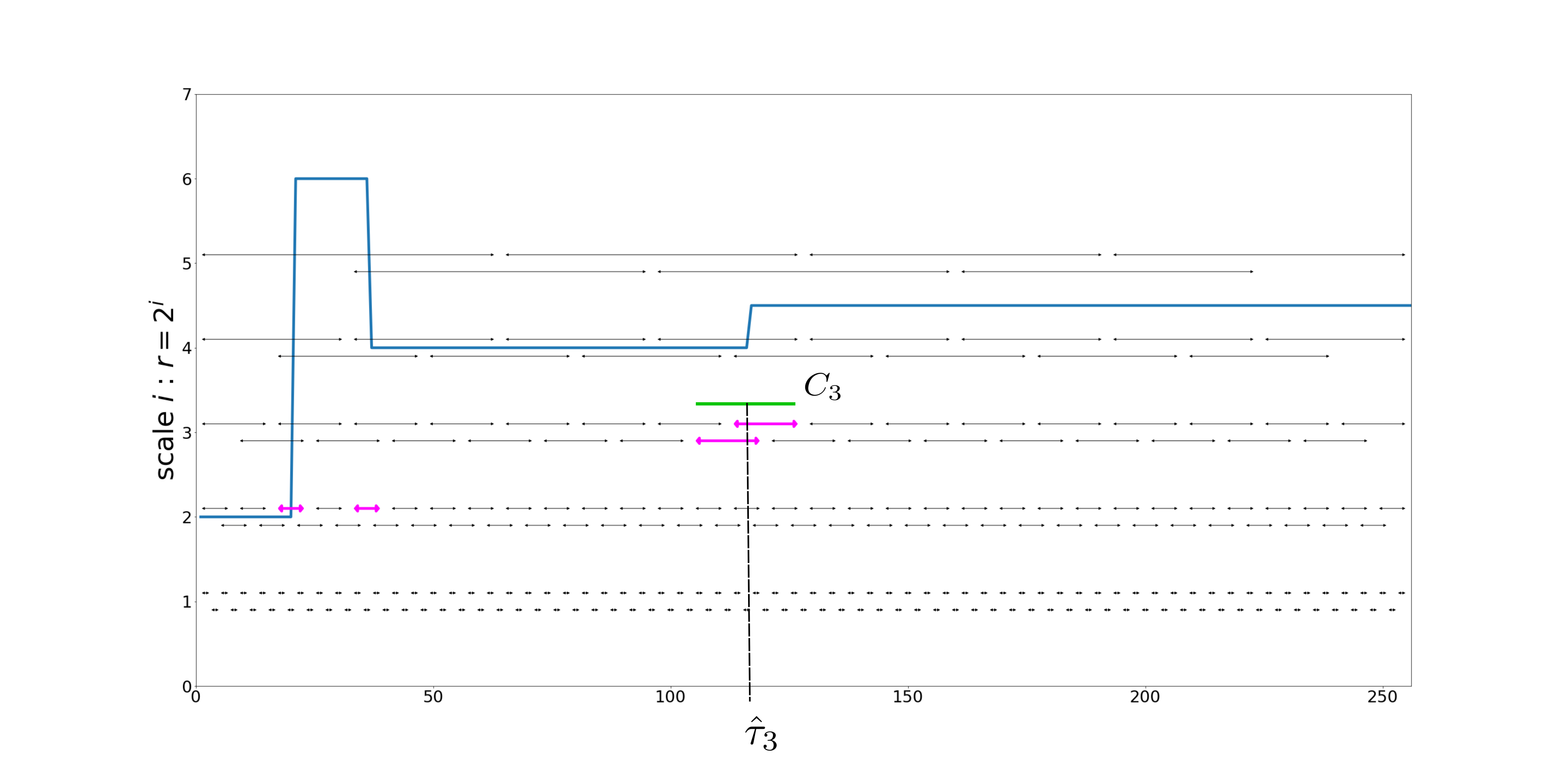}
	\caption{\label{fig:3} Example of our change-point detection procedure with three change-points. The first two change-points have large heights and are detected at a small scale $r$ (in magenta) while the third one is detected at a larger scale $r$.}
\end{figure}

\noindent
{\bf Computational Cost}. A naive implementation of \Cref{algo_1} - and also of ~\Cref{algo_2} defined in Appendix - requires to compute all tests $T_{l,r}$ on the grid, whereas the aggregation procedure  only needs to compute a number of tests $T_{l,r}$ proportional to the size of the grid. More precisely, if the computational cost of $T_{l,r}$ is $\Lambda_{l,r}$ for each $(l,r)$ in the grid $\cG$, then the aggregation procedure requires $O(\sum_{(l,r)\in \cG}\Lambda_{l,r})$ computations. If for all $(l,r)$, the cost $\Lambda_{l,r}$ is proportional to $r$, that is $\Lambda_{l,r}=O( r\Lambda)$, then the overall computational cost is 
$O(\Lambda\sum_{(l,r)\in \cG}r)$ which is $O(\Lambda n^3)$ for the complete grid and $O(\Lambda n\log(n))$ for the dyadic grid. One can speed up the full procedure by computing the statistics $T_{l,r}$ and aggregating on the fly by checking whether $[l-r+1,l+r-1]$ intersects $\SCP$ before evaluating $T_{l,r}=1$. Indeed, the connected components $C_k$ can be computed at each increasing scale $r$. Hence, at scale $r$, one only needs to compute the tests $T_{l,r}$ at locations $l$ such that  $[l-r+1, l+r-1]$ does not intersect the connected components detected at scales $r' < r$.

\subsection{General analysis}
\label{sub:general_analysis}

In this subsection, we provide an abstract theorem translating error controls of the multiple test procedure $T$ in terms of properties of $\widehat{\tau}$.
As explained in the introduction, the time series $(y_t)$ may contain change-points that are too small to be detected.  Having this in mind, we define a subset $\cK^* \subset [K]$ of indices corresponding to so-called significant change-points. As our purpose is to provide deterministic condition so that the change-points in $\cK^*$, we  need to introduce, for each $k\in \cK^*$, an element of the grid $(\tg_k,\rg_k) \in \cG$   at which the statistic $T$ is expected to detect $\tau_k$. One could think of $\tg_k$ as some position close to $\tau_k$ and to $\rg_k$ as some radius which is large enough to convey information on the change-point. 
Recall that the length $r_k$ of the change-point $\tau_k$ is defined by $r_k=\min(\tau_{k+1}-\tau_k,\tau_k-\tau_{k-1})$.
We assume that the scales $\rg_k$ and the location $\tg_k$  of detection satisfy the two following conditions:
\begin{equation}\label{eq:hypothese_scale}
4(\rg_k - 1) < r_k \quad \mbox{and}\quad |\tg_k - \tau_k| \leq \rg_k - 1. \enspace
\end{equation}
The first condition ensures that the scale $\rg_k< r_k/4+1$ is small enough compared to the length $r_k$.
The second condition is always satisfied if  $\tg_k$ is the best approximation of $\tau_k$ in $\cD_{\rg_k}$ and if the grid $\cG$ satisfies the following approximation property

\smallskip

\noindent
{\bf (App):}  For all  $r\in \cR$ and all $l\in [r+1,n-r+1]$, there exists  $l'\in \cD_r$ such that
$|l' - l| \leq r - 1$.

\smallskip

This property  entails that any point $l$ can be approximated at distance $r-1$ by some location in $\cD_r$. This also implies that each point $l\in [r+1,n-r]$ belongs to at least one segment $(l'-r,l'+r)$ where $l_1$ lies in $\cD_r$. In practice, the $a$-adic grids $\cG_a$ and the complete grid satisfy {\bf (App)}.



\medskip

Next, we introduce an event on the tests $(T_{l,r})$ under which  the change-point estimator $\widehat{\tau}$ of \Cref{algo_1} performs well.
In the following, we write $\cH_0$, the collection of all possible $(l,r)\in J_n$ such that there is no change in $[l-r+1,l+r-1]$, i.e. $\Gamma(\bbP_t)$ is constant on $[l-r, l+r)$. Equivalently, we have
\begin{equation}\label{eq:Ho}
(l,r) \in \cH_0 \quad \text{iff} \quad (l-r,l+r) \cap \{\tau_k, k=1,\dots,K\} = \emptyset \enspace .
\end{equation}

For a collection $\cK^*$ and some elements of the grid $(\bar \tau_k, \bar r_k )$ satisfying \Cref{eq:hypothese_scale},  the Event {\bf $\cA \left( \Tg, \cK^*,(\tg_k,\rg_k)_{k\in \cK^*}\right)$} is defined as the conjunction of the two following properties:
	(i) {\bf (No false positive)} 
    $\Tg_{l,r} = 0$ for all $(l,r)\in \cH_0 \cap \cG$
  	(ii) {\bf (Detection of significant change-points)} for every $k \in  \cK^*$, we have $\Tg_{\tg_k,\rg_k} = 1$.

 The first property states that $T$ performs no type I errors on the event $\cA \left( \Tg, \cK^*,(\tg_k, \rg_k)_{k\in \cK^*}\right)$, whereas the second property enforces that all the significant change-points are detected by the specific tests $\Tg_{\tg_k,\rg_k}$.

\begin{theorem}\label{th:g}
 The following holds for any grid $\cG$, any local test statistic $\Tg$, any non-negative integer $K$, any distribution with $K$ change-points, any $\cK^*\subset [K]$  and scales and locations $(\tg_k, \rg_k)_{k \in \cK^*}$ in $\cG$ satisfying Assumption~\eqref{eq:hypothese_scale}. Under the event $\cA( \Tg, \cK^*, (\tg_k, \rg_k)_{k \in \cK^*})$, the estimated change-point vector $\hat{\tau}$ returned by \Cref{algo_1} satisfies the two following properties
	\begin{itemize}
		\item {\bf Significant change-points are detected}:
		for all $k \in \cK^*$, there exists $k' \leq \hat K$ such that
		$|\hat\tau_{k'} - \tau_{k}| \leq \rg_{k} - 1 < \frac{r_{k}}{4}$.		\item ({\bf NoSp}): No Spurious change-point is detected~\eqref{eq:def:nosp}.
	\end{itemize}
\end{theorem}
The first property states that so-called significant change-points $(\tau_k)_{k \in \cK^*}$ are detected by the generic algorithm at the right scale. The no-spurious property~\eqref{eq:def:nosp} guarantees that, around any true change-point $\tau_k$, the procedure estimates at most one single change-point $\widehat{\tau}_l$.
Importantly, the theorem does not make any assumption on  the non-significant change-points. In fact, change-points $\tau_k$ with $k\in [K]\setminus \cK^*$ may or may not be detected. In general, we can only conclude from \Cref{th:g} that $|\cK^*|\leq \widehat{K}\leq K$ on the event $\cA \left( \Tg, \cK^*,(\tg_k,\rg_k)_{k\in \cK^*}\right)$ .

\medskip

\Cref{th:g} is abstract but its main virtue is to translate multiple testing properties into change-point detection properties. For a specific problem such as multivariate mean change-point detection considered in the next section, the construction of a near optimal procedure boils down to introducing a collection of local test statistics, such that (a) change-points $\tau_k$ belong to $\cK^*$ under minimal conditions,  (b) the scale $\rg_k$
is the smallest possible, and (c) the event $\cA( \Tg, \cK^*, (\tg_k,\rg_k)_{k \in \cK^*})$ holds with high probability.

\medskip

In the case where all the change-points are significant, the result of \Cref{th:g} can be reformulated as follows:
\begin{corollary}\label{cor:thg}
 The following holds for any grid $\cG$, any local test statistic $\Tg$, any non-negative integer $K$, any distribution with $K$ change-points, any $(\tg_k, \rg_k)_{k =1,\ldots, K}$ in $\cG$ satisfying Assumption~\eqref{eq:hypothese_scale}. Under the event $\cA( \Tg, [K], (\tg_k, \rg_k)_{k=1,\ldots, K}$, the estimated change-point vector $\hat{\tau}$ returned by \Cref{algo_1} satisfies $\widehat{K}=K$ and,
 \[|\hat\tau_{k} - \tau_{k}| < \rg_k - 1\leq \frac{r_{k}}{4}\quad\quad  \text{  for all }k=1,\ldots, K\enspace .
 \]
\end{corollary}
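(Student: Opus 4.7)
The plan is to derive the corollary directly from \Cref{th:g} by exploiting the fact that, when $\cK^* = [K]$, every change-point produces an estimated change-point strictly inside its ``window'' $W_k := \left[ \tau_k - \frac{\tau_k - \tau_{k-1}}{2},\tau_k + \frac{\tau_{k+1}-\tau_k}{2}\right]$, and the no-spurious property then precludes any extra estimated change-points.

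First I would invoke \Cref{th:g}, which gives, for each $k \in [K]$, an index $k' \leq \hat K$ with $|\hat\tau_{k'} - \tau_k| \leq \rg_k - 1$. Combined with Assumption~\eqref{eq:hypothese_scale} one has $\rg_k - 1 < r_k/4 \leq \min(\tau_k - \tau_{k-1},\tau_{k+1}-\tau_k)/4$, so $\hat\tau_{k'}$ lies \emph{strictly} in the interior of $W_k$. Since the windows $W_1,\dots,W_K$ have pairwise disjoint interiors (consecutive windows share only the single boundary point $(\tau_k+\tau_{k+1})/2$), the indices $k'$ obtained for different $k$'s must be distinct. Hence $\hat K \geq K$, and each window $W_k$ contains at least one estimated change-point in its interior.

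Next I would use both parts of the no-spurious property of \Cref{th:g} to prove $\hat K \leq K$. The second part guarantees that every $\hat\tau_{k'}$ lies in the global interval $[\tau_1 - (\tau_1-1)/2,\tau_K + (n+1-\tau_K)/2] = \bigcup_{k=1}^K W_k$. Suppose for contradiction that there exists an estimated change-point $\hat\tau_{k'}$ other than the ones produced in the first step. This $\hat\tau_{k'}$ must lie in some $W_k$; whether it falls in the interior of $W_k$ (adding to the detected estimator already inside) or on the shared boundary with $W_{k+1}$ (belonging to both windows), it forces at least one of $W_k,W_{k+1}$ to contain two estimated change-points, contradicting the first bullet of the no-spurious property. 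Therefore $\hat K = K$, and each $W_k$ contains exactly one estimated change-point.

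Finally, since the $\hat\tau_j$'s are sorted in increasing order and the windows $W_1 < W_2 < \dots < W_K$ are also ordered, the unique estimated change-point in $W_k$ must be $\hat\tau_k$; applying the localization bound of \Cref{th:g} to this $\hat\tau_k$ yields $|\hat\tau_k - \tau_k| \leq \rg_k - 1$, which together with $\rg_k - 1 < r_k/4$ concludes the proof. There is no real obstacle here: the only point requiring a little care is the boundary case where an extra estimator might sit exactly at $(\tau_k+\tau_{k+1})/2$, which is handled by observing that such a point is counted in both adjacent windows when applying the no-spurious inequality.
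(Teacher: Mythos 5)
Your proof is correct and fills in exactly the derivation the paper leaves implicit: the paper states \Cref{cor:thg} as an immediate reformulation of \Cref{th:g} without a separate proof, and your argument (detection of each $\tau_k$ strictly inside its window forces $\hat K\geq K$; the no-spurious bounds force $\hat K\leq K$; ordering identifies $\hat\tau_k$ with the point in $W_k$) is the intended chain of reasoning. The only discrepancy is the strict-versus-nonstrict inequality, where your conclusion $|\hat\tau_k-\tau_k|\leq \rg_k-1<r_k/4$ matches \Cref{th:g} and the corollary's statement appears to have the inequalities transposed.
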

Let us respectively  define the Hausdorff distance and the Wasserstein distance of two vectors $(u_1,\dots,u_K)$ and $(v_1,\dots,v_K)$ in $\bbR^K$ by
 $d_H(u,v) = \max_{k = 1,\dots,K}|u_k - v_k|$ and $d_W(u,v) = \sum_{k = 1,\dots,K}|u_k - v_k|$. Then,
\Cref{cor:thg} straightforwardly implies that, if $\cK^* = [K]$, then these two losses are bounded as follows
$$d_H(\hat \tau, \tau) \leq \max_{k = 1,\dots,K}(\rg_k - 1) \quad \mbox{and } \quad d_W(\hat \tau, \tau) \leq \sum_{k = 1,\dots, K} (\rg_k - 1)\enspace .$$

\medskip

As an alternative of~\Cref{algo_1}, one could use other bottom-up aggregating procedures. For instance, \Cref{algo_2} defined in \Cref{sec:algo2}  also satisfies \Cref{th:g}. Although these two algorithms are closely related, \Cref{algo_1} is slightly more conservative than \Cref{algo_2} since it merges all detection intervals at a given resolution while \Cref{algo_2} only keeps one interval at a given resolution when multiple intervals intersect - the one with smallest index $t$. While the minimax properties of both methods are comparable - at least up to a multiple constant - the choice of aggregation method will have an influence in practice on the outcome: \Cref{algo_1} will be slightly more stable, detect less change-points, and provide wider confidence interval around them, while \Cref{algo_2} will be slightly more sensitive to smaller changes, i.e.~detect smaller change-points, will be more precise, and somewhat less stable.

\medskip

\Cref{th:g} ensures that, if $T_{\overline{\tau}_k, \overline{r}_k}=1$ with $(\overline{\tau}_k, \overline{r}_k)$ satisfying Assumption~\eqref{eq:hypothese_scale}, then the change-point $\tau_{k}$ is detected. Inspecting the proof of \Cref{th:g}, one easily checks that Assumption~\eqref{eq:hypothese_scale} is minimal for \Cref{algo_1} (and also for \Cref{algo_2}). Still, one may wonder whether any generic algorithm has to require that  $4(\overline{r}_k-1)<r_k$ to detect the change-points or if there exists a generic algorithm where the constant $4$ in the above condition can be improved.

\medskip

\noindent
{\bf Comparison with narrowest over threshold methods.} As mentioned in the introduction, other aggregation procedures have been proposed in the literature. In particular, the narrowest over threshold scheme proposed by \cite{baranowski2019narrowest} and later used in~\cite{kovacs2020seeded} is also closely related  to the local segmentation algorithm of Chan and Chen~\cite{chan2017multi}. A simple extension of these procedures for generic change-point problems and for a general collection of tests $(T_{l,r})$ would amount to modifying~\cref{algo_1} by selecting locations $l$ in $\mathcal{D}_r$ such that $T_{l,r}=1$ and $[l-r+1,l+r-1]$ does not intersect previously detected change-points, whereas we require in Algorithms \ref{algo_1} and \ref{algo_2}, that $[l-r+1,l+r-1]$ does not intersect previously detected confidence intervals. In some way, the narrowest-over threshold scheme is therefore less conservative. Unfortunately, there is no generic result in the form of Theorem~\ref{th:g} for such procedures and, from informal arguments,  we doubt that the corresponding procedure provably achieves ({\bf NoSp}) under a control of the FWER of the tests. Inspecting the proof of Theorem 1 in~\cite{baranowski2019narrowest} and Theorem 3 in~\cite{kovacs2020seeded} for univariate mean change-point problems, one observes that the chosen threshold is much larger than what is needed to control the FWER so that the theoretical threshold is certainly over-conservative -- see step 5 of the proof of Theorem 1 in~\cite{baranowski2019narrowest}. In contrast, Theorem 1 in~\cite{chan2017multi} for univariate change-point problems is based on the minimal threshold, but the proof relies on the important assumption that the number $K$ of change-point remains bounded while $n$ goes to infinity. Besides, it is not clear how one could extend the arguments to more general settings.

\section{Multivariate Gaussian change-point detection}
\label{sec:multi_scale_change_point_estimation_in_gaussian_noise}
We now turn to the multivariate change-point model introduced in \Cref{sec:setting}. Throughout this section, we assume that the random vectors $\Noise_t$ are independently and identically distributed with
$\Noise_t \sim \cN(0, \sigma^2 \bI_p)$. Since we shall apply the general aggregation  procedures introduced in the previous section, our main job here is to introduce a near-optimal testing procedure.


\medskip

Fix some quantity $\delta\in (0,1)$. At the end of the section, $1-\delta$ will correspond to the probability of the event {\bf $\cA \left( \Tg, \cK^*,(\tg_k,\rg_k)_{k\in \cK^*}\right)$} introduced in the previous section. Alternatively, one may interpret $\delta$ as an upper bound of the desired probability that the change-point detection procedure detects a spurious change-points.  Recall that, for a change-point $\tau_k$, $s_k$ stands for the sparsity of the difference $\mu_{k+1}-\mu_k$. The energy of a given change-point $\tau_k$ is $c_0$-high if

\beq\label{EnergyGauss}
r_k \Delta_k^2
\geq
\Csto\sigma^2\left[s_k \Log{1 + \frac{\sqrt{p}}{s_k} \sqrt{\Log{\frac{n}{r_k \delta }}}}
+ \Log{\frac{ n}{r_k \delta }}\right] \enspace ,
\eeq
for some universal constant $\Csto$ to be defined later. We show in this section that when $c_0$ is large enough, all high-energy change-points can be detected. Conversely, it is established in \Cref{sec:lower} that Condition~\eqref{EnergyGauss} is (up to a multiplicative constant) optimal for detecting change-points and cannot be weakened.

Let us now discuss the different regimes contained in Equation~\eqref{EnergyGauss}. In what follows, define
$$\psi^{(g)}_{n,r,s} := s \Log{1 + \frac{\sqrt{p}}{s} \sqrt{\lo}}
+ \lo\ ; \quad \quad \lo := \log\left(\frac{n}{r \delta}\right) \enspace ,$$
in order to alleviate notations.  If $\lo \geq  p/2$, then
$	\psi^{(g)}_{n,r,s}\asymp \lo$ where $u\asymp v$ means that for two positive numerical constants $c_1$ and $c_2$, one has $c_1v\leq u\leq c_2v$. This corresponds to the minimal energy condition for detection in the univariate case, i.e.~when $p=1$; see~\cite{verzelenoptimal_change_point}. The condition $\lo \geq  p/2$ occurs when $p$ is rather small and the scale $r$ is much smaller than $n$.
If $\lo\leq p/2$,
then
\begin{equation*}
	\psi^{(g)}_{n,r,s} \asymp \left\{
\begin{array}{cc}
	\lo &\text{ if } s \leq \frac{\lo}{\log(p)- \log\left(\lo\right)} \\
	s \Log{2\frac{p}{s^2} \lo}& \text{ if } \frac{\lo}{\log(p)- \log\left(\lo\right)} < s   < \sqrt{p\lo}
	\\
	\sqrt{p\lo} &\text{ if } s \geq \sqrt{p\lo}\enspace .
\end{array}
\right.
\end{equation*}

We define $\cK^*\subset [K]$  as the subset of indices such that $\tau_k$ satisfies~\eqref{EnergyGauss}.
For any $k\in \cK^*$, we define $r^*_k$ as the minimum radius $r$ such that an inequality similar to~\eqref{EnergyGauss} is satisfied for $r\Delta_k^2$, namely
\begin{equation}\label{eq:VitesseGauss}
	r^*_k
	=
	\min
	\left\{
		r \in \bbR^+:\quad r{\Delta_k^2}
		\geq
		\Csto\sigma^2\left[s_k \Log{1 + \frac{\sqrt{p}}{s_k} \sqrt{\Log{\frac{n}{r \delta }}}}
	+ \Log{\frac{ n}{r \delta}}\right]
	\right\}\enspace.
\end{equation}


In the following,  we introduce multi-scale tests for respectively dense and sparse change-points. For simplicity, we restrict our attention to the dyadic grid $\cG_D = (\cR, \cD)$ introduced in the previous section (see Equation \eqref{eq:def_dyadic}), the complete grid being used in the next section.

To apply \Cref{th:g}, we will consider an event {\bf $\cA \left( \Tg, \cK^*,(\tg_k,\rg_k)_{k\in \cK^*}\right)$}   in the proof of \Cref{cor:gaus} where the scale $\rg_k \in \cR$ is of the same order as $r^*_k \in \bbR^+$.

\subsection{Dense change-points}
\label{sub:dense_regime}



We focus here on dense change-points for which $s_k$ is possibly as large as $p$. Given $\kappa>0$,  $\tau_k$ is a $\kappa$-\emph{dense} high-energy change-point if
\begin{equation}
	\label{eq:EnergyDenseGauss}
	r_k \Delta_k^2 \geq  \kappa \sigma^2\NoiseBound<p,n,r_k,\delta> \enspace.
\end{equation}
The requirement~\eqref{eq:EnergyDenseGauss} is analogous to \eqref{EnergyGauss} when $s_k\geq [p\log(n/(r_k\delta))]^{1/2}$.  For any $\kappa$-dense high-energy change-point, we define $\rgD_k \in \cR$ as the minimum radius $r\in \cR$ such that an inequality of the same type as \eqref{eq:EnergyDenseGauss} is satisfied for $r\Delta_k^2$,
\begin{align*}
	\rgD_k
	=
	\min
	\left\{
		r \in \cR:\, 8r{\Delta_k^2}
	   \geq \kappa\sigma^2\NoiseBound<p, n,r,\delta>
   	\right\} \enspace.
\end{align*}
Intuitively, $\rgD_k$  corresponds to the smallest scale such that $\tau_k$ is guaranteed to be detected. By definition, we have
$4(\rgD_k-1) \leq r_k$. Let $\tgD_k$ be the best approximation of $\tau_k$ in the grid with scale $\rgD_k$. By definition of the dyadic grid, we have $|\tgD_k - \tau_k| \leq \rgD_k/4$.




For any positive integers $r\in [1;n]$ and $l\in [r+1, n+1 - r]$, we define the  statistic
$\psiD_{l,r}:= \norm{\bC_{l,r}}^2 -  p$.  If  $\theta$ is constant over $[l-r,l+r)$, then the expectation of $\psiD_{l,r}$ is zero.  Recall that the rescaled CUSUM statistic $\bC_{l,r}$ depends on the noise level $\sigma$, and the statistic $\psiD_{l,r}$ therefore requires the knowledge of $\sigma$.
To calibrate the corresponding test $\TD_{l,r}$ rejecting for large values of $\psiD_{l,r}$ we introduce
\begin{align*}
\TD_{l,r} := \1\left\{ \psiD_{l,r}> \tD_r \right\}\ ;\quad \quad 	\tD_r & := 4\NoiseBound<p,2n,r,\delta> \enspace .
\end{align*}


\begin{proposition}\label{prop:dense}
There exists a universal constant  $\kappa_{\mathrm{d}}>0$ and an event $\xDense$ of probability larger than $1-2\delta$ such that 
(i) $\TD_{l,r} = 0$ for all $(l,r) \in \cH_0\cap\cG_D$ and (ii) $\TD_{\tgD_k,\rgD_k} = 1$ for all $\kappa_{\mathrm{d}}$-dense high-energy change-point $\tau_k$.
\end{proposition}

The above proposition ensures that, on the event $\xDense$, the collection of tests $\TD_{l,r}$ detects all dense high-energy change-points at the scale $\rgD_k$ and makes no false positives on the dyadic grid $\cG_D$. If we plugged this collection of tests into the general multiple change-point procedure, then \Cref{th:g} would entail that all $\kappa_{\mathrm{d}}$-dense high-energy change-points are discovered and localized and that $\widehat{\tau}$ does not detect any spurious change-point. In the next subsection, we introduce alternative tests that are tailored to sparse change-points and thereby allow to detect change-points that are not $\kappa_{\mathrm{d}}$-dense high-energy
but still satisfy the energy condition~\eqref{EnergyGauss}.

\subsection{Sparse change-points}

\subsubsection{Energy condition}
\label{subsub:energy_condition}

For a given $1\leq k\leq K$, the change-point $\tau_k$ is a $\kappa$-\emph{sparse} high-energy change-point if $s_k \leq [p \log(n/(r_k\delta))]^{1/2}$ and
\begin{equation}\label{eq:EnergySparseGauss}
	r_k \Delta_k^2 \geq \kappa\sigma^2\NoiseBoundsparse<p, n,r_k,\delta, s_k>\enspace .
\end{equation}

 If $\tau_k$ is a $\kappa$-sparse high-energy change-point, we define $\rgSp_k$ as the minimum scale such that an inequality similar to  \eqref{eq:EnergySparseGauss} is satisfied :
\begin{align*}
	\rgSp_k
	=
	\min
	\left\{
		r \in \cR:\quad 8r{\Delta_k^2}
		\geq \kappa \sigma^2\NoiseBoundsparse<p, n,r,\delta, s_k>
	\right\} \enspace.
\end{align*}
As in the dense case, we have $4(\rgSp_k-1) \leq r_k$. Set $\tgSp_k$ as the best approximation of $\tau_k$ in the grid $\cD_{\rgSp_k}$ at scale $\tau_k$. By definition of the dyadic grid, we have $|\tgSp_k - \tau_k| \leq \rgSp_k/4$. We introduce below two statistics for handling this problem.


\subsubsection{Berk-Jones Test}
\label{subsub:Berk_jones_regime}

The Berk-Jones test~\cite{moscovich2016exact} is a variation of the Higher-Criticism test  originally introduced in \cite{jin2004} for signal detection. It has been previously studied in~\cite{chan2015optimal} for sparse segment detection. 
We decided to use the Berk-Jones test in this paper because of its intrinsic formulation in terms of the quantiles of a Bernoulli distribution, but the Higher-Criticism test would reach the same rates of detection within a constant factor. We use the notation $\bbN^*$ to denote the set of positive itegers. Given
$(l,r)$ in the grid $\cG_D$,  we first introduce $N_{x,l,r}$ as the number of coordinates of $\bC_{l,r}$ that are larger than $x$ in absolute value.
\beq\label{eq:def_count}
N_{x,l,r}= \sum_{i=1}^p \1_{|\bC_{l,r,i}|> x }
\eeq
If $(l,r) \in \cH_0$, then the rescaled CUSUM statistic follows a standard normal distribution and $N_{x,l,r}$ therefore follows a Binomial distribution with parameters $p$ and $2\overline{\Phi}(x)$. The Berk-Jones test amounts to rejecting the null, when at least one of the statistics $N_{x,l,r}$, for $x\in \bbN^*$, is significantly large. Next, we  formalize what we mean by 'large'.

For any $u>0$, any $q_0\in [0,1]$, and positive integer $p_0$, denote
$\overline{Q}(u,p_0,q_0)= \P[\cB(p_0,q_0)> u]$
 the tail distribution function of a Binomial distribution with parameters $p_0$ and $q_0$. Given $\alphabj\in [0,1]$, we then write $\overline{Q}^{-1}(\alphabj,p_0,q_0)$ for the corresponding quantile function,
\[
\overline{Q}^{-1}(\alphabj,p_0,q_0)= \inf_{u}\left[\P[\cB(p_0,q_0)> u]\leq \alphabj \right]\enspace .
\]
Given a scale $r\in \cR$ and a positive integer $x$, we  define the weights
\begin{equation}
	\label{eq:def_alpha_tr}
\alphabj^{(\mathrm{BJ})}_{x,r} = \frac{6\delta r}{\pi^2x^2|\cD_r|n }\enspace .
\end{equation}
This allows us to define the Berk-Jones statistic over $[l-r,l+r)$ as the test rejecting the null when at least one $N_{x,l,r}$ is large.
\beq\label{eq:def_berk_jones}
\Tbj_{l,r}=\max_{x\in \bbN^*}\1\left\{N_{x,l,r}>\overline{Q}^{-1}(\alphabj^{(\mathrm{BJ})}_{x,r},p,2\overline{\Phi}(x)) \right\} \enspace .
\eeq
Equivalently, $\Tbj_{l,r}$ is an aggregated test based on the statistics $N_{x,l,r}$ with weights $\alphabj^{(\mathrm{BJ})}_{x,r}$. From the above remark and a union bound, we deduce that the probability that the collection of tests $\{\Tbj_{l,r}, (l,r)\in \cG_D\}$ rejects a least one false positive is at most $\delta$:
\[
\P\left[\max_{(l,r) \in \cH_0\cap \cG_D}\Tbj_{l,r}= 1\right]\leq \sum_{r\in \cR}\sum_{ l\in \cD_r}\sum_{x\in \bbN^*} \alphabj^{(\mathrm{BJ})}_{x,r}
\leq \sum_{r\in \cR}\sum_{ l\in \cD_r} \frac{\delta r}{|\cD_r|n}\leq \sum_{r\in \cR}\frac{\delta r}{n}\leq \delta\enspace ,
\]
where we recall that $(l,r) \in \cH_0$ if and only if $\Theta$ is constant on $[l-r,l+r)$.
 Although one may think from the definition~\eqref{eq:def_berk_jones} that $\Tbj_{l,r}$ involves an infinite number of $N_{x,l,r}$, this is not the case. Indeed, $N_{x,l,r}$ is a non-increasing function of $x$ whereas for all $x$ such that $2p \overline{\Phi}(x)\leq \alphabj^{(\mathrm{BJ})}_{x,r}$, we have $\overline{Q}^{-1}(\alphabj^{(\mathrm{BJ})}_{x,r},p,2\overline{\Phi}(x))=0$.
Writing $x_{0,r}$ the smallest $x$ such that  $2p \overline{\Phi}(x)\leq \alphabj^{(\mathrm{BJ})}_{x,r}$ we derive
\[
\Tbj_{l,r}=\max_{x=1,\ldots, x_{0,r}}\1\left\{N_{x,l,r}>\overline{Q}^{-1}(\alphabj^{(\mathrm{BJ})}_{x,r},p,2\overline{\Phi}(x)) \right\}\ .
\]
Since, for any $x>0$, we have $\overline{\Phi}(x)\leq e^{-x^2/2}$, one can deduce that $x_{0,r}\leq c[\log(np/(r\delta))]^{1/2}$,
for some numerical constant $c>0$.

\subsubsection{Partial norm statistics}
\label{subsub:max_statistic}
The Berk-Jones test is able to detect change-points $\tau_k$ for which there exists $s$ such that the $s$ largest squared coordinates of $\mu_k - \mu_{k-1}$ are larger than $C (\log(ep/s^2) + \log(n/r_k)/s)$ with a large enough constant $C$. However, it may happen that $\tau_k$ satisfies the energy condition~\eqref{EnergyGauss} and that the $s$ largest coordinates of $\mu_k - \mu_{k-1}$ are negligible compared to $\log(n/r_k)/s$, mainly because $s\mapsto 1/s$ is not summable. To solve this issue, we introduce a second sparse statistic based on the partial sums.
Let
$$\cZ = \left\{1, 2,2^2,\dots,2^{\floor{\log_2(p)}} \right\}$$ denote the dyadic set. Only the sparsities $s \in \cZ$ will be analysed by the partial norm statistic. For any $(l,r)$ in the grid $\cG_D$, we respectively write  $\bC_{l,r,(1)}$, $\bC_{l,r,(2)},\ldots
$ the reordered entries of $\bC_{l,r}$  by decreasing absolute value, that is $|\bC_{l,r,(1)}| \geq \dots \geq |\bC_{l,r,(p)}|$. Then, for $s\in \cZ$, we define the partial CUSUM norm by
\begin{align}\label{eq:partialCUSUMnorm}
	\Ssup_{l,r,s}=  \sum_{i=1}^s\left( \bC_{l,r,(i)}\right)^2 \enspace .
\end{align}
Then, we define	the test $\Tsup_{l,r}$ rejecting the null when at least one of the partial norms is large
\begin{align*}
	\tsup_{r,s}
	:= \tsup_{r,s}(\delta)
	 = 4s\Log{\frac{2ep}{s}} + 4\Log{\frac{n}{r\delta}};  \quad \quad 	\Tsup_{l,r}
	 	 = \max_{s \in \cZ}\1\left\{\Ssup_{l,r,s} > \tsup_{r,s}\right\} \enspace .
\end{align*}




Finally, we define the sparse test by aggregating both the Berk-Jones test and the partial norm test. For any $(l,r)\in \cG_D$,  let
$ \Tsparse_{l,r} = \Tsup_{l,r}\lor \Tbj_{l,r}$. The next proposition controls the error of this collection of tests.

\begin{proposition}\label{prop:Sparse}
	There exists a universal constant  $\kappa_{\mathrm{s}}>0$ and an event $\xSparse$ of probability larger than $1-4\delta$ such that
	(i) $\Tsparse_{l,r} = 0$ for all $(l,r) \in \cH_0\cap\cG_D$ and (ii) $\Tsparse_{\tgSp_k,\rgSp_k} = 1$ for all $\kappa_{\mathrm{s}}$-sparse high-energy change-point $\tau_k$.
	\end{proposition}

Here we introduced two different statistics for the same sparse regime $s_k \leq [p \log(n/(r_k\delta))]^{1/2} $ - the Berk-Jones statistic and the partial sums statistic - mainly to solve a problem of integrability. We made this choice for the sake of simplicity, but we could have used a single test, as presented in \cite{liu2019minimax}
$$ \Psi_{x,l,r}^{(\mathrm{LGS})} = \sum_{i=1}^{p} \left(\bC_{l,r,i}^2 - \Esper{Z | Z \geq x}\right) \1 \{\bC_{l,r,i}^2 \geq x\} \enspace ,$$
where $Z$ follows a standard normal distribution $\cN (0,1)$. This statistic leads to the same type of result as the Berk-Jones statistic when enough coordinates $\mu_k - \mu_{k-1}$ are large in absolute value, and it is comparable to the partial sums statistic when its threshold $x$ becomes low enough.

\subsection{Consequences}

To conclude this section, it suffices to observe that, for $\Csto$ in~\eqref{EnergyGauss}, any $\Csto$-high-energy change-point $\tau_k$ in the sense of
\eqref{EnergyGauss} is either a $\tfrac{\Csto}{2}$-dense or a $\tfrac{\Csto}{2}$-sparse high-energy change-point. Hence, upon defining the test $\Tg_{l,r} = \TD_{l,r}\lor \Tsparse_{l,r}$ for $(l,r)\in \cG_D$, we consider the change-point procedure $\widehat{\tau}$ defined in \Cref{algo_1}. Gathering \Cref{th:g} with \Cref{prop:dense} and \Cref{prop:Sparse}, we obtain the following.

\begin{corollary}\label{cor:gaus}
	There exists a universal constant $\Csto>0$ such that, with probability higher than $1-6\delta$, the estimator $\widehat{\tau}$ satisfies ({\bf NoSp})  and {\bf detects} all $c_0$-high-energy change-points (as defined in~\eqref{EnergyGauss}) $\tau_k$ in the sense
\[
d_{H,1}(\hat \tau, \tau_k) < \frac{r_k^*}{2}\leq \frac{r_{k}}{2}  \enspace ,
\]
where $r^*_k$ is defined in~\eqref{eq:VitesseGauss}.
\end{corollary}
If the change-points are of high-energy, that is $\cK^* = [K]$, then \Cref{cor:gaus} can be reformulated as follows:
\begin{corollary}\label{cor:gaus_hausdorff}
Assume that for all $k = 1,\dots, K$, $\tau_k$ is a $c_0$-high-energy change-point (see~\eqref{EnergyGauss}) where $\Csto$ is the same as in Corollary~\ref{cor:gaus}. Then, with probability higher than $1-6\delta$,  the estimator $\widehat{\tau}$ satisfies $\hat K = K$
and
\[
	|\hat\tau_{k} - \tau_{k}| < \frac{r_k^*}{2}\leq \frac{r_{k}}{2} \ , \quad \quad \text{for all } k=1,\ldots, K\enspace .
\]
\end{corollary}
In particular, one can respectively bound the Hausdorff and the Wasserstein losses, with probability higher than $1-6\delta$ by
\begin{align}\label{eq:HandW}
d_H(\hat \tau, \tau) \leq \max_{k = 1, \dots,K} \frac{r_k^*}{2} \quad \mbox{and} \quad d_W(\hat \tau, \tau) \leq \sum_{k = 1, \dots,K} \frac{r_k^*}{2} \enspace .
\end{align}

In \Cref{sec:lower}, we establish that the Condition~\eqref{EnergyGauss} is (up to a multiplicative constant) unimprovable and corresponds to the detection threshold for multivariate change-points.

\medskip

\Cref{cor:gaus_hausdorff} can be compared to the result of \cite{Wang2018} on multivariate change-point detection in the multiple change-point setting. Using a method based on the CUSUM statistic and assuming that there are only high-energy change-points, the authors also obtain an upper bound on the energy necessary to detect the change-points. However, this result does not adapt to $r_k, \Delta_k, s_k$, and the detection rate is suboptimal in many regimes. Writing $r = \min_{k = 1,\dots,K} r_k$, $\Delta = \min_{k = 1,\dots,K} \Delta_k$ and $s = \max_{k = 1,\dots,K} s_k$, Theorem 5 of \cite{Wang2018} requires two conditions of the type $r\Delta^2 \geq c (\tfrac{n}{r})^4\log(np)$ and $r\Delta^2 \geq c s\tfrac{n}{r} \log(np)$. This detection rate is therefore suboptimal by a polynomial factor in $n/r$ when $r$ is of smaller order than $n$, and by a logarithmic factor $\log(np)$ instead of $\log(1+ \sqrt{p}/s\log(n/r)) + \tfrac{1}{s}\log(n/r)$ when $r$ is of order $n$.
Closer to our results, \cite{chan2017multi} have introduced another bottom-up procedure in the very specific asymptotic setting $n = e^{p^\zeta}$ for $\zeta\in (0,1)$ with a fixed $K$ number of change-points. Assuming that, for each change-point, at least $s$ coordinates of $\mu_{k+1}-\mu_{k+1}$ are larger than $\zeta$ in absolute value, \cite{chan2017multi} establish that their procedure provably detects the change-points as long as
\[
    r s \zeta^2 \geq c \begin{cases}
        \sqrt{p\log (n)} &\text{ if } s \geq 0.5 \sqrt{p\log(n)}\\
        s\Log{\frac{p}{s^2}\Log{n}} &\text{ if } s \leq 0.5 \sqrt{p\log(n)} \enspace .\\
    \end{cases}
 \]
In their specific asymptotic regime and when all non-zero coordinates are of the same order, and all the change-points have a similar length $r_k$, their result is similar to ours up to the logarithmic terms. Indeed, for equispaced change-points, our logarithmic term $\log(n/r_k)= \log(K)$ is much smaller than $\log(n)$. Besides, their result does not handle the presence of low-energy change-points and does not hold beyond the asymptotic regime $n = e^{p^\zeta}$. In contrast, our condition~\eqref{EnergyGauss} for high-energy change-points entails that the detection conditions are qualitatively different for other scalings in $n$ and $p$. On the technical side, our condition~\eqref{EnergyGauss} is of $l_2$ type whereas that in~\cite{chan2017multi} is of minimal non-zero type. Recovering the tight $l_2$ conditions turns out to be much more challenging as we need to handle situations where some coordinates have different orders of magnitude. This is the main reason why we need to resort to a combination of the Berk-Jones and the partial-norm statistics.

\medskip

\noindent
{\bf Comparison to  one change-point problem}.
When one knows that $K\leq 1$ (at most one change-point), then \cite{liu2019minimax} proved that it is possible to detect $\tau_1$ if and only if $r_1\Delta_1^2 \geq c \sigma^2 \big[s_1\log(1+\tfrac{1}{s_1}\sqrt{p\log\log 8n}) + \log\log 8n\big]$. As in the univariate setting, the problem with only one change-point is simpler than for general $K\geq 2$. As for our procedure, Liu et al.~\cite{liu2019minimax} rely on  statistics based on the CUSUM - a chi square statistics in the dense case and a thresholded sum of squared coordinates in the sparse case - to detect and localize $\tau_1$. It turns out that the detection procedure of~\cite{liu2019minimax} adapts to distance $r_1 = \max(\tau_1-1, n+1-\tau_1)$  the boundary, and one could refine their result by stating that $\tau_1$ is detectable if and only if $r_1\Delta_1^2 \geq c \sigma^2 [s_1\log(1+\tfrac{1}{s_1}\sqrt{p\log\log (2n/r_1)}) + \log\log (2n/r_1)]$ which is more smaller when $r_1$ is of the  order of $n$. This refined result is in the same spirit as our bounds for mutiple change-point, but the rate is faster because one obtains  $\log\log(n/r_1)$ - instead of $\log(n/r_k)$ in our case. The reason for this faster rate is due to the relative simplicity of the problem with only one change-point. Indeed, in single change-point detection, there is no need to look for change-points at all positions and scale at the same time, since scale and positions are related. This implies that it is possible to attain faster rates than in multiple change-point detection. The comparison between single and multiple change-point detection is thoroughly done in \cite{verzelenoptimal_change_point} for univariate models.

\noindent

{\bf Computational Cost}. The cost of the tests $\TD_{l,r}$ in the dense regime is $O(rp)$. The computation of the partial norm statistic requires to sort  the coordinates $\bC_{l,r,i}$ of the CUSUM statistic, which takes $O(p(r+\log(p)))$ operations. Since only the thresholds $x \leq c\log(np/(r\delta))^{1/2}$ are needed to compute the Berk-Jones statistic, it holds that, for $\delta \geq (np)^{-c}$ with a numerical constant $c > 0$,  the computational cost of the Berk-Jones statistic is $O(p(r+\log(np)))$. Thus, for each $(l,r)$, the overall computational cost of the test $T_{l,r} = \TD_{l,r}\lor \Tsparse_{l,r}$ is $\Lambda = O(p(r+\log(np)))$, and the computational cost of the whole change-point detection procedure on the dyadic grid is $O(np\log(np))$.


\section{Multi-scale change-point detection with sub-Gaussian noise}
\label{sec:multi_scale_change_point_estimation_in_sub_gaussian_noise}


We now turn to the more general case of sub-Gaussian distributions~\cite{vershynin}.
Given a random variable $Z$, define its $\psi_2$-norm by
$	\norm{Z}_{\psi_2}
	=
	\inf
	\{
		\nx > 0,\  \bbE[\exp(Z^2/x^2)] \leq 2
	\} \enspace.
$
Given $L>0$, a mean zero real random variable  is said to be $L$-sub-Gaussian if $\norm{Z}_{\psi_2}\leq L$. This implies in particular that, for all $\nx \geq 0$, one has $\Proba{|Z| \geq \nx}\leq 2 \exp( - \nx^2/L^2)$.
Throughout this section, we assume that, for $t = 1,\dots,n$, the random vectors $\Noise_t$ are independent, have independent $L$-sub-Gaussian components $\Noise_{t,i}$, for $i=1,\ldots, p$ with variance $\sigma^2$. As in the previous section, we apply the general aggregation  procedures introduced in \Cref{sec:meta_algorithm_for_multi_scale_change_point_detection_on_a_grid}. As a consequence, our main task boils down to introducing a near-optimal multiple testing procedure indexed by a grid for detecting the existence of a change-point. Here, we shall rely on the complete grid $\cG_F=J_n=\left\{(l,r)~:~ r = 1,\dots, \floor{\frac n 2} \text{ and } l = r+1, \dots, n - r\right\}$
whose size is quadratic with respect to $n$. All the results presented in this section are still valid (but with different numerical constants) if we keep the dyadic grid $\cG_D$ as in the previous section. Here, we use the complete grid  as a proof of concept that one can rely on the full collection of possible segments without deteriorating the rates. Still, controlling the behavior of the procedure on the complete grid is technically more involved and requires chaining arguments. A detailed comparison between the complete and dyadic grids is made in \Cref{sec:discussion}.

In order to emphasize the common points with the previous section, we use the same notation $\cK^*$ for the collection of high-energy change-points\footnote{See Equation~\eqref{eq:Energysg} as the energy condition is slightly different in the sub-Gaussian setting.}, $\rg_k$ for the scales associated to the $k$-th change-points\footnote{Re-defined in Equation~\eqref{eq:rkbarsg}.}, $\Psi$ for the statistics, $T$ for the test and $x$ for the thresholds although these quantities are slightly changed to cope with the sub-Gaussian tail distribution.
We follow the same scheme  as for the Gaussian case and first introduce multi-scale tests for dense change-points before turning to sparse change-points. As in the previous section, we consider some $\delta\in (0,1)$ corresponding to the type I error probability.

\poubelle{For any $k\in \cK^*$, we redefine $r^*_k$ as the minimum radius $r$ such that an inequality similar to~\eqref{eq:Energysg} is satisfied for $r\Delta_k^2$, namely
\begin{equation}\label{eq:VitesseSg}
	r^*_k
	=
	\min
	\left\{
		r \in \bbR^+:\quad r{\Delta_k^2}
		\geq
		\Csto L^2
\left[
	\left(
		\sqrt{p \Log{\frac{n}{r\delta} }} \land
		\left(s_k\Log{\frac{ep}{s_k} }\right)
	\right)
	+
	\Log{ \frac{n}{r\delta}}
\right]\right\}.
\end{equation}
}


\subsection{Dense change-points with sub-Gaussian noise}
\label{sub:tests_in_the_dense_regime}

Recall that, for a change-point $\tau_k$, $s_k$ stands for the sparsity of the difference $\mu_{k+1}-\mu_k$.
We focus here on dense change-points for which $s_k$ is possibly as large as $p$. Given $\kappa>0$,  $\tau_k$ is a $\kappa$-dense high-energy change-point if
\begin{equation}\label{eq:Energydsg}
	r_k \Delta_k^2\geq \kappa L^2
	\left(
		\sqrt{p \Log{\frac{n}{r_k\delta}}}
		+
		\Log{\frac{n}{r_k\delta}}
	\right) \enspace.
\end{equation}
This condition is very similar to its counterpart~\eqref{eq:EnergyDenseGauss} for Gaussian noise. Still, we introduce it here for the sake of completeness.  For $k\in [K]$ such that $\tau_k$ is a $\kappa$-dense high-energy change-point, we define $\rgD_k$ as the minimum length such that an inequality similar to~\eqref{eq:Energydsg} is satisfied :
\begin{align*}
	\rgD_k
	=
	\min
	\left\{
		r \in \bbN^* :
		\quad 4r{\Delta_k^2}  \geq \kappa L^2\NoiseBoundsg<p,n,r\delta>
	\right\} \enspace.
\end{align*}
As in the Gaussian case in \Cref{sec:multi_scale_change_point_estimation_in_gaussian_noise}, $\rgD_k$  corresponds to the smallest scale such that $\tau_k$ is guaranteed to be detected. 
For any $\kappa$-dense high-energy change-point, it holds that
$4(\rgD_k-1) < r_k$.
For any positive integers $(l,r)\in \cG_F$, we consider the same  CUSUM-based statistic
$\psiD_{l,r}:= \norm{\bC_{l,r}}^2 -  p$ as for Gaussian noise.
Let $\thresh^{(\mathrm{d})}>0$ be a tuning parameter to be discussed later. To calibrate the corresponding multiple test procedures $(\TD_{l,r})$ with $(l,r)\in \cG_F$ rejecting for large values of $\psiD_{l,r}$ we introduce
\begin{align*}
\TD_{l,r} := \1\left\{ \psiD_{l,r}> \tD_r \right\}\ ;\quad \quad 	\tDsg_r
 = \thresh^{(\mathrm{d})} \frac{L^2}{\sigma^2} \NoiseBoundsg<p,n,r\delta> \enspace.
\end{align*}


\begin{proposition}\label{prop:densesg}
	There exists a numerical constant $\thresh^{(\mathrm{d})}>0$ such that the following holds for any $\kappa_{\mathrm{d}} > 32\thresh^{(\mathrm{d})}$. With probability higher than $1-\delta$, one has (i) $\TDsg_{l,r} = 0$ for all $(l,r) \in \cG_F\cap \cH_0$ and (ii) $\TDsg_{\tau_k, \rgD_k}= 1$ for all  $\kappa_{\mathrm{d}}$-dense high-energy change-points $\tau_k$.
\end{proposition}

In comparison to \Cref{prop:dense} in the previous section, there are two differences. First, we need to cope with sub-Gaussian distribution by applying the Hanson-Wright inequality. Most importantly, the grid $\cG_F$ is much larger than $\cG_D$ so that we cannot simply consider each test $T_{l,r}$ separately and simply apply a union bound as in the previous section. To handle the dependencies between the statistics $\psiD_{l,r}$, we have to apply a chaining argument. In fact, the thresholds $\tDsg_r$ are similar to their counterpart in the previous section, whereas the number $|\cG_F|$ of tests is now proportional to $n^2$. In principle, the benefit of using the full grid $\cG_F$ is that  $(\tau_k,\rgD_k)$ belongs to $\cG_F$ so that we can consider the CUSUM statistic based on a segment $[\tau_k- \rgD_k, \tau_k+ \rgD_k]$ centered around the change-point $\tau_k$. In contrast, $(\tau_k,\rgD_k)$ does not necessarily belong to the dyadic grid $\cG_D$ and we needed to consider its best approximation $(\tgD_k,\rgD_k)$. The segment $[\tgD_k-\rgD_k, \tgD_k+\rgD_k]$ is therefore not centered on $\tau_k$ and the corresponding statistic $\psiD_{\tgD_k,\rgD_k}$ is in expectation smaller than $\psiD_{\tau_k,\rgD_k}$. In summary, both the collections of dense tests $\psiD_{l,r}$ on $\cG_D$ and $\cG_F$ are able to detect
change-points whose energy is, up to some multiplicative constants, higher than $L^2[[p \log(\tfrac{n}{r_k\delta})]^{1/2}+ \log(\tfrac{n}{r_k\delta})]$.

\subsection{Sparse change-points with sub-Gaussian noise}
\label{sub:tests_in_the_sparse_regime}

Unlike in the Gaussian case, we do not know the exact distribution of the noise. As a consequence, the Berk-Jones test and more generally higher-criticism type tests cannot be applied to this setting. This is why we only rely on the partial norm statistic.
Recall that $\cZ = \left\{1, 2,2^2,\dots,2^{\floor{\log_2(p)}} \right\}$ stands for a  dyadic set of sparsities.
For $(l,r)\in \cG_F$ and $s\in \cZ$, we also recall that the partial CUSUM norm is defined as $\Ssup_{l,r,s}=  \sum_{i=1}^s\left(\bC_{l,r,(i)}\right)^2$.
Then, for any $(l,r)\in \cG_F$, the test $\Tsup_{l,r}$ rejects the null when at least one of the partial norms is large
\begin{align*}
	\tDmsg_{r,s}
	 = s+ \thresh^{(\mathrm{p})} \frac{L^2}{\sigma^2}
	   \left[
   			s \Log{\frac{2ep}{s} } + \Log{\frac{n}{r\delta}}
	   \right] ;  \quad \quad 	\Tsup_{l,r}
	 	 = \max_{s \in \cZ}\1\left\{\Ssup_{l,r,s} > \tsup_{r,s}\right\} \enspace ,
\end{align*}
where $\thresh^{(\mathrm{p})}$ is a tuning parameter in \Cref{prop:sparsesg} below.
The partial norm test alone is not able to detect sparse high-energy change-points in the sense of~\eqref{eq:EnergySparseGauss} and we need to introduce a stronger condition on the energy. Given $\kappa>0$,  a change-point $\tau_k$ is a $\kappa$-sparse high-energy change-point in the sub-Gaussian setting if $s_k \leq [p \log(\tfrac{n}{r_k\delta} )]^{1/2}$ and
\begin{equation}\label{eq:Energymsg}
	r_k \Delta_k^2
	\geq
	\kappa L^2
	\left[
		s_k \Log{\frac{ep}{s_k}} + \Log{\frac{n}{r_k\delta}}
	\right]\enspace .
\end{equation}
Both Conditions \eqref{eq:EnergySparseGauss} and \eqref{eq:Energymsg} are compared at the end of the subsection. For a  $\kappa$-sparse high-energy change-point $\tau_k$, we define its scale $\rgSp_k$ by
\begin{equation}
	\rgSp_k
	=
	\min
	\left\{
		r \in \bbN^* :
		\quad 4r{\Delta_k^2} \geq
		\kappa L^2\left[
			s_k \Log{\frac{ep}{s_k} } + \Log{\frac{n}{r\delta}}
		\right]
	\right\} \enspace.
\end{equation}
For any $\kappa$-sparse high-energy change-point, it holds that $4(\rgSp_k - 1) \leq r_k$.

\medskip


\begin{proposition}\label{prop:sparsesg}
	There exists a numerical constant $\thresh^{(\mathrm{p})}>0$ such that the following holds for any $\kappa_{\mathrm{s}} > 32\thresh^{(\mathrm{p})}$. With probability higher than $1-\delta$, one has (i) $\TDmsg_{l,r} = 0$ for all $(l,r) \in \cG_F\cap \cH_0$ and (ii) $\TDmsg_{\tau_k, \rgSp_k}= 1$ for all  $\kappa_{\mathrm{s}}$-sparse high-energy change-point $\tau_k$ in the sense of \eqref{eq:Energymsg}.
\end{proposition}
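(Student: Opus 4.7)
The plan is to mirror exactly the two-step template used in the Gaussian analysis of \Cref{prop:Sparse}: construct one high-probability event of probability at least $1-\delta$ on which the partial CUSUM norm is controlled uniformly over the grid $\cG_F$, and then derive both \textbf{No False Positives} and \textbf{Detection} deterministically on that event. The chaining ingredient needed here is the same as the one invoked in \Cref{prop:densesg}, only adapted to the partial-norm statistic.

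For \textbf{No False Positives}, fix $(l,r)\in\cG_F\cap \cH_0$. Each coordinate $\bC_{l,r,i}$ is then centered, has unit variance, is $(L/\sigma)$-sub-Gaussian, and the coordinates are independent across $i$. For any fixed $S\subset[p]$ with $|S|=s$, $\sum_{i\in S}(\bC_{l,r,i}^2-1)$ is a sum of $s$ independent sub-exponentials, and Bernstein's inequality gives
\begin{equation*}
\P\Bigl[\sum_{i\in S}\bC_{l,r,i}^2>s+c\tfrac{L^2}{\sigma^2}(s+u)\Bigr]\leq e^{-u}.
\end{equation*}
Since $\Ssup_{l,r,s}=\sup_{|S|=s}\sum_{i\in S}\bC_{l,r,i}^2$, a union bound over the $\binom{p}{s}\leq(ep/s)^s$ subsets absorbs the $s\log(ep/s)$ part of the threshold, yielding $\P[\Ssup_{l,r,s}>s+c(L^2/\sigma^2)(s\log(ep/s)+u)]\leq e^{-u}$ for a suitable numerical $c$. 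A naive union bound over $(l,r)\in\cG_F$ and $s\in\cZ$ fails because $|\cG_F|\asymp n^2$ and the threshold only provides an additive $\log(n/(r\delta))$ margin. The plan is therefore to chain: control $\Ssup_{l,r,s}$ on the dyadic skeleton $\cG_D\subset\cG_F$ by direct union bound, then bound the increments $\Ssup_{l,r,s}-\Ssup_{l',r',s}$ for $(l',r')$ a dyadic neighbour of $(l,r)$, exploiting that $\bC_{l,r,i}-\bC_{l',r',i}$ is itself $(L/\sigma)$-sub-Gaussian with a variance proxy proportional to $(|l-l'|+|r-r'|)/r$. Summing the chained deviations and picking $\thresh^{(p)}$ large enough makes the uniform bound $\Ssup_{l,r,s}\leq \tDmsg_{r,s}$ hold simultaneously for all $(l,r)\in\cG_F\cap \cH_0$ and all $s\in\cZ$ on an event of probability at least $1-\delta$.

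For \textbf{Sparse high-energy change-point Detection}, fix such a $\tau_k$. Since $\cG_F$ is complete, $(\tau_k,\rgSp_k)\in\cG_F$, and because $4(\rgSp_k-1)\leq r_k$ there is no other change-point in $[\tau_k-\rgSp_k,\tau_k+\rgSp_k)$. Hence $\E[\bC_{\tau_k,\rgSp_k}]=\sqrt{\rgSp_k/(2\sigma^2)}\,(\mu_k-\mu_{k-1})$, supported on exactly $s_k$ coordinates. Pick $s\in \cZ$ with $s_k\leq s\leq 2s_k$ (possible because $s_k\leq\sqrt{p\log(n/(r_k\delta))}\leq p$ and $\cZ$ is dyadic). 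Denoting by $S^\star$ the support of $\mu_k-\mu_{k-1}$, one has $\Ssup_{\tau_k,\rgSp_k,s}\geq\|P_{S^\star}\bC_{\tau_k,\rgSp_k}\|^2$, and on the same event one checks, again via Bernstein/Hanson–Wright applied to the random vector $\bC-\E[\bC]$ restricted to $S^\star$, that
\begin{equation*}
\Ssup_{\tau_k,\rgSp_k,s}\geq \tfrac{1}{2}\|\E[\bC_{\tau_k,\rgSp_k}]\|^2+s-c\tfrac{L^2}{\sigma^2}\bigl(s\log(ep/s)+\log(n/(\rgSp_k\delta))\bigr).
\end{equation*}
Since $\|\E[\bC_{\tau_k,\rgSp_k}]\|^2=\rgSp_k\Delta_k^2/(2\sigma^2)$, invoking the definition of $\rgSp_k$ together with $\kappa_\mathrm{s}>32\thresh^{(p)}$ makes this lower bound strictly exceed $\tDmsg_{\rgSp_k,s}$, so the partial-norm test fires at $(\tau_k,\rgSp_k)$ and $\TDmsg_{\tau_k,\rgSp_k}=1$.

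The main obstacle is the chaining step: unlike the quadratic form $\psiD_{l,r}$ of \Cref{prop:densesg}, the statistic $\Ssup_{l,r,s}$ is itself a supremum over $\binom{p}{s}$ subsets, so one must simultaneously handle the combinatorial supremum over $S$ and the geometric supremum over $(l,r)\in\cG_F$. The cleanest way is to view $\{\sum_{i\in S}\bC_{l,r,i}^2:(l,r)\in \cG_F,|S|=s\}$ as a sub-Gaussian chaos on the product index set equipped with its intrinsic pseudometric and apply generic chaining; the delicate calibration is to choose the chaining grid and the metric entropy bounds so that the resulting tail exactly matches the additive form $s\log(2ep/s)+\log(n/(r\delta))$ that appears in $\tDmsg_{r,s}$.
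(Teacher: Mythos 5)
Your proposal is correct and follows essentially the same route as the paper: a single high-probability event controlling the partial norms uniformly over $\cG_F$, obtained by combining a union bound over the $\binom{p}{s}\leq (ep/s)^s$ supports (which absorbs the $s\log(2ep/s)$ term) with a Hanson--Wright chaining argument over the quadratic-size complete grid, followed by a deterministic verification of the two properties with the margin $\kappa_{\mathrm{s}}>32\thresh^{(p)}$. Two remarks on the points where you diverge. First, the ``main obstacle'' you identify --- handling the combinatorial supremum over $S$ and the geometric supremum over $(l,r)$ simultaneously via generic chaining on the product index set --- is resolved much more cheaply in the paper: for each \emph{fixed} support $S$ of size $s$ one runs the exact chaining of \Cref{lem:purenoise} restricted to the coordinates in $S$ (\Cref{lem:purenoiseS}) at confidence level $\delta_s=\delta/(2^s\binom{p}{s})$, and only then takes a union bound over $S$; since $\sup_S$ of per-$S$ events dominates the supremum statistic $\Ssup_{l,r,s}$, no genuine product chaining is required, and the $\log(1/\delta_s)$ inflation is exactly what produces the $s\log(2ep/s)$ term in $\tDmsg_{r,s}$. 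Second, for detection the paper does not use the $(a+b)^2\geq \tfrac12 a^2-b^2$ device you propose (that is what it does in the Gaussian case, \Cref{lem:cp2}); instead it decomposes the restricted statistic into signal, cross term and pure noise, and controls the cross term at the points $(\tau_k,\rgSp_k)$ by a separate sub-Gaussian deviation bound (\Cref{lem:crossedtermsm}), absorbing it into a quarter of the signal. Your variant is equally valid and marginally more economical since it reuses the pure-noise event on the support $S^\star$; just note the sign in your displayed lower bound should be $-s$ rather than $+s$ (the noise restricted to $S^\star$ contributes $s_k$ plus deviations, which is subtracted), a slip that does not affect the conclusion because the threshold $\tDmsg_{r,s}$ also carries an additive $s$ that is dominated by the $s\log(2ep/s)$ term up to the constant $\thresh^{(p)}$.
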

As for \Cref{prop:densesg}, the proof relies on a careful analysis of the joint distributions of the statistics $\Ssup_{l,r,s}$ to handle the multiplicity of $\cG_F$.

\subsection{Consequences}\label{subsection:consequences}

Let $\Csto>0$ be some constant that we will discuss later. A change-point $\tau_k$ is then said to be a $\Csto$-high-energy change-points --in the sub-Gaussian setting-- if
\beq\label{eq:Energysg}
 r_k \Delta_k^2
\geq
\Csto L^2
\left[
	\left(
		\sqrt{p \Log{\frac{n}{r_k\delta} }} \land
		\left(s_k\Log{\frac{ep}{s_k} }\right)
	\right)
	+
	\Log{ \frac{n}{r_k\delta}}
\right] \enspace .
\eeq
We here re-introduce $\cK^*\subset [K]$  as the subset of indices such that $\tau_k$ satisfies~\eqref{eq:Energysg}.

We gather both tests by considering, for any $(l,r)\in \cG_F$, the test $\Tg_{l,r} = \TDsg_{l,r}\lor \TDmsg_{l,r}$
with tuning parameters $\thresh^{(\mathrm{d})}$ and $\thresh^{(\mathrm{p})}$ as in Propositions~\ref{prop:densesg} and \ref{prop:sparsesg}. Consider any $\Csto > 32(\thresh^{(\mathrm{d})}\lor \thresh^{(\mathrm{p})})$ and any $\Csto$-high-energy change-point $\tau_k$, which is either a $\Csto$-sparse or a $\Csto$-dense high-energy change-point. Defining
\begin{equation}\label{eq:rkbarsg}
	\rg_k = \rgD_k \land \rgSp_k,
\end{equation}
we straightforwardly derive from \Cref{prop:densesg} and \Cref{prop:sparsesg} the following result.
\begin{corollary}\label{cor:test_full_sg}
	There exists two numerical constants $\thresh^{(\mathrm{p})}>0$ and $\thresh^{(\mathrm{d})}>0$ such that the following holds. With probability higher than $1-\delta$, it holds that (i) $\Tg_{l,r} = 0$ for all $(l,r) \in \cG_F\cap \cH_0$ and (ii) $\Tg_{\tau_k, \rg_k}= 1$ for any  $\Csto$-high-energy change-point $\tau_k$ in the sense of \eqref{eq:Energysg}.
\end{corollary}

Then, it suffices to combine this multiple testing procedure  with \Cref{algo_1} to get the  change-point procedure $\widehat{\tau}$. Since, for a high-energy change-point in the sense of \eqref{eq:Energysg}, we have $4(\rg_k-1) <  r_k$, we are in position to apply \Cref{th:g}.

\begin{corollary}\label{cor:subgaus}
	There exist two numerical constant $\thresh^{(\mathrm{p})}>0$ and $\thresh^{(\mathrm{d})}>0$ such that the following holds. With probability higher than $1-\delta$, the estimator	$\widehat{\tau}$ satisfies ({\bf NoSp}) and {\bf detects}  $\Csto$-high-energy change-point $\tau_k$ (as defined in\eqref{eq:Energysg}), that is
\[
	d_{H,1}(\widehat{\tau},  \tau_{k})\leq \rg_k - 1  \leq \frac{r_{k}}{4}\enspace ,
\]
where  $\rg_k$ is defined in \eqref{eq:rkbarsg}.
\end{corollary}
In the case where all change-points are $\Csto$-high-energy change-points in the sense of \eqref{eq:Energysg}, all of them are detected,  and a result similar to \Cref{cor:gaus_hausdorff} holds here, replacing $r_k^*/2$ by $\rg_k - 1$. Also, both the Hausdorff distance and the Wasserstein distance, can be bounded as in Equation~\eqref{eq:HandW} if we  replace $r_k^*/2$ by $\rg_k - 1$.

As already stated, we could have obtained a similar result (but with different constants) using the dyadic grid $\cG_D$ instead of $\cG_F$.
To conclude this section, let us compare the conditions \eqref{eq:Energysg}
and \eqref{EnergyGauss} for high-energy. Define $$\psi^{(sg)}_{n,r,s}=
		\sqrt{p \lo} \land
		\left(s\Log{\frac{ep}{s} }
	\right)
	+
	\lo\enspace ,$$
	where we recall that $\lo = \Log{ \frac{n}{r\delta}}$.
If $\lo\geq  p/2$, then $\psi^{(sg)}_{n,r,s}\asymp \lo$. In low dimension, the energy threshold for multivariate change-point detection is the same as in the univariate setting, see \cite{verzelenoptimal_change_point}. If $\lo\leq p/2$, then
\begin{equation*}
\psi^{(sg)}_{n,r,s}\asymp \left\{
\begin{array}{cc}
	\lo &\text{ if } s \leq \frac{\lo}{\log(p)- \log\left(\lo\right)} \\
	s \Log{e\frac{p}{s} }& \text{ if }\frac{\lo}{\log(p)- \log\left(\lo\right)} < s   < \frac{\sqrt{p\lo}}{\log(p)-\log(\lo)}
	\\
	\sqrt{p\lo} &\text{ if } s \geq \frac{\sqrt{p\lo}}{\log(p)-\log(\lo)}
\end{array}
\right.
\end{equation*}

As a consequence, $\psi^{(sg)}_{n,r,s}$ and $\psi^{(g)}_{n,r,s}$ are of the same order of magnitude for all $s$ when $\lo\geq p/2$. When  $\log(n/r\delta)< p$, they are also of the same order of magnitude except when $s$ is close but smaller than $\sqrt{p\lo}$, for which the ratio $\psi^{(sg)}_{n,r,s}/\psi^{(g)}_{n,r,s}$  between these two quantities can be as large as $\log(p)- \log(\lo)$. This gap corresponds to the regime where the test based on the Berk-Jones statistic defined in Equation~\eqref{eq:def_berk_jones}, used in the Gaussian case, outperforms the test based on the partial CUSUM norm statistic defined in Equation~\eqref{eq:partialCUSUMnorm}.

\medskip

In the definitions of the tests, the tuning constants $\thresh^{(\mathrm p)}$ and $\thresh^{(\mathrm d)}$ are left implicit, although one can find suitable values by following the proofs of Propositions~\ref{prop:densesg} and~\ref{prop:sparsesg}. In practice, the practitioner can calibrate them by a Monte-Carlo method by simulating a Gaussian multivariate times series without any change-points. Then, $\thresh^{(\mathrm p)}$ and
$\thresh^{(\mathrm d)}$ are chosen so that the Family-wise error rate (FWER) of the two collections $(\TD_{l,r})$ and  $\Tsup_{l,r}$ is equal to $\delta$.

\noindent
 {\bf Computational Cost}.
The computational cost of the statistic $T_{l,r} = \TD_{l,r} \lor \Tsup_{l,r}$ is $O(p(r+\log(p)))$. Thus, a naive computation of all the tests $T_{l,r}$ for $(l,r)$ in the complete grid $\cG_F$ requires $O(p\log(p)\sum_{(l,r) \in \cG_F}r) = O(pn(n^2+\log(p)))$ operations. Nevertheless, using the fact that $\sum_{i = l+1}^{l+r}Y_i = (\sum_{i = l}^{l+r-1}Y_i) + Y_{l+r} - Y_l$, it is possible to compute all the tests at scale $r$ with cost $O(np\log(p))$. Since there are $n$ possible scales $r$ on the complete grid, the whole procedure  cost is $O(n^2 p\log(p))$. Using a grid $\cG = \{ (l,r) \in \cG_F ~:~ r \in \cR \}$ that contains dyadic scales and all possible locations $l$ for each scale, the whole change-point detection would then require only $O(np \log(n)\log(p))$ computations, since there are only $\log(n)$ possible scales $r$ for such grids.


\def\NoiseBound<#1,#2,#3>{\left( \sqrt{#1 \Log{\frac{#2}{#3}}} + \Log{ \frac{#2}{#3}} \right)}
\def\NoiseBoundSp<#1,#2,#3,#4>{\left( #4\Log{\frac{e #1\Log{\frac{e #2}{#3}}}{#4 ^2}} + \Log{ \frac{e #2}{#3}} \right)}
\def\proscal<#1,#2>{\langle #1,#2\rangle}

\section{Minimax lower bound}\label{sec:lower}

In this section, we write for any $\Theta \in \mathbb R^{p\times n}$, the distribution  of the time series $Y=(y_1,\ldots, y_n)$ in the model
\eqref{eq:model_multivariate} with Gaussian noise  $\Noise_t \sim \mathcal N(0, \sigma^2\mathbf I_p)$. In  \Cref{sec:multi_scale_change_point_estimation_in_gaussian_noise}, we have established that any change-point satisfying the condition \eqref{EnergyGauss}, that is
    \begin{align*}
        r_k \Delta_k^2
	\geq
	\Csto\sigma^2\left[s_k \Log{1 + \frac{\sqrt{p}}{s_k} \sqrt{\Log{\frac{n}{r_k \delta }}}}
	+ \Log{\frac{ n}{r_k \delta }}\right] \enspace,
    \end{align*}
is detected by our change-point procedure.  We now show that this energy condition is unimprovable from a minimax point of view.
More precisely, let us define, for any $u>0$, the class $\Pginf$ of mean parameters $\Theta$ with arbitrary $K\geq 0$ number of change points and such that any change-point $\tau_k$ for $1\leq k\leq K$ satisfies
\begin{equation}\label{eq:LBen}
	r_k \Delta_k^2 \geq \frac{1}{2}\sigma^2\left[s_k \Log{1 + u\frac{\sqrt{p}}{s_k} \sqrt{\Log{ \frac{n}{r_k }}}} +  u\Log{\frac{n}{r_k }}\right]\enspace.
\end{equation}
For $u$  small enough, it turns out no change-point estimator is able to detect all change-points without estimating any spurious change-point with high probability on the full class $\Pginf$. Still, using this large class provides somewhat pessimistic bounds. For instance, the most challenging distributions  in
$\Pginf$ for the purpose of change-point detection satisfy $s_k=p$ and $r_k=1$ (very close change-points). As a consequence, relying on the full collection $\Pginf$
turns too pessimistic. To establish that our bounds are adaptive with respect to the sparsity $s_k$ and the length $r_k$, we define, for any positive integers $1\leq r\leq  \lfloor n/2\rfloor $ and any $1\leq s\leq p$ the collection
$$ \Pginfrs = \{ \Theta \in \Pginf ~:~  \min_k r_k \geq r \mbox{ and } \max_k s_k \leq s \}\enspace .$$
By convention, constant means $\Theta$ with no change-points ($K=0$) also belong to $\Pginfrs$. In the class $\Pginfrs$, all change-points have a sparsity at most $s$ and a length at least $r$. Hence, $\Pginfrs$ becomes larger when $s$ increases or when $r$ increases. 

\begin{theorem}\label{th:borneinf}
Fix any $u\in (0,1/8)$.  For any $\sigma>0$, $n\geq 2$, $p\geq 1$, any length $1\leq r\leq n/4$, and any sparsity $ 1\leq s\leq p$, we have
$$ \underset{\hat \tau}{\inf}\underset{\Theta \in \Pginfrs}{\sup}\Prob_\Theta( \hat K \neq K) \geq \frac{1}{4}\enspace , $$
where  the infimum is taken over all estimators $\hat \tau$ of the change-point vector $\tau$ and and $\hat{K}= |\hat{\tau}|$.
\end{theorem}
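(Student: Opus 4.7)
The plan is to apply Le Cam's two-point method combined with an Ingster-type mixture construction. Concretely, I will exhibit two distributions $\mathbb{P}_0$ and $\mathbb{P}_1$ over time series from the model~\eqref{eq:model_multivariate}, both whose realizations $\Theta$ lie in $\bar{\cP}(r,s)$, such that under $\mathbb{P}_0$ one has $K = 0$ change-points, under $\mathbb{P}_1$ one has $K = 2$, and $\mathrm{TV}(\mathbb{P}_0, \mathbb{P}_1) \leq 1/2$. For any estimator $\hat{\tau}$ with cardinality $\hat{K}$, applying Le Cam's inequality to the test $\psi := \mathbf{1}\{\hat{K} = 2\}$ then gives
\[
\sup_{\Theta \in \bar{\cP}(r,s)} \mathbb{P}_\Theta(\hat K \neq K) \;\geq\; \max\bigl\{\mathbb{P}_0(\hat K \neq 0),\ \mathbb{P}_1(\hat K \neq 2)\bigr\} \;\geq\; \tfrac{1}{2}\bigl(1 - \mathrm{TV}(\mathbb{P}_0, \mathbb{P}_1)\bigr) \;\geq\; \tfrac{1}{4}.
\]

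Under $\mathbb{P}_0$ I take $\Theta = 0$. For $\mathbb{P}_1$, I introduce $M = \lfloor (n-2r)/r\rfloor$ equally spaced positions $\tau_j = r+1+(j-1)r$ inside $[r+1, n-2r+1]$, which satisfies $M \geq 2$ since $r \leq n/4$. I pick $j$ uniformly in $[M]$ and independently $S$ uniformly in $\binom{[p]}{s'}$ with $s' := s \wedge \lceil \sqrt{p \log(n/r)}\,\rceil$, and set $\Theta^{(\tau_j, v_S)}$ to be the plateau $\theta_t = v_S := (\Delta/\sqrt{s'})\,\mathbf{1}_S$ for $t \in [\tau_j, \tau_j + r)$ and $\theta_t = 0$ otherwise, with
\[
\Delta^2 \;=\; \tfrac{\sigma^2}{2r}\Bigl[\,s'\log\!\Bigl(1 + u\tfrac{\sqrt p}{s'}\sqrt{\log(n/r)}\Bigr) + u\log(n/r)\Bigr].
\]
Each realization has exactly two change-points at $\tau_j$ and $\tau_j+r$, both with length $r_k = r$, height $\Delta_k = \Delta$, and sparsity $s_k = s' \leq s$; by construction, the energy inequality~\eqref{eq:LBen} holds with equality at each of them, so $\Theta^{(\tau_j, v_S)} \in \bar{\cP}(r,s)$.

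To control $\mathrm{TV}(\mathbb{P}_0, \mathbb{P}_1)$ I use $\mathrm{TV} \leq \sqrt{\chi^2/2}$ and compute $\chi^2$ via the Gaussian second-moment identity. Since the plateau supports are pairwise disjoint whenever $\tau_j \neq \tau_{j'}$, the inner product $\langle \Theta^{(\tau_j,v_S)}, \Theta^{(\tau_{j'},v_{S'})}\rangle$ vanishes off the diagonal, which yields
\[
\chi^2(\mathbb{P}_1, \mathbb{P}_0) \;=\; \tfrac{1}{M}\Bigl(\mathbb{E}_{S,S'}\bigl[\exp(\lambda\, |S \cap S'|)\bigr] - 1\Bigr), \qquad \lambda := \tfrac{r\Delta^2}{s'\sigma^2},
\]
for independent uniform $s'$-subsets $S,S'$ of $[p]$. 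A classical stochastic domination argument gives $\mathbb{E}[\exp(\lambda|S\cap S'|)] \leq \exp\bigl((s')^2(e^\lambda-1)/p\bigr)$; substituting the value of $\Delta^2$ and splitting into the subcases $s' = 1$, $1 < s' < \lceil\sqrt{p\log(n/r)}\,\rceil$, and $s' = \lceil\sqrt{p\log(n/r)}\,\rceil$ shows that the exponent $(s')^2(e^\lambda-1)/p$ stays below $\log M + O(1)$ as soon as $u \leq 1/8$, so that $\chi^2(\mathbb{P}_1,\mathbb{P}_0) \leq 1/2$, and the theorem follows.

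The hardest step is the uniform control of $(s')^2(e^\lambda-1)/p$ across all regimes. Near the transition $s' \asymp \sqrt{p\log(n/r)}$, the prefactor $1/M$ must be balanced tightly against $e^\lambda$; and in the very sparse regime $s'=1$ the hypergeometric bound is too crude, so one has to revert to the exact MGF $1 - 1/p + e^\lambda/p$ and exploit the factor $1/p$ (or, when $p=1$, exploit that only the positional term contributes). The constant $u \leq 1/8$ is just enough slack for this joint control to succeed in all regimes.
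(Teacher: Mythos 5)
Your proposal is correct and follows essentially the same route as the paper: a Le Cam two-point argument against $\Theta=0$, an alternative built as a uniform mixture over disjoint length-$r$ plateaus with uniformly random $s'$-sparse supports, the $\chi^2$ identity with the $1/M$ prefactor coming from disjointness of the temporal blocks, the hypergeometric-to-binomial MGF domination (Hoeffding), and a case analysis over the sparse/intermediate/dense regimes. The only cosmetic differences are that you cap the sparsity at $s'=s\wedge\lceil\sqrt{p\log(n/r)}\rceil$ with a single normalization $\Delta/\sqrt{s'}$ where the paper runs three explicitly separate constructions (with cap $\lceil u\sqrt{p\log(n/r)}\rceil$ and normalization $\Delta/\sqrt{p}$ in the dense case), and that you restrict the plateau positions to force exactly $K=2$; neither changes the substance of the argument.
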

Thus, in the Gaussian setting, if all the change-points have a high-energy in the sense of \Cref{EnergyGauss} but with a smaller multiplicative constant factor, no change-point estimator can consistently estimate the true number of change-points. The next corollary restates this negative results in the same lines as  \Cref{cor:gaus_hausdorff}.
\begin{corollary}\label{cor:LB} Fix any $u\in (0,1/8)$.
	For any $\sigma>0$, $n\geq 2$, $p>1$, any length $1\leq r\leq n/4$, any sparsity $ 1\leq s\leq p$, and any estimator $\hat \tau$, there exists some $\Theta \in \Pginfrs$ such that with $\mathbb P_{\Theta}$-probability larger than $1/4$, at least one of the two following properties is satisfied
	\begin{itemize}
		\item $\hat \tau$ contains at least one {\bf spurious} change-point
		\item at least a change-point $\tau_k$ with $1\leq k \leq K$ is not {\bf detected}, i.e.~there is no change-point estimated in the interval $[ (\tau_{k-1}+ \tau_{k})/2,(\tau_{k} +\tau_{k+1})/2 ]$.
	\end{itemize}
\end{corollary}
This corollary is to be compared to \Cref{cor:gaus_hausdorff} - indeed, the energy condition in Equation~\eqref{eq:LBen} differs from Equation~\eqref{EnergyGauss} only by a numerical multiplicative constant. As a consequence, the energy  condition~\eqref{eq:LBen} is minimal for detection by a change-point estimator that achieves ({\bf NoSp}).

%
\section{Application to other change-point problems}\label{sec:othermodels}

In this section, we apply the general methodology of Section~\ref{sec:meta_algorithm_for_multi_scale_change_point_detection_on_a_grid} to two other problems, namely detection of covariance  and nonparametric change-points. This allows us to obtain the first tight minimax detection conditions for these problems.

\subsection{Covariance change-point detection}\label{ss:cov}

Following Wang et al.~\cite{wang2017optimal},  we consider the covariance change-point model where the covariance matrices $\Sigma_t$ of the centered random vectors $y_t\in \mathbb{R}^p$ are piece-wise constant. Then, the goal is to estimate the times $0<\tau_1<\ldots<\tau_{K} <\tau_{K+1}=n+1$ such that $\Sigma_t$ is varying.  See~\cite{wang2017optimal} for motivations. As in that work, we assume that the random vectors $y_t$ are independent and are sub-Gaussian with a uniformly bounded  Orlicz norm, that is
$\max_{t=1,\ldots, n}\|y_t\|_{\psi_2}\leq B$ for some known fixed $B$. The Orlicz norm of a random vector $y$ is the supremum of the Orlicz norm of any uni-dimensional projection of $y$  -- see e.g.~\cite{vershynin}. If the $y_t$'s follow a normal distribution, this amounts to assuming that $\max_{t=1,\ldots, n}\|\Sigma_t\|_{op}\leq 2B^2$  where $\|.\|_{op}$ is for the operator norm. The purpose of Wang et al.
was to detect small changes in operator norm, that is detecting instants $\tau_{k}$  such that $\Sigma_{\tau_{k}}\neq \Sigma_{\tau_{k}-1}$ with $\|\Sigma_{\tau_{k}}-\Sigma_{\tau_{k-1}}\|_{op}$ possibly small. Apart from the operator norm, other norms have also been considered e.g. in~\cite{dette2020estimating}. Here, we focus on the operator norm as in~\cite{wang2017optimal}.

Recalling the generic procedure introduced in Section~\ref{sec:meta_algorithm_for_multi_scale_change_point_detection_on_a_grid}, we consider the dyadic grid $\cG_D$ and some $\delta\in (0,1)$. For any $(l,r)\in \mathcal{G}$, we respectively write $\widehat{\Sigma}_{l,-r}$ and $\widehat{\Sigma}_{l,r}$ for the empirical covariance matrices
\[
\widehat{\Sigma}_{l,-r}= r^{-1}\sum_{t=l-r}^{l-1}y_ty_t^{T}\ ; \quad \quad    \widehat{\Sigma}_{l,r}= r^{-1}\sum_{t=l}^{l+r-1}y_ty_t^{T} \enspace .
\]
Then, we consider the test $T_{l,r}$ rejecting for large  values of $\|\widehat{\Sigma}_{l,r}- \widehat{\Sigma}_{l,-r}\|_{op}$.
\beq\label{eq:test_covariance}
T_{l,r}= \1\left\{\|\widehat{\Sigma}_{l,r}- \widehat{\Sigma}_{l,-r}\|_{op}\geq c_0 B^2 \left[\sqrt{\frac{p}{r}} + \frac{p}{r}+ \sqrt{\frac{\log(\frac{2n}{\delta r})}{r}} + \frac{\log(\frac{2n}{\delta r})}{r}\right]\right\}\ ,
\eeq
where the numerical tuning constant $c_0$ is set in the proof of the following proposition. Relying on concentration bounds~\cite{koltchinskii2017concentration} for the empirical covariance matrix of sub-Gaussian random vectors, we easily  prove that the FWER of the multiple testing procedure $(T_{l,r})$ with $(l,r)\in \mathcal{G}_D$ is small. Then, we can analyze the type II error probability and plug it into the generic result (Theorem~\ref{th:g}) to control the behavior of the change-point estimator $\widehat{\tau}$. This leads us to the following result.
In the sequel, a change-point $\tau_k$ is said to have a high-energy if
\beq\label{eq:high_energy_covariance}
 r_k \|\Sigma_{\tau_{k}}- \Sigma_{\tau_{k-1}}\|^2_{op}\geq c_1 B^4 \left[\left(p+ \log\left(\frac{2n}{r_k\delta}\right)\right) \wedge  r_k \right]\ ,
\eeq
where the numerical constant $c_1$ is introduced in the proof of the following proposition. We recall that, by definition of the model, we have $\|\Sigma_{\tau_{k}}- \Sigma_{\tau_{k-1}}\|_{op} \leq 4B^2$.

\begin{proposition}\label{prop:covariance}
 There exist positive numerical constants $c_0$, $c_1$, and $c_2$ such that the following holds for any $B>0$ and any sequence of independent centered random vectors ($y_t$) satisfying $\max_{t}\|y_t\|_{\psi_2}\leq B$. With probability higher than $1-\delta$, the change-point estimator $\widehat{\tau}$ satisfies ({\bf NoSp}) and {\bf detects} all high-energy change-points in the sense of~\eqref{eq:high_energy_covariance}.  Besides, any such high-energy change-point $\tau_k$  satisfies
\beq\label{eq:error_localization}
d_{H,1}(\widehat{\tau},\tau_k)\leq c_2 B^4\frac{p+ \log\left(2\delta^{-1}B^{-4}n\|\Sigma_{\tau_{k}}- \Sigma_{\tau_{k-1}}\|^2_{op}\right)}{\|\Sigma_{\tau_{k}}- \Sigma_{\tau_{k-1}}\|^2_{op}} \leq \frac{r_k}{4}\ ,
\eeq
under the same event of probability than $1-\delta$. 
\end{proposition}

Let us compare our condition~\eqref{eq:high_energy_covariance} for detection with Theorem 2 in  Wang et al.~\cite{wang2017optimal}. The authors assume that all the change-points satisfy
\[
 \min_k r_k  \min_k \|\Sigma_{\tau_{k}}- \Sigma_{\tau_{k-1}}\|^2_{op}\geq c'_1 B^4 p\log(n) \enspace .
\]
In addition to the fact that we allow some change-points to have an arbitrarily low energy, our requirement for detection scales like $\sqrt{p}+\sqrt{\log(n/r_k)}$ instead of
$\sqrt{p\log(n)}$.

The next proposition establishes that the latter condition is minimal. By homogeneity, we can only consider the case where $B=3/2$.
We focus our attention on Gaussian distributions so that the distribution of the sequence $(y_1,\ldots, y_n)$ is uniquely defined by the sequence $(\Sigma_1,\ldots, \Sigma_n)$ of covariance matrices. Given an integer $1\leq r\leq n/4$ and $\zeta \in (0,1/\sqrt{2})$, we define $\bar{\cP}(r,\zeta)$ the collection of sequences $\eta=(\Sigma_1,\ldots,\Sigma_n)$ of covariance matrices that satisfy either $\Sigma_t=I_p$ or $\|\Sigma_t\|_{op}= 1+\zeta$. Besides, the corresponding change-points ($\tau_1,\ldots, \tau_K)$ of $\eta$ must  satisfy $\min_{k}r_k\geq r$ and $\min_{k}\|\Sigma_{\tau_k}-\Sigma_{\tau_{k-1}}\|_{op}\geq \zeta$. For $\eta \in \bar{\cP}(r,\zeta)$, we write $\P_{\eta}$ for the corresponding distribution of $(y_1,\ldots, y_n)$.

\begin{proposition}\label{prp:lower:covariance}
There exists a  positive numerical constant $c$ such that, for any $n$, $p$ and any length $1\leq r\leq n/4$ the following holds. Provided that
$r\zeta^2 \leq c(p+ \log(n/r))\wedge \frac{r}{2}$, we have
\[
\underset{\hat \tau}{\inf}\underset{\eta \in \bar{\cP}(r,\zeta)}{\sup}\Prob_\eta( \hat K \neq K) \geq \frac{1}{4}.
\]
\end{proposition}

As a consequence, our procedure $\widehat{\tau}$ achieves the minimal separation condition~\eqref{eq:high_energy_covariance} for change-point detection.
In their work,~\cite{wang2017optimal} obtain faster localization errors than~\eqref{eq:error_localization} to the price of stronger separation conditions. Our focus in this work is to provide optimal detection conditions and we did not try to optimize~\eqref{eq:high_energy_covariance}.

\subsection{Univariate nonparametric change-point  detection}

We now turn to the univariate nonparametric change-point model considered in~\cite{padilla2019optimal1}. Let  $m\geq 1$ be any positive integer.  At each time $t=1,\ldots, n$, the random vector $y_t$ is an $m$-sample of a univariate distribution with cumulative distribution function $F_t$. Then, we aim at detecting a vector $\tau=(\tau_1,\ldots,\tau_K)$ of change-points such that $F_{\tau_k}\neq F_{\tau_{k-1}}$. As in~\cite{padilla2019optimal1}, we quantify the distance between two distributions by the Kolmogorov distance $\|F_1- F_2\|_{\infty}= \sup_{z\in \mathbb{R}}|F_1(z)-F_2(z)|$.

As in the previous subsection, we build a procedure $\widehat{\tau}$ with our generic algorithm on the dyadic grid. Regarding the collection of tests $(T_{l,r})$, we consider two-sample Kolmogorov-Smirnov tests. More precisely, we denote $\widehat{F}_{t}$ the empirical distribution function associated with the sample $y_t$ and we define the test
\[
T_{l,r}=\1\left\{\left\|r^{-1}\left( \sum_{t=l}^{l+r-1}\widehat{F}_{t}- \sum_{t=l-r}^{l-1}\widehat{F}_{t}\right)\right\|_{\infty}\geq \sqrt{2\frac{\log(4n/(\delta r))}{mr}} \right\}\enspace .
\]
In the following, a change-point $\tau_k$ is said to have a high-energy if
\beq\label{eq:high_energy_non_param}
 r_k  \|F_{\tau_{k}}- F_{\tau_{k-1}}\|^2_{\infty}\geq \frac{c_1}{m}
\log\left(\frac{n}{r_k\delta}\right)\ ,
\eeq
where the numerical constant $c_1$ is introduced in the proof of the next proposition. As in Subsection~\ref{ss:cov}, it is straightforward to prove, based on Dvoretzky–Kiefer–Wolfowitz inequality, that the FWER of the multiple testing procedures $(T_{l,r})$ with $(l,r)\in \mathcal{G}_D$ is small. Then, we analyze the type II error probability of this test and plug it into the generic result (Theorem~\ref{th:g}) to control the behavior of the change-point estimator $\widehat{\tau}$.

\begin{proposition}\label{prop:non_parametric}
There exist positive numerical constants  $c_1$ and $c_2$ such that the following holds. With probability higher than $1-\delta$, the change-point estimator $\widehat{\tau}$ satisfies ({\bf NoSp}) and {\bf detects} all high-energy change-points $\tau_k$ in the sense of~\eqref{eq:high_energy_non_param}. Besides, any such high-energy change-points $\tau_k$ satisfies
\beq\label{eq:error_localization_non_param}
d_{H,1}(\widehat{\tau}_{k'},\tau_k)\leq c_2  \frac{\log\left(\delta^{-1}nm\|F_{\tau_{k}}- F_{\tau_{k-1}}\|^2_{\infty}\right)}{m\|F_{\tau_{k}}- F_{\tau_{k-1}}\|^2_{\infty}} \leq \frac{r_k}{4}\ ,
\eeq
under the same event of probability than $1-\delta$. 
\end{proposition}

In~\cite{padilla2019optimal1}, the authors introduce a procedure detecting all the change-points  provided that
\[
\min_k r_k  \min_k\|F_{\tau_{k}}- F_{\tau_{k-1}}\|^2_{\infty}\geq c_1\frac{\log(n)}{m}\enspace .
\]
Comparing this last condition with~\eqref{eq:high_energy_non_param}, we observe that our logarithmic term  is tighter and that we allow arbitrarily low-energy change-points.

The next proposition establishes that the condition~\eqref{eq:high_energy_non_param} is unimprovable. Given an integer $1\leq r\leq n/4$ and $\zeta\in (0,1/4)$, we focus our attention  on the collection $\bar{\cP}(r,\zeta)$ of sequences $(F_1,\ldots, F_n)$ of distributions such that the corresponding change-points ($\tau_1,\ldots, \tau_K)$ satisfy $\min_{k}r_k\geq r$ and $\min_{k}\|F_{\tau_k}-F_{\tau_{k-1}}\|_{\infty}\geq \zeta$. For $\eta \in \bar{\cP}(r,\zeta)$, we write $\P_{\eta}$ for the corresponding distribution of the sequence $(y_1,\ldots,y_n)$.

\begin{proposition}\label{prop:lower_non_parametric}
	There exists a  positive numerical constant $c$ such that, for any $n$, $p$ and any length $1\leq r\leq n/4$ the following holds. Provided that
	$r\zeta^2 \leq c'\log(n/r)/m$, we have
	\[
	\underset{\hat \tau}{\inf}\underset{\eta \in \bar{\cP}(r,\zeta)}{\sup}\Prob_\eta( \hat K \neq K) \geq \frac{1}{4}.
	\]
\end{proposition}


\section{Discussion}\label{sec:discussion}

\subsection{Noise distribution for multivariate change-point detection}

\paragraph*{Comparison between Gaussian and sub-Gaussian rates.} In this work, we have studied two types of noise distribution: Gaussian (\Cref{sec:multi_scale_change_point_estimation_in_gaussian_noise}) and general sub-Gaussian distributions (\Cref{sec:multi_scale_change_point_estimation_in_sub_gaussian_noise}) without further knowledge on the distribution functions.  Since the Gaussian setting is a specific instance of the sub-Gaussian setting, it is clear that the minimax lower bounds from \Cref{sec:lower} apply in both settings. As described in the previous subsection, the performances in the sub-Gaussian case almost match those in the Gaussian setting except for $s_k$ slightly lower but close to $\sqrt{p \log(en/r_k)}$. Indeed, in that regime, Berk-Jones or Higher-Criticism type statistics heavily rely on the probability distribution function of the noise, which is not available in the general sub-Gaussian case. Still, we could slightly improve the sub-Gaussian rates if we further assume that the noise components are identically distributed with common CDF $F$.
\begin{itemize}
\item If $F$ is known (know noise distribution), then one may adapt Berk-Jones test by replacing $\bar \Phi(x)$ in Equation~\eqref{eq:def_berk_jones} by $F(-x) + (1 - F(x))$. This would allow us to recover
the exact same detection condition as in the Gaussian setting.
\item If $F$ is unknown and if there are not too many change-points, one could hope to estimate the quantiles of the CUSUM statistic at each scale $r$ and plug them into a Berk-Jones statistics. This goes however beyond the scope of this paper.
\end{itemize}

\paragraph*{Unknown variance or more general variance matrix.} We assumed in the sparse multivariate sections that the variance  $\sigma^2$ is known. Whereas the partial norm test only requires the knowledge of an upper bound on $\sigma$, the dense statistic $\Psi_{l,r}^{(\mathrm{d})}$ requires the exact knowledge of the variance.
As soon as there are not too many change-points, it is possible to roughly estimate $\sigma$ and therefore accommodate the partial norm test with an unknown variance. In contrast, the dense statistics needs to be replaced by a $U$-statistics. Consider any even positive integer $r$ and define
\[
\widetilde{\bC}_{l,r}(Y) = \frac{\sqrt{r}}{2}\left(\frac{2}{r}\sum_{t = 1}^{r/2} Y_{l-2(t-1)-1} - \frac{2}{r}\sum_{t =1 }^{r/2} Y_{l+2(t-1)} \right)\ ,\quad
\widetilde{\bC}'_{l,r}(Y) = \frac{\sqrt{r}}{2}\left(\frac{2}{r}\sum_{t = 1}^{r/2} Y_{l-2t} - \frac{2}{r}\sum_{t =1 }^{r/2} Y_{l+2(t-1)+1} \right) \enspace ,
\]
where  $\widetilde{\bC}_{l,r}(Y)$ and $\widetilde{\bC}'_{l,r}(Y)$ are independent. If there is one change-point at position $l$ and no other change-points in $(l-r,l+r)$, then these statistics are identically distributed and we consider  $\widetilde{\Psi}_{l,r}^{''\mathrm{(d)}} = \langle \widetilde{\bC}_{l,r}(Y), \widetilde{\bC}_{l,r}'(Y)\rangle$ whose expectation is null when there are no change-points in the segment. As a consequence, $\widetilde{\Psi}_{l,r}^{''\mathrm{(d)}}$ does not require the knowledge of $\sigma$; only an upper bound of $\sigma$ is required to calibrate the corresponding test. Such a $U$-statistics has already been introduced in~\cite{wang2019inference} and analyzed in an asymptotic setting.  Unfortunately, since we can only consider even $r$, this precludes us to detecting change-points that are very close together with $r_k=1$.

In the general case where there is spatial covariance in the noise, that is $\mathrm{var}(\epsilon_t)=\Sigma$ for an unknown but general $\Sigma$, we can still use the same $U$-statistic described in the previous paragraph for the dense case. For the sparse case, one could use the supremum norm of the CUSUM statistics as in
Jirak~\cite{jirak2015uniform} and Yu and Chen~\cite{yu2017finite}. To calibrate those tests, we need to estimate both the Frobenius and the operator norm of $\Sigma$, which seems to be doable as soon as there are not too many change-points. If the spatial covariance matrix $\mathrm{var}(\epsilon_t)$ is unknown and even allowed to change with time, we suspect that the problem becomes intrinsically more involved. 

\subsection{Optimal Localization rates}

In this work, we mainly considered the problem of {\bf detecting} change-points in the mean of a  random vector. We provided tight conditions on the energy so that a change-point is detectable.  When such a change-point $\tau_k$ is detected,  \Cref{cor:gaus} states that its position is estimated up to an error of $r_k^*$, which is also of the order of  $\sigma^2 \Psi^{(g)}_{n,\overline{r}_k,s_k} \Delta_k^{-2}$-- see the definition~\eqref{eq:VitesseGauss}.
It is not clear whether this error is optimal or not.

In the univariate setting ($p=1$),~\cite{verzelenoptimal_change_point} has established that, above the detection threshold, a specific change-point position $\tau_k$ can be localized at the rate $\sigma^2 \Delta_k^{-2}$. In the multivariate setting, the situation is more tricky and there are certainly several localization regimes beyond the detection threshold. It is an interesting direction of research to pinpoint the exact localization rate between $\sigma^2 \Delta_k^{-2}$ and $\sigma^2 \Psi^{(g)}_{n,\overline{r}_k,s_k} \Delta_k^{-2}$. We leave this for future work.

\subsection{On the choice of the grid in the generic algorithm}

Our general procedure is defined for almost any arbitrary grid. Optimal procedures with the dyadic grid are introduced in Sections~\ref{sec:multi_scale_change_point_estimation_in_gaussian_noise} and~\ref{sec:othermodels}, whereas we use a near-optimal procedure on the  complete grid in Section~\ref{sec:multi_scale_change_point_estimation_in_sub_gaussian_noise}. 

From a computational perspective, the procedure's worst-case complexity is proportional to the size $|\cG|$ of the grid $\cG$. In that respect, the dyadic grid and more generally the $a$-adic grids benefit from a linear size whereas the size of the complete grid is quadratic.

From a mathematical perspective, it is much easier to control the behaviour of the procedure for an $a$-adic grid by a simple Bonferroni correction on all the statistics as it turns out that this correction is sufficient for our purpose -- see the proofs of Section~\ref{sec:multi_scale_change_point_estimation_in_gaussian_noise}. In constrast, controlling larger collections of tests turns out to be much more challenging as one needs to carefully take into account the dependences between the test statistics, which becomes all the more challenging for complex models. As an example, we introduced in Section~\ref{sec:multi_scale_change_point_estimation_in_gaussian_noise} Berk-Jones statistics to achieve the tight minimax condition for change-point detection. Unfortunately, we did not manage to apply a suitable chaining argument to these statistics and were therefore unable to control the behavior of the corresponding change-point detection procedure on the complete grid.

From a purely statistical perspective, it is difficult to appreciate the respective benefits of denser or sparser grids. On the one hand, for denser grids, the approximation $\overline{\tau}_k$ of $\tau_k$ at scale $r$ will be closer to $\tau_k$ so that the corresponding test $T_{\overline{\tau}_k,r}$ may be more powerful. On the other hand, for a denser grid, the tests possibly suffer from a higher price for multiplicity. This price can be mild if one takes into account the dependences between the tests. Still, except perhaps in the univariate Gaussian change-point model for which delicate controls of the CUSUM process exist, it is  challenging to provide theoretical guidance towards the best choice of the grid.


\subsection{Optimality of the generic algorithm in a broader context.}

\Cref{algo_1} aggregates homogeneity tests and provides theoretical guarantees on the event $\cA \left( \Tg, \cK^*,(\tg_k,\rg_k)_{k\in \cK^*}\right)$ - i.e.~the event where the outcomes of the tests are consistent - as stated in \Cref{th:g}. In the possibly sparse high-dimensional mean change-point model, we introduced a suitable multiple testing procedure which, when combined with  \Cref{algo_1}, leads to a minimax optimal change-point detection procedure.

\medskip

We described in \Cref{sec:meta_algorithm_for_multi_scale_change_point_detection_on_a_grid} how to adapt this approach to other change-point problems and this was already illustrated in Section~\ref{sec:othermodels} with covariance and nonparametric problems.  One may then wonder whether this roadmap still leads to minimax optimal procedures for general problems. Consider the general setting from~\Cref{sec:introduction} where we are interested in detecting change-points in  $\left(\Gamma \left(\bbP_{t}\right)\right)_{t\in [n]}$.
Upon endowing the space $\cV$ with some distance  $d$, we define, for any $k$,
$$\bar \Delta_k = d\left(\Gamma\left(\bbP_{\tau_k}\right), \Gamma\left(\bbP_{\tau_{k-1}}\right)\right)\enspace ,$$
which corresponds to the change-point height.
Then, one may wonder how large $\bar \Delta_k$ has to be - as a function of $r_k$ - so that a change-point detection procedure achieving the no-spurious property ({\bf NoSp}) with high probability is able to detect $\tau_k$. In this discussion, we restrict our attention to independent observations, that is the random variables $y_t$ are assumed to be independent and we consider the dyadic grid  $\cG_D$.

Fix $\alphabj\in (0,1)$. At each scale $r\in \{1,2,\ldots, 2^{\lfloor \log_2(n)\rfloor-1}\}$ and for each $l\in \cD_r$, with $\cD_r$ defined in \eqref{eq:def_dyadic}, we consider the testing problem $H_{0,l,r}:\{\mathbb{P}:\ \Gamma(\mathbb{P}_{l-r})=\ldots=\Gamma(\mathbb{P}_{l+r-1})\}$  versus
\[
H_{\rho,l,r}:\left\{\mathbb{P}:\begin{array}{c} \Gamma(\mathbb{P}_{l-r})=\ldots =\Gamma(\mathbb{P}_{l-m-1})\\ \Gamma(\mathbb{P}_{l-m})=\ldots=\Gamma(\mathbb{P}_{l+r-1})\\ d(\Gamma(\mathbb{P}_{l-m-1}),\Gamma(\mathbb{P}_{l-m})\geq \rho)
\end{array}
\text{ for some integer }m\in [-r/2,r/2]\right\}
\]
This amounts to testing whether there is a single change-point near $l$ of height at least $\rho$ in the segment $(l-r,l+r)$. Given $\delta\in (0,1)$ and a test $T$ we define the $\delta$-separation distance of $T$ by
\[
\rho_{l,r}^*(T,\delta)=\inf\left\{\rho:\  \sup_{\mathbb{P}\in H_{0,l,r}}\mathbb{P}(T=1)\lor  \sup_{\mathbb{P}\in H_{\rho,l,r}}\mathbb{P}(T=0)\leq \delta\right\}\ .
\]
This corresponds to the minimal change-point height that is detected by the test $T$. Then, the {\it minimax} separation distance $\rho^*_{l,r}(\delta)$ is simply
$\inf_{T}\rho_{l,r}(T,\delta)$, i.e.~the infimum over all tests $T$ of the separation distance. By translation invariance of the testing problem, note that  $\rho^*_{l,r}(\delta)$ does not depend on $l$  and is henceforth denoted $\rho^*_{r}(\delta)$.
\medskip

For any $(l,r)$, take any test $T_{l,r}$ (nearly)\footnote{Since the minimax separation distance is defined as an infimum, it is not necessarily achieved by a test. Still, we can build a test whose separation distance is arbitrarily close to the optimal one. We neglect the additive error term for the purpose of the discussion.} achieving the
minimax separation distance  $\rho^*_{r}(\delta|\mathcal{D}_r|^{-1}\beta_r)$ with $\beta_r= 6\log^{-2}_2(n/r))\pi^{-2}$. Then, it follows from a simple union bound on the dyadic grid that, with probability higher than $1-\delta$, the collection of tests $T_{l,r}$, where $(l,r)$ belongs to the dyadic grid, does not detect any false positive and detects any change-point $\tau_k$ such that $\overline{\Delta}_k$ is higher than $\rho^*_{\tilde{r}_k}(\delta|\mathcal{D}_{\tilde{r}_k}|^{-1}\beta_{\tilde{r}_k})$, where $\tilde{r}_k$ is the largest scale in $\cR$ such that $4(\tilde{r}_{k}-1)\leq r_k$. As a consequence of \Cref{th:g}, the corresponding detection procedure achieves, with probability higher than $1-\delta$, the property $({\bf NoSp})$ and {\bf detects} any change-point satisfying the energy condition  $\overline{\Delta}_k\geq \rho^*_{\tilde{r}_k}(r \delta\beta_r/2n)$.

\medskip

Conversely, we believe that this energy condition is almost tight. Indeed, fix any even range $r\geq 2$. To simplify the discussion suppose that $n/(2r)$ is an integer. We consider a specific instance of the problem where the statistician knows that there are  $n/(2r)-1$ evenly-spaced change-points respectively at $2r+1,4r+1,\ldots, n-2r+1$
that allow to reduce the change-point detection problem to $n/(2r)$ change-point detection problem in intervals $(l-r, l+r]$ for $l=r+1,3r+1,5r+1,\ldots$. Furthermore, it is known that, in each such segment, there exists at most one change-point that is situated in $[l - 0.5r,l+0.5r]$, and if the change-point is present then its height is at least $\rho= \rho^*_{r}(\delta)- \zeta$ for $\zeta$ arbitrarily small. Since all  $n/(2r)-1$ evenly-spaced change-points $2r+1,4r+1,\ldots, n-2r+1$ are  known to the statistician, detecting all remaining change-points is equivalent to building an $n/(2r)$ multiple test of the hypotheses
$H_{0, l,r}$ versus $H_{\rho, l,r}$ for $l=r+1,3r+1,5r+1,\ldots$. If a change-point procedure achieves $({\bf NoSp})$ and  ${\bf detects}$ all change-points with radius at least $r/2$ and height at least $\rho$ with probability at least $1-\delta$, then one is able, with probability uniformly higher than $1-\delta$, to simultaneously perform without error $n/(2r)$ independent tests $H_{0, l,r}$ versus $H_{\rho, l,r}$. Since any single test must endure an error with probability at least $\delta$ in the worst case, no collection of independents tests  is able to endure less than $1-(1-\delta)^{n/(2r)}$. When $n/r$ is large and $\delta < 2r/n$, the latter is of the order of $\delta 2r/n$. Based on this, we conjecture that no change-point procedure  is able to achieve, with probability higher than $1-\delta$ the property $({\bf NoSp})$, and also to ${\bf detect}$ all change-points with radius at least $r/2$ and height at least $\rho^*_{r}(2r\delta/n)- \zeta$ for $\zeta>0$ arbitrarily small.

\medskip

Comparing the performances of our procedure with the negative arguments that we just outlined, we see that aggregating optimal tests on a dyadic grid allows to detect change-points with (almost) uniform height higher $\rho^*_{\tilde{r}_k}(r_k \delta\beta_{r_k}/(2n))$ whereas, as explained above, we conjecture that a change-point $\tau_k$ can be detected only if   $\bar \Delta_k\geq \rho^*_{r_k}(2r_k\delta/n)$.
Since $\tilde{r}_{k}\geq (r_k/8)\vee 1$- as we considered the dyadic grid when constructing $\tilde{r}_{k}$ - the difference between these two bounds is mostly due to the term $\beta_r$ which is of the order of $\log^2(n/r)$. Whereas it is possible to detect change-points at a given scale with a test of type I error probability $2r\delta/n$, our multi-scale procedure relies on a collection of single tests with type I error probability of the order of $r\delta/n/ \log^2(n/r)$. This mild mismatch - that we introduce to deal with the multiplicity of scales - of order $\log^{2}(n/r)$ is harmless for the Gaussian mean-detection problem. Indeed, one may deduce from our analysis in \Cref{sec:multi_scale_change_point_estimation_in_gaussian_noise} that
$\rho^*_{r_k}(2r_k\delta/n)$ is of the same order as $\rho^*_{\tilde{r}_k}(\delta|\mathcal{D}_{\tilde{r}_k}|^{-1}\beta_{\tilde{r}_k})$.
\medskip

In conclusion, one can build through Algorithm~\ref{algo_1} an almost optimal change-point procedure in any model provided that we are given optimal homogeneity tests of the form $H_{0,l,r}$ versus $H_{\rho,l,r}$. This provides a universal reduction of the problem of change-point detection to the problem of homogeneity testing.

\section{Numerical Experiments}\label{sec:numerical}

In this section, we illustrate the behavior of our procedure to detect change-points in a sparse high-dimensional setting~\eqref{eq:model_multivariate}. 

\noindent 
{\bf Performance Measure}. To assess the quality of change-point estimator $\hat{\tau}$, we first measure whether the estimated number of change-points $\widehat{K}=|\hat{\tau}|$ is equal to the true number $K$ of change-points. We also define the 
$\mathbf{SAND}$ loss as the proportion of $\bS$purious estimated change-points $\bA$nd true change-points that are $\bN$ot $\bD$etected:
$$\mathbf{SAND}((\tau_k), (\hat \tau_{k'})) = \tfrac{1}{K}\sum_{k=1}^K \left| |[(\tau_k + \tau_{k-1})/2,(\tau_k + \tau_{k+1})/2] \cap \{\hat \tau_k, k \in [\hat K]\}| - 1\right| \enspace .$$

\noindent
{\bf Change-point Detection Methods}. 
In the experiments, we implemented the bottom-up aggregation procedure \Cref{algo_1} with partial norm tests $T^{(\mathrm{p})}$ and dense test $T^{(\mathrm{d})}$ corresponding to \Cref{sec:multi_scale_change_point_estimation_in_sub_gaussian_noise} on a semi-complete grid $\cG_F = \{(l, r) ~:~ l \in \{r+1, \dots, n-r+1,~ r\in \cR\}$ - 
we take scales $r$ in the dyadic set for computational purposes. On a location $l$ and a scale $r$, each test statistic can be seen as a partial norm test relying on the statistic $\Ssup_{l,r,s}$ defined in \Cref{sub:tests_in_the_sparse_regime} and a threshold $\mathrm{Thresh}(r,s)$ which is either equal to $x^{(\mathrm{d})}_{r}$ when $s=d$ - see \Cref{sub:tests_in_the_dense_regime} - or to $x^{(\mathrm{p})}_{r,s}$ when $s \in \cZ_r := \{1, 2, 4, \dots, 2^{\floor{\log_2(s_{\max})}}\}$ with $s_{\max} := \frac{\sqrt{p\lo}}{\log(p)-\log(\lo)}$ - see \Cref{subsection:consequences} for the definition of the boundary between sparse and dense regimes $s_{\max}$. We actually do not use the definition of $x^{(\mathrm{d})}_{r}$ and $x^{(\mathrm{p})}_{r,s}$ for our thresholds $\mathrm{Thresh}(r,s)$ since they rely on constants that are not necessarily tight, but we rather calibrate them by a Monte-Carlo method using $10.000$ independant samples. For each sample consisting in a time series made of $n$ gaussian normal centered vector in $\mathbb{R}^p$, and for each $r \in \cR, s \in \cZ_r \cup \{p \}$, we compute the maximum over all $l$ of the statistics $\Ssup_{l,r,s}$. Considering the list of all the $10.000$ maximums and taking $\delta=5\%$, $\mathrm{Thresh}(r,s)$ is then defined as the $(1- \delta/(2|\cR||\cZ_r|))$-quantile if $s \in \cZ_r$ and as the $(1 - \delta/(2 |\cR|))$-quantile if $s = p$, so that, by a union bound, the total probability of finding a false positive is less than $\delta$. Note that  this calibration step only depends on $n$, $p$, and $\sigma$ and only needs to be performed once and for all. 

 We compare our procedure with the inspect method of~\cite{Wang2018} which is available as an $R$ package. The tuning parameters of inspect are computed with the automatic method defined in the same R package.

 \bigskip

In all the following experiments, we fix the dimension $p=100$ and the sample size $n=200$. We generate a piecewise constant signal $(\eta_t)_{t=1}^{n}$ in $\bbR^p$ with possible change-points $(\tau_1, \dots, \tau_K)$ using one of the three following settings. We then add a scaling factor $\alpha > 0$ and apply our procedure to the data $y_t = \alpha\eta_t + \varepsilon_t$, which amounts to setting $\theta_t = \alpha \eta_t$ in model \Cref{eq:model_multivariate}. We fix the variance of all the coordinates of $\varepsilon_t$ to be equal to one. Increasing $\alpha$ on a grid with step $0.1$ allows us to experimentally identify a transition between the regime where we do not detect precisely the change-points - in which case the two losses tend to be close to one - and the regime where we do detect the change-points - in which cases the losses are smaller.  We consider three simulation settings: 
\begin{enumerate}
\item {\bf Segment}. We generate a signal $\eta$ which is zero everywhere, except on $[80, 100]$ where we set it equal to a random vector $\Delta$ with $\|\Delta\| = 1$ and $\|\Delta\|_0 = s$, for $s = 1, 20, 100$. In each one of these cases, we choose the location of the $s$ non null coordinates of $\Delta$ uniformly at random and their value uniformly at random in the set $\{-1/\sqrt{s}, 1/\sqrt{s}\}$. Each time, $\eta$ has $2$ true change-points, and we generate the noise $(\epsilon_t)$ as independent centered and normalized gaussian vectors. 
\item {\bf Multiple Change points}. We generate $10$ uniform random locations $\tau_1< \tau_2< \ldots <\tau_{10}$ on $[1, 200]$. For each location $\tau_i$, we generate a uniform random integer $s_i \in [1, 100]$ and a vector $\Delta_i$ as in the segment setting with $\|\Delta_i \| = 1$ and $\|\Delta_i\|_0 = s_i$.  We generate a uniform random real number $N_i \in [1, 5]$ and define the time series $\eta_i$ by $(\eta_i)_t=N_i \Delta_i\1_{t\geq \tau_i}$.  Finally, the signal $\eta=\sum_{i=1}^{10}\eta_i$ has exactly $10$ change-points with random locations. As previously, the noise components $(\varepsilon_t)$ follow independent centered and standard gaussian vectors.
\item {\bf Time-dependencies}. We use the same signal as in the segment setting with $s=20$ but we move away from our assumptions by considering time dependencies. More precisely, the $(\varepsilon_t)$'s are now defined according to an AR process such taht $\varepsilon_{t+1} = \rho \varepsilon_t + \sqrt{1 - \rho^2}\varepsilon'_{t+1}$ for $t \geq 0$ where $(\varepsilon'_t)$ are independent centered and normalized gaussian vectors, $\rho = 0.05$ for the simulation and by convention $\varepsilon_0\sim \cN(0,I_p)$. 
\end{enumerate}

\paragraph*{Risk estimation with Monte-Carlo.}
In  each setting, we generate $500$ independent samples and compute the twpo losses $\mathbf{SAND}((\tau_k), (\hat \tau_{k'}))$ and $\1\{\hat K \neq K \}$. We estimate the risks  $\mathbb E[\mathbf{SAND}((\tau_k), (\hat \tau_{k'}))]$ and $\mathbb P ( K \neq \hat K)$ by averaging the loss over the $500$ trials. We also compute  95\% confidence intervals.

\paragraph*{Results.}
In the segment setting - see \Cref{fig:worst_case_simu_s1}, \ref{fig:worst_case_simu_s20}, \ref{fig:worst_case_simu_s100}, the risks tend to decrease as $\alpha$ increases since the higher $\alpha$, the higher the energy of the generated change-points are. As $s$ increases, we can see that both methods need a higher scaling factor to achieve the same risk, which translates the fact that the higher $s$, the more energy is needed to detect a change-point with vector $\Delta$ of sparsity $s$. In the segment settings, our bottom-up procedure tends to achieve significantly smaller loss than the inspect method on average. It is not the case in the multiple change-points setting - see \Cref{fig:simu_K10} - where the inspect method tends to perform slightly better.
In the setting with time-dependencies - see \Cref{fig:time_dependent} - the risks are worse than the corresponding setting without time-dependencies - see \Cref{fig:worst_case_simu_s20} - mainly because adding time-dependencies tends to create more spurious change-points (i.e. false positives).

\paragraph*{Computation time} Our code is implemented with python 3.9 and it mainly uses the convolution function conv1d from pytorch 1.12.1 to compute the Cusum statistics. Simulations are run on CPU (Intel(R) Core(TM) i7-10510U CPU @ 1.80GHz) with 32Go of memory. Running our method on pure noise - i.e. $\theta_t = 0$ for all $t$ - takes $101 \pm 2$ ms while the inspect method takes only $18 \pm 2$ ms to run on average, but optimizing our code is out of the scope of this paper. All the experiments are described in the  repository  \url{https://github.com/epilliat/multicpdetec}.
\begin{figure}[H]
	\begin{minipage}[c]{0.48\textwidth}
		\includegraphics[scale=0.32]{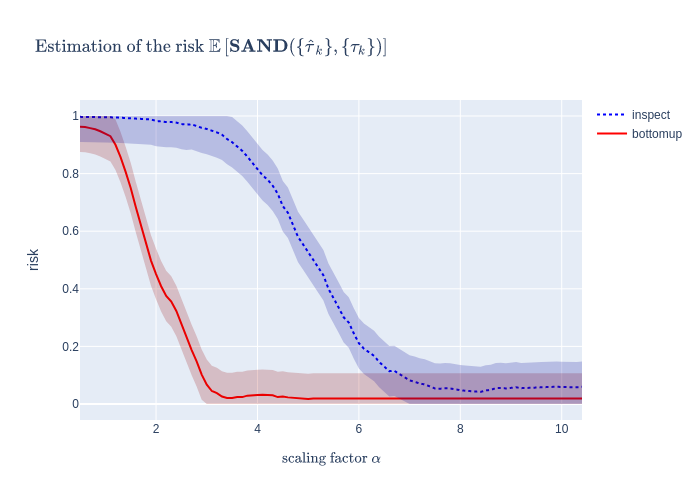}	
	\end{minipage}
	\ ~ \
	\begin{minipage}[c]{0.48\textwidth}
		\hspace{-0.7 cm}\includegraphics[scale=0.32]{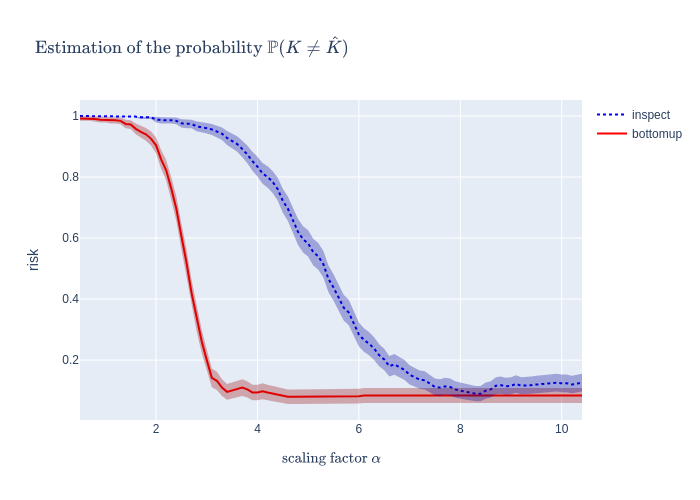}	
	\end{minipage}

		\caption{Estimation of $\mathbb{E}[\mathbf{SAND}((\tau_k), (\hat \tau_{k'}))]$ and $\mathbb{P}(\hat K \neq K)$ in the segment setting with $s=1$. }
		\label{fig:worst_case_simu_s1}
\end{figure}

\begin{figure}[H]
	\begin{minipage}[c]{0.48\textwidth}
		\includegraphics[scale=0.32]{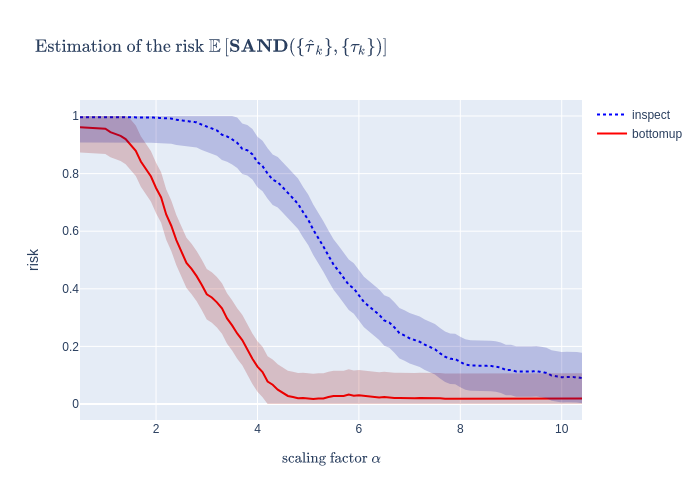}	
	\end{minipage}
	\ ~ \
	\begin{minipage}[c]{0.48\textwidth}
		\hspace{-0.7 cm}\includegraphics[scale=0.32]{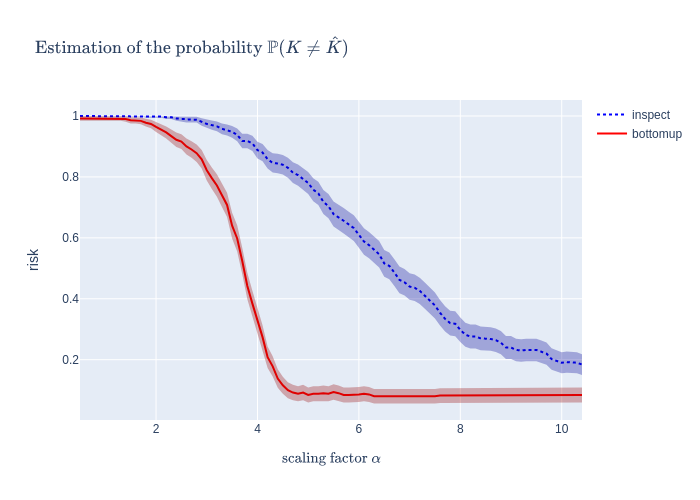}	
	\end{minipage}

		\caption{Estimation of $\mathbb{E}[\mathbf{SAND}((\tau_k), (\hat \tau_{k'}))]$ and $\mathbb{P}(\hat K \neq K)$ in the segment setting with $s=20$.}
		\label{fig:worst_case_simu_s20}
\end{figure}

\begin{figure}[H]
	\begin{minipage}[c]{0.48\textwidth}
		\includegraphics[scale=0.32]{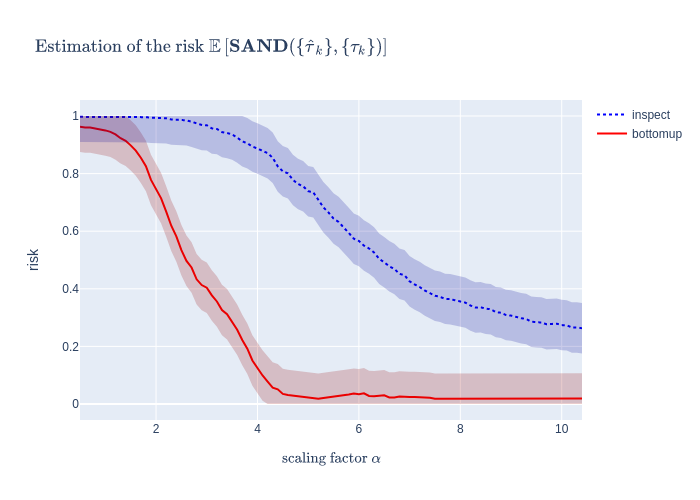}	
	\end{minipage}
	\ ~ \
	\begin{minipage}[c]{0.48\textwidth}
		\hspace{-0.7 cm}\includegraphics[scale=0.32]{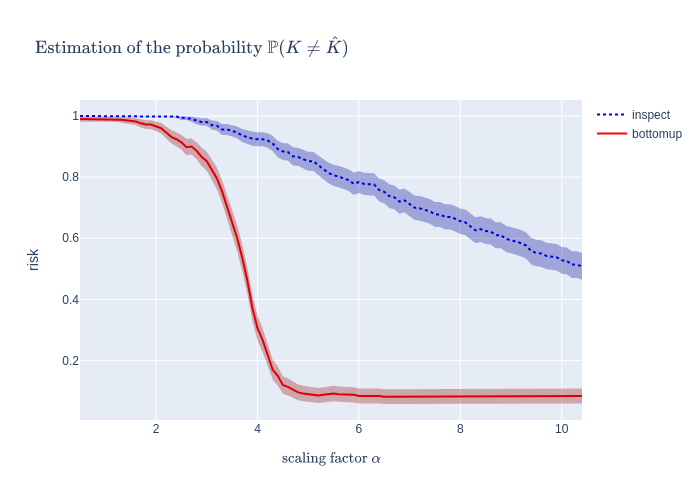}	
	\end{minipage}

		\caption{Estimation of $\mathbb{E}[\mathbf{SAND}((\tau_k), (\hat \tau_{k'}))]$ and $\mathbb{P}(\hat K \neq K)$ in the segment setting with $s=100$.}
		\label{fig:worst_case_simu_s100}
\end{figure}

\begin{figure}[H]
	\begin{minipage}[c]{0.48\textwidth}
		\includegraphics[scale=0.32]{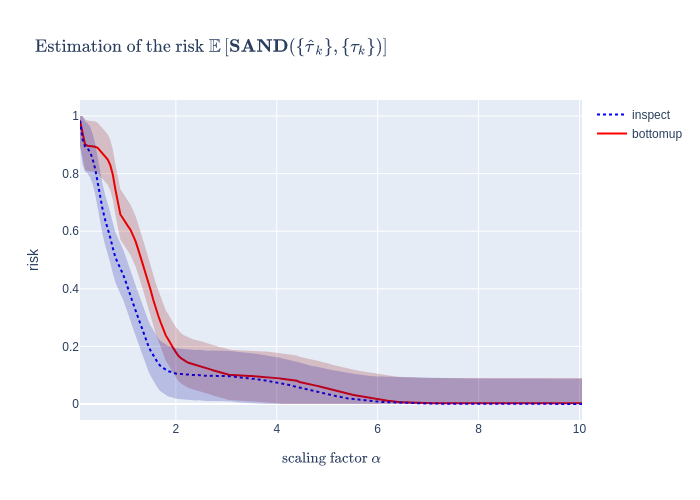}	
	\end{minipage}
	\ ~ \
	\begin{minipage}[c]{0.48\textwidth}
		\hspace{-0.7 cm}\includegraphics[scale=0.32]{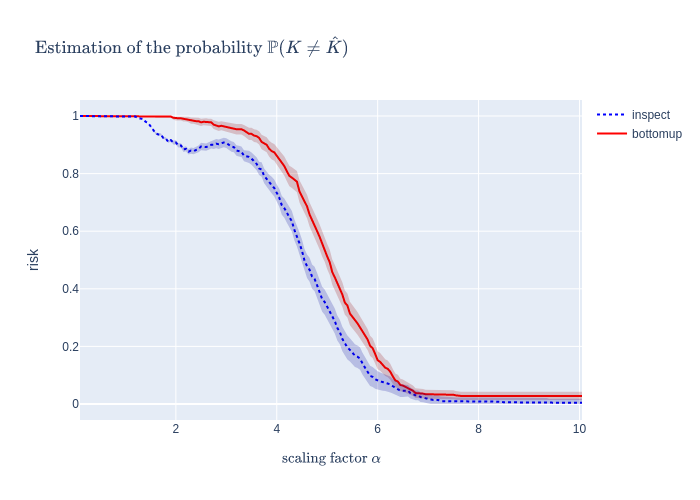}	
	\end{minipage}

		\caption{Estimation of $\mathbb{E}[\mathbf{SAND}((\tau_k), (\hat \tau_{k'}))]$ and $\mathbb{P}(\hat K \neq K)$ in a multiple change-point setting with $K = 10$ where change-points have random norms in $[1, 5]$ and random sparsities in $[1, p]$.}
		\label{fig:simu_K10}
\end{figure}

\begin{figure}[H]
	\begin{minipage}[c]{0.48\textwidth}
		\includegraphics[scale=0.32]{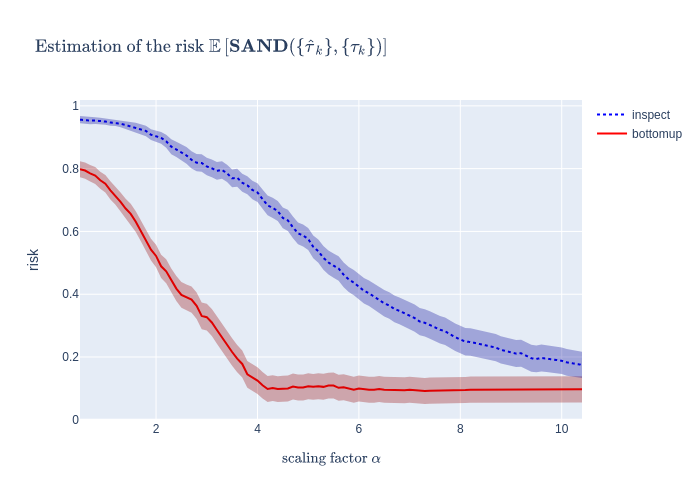}	
	\end{minipage}
	\ ~ \
	\begin{minipage}[c]{0.48\textwidth}
		\hspace{-0.7 cm}\includegraphics[scale=0.32]{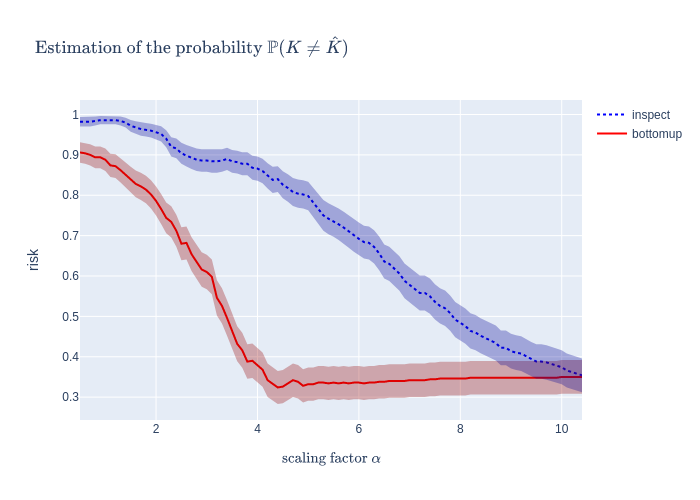}	
	\end{minipage}

		\caption{Estimation of $\mathbb{E}[\mathbf{SAND}((\tau_k), (\hat \tau_{k'}))]$ and $\mathbb{P}(\hat K \neq K)$ in the segment setting with $s = 20$ but with time-dependent noise that have an auto-correlation of $\rho = 5\%$.}
		\label{fig:time_dependent}
\end{figure}

\paragraph*{Acknowledgements.} The work of A. Carpentier is partially supported by the Deutsche Forschungsgemeinschaft (DFG) Emmy Noether grant MuSyAD (CA 1488/1-1), by the DFG - 314838170, GRK 2297 MathCoRe, by the FG DFG, by the DFG CRC 1294 'Data Assimilation', Project A03, by the Forschungsgruppe FOR 5381 "Mathematical Statistics in the Information Age - Statistical Efficiency and Computational Tractability", Project TP 02, by the Agence Nationale de la Recherche (ANR) and the DFG on the French-German PRCI ANR ASCAI CA 1488/4-1 "Aktive und Batch-Segmentierung, Clustering und Seriation: Grundlagen der KI" and by the UFA-DFH through the French-German Doktorandenkolleg CDFA 01-18 and by the SFI Sachsen-Anhalt for the project RE-BCI. The work of E. Pilliat and N. Verzelen has been partially supported by ANR-21-CE23-0035 (ASCAI). The authors are grateful to two anonymous referees for their helpful comments that improved the presentation of the manuscript.

\newpage
\appendix

\section{An alternative Algorithm}\label{sec:algo2}

In \Cref{algo_2} below, we also introduce a variant of the procedure, where instead of merging relevant interesting intervals at the same scale, we only keep one of them. More precisely, we choose the convention of discarding the interval $[l-r+1,l+r-1]$ if there exists $l'<l$ such that $T_{l',r}=1$ and $[l-r+1, l+r-1] \cap[l'-r+1, l'+r-1] \neq \emptyset$. Alternatively, we could have chosen to discard one of the intervals at random.
\begin{algorithm}
	\caption{\label{algo_2} Variant bottom-up aggregation procedure of multiscale tests}
	\KwData{$y_t, t = 1 \dots n$ and local test statistic $(\Tg_{l,r})_{(l,r) \in \cG}$}
	\KwResult{$(\hat \tau_k)_{k\leq \hat K}$}
	$\SCP  = \emptyset$\ $\cT=\emptyset$\;
	\For{$r \in \cR$}{
		\For{$l \in \cD_r$ s.t. $\Tg_{l,r} = 1$}{
		\If{$[l-r+1, l+r-1]\cap \SCP = \emptyset$}{
				$\SCP \gets \SCP \cup [l-r+1,l+r-1]$\;
        $\cT \gets \cT \cup \{l\}$\;
		 }
	}}
	\Return $\cT$
\end{algorithm}

\section{Proofs}
\label{sec:proofs}
\subsection{Proof of \Cref{th:g}}
\label{subsub:proof_of_th:g}
Let $\Theta \in \bbR^{n\times p}$, $\Tg$ be a local test statistic, $\cK^*$ be a set of indices of significant change-points and $(\tg_k,\rg_k)_{k \in \cK^*}$ be elements of the grid $\cG$ that satisfy \Cref{eq:hypothese_scale}. We assume that $\cA(\Theta, \Tg, \cK^*, (\tg_k,\rg_k)_{k \in \cK^*})$ holds, that is:
\begin{enumerate}
	\item {\bf (No False Positive)}\label{it:nofp}
  $\Tg_{l,r} = 0$ for all $(l,r)\in \cH_0 \cap \cG$, where $\cH_0$ is defined by \Cref{eq:Ho}
	\item  \label{it:sjdetect}{\bf (Significant change-point detection)} for every $k \in  \cK^*$, we have $\Tg_{\tg_k,\rg_k} = 1$.
\end{enumerate}

For every  $r \in \cR$ define
$$\begin{array}{ll}
	\CPd_r &= \{ l \in \CP_r ~:~ \exists k \in \cK^* \mbox{ s.t. } \tau_k \in [l-r+1, l+r-1]\}, \\
	\SCPd_r &= \underset{l \in \CPd_r}{\bigcup} [l-r+1,l+r-1].
\end{array}
$$
In other words, for all $r \in \cR$, $\CPd_r$ is the subset of $\CP_r$ for which each interval of detection $[l-r+1,l+r-1]$ contains a significant change-point.
The next proposition recursively analyzes the detection sets corresponding to significant change-points $(\SCPd_r)_{r\geq 1}$. The first inclusion means that significant change-points which can be detected with a local statistic with radius smaller than $r$ are detected  before step $r$, while the second inclusion means that each connected component of $\underset{r\in \cR}{\bigcup}\SCPd_r$ is included in a close neighborhoods of some significant change-point $\tau_k$, $k \in \cK^*$.

\begin{proposition}\label{prop:double-inclusion}
For all $r \in \cR\cup \{0\}$, we have the double inclusion
\begin{align}
	\left\{ \tau_k ~:~ k \in \cK^* \text{ and } \rg_k \leq r \right\}
	\subset \bigcup_{r'\leq r, r'\in \cR} \SCPd_{r'}
	\subset \bigcup_{k \in \cK^*} [\tau_k-2(\rg_k-1),\tau_k + 2(\rg_k-1)] \enspace.
\end{align}
\end{proposition}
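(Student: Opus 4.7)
I would prove both inclusions simultaneously by induction on $r\in\cR\cup\{0\}$ ordered naturally. The base case $r=0$ is vacuous: no $k\in\cK^*$ has $\rg_k\leq 0$ and the union over $r'\in\cR$ with $r'\leq 0$ is empty. For the inductive step, let $r^-$ denote the largest element of $\cR\cup\{0\}$ strictly less than $r$; since $\cR\subset\{1,2,\dots,\lfloor n/2\rfloor\}$, we have $r^-\leq r-1$. I would assume both inclusions at $r^-$ and derive them at $r$.

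For the right inclusion at $r$, the only new case is $x\in\SCPd_r$; pick $l\in\CPd_r$ with $x\in[l-r+1,l+r-1]$ and some $\tau_{k'}$ with $k'\in\cK^*$ in that interval, provided by the definition of $\CPd_r$. The central step is to show $\rg_{k'}\geq r$: otherwise $\rg_{k'}\leq r^-$, and the left inclusion of the IH at $r^-$ would place $\tau_{k'}$ in $\bigcup_{r''\leq r^-,\,r''\in\cR}\SCPd_{r''}\subset\bigcup_{r''<r,\,r''\in\cR}\SCP_{r''}$, whose intersection with $[l-r+1,l+r-1]$ would contradict the defining condition of $l\in\CP_r$ in \Cref{algo_1}. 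Once $r\leq\rg_{k'}$, the bound $|x-\tau_{k'}|\leq 2(r-1)\leq 2(\rg_{k'}-1)$ concludes.

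For the left inclusion at $r$, the non-trivial case is $\rg_k=r$. The detection hypothesis gives $\Tg_{\tg_k,r}=1$. If $\tg_k\in\CP_r$, combining with $|\tg_k-\tau_k|\leq r-1$ yields $\tg_k\in\CPd_r$ and thus $\tau_k\in\SCPd_r$. Otherwise a smaller-scale detected interval $[l'-r''+1,l'+r''-1]$ (with $r''\in\cR$, $r''<r$, $l'\in\CP_{r''}$) blocks $\tg_k$, so $|l'-\tg_k|\leq r+r''-2$. By no-false-positive, some change-point $\tau_{k^*}$ lies in $(l'-r'',l'+r'')$, and the triangle inequality gives
\[
|\tau_{k^*}-\tau_k|\leq (r''-1)+(r+r''-2)+(r-1)\leq 4r-6,
\]
using $r''\leq r-1$. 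The heart of the argument lies here: if $k^*\neq k$, the inter-change-point spacing forces $|\tau_{k^*}-\tau_k|\geq r_k>4(\rg_k-1)=4r-4$, contradicting the display. Hence $k^*=k$, so $\tau_k\in[l'-r''+1,l'+r''-1]$, which puts $l'$ in $\CPd_{r''}$ and $\tau_k$ in $\SCPd_{r''}$, completing the induction. The main obstacle is precisely this step, which exploits the hypothesis $4(\rg_k-1)<r_k$ from \eqref{eq:hypothese_scale} in an essentially tight way.
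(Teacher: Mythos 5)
Your proof is correct and follows essentially the same route as the paper: induction over the scales of $\cR$, with the second inclusion handled identically (the induction hypothesis plus the algorithm's non-intersection rule forces $\rg_{k'}\geq r$, whence the $2(r-1)$ bound), and the first inclusion resting on the same three ingredients — the detection hypothesis $\Tg_{\tg_k,\rg_k}=1$, the no-false-positive property applied to a potential blocking interval at a smaller scale, and the spacing condition $4(\rg_k-1)<r_k$ combined with $|\tg_k-\tau_k|\leq\rg_k-1$. The only difference is organizational: where the paper assumes $\tau_k$ has not yet been captured and derives that no blocker can exist (the blocker would lie in $\cH_0$), you argue the contrapositive directly, showing that any blocker must itself contain $\tau_k$ and hence already places it in some $\SCPd_{r''}$ with $r''<r$; both versions of the arithmetic check out.
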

The next proposition shows that for each step $r\in \cR$, the subset of detection corresponding to non significant change-point is disjoint from $\bigcup_{r'\in \cR}\SCPd_{r'}$.
\begin{proposition}\label{prop:control-other-change-points}
	For all $r \in \cR$, we have
		$$\bigcup_{l \in \CP_r \setminus \CPd_r} [l-r+1,l+r-1] \cap \left(\bigcup_{r'\in \cR}\SCPd_{r'}\right) = \emptyset \enspace .$$
\end{proposition}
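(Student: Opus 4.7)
I would proceed by contradiction. Suppose that for some $r\in\cR$, some $l\in\CP_r\setminus\CPd_r$, some $r'\in\cR$ and some $l'\in\CPd_{r'}$, the segments $[l-r+1,l+r-1]$ and $[l'-r'+1,l'+r'-1]$ intersect. The cross-scale cases are immediate from the construction of \Cref{algo_1}, where $\cT_{r}$ is defined so that every interval selected at scale $r$ is disjoint from all $\SCP_{r''}$ with $r''<r$. If $r'<r$, the inclusion $[l'-r'+1,l'+r'-1]\subseteq\SCP_{r'}$ would violate the condition for $l\in\cT_r$; if $r'>r$, the inclusion $[l-r+1,l+r-1]\subseteq\SCP_r$ would symmetrically violate the condition for $l'\in\cT_{r'}$. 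So only the same-scale case $r'=r$ remains, and this is the core of the argument.

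In the case $r'=r$, I would unpack both memberships as follows. Since $l\in\CP_r$, the \emph{No False Positive} hypothesis forces the existence of some change-point $\tau_j\in[l-r+1,l+r-1]$, and $l\notin\CPd_r$ gives $\tau_j\notin\cK^*$. On the other hand, $l'\in\CPd_r$ furnishes some $\tau_k\in\cK^*$ with $\tau_k\in[l'-r+1,l'+r-1]$. Assuming WLOG that $l<l'$ (the other case is symmetric), the overlap condition $l'-r+1\leq l+r-1$ together with the exclusion $\tau_k\notin[l-r+1,l+r-1]$ (coming from $l\notin\CPd_r$) would yield the chain $l+r-1<\tau_k\leq l'+r-1\leq l+3r-3$, while $\tau_j\geq l-r+1$. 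Combining these gives $0<\tau_k-\tau_j\leq 4(r-1)$. Since $\tau_j<\tau_k$ and $\tau_j$ is another change-point, one has $\tau_j\leq\tau_{k-1}$, hence $r_k\leq\tau_k-\tau_{k-1}\leq 4(r-1)$.

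Combining this with the scale assumption $4(\rg_k-1)<r_k$ from \eqref{eq:hypothese_scale} yields $\rg_k<r$. Since $\rg_k\in\cR$, I would then invoke \Cref{prop:double-inclusion} at the scale $\rg_k$ to obtain $\tau_k\in\bigcup_{r''\leq\rg_k,\,r''\in\cR}\SCPd_{r''}$, so some $r''<r$ satisfies $\tau_k\in\SCP_{r''}$. But $\tau_k$ also lies in $[l'-r+1,l'+r-1]$, so this interval meets $\SCP_{r''}$ for some $r''<r$, directly contradicting the defining condition of $l'\in\cT_r$ in \Cref{algo_1}. This contradiction closes the proof.

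The main obstacle is precisely the same-scale case: because intervals of equal resolution are not filtered against each other by the merging procedure, one cannot conclude by a single line from the algorithmic definition, as in the cross-scale cases. The trick is to use the arithmetic on positions to bound $r_k$ in terms of $r$, and then to use the detection statement already proved for smaller scales in \Cref{prop:double-inclusion} to manufacture a smaller-scale detected interval that should have prevented $l'$ from being added to $\cT_r$ in the first place.
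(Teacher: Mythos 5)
Your proof is correct, and for the decisive case it runs along a genuinely different track than the paper's. The paper works component by component: it fixes the connected component $\Cgc_k$ of $\bigcup_{r'}\SCPd_{r'}$ containing a significant $\tau_k$, observes that it sits inside a single $\SCPd_{r_0}$ with $r_0\leq \rg_k$, and splits on $r>\rg_k$ (handled by cross-scale disjointness of the $\SCP_{r'}$) versus $r\leq \rg_k$, the latter resolved by the \emph{second} inclusion of \Cref{prop:double-inclusion} (localization $\Cgc_k\subset[\tau_k-2(\rg_k-1),\tau_k+2(\rg_k-1)]$) combined with the arithmetic $l+r-1\leq \tau_b+2(\rg_k-1)<\tau_k-2(\rg_k-1)$ drawn from $4(\rg_k-1)<r_k$. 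You instead split on $r$ versus $r'$, dispose of the cross-scale cases directly from the filtering condition defining $\cT_r$ in \Cref{algo_1}, and in the same-scale case derive $r_k\leq \tau_k-\tau_j\leq 4(r-1)$, hence $\rg_k<r$, and then invoke the \emph{first} inclusion of \Cref{prop:double-inclusion} to produce a strictly smaller-scale detected set containing $\tau_k$, which the admission of $l'$ at scale $r$ would have had to avoid. Both arguments hinge on Assumption \eqref{eq:hypothese_scale} and on \Cref{prop:double-inclusion}, but they exploit opposite halves of that proposition: yours works at the level of individual pairs of intervals and never needs the component $\Cgc_k$ or its unique scale $r_0$, which makes it slightly more self-contained; the paper's version has the side benefit of setting up the objects ($\Cgc_k$ and $r_0\leq\rg_k$) that are reused immediately afterwards in concluding \Cref{th:g}. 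Your arithmetic in the same-scale case (the chain $l+r-1<\tau_k\leq l'+r-1\leq l+3r-3$ and the deduction $\tau_j\leq\tau_{k-1}$) is sound.
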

Recall that $\left(\Cgc_k\right)_{k =1,\dots,\hat K}$ are defined as the connected component of $\bigcup_{r \in \cR} \SCP_r$. To ease the notation, re-index $(\Cgc_k)$ so that $\tau_k$ is the closest true change-point to $\hat \tau_k = \frac{\min \Cgc_k + \max\Cgc_k}{2}$. Since there is no false positive, $\tau_k \in \Cgc_k$.\\
By \Cref{prop:control-other-change-points}, the two closed subset $\bigcup_{r\in \cR}\bigcup_{l \in \CP_r \setminus \CPd_r} [l-r+1,l+r-1]$ and $\bigcup_{r\in \cR}\SCPd_r$ are disjoint. For all $k \in \cK^*$, it holds by \Cref{prop:double-inclusion} that $\tau_k \in \bigcup_{r \in \cR} \SCPd_{r}$, so that $\Cgc_k$ is a connected component of $\bigcup_{r\in \cR}\SCPd_r$ containing the significant change-point $\tau_k$. In particular, $\hat K \geq |\cK^* |$.
We have
\begin{itemize}
	\item By \Cref{prop:double-inclusion}, $\Cgc_k \subset [\tau_k - 2(\rg_k - 1), \tau_k + 2(\rg_k - 1)]$ for every $k \in \cK^*.$ Thus
	$$ |\hat\tau_{k} - \tau_k| \leq (\rg_k - 1) < \frac{r_k}{4}.$$
	\item For all $k \in [K] \setminus \cK^*,$ either $\tau_k$ does not belong to $\bigcup_{r \in \cR} \SCP_r$ and it is simply not detected, or it is the closest true change-point to $\hat \tau_k = \frac{\min \Cgc_k + \max\Cgc_k}{2}$ so that
	$$\hat \tau_{k} \in \left[ \tau_k - \frac{\tau_k + \tau_{k-1}}{2},\tau_k + \frac{\tau_k + \tau_{k+1}}{2} \right] \enspace.$$
	In particular,
	$$\{\hat \tau_{k'}, k' \leq \hat K\} \subset \left[ \tau_{1} - \frac{\tau_{1} - \tau_{0}}{2},\tau_{K} + \frac{\tau_{K+1} - \tau_{{K}}}{2} \right] \enspace.$$
	\item Finally, if there exists two estimated change-points $\hat \tau_{k_1}, \hat \tau_{k_2}$ in $ \left[ \tau_k - \frac{\tau_k + \tau_{k-1}}{2},\tau_k + \frac{\tau_k + \tau_{k+1}}{2} \right]$, then either $\Cgc_{k_1}$ or $\Cgc_{k_2}$ does not contain $\tau_k$. Then $\Theta$ is constant on $\Cgc_{k_1}$ or on $\Cgc_{k_2}$ and we obtain a contradiction since there is no false positive.
\end{itemize}

This concludes the proof of \Cref{th:g}.

\begin{proof}[Proof of \Cref{prop:double-inclusion}]
To prove the proposition, we do an induction on $r \in \cR\cup \{0\}$.
The case $r=0$ is trivial since by definition, $\SCP_0 = \emptyset$.
Let $r \in \cR$ and assume that the double inclusion \Cref{prop:double-inclusion} holds for all $r' < r, r' \in \cR\cup \{0\}$.

\paragraph*{First inclusion:} Let $k \in \cK^*$ be such that $\rg_k = r$ and assume that the corresponding significant change-point $\tau_k$ has not been detected before step $r$, that is $\tau_k \not\in \underset{r' < r}{\bigcup} \SCPd_{r'}$. Since $k \in \cK^*$, this implies in particular that $\tau_k \not\in \underset{r' < r}{\bigcup} \SCP_{r'}$. Let us show that $\tau_k \in \SCP_r$. To this end we prove that
\begin{align}\label{eq:point1g}
	[\tg_k - r+1, \tg_k + r -1] \cap \underset{r'<r,r' \in \cR}{\bigcup} \SCP_{r'}= \emptyset
\end{align} and

\begin{align}\label{eq:point2g}
\Tg_{\tg_k,r} = 1,
\end{align}
which will be enough since $\abs{\tg_k - \tau_k} \leq \rg_k - 1 = r - 1$.
\begin{itemize}
	\item {\bf Proof of \Cref{eq:point1g}:} Assume for the sake of contradiction that there exists an integer $z$ which belongs to $[\tg_k - r+1, \tg_k +r-1] \cap \underset{\substack{r'<r \\r' \in \cR}}{\bigcup} \SCP_{r'}$. There exists $r'< r$ such that $z \in \SCP_{r'}$ and $l(z) \in \CP_{r'}$ such that $z \in [l(z) - r'+1, l(z) + r'-1]$. Since $\tau_k \not \in \underset{r'<r}{\bigcup} \SCP_{r'}$, we have $\tau_k \not \in [l(z) - r'+1, l(z) + r'-1]$. Moreover,
	\begin{align*}
		|l(z) - \tau_k|
		& \leq |l(z) - z| + |z-\tg_k| + |\tg_k - \tau_k|\\
		& \leq (r'-1) + (r - 1) +  |\tg_k - \tau_k|\\
		& < r_k - r' \enspace,
	\end{align*}
	Where the last inequality comes from the hypothesis $3(\rg_k - 1) + \abs{\tg_k - \tau_k} \leq r_k$
	Consequently,
	\begin{align*}
		[l(z) - r', l(z) + r']
		\subset
		[\tau_k - r_k, \tau_k + r_k) \setminus \{\tau_k\} \enspace,
	\end{align*}
	so that $\theta$ is constant on $[l(z) - r', l(z) + r')\cap\bbN$. Thus, $(l(z),r') \in \cH_0$ and $l(z) \not \in \CP_{r'}$ since there is no false positive. This gives a contradiction and concludes the proof of \Cref{eq:point1g}.
	\item {\bf Proof of \Cref{eq:point2g}:} This is simply a consequence of the fact that significant change-point are detected on the grid (See Item \labelcref{it:sjdetect} in the definition of $\cA$).
\end{itemize}
We have just shown that $\tau_k \in \SCP_r$ and hence $\tau_k \in \SCPd_r$ so that the first inclusion holds at step $r$.
\paragraph*{Second inclusion} : Let $x$ be an element of $\SCPd_r$. There exists $l(x) \in \CPd_r$ such that\\
$x \in [l(x) - r+1, l(x) +r-1]$.
By definition of $\CPd_r$, there exists a significant change-point $\tau_k$ ( i.e. such that $k \in \cK^*$) belonging to $ [l(x) - r+1, l(x) + r-1]$.

We necessarily have $\rg_k \geq r$. Indeed, if  $\rg_k < r,$ then by the induction hypothesis, $\tau_k \in \SCPd_{r'}$ for some $r' < r$, which contradicts the fact that $\SCPd_{r'}$ is disjoint from $[l(x) - r+1, l(x) + r-1] \subset \SCPd_r$. Consequently,
\begin{align*}
	|l(x) - \tau_k| + r-1
	& \leq 2r - 2 \\
	& \leq 2 (\rg_k -1) \\
\end{align*}
Thus
\begin{align*}
	x \in [l(x) -r+1, l(x) + r-1]
	\subset
	[\tau_k - 2(\rg_k-1), \tau_k + 2(\rg_k-1)] \enspace.
\end{align*}
We have just shown that $\SCPd_r \subset \underset{k\in \cK^*}{\bigcup} [\tau_k - 2(\rg_k-1), \tau_k + 2(\rg_k-1)].$\\
Therefore, the proposition is verified at step $r$ and the induction is proved.
\end{proof}
\begin{proof}[Proof of \Cref{prop:control-other-change-points}]
	Let $k \in \cK^*$ and $\Cgc_k$ be the detected connected component containing the significant change-point $\tau_k$
	$$\Cgc_k = \underset{r' \in \cR}\bigcup \SCPd_{r'} \cap \left[ \tau_k - 2(\rg_k - 1), \tau_k + 2(\rg_k - 1)\right] \enspace .$$
	We know from \Cref{prop:double-inclusion} that $C_k$ is a connected component of $\bigcup_{r' \in \cR} \SCPd_{r'}$ and we want to prove now that $C_k$ does not overlap with $\bigcup_{l \in \CP_r \setminus \CPd_r} [l-r+1, l+r-1]$ for some $r \in \cR$. Let $r_0$ be such that $C_k$ is the connected component of $\SCP_{r_0}$,
	$$ C_k \subset \SCPd_{r_0} \enspace .$$
	Such an $r_0$ exists and is unique since the sets $\left(\SCPd_{r'}\right)$ are disjoint. We have from \Cref{prop:double-inclusion} that $\tau_k \in \bigcup_{r' \in \cR, r' \leq \rg_k} \SCPd_{r'}$ so that
	$$r_0 \leq \rg_k \enspace .$$
	Let $r \in \cR$ and $l \in \CP_r \setminus \CPd_r$ and assume without loss of generality that $l + r - 1 < \tau_k$. Since there is no false positive, $(l,r) \not \in \cH_0$ and there exists at least one true change-point in the interval of detection $[l-r+1, l+r-1]$. Denote $\tau_a, \dots, \tau_b$ with $a \leq b$ the true change-points belonging to $[l-r+1,l+r-1]$. By definition of $\CP_r \setminus \CPd_r$, $\tau_a, \dots, \tau_b$ are not significant change-points, i.e. $a, a+1, \dots,b \not \in \cK^*$. We consider the two cases $r > \rg_{k}$ and $r \leq \rg_{k}$
	\begin{itemize}
	\item \underline{$r > \rg_{k}$} : In that case, since the sets $(\SCP_{r'})$ are disjoint and $\Cgc_{k} \subset \SCPd_{r_0}$, we have $\Cgc_{k}\cap [l-r+1, l+r-1] = \emptyset$.
	\item \underline{$r \leq \rg_{k}$} : In that case, we have
	$$ l+r-1 \leq \tau_b + 2(r - 1) \leq \tau_b + 2(\rg_k- 1)< \tau_{k} - 2(\rg_{k} - 1) \enspace,$$
	where we used the fact that $4(\rg_{k} - 1) < r_{k} \leq \tau_{k} - \tau_b$. Since by \Cref{prop:double-inclusion} we have $\Cgc_{k} \subset [l-r+1, l+r-1]$, we also have in that case $\Cgc_{k}\cap [l-r+1, l+r-1] = \emptyset$.
\end{itemize}
This concludes the proof of the proposition.
\end{proof}


\subsection{Proofs for Gaussian multivariate change-point detection}
\label{sub:proofs_in_the_gaussian_setting}
From now on, we use the following notation for all $(l,r) \in J_n$.
\begin{itemize}
	\item For any $(v_1,\dots,v_n)$ with $v_t \in \bbR^p$, the left mean and right mean of $v$ on $[l-r,l+r)$ are denoted by
	$$\bar v_{l,+r} = \frac{1}{r}\sum_{t = l}^{l+r-1} v_t \quad \bar v_{l,-r} = \frac{1}{r}\sum_{t = l-r}^{l-1} v_t \enspace .$$
	\item The population term of the CUSUM statistic $\bC_{l,r}$ is written
	$$U_{l,r} = \sqrt{\frac{r}{2}}\left(\bar \theta_{l,+r} - \bar \theta_{l,-r}\right)\enspace.$$
	\item With these notation, we write $v_{l,+r,i},v_{l,-r,i}, U_{l,r,i}$ for the $i^{th}$ coordinate of the vector $v_{l,+r}, v_{l,-r}, U_{l,r}$.
	\item We define, for $1\leq s\leq p$, the order statistics  $U_{l,r,(s)}$ by $|U_{l,r,(1)}|\geq |U_{l,r,(2)}|\geq \ldots |U_{l,r,(p)}|$.
\end{itemize}
\subsubsection{Proof of \Cref{prop:dense}}
\label{subsub:proof_of_prop:dense}

\paragraph*{Step 0: Consequence of Equation~\eqref{eq:EnergyDenseGauss} on the grid.}
Let $k \in [K]$ and assume that $\tau_k$ is a $\kappa_{\mathrm{d}}$-dense high-energy change-point (see Equation~\eqref{eq:EnergyDenseGauss}). We have that
\begin{equation}
\label{eq:EnergyDenseGaussGrid}
\begin{aligned}
	\norm{ U_{\tgD_k,\rgD_k}}^2
	&\geq \frac{9}{16}\norm{ U_{\tau_k,\rgD_k}}^2 \\
	&\geq \frac{9}{16\times 12}\kappa_{\mathrm{d}}\NoiseBound<p,n,\rgD_k,\delta>,
\end{aligned}
\end{equation}
since by definition $\|\tau_k - \tgD_k\| \leq \rgD_k/4$, so that $||\overline\theta_{\tgD_k,+\rgD_k} - \overline\theta_{\tgD_k,-\rgD_k}||^2 \geq \frac{9}{16}||\overline\theta_{\tau_k,+\rgD_k} - \overline\theta_{\tau_k,-\rgD_k}||^2.$

\paragraph*{Step 1: Introduction of useful high probability events.} Remark that 
\begin{align*}
	\frac{r}{2}
	\left[\norm{\overline{y}_{l,+r} - \overline{y}_{l,-r} }^2 - \norm{ \overline\theta_{l,-r} - \overline\theta_{l,+r}}^2
	\right] - \sigma^2 p
	=
	r \proscal<\overline{\Noise}_{l,+r} - \overline{\Noise}_{l,-r},\overline{\theta}_{l,+r} - \overline{\theta}_{l,-r}>
	+
	\frac{r}{2} \norm{\overline{\Noise}_{l,+r} - \overline{\Noise}_{l,-r} }^2 - \sigma^2 p \enspace.
\end{align*}

The first term, written as
\begin{align*}
	r \proscal<\overline{\Noise}_{l,+r} - \overline{\Noise}_{l,-r},\overline{\theta}_{l,+r} - \overline{\theta}_{l,-r}> \enspace,
\end{align*}
is a crossed term between the noise and the mean vector $\theta$. \Cref{lem:crossedtermsgrid} states that near the change-points and on the grid defined by the sets $\cR, \cD_r$, it is jointly controlled with high probability.
\begin{lemma}\label{lem:crossedtermsgrid}
Let $1\geq \delta>0$. The event
\begin{align*}
\begin{aligned}
	\xDense_1
	& = \bigcap_{k \in [K]}
	\Bigg\{ \rgD_k\abs{ \proscal<\overline{\Noise}_{\tgD_k,+\rgD_k} - \overline{\Noise}_{\tgD_k,-\rgD_k},\overline{\theta}_{\tgD_k,+\rgD_k} - \overline{\theta}_{\tgD_k,-\rgD_k}>} \\
	& \leq \frac{1}{8} \rgD_k\norm{ \overline\theta_{\tgD_k,+\rgD_k} - \overline\theta_{\tgD_k,-\rgD_k}}^2
	+ 16\sigma^2 \log\left(2\frac{n}{\rgD_k \delta}\right) \Bigg\} \enspace.
\end{aligned}
\end{align*}
holds with probability larger than $1 - \delta$.
\end{lemma}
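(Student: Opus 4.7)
The plan rests on the elementary observation that, for each fixed $(l,r) \in J_n$, the quantity $X_{l,r} := r\langle \bar\Noise_{l,+r} - \bar\Noise_{l,-r},\ \bar\theta_{l,+r} - \bar\theta_{l,-r}\rangle$ is a centered Gaussian scalar: the vector $\bar\theta_{l,+r} - \bar\theta_{l,-r}$ is deterministic, while $\bar\Noise_{l,+r} - \bar\Noise_{l,-r}$ is Gaussian with covariance $(2\sigma^2/r)\mathbf{I}_p$ (the two windows being disjoint). Its variance is therefore $2r\sigma^2 \|\bar\theta_{l,+r} - \bar\theta_{l,-r}\|^2$. I will simply invoke the standard Gaussian tail bound for $X_{l,r}$ and then convert it into the additive form appearing in the lemma via Young's inequality.

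Concretely, for any weight $\delta_{l,r} \in (0,1)$, Gaussian concentration yields $|X_{l,r}| \leq 2\sigma\sqrt{r}\,\|\bar\theta_{l,+r} - \bar\theta_{l,-r}\|\,\sqrt{\log(2/\delta_{l,r})}$ with probability at least $1-\delta_{l,r}$. I then split this product using $ab\le a^2/8 + 2b^2$ with $a=\sqrt{r}\|\bar\theta_{l,+r} - \bar\theta_{l,-r}\|$, so that the signal term acquires the prefactor $1/8$ required by the lemma while the residual noise term becomes $8\sigma^2\log(2/\delta_{l,r})$. Calibrating $\delta_{l,r}:=2(r\delta/(2n))^2$ gives $\log(2/\delta_{l,r}) = 2\log(2n/(r\delta))$, producing exactly the constant $16$ in the stated bound. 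The doubling here (from $8$ to $16$) reflects the need to overpower the grid complexity: because $|\cG_D|$ is linear in $n$, the natural weights must decay like $(r/n)^2$ rather than $r/n$, which doubles the exponent inside the logarithm.

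The remaining task is a union bound. Since the pairs $(\tgD_k,\rgD_k)$ introduced in Step~0 of the proof of \Cref{prop:dense} all belong to the dyadic grid $\cG_D$, it suffices to control the above pointwise event simultaneously over $(l,r)\in\cG_D$. Using $|\cD_r|\le 2n/r$ and the fact that $\sum_{r\in\cR} r \le n$ on the dyadic grid, the total error probability is bounded by
\[
\sum_{(l,r)\in\cG_D} \delta_{l,r} \;\leq\; \sum_{r\in\cR}\frac{2n}{r}\cdot\frac{r^2\delta^2}{2n^2} \;=\;\frac{\delta^2}{n}\sum_{r\in\cR} r \;\leq\; \delta^2 \;\leq\;\delta,
\]
which closes the argument. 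No step is a genuine obstacle: the whole lemma amounts to a one-shot Gaussian concentration estimate combined with the bookkeeping above.
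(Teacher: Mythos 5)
Your proof is correct and follows essentially the same route as the paper's: identify the crossed term as a centered Gaussian with variance $2r\sigma^2\|\bar\theta_{l,+r}-\bar\theta_{l,-r}\|^2$, apply the Gaussian tail bound with scale-dependent weights of order $(r/n)^2$, convert to the additive form via Young's inequality $ab\le a^2/8+2b^2$, and close with a union bound using $|\cD_r|\le 2n/r$ and $\sum_{r\in\cR}r\le n$. The only (immaterial) difference is that you take the union over the entire dyadic grid with weights proportional to $\delta^2$, whereas the paper unions only over the $K$ change-points with weights proportional to $\delta$; both sum to at most $\delta$.
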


The second term, written as
\begin{align*}
	\frac{r}{2}
	\norm{\overline{\Noise}_{l,+r} - \overline{\Noise}_{l,-r} }^2
	- \sigma^2 p \enspace,
\end{align*}
is a term of pure noise. \Cref{lem:purenoisegrid} states that it is controlled jointly with high probability on the grid defined by the sets $\cR, \cD_r$.
\begin{lemma}\label{lem:purenoisegrid}
Let $1\geq \delta>0$. The event
\begin{align*}
\begin{aligned}
	\xDense_2
	=
	\bigcap_{r \in \cR} \bigcap_{l \in \cD_r}
	\left\{
		\abs{\frac{r}{2} \norm{\overline{\Noise}_{l,+r} - \overline{\Noise}_{l,-r} }^2 - \sigma^2 p} \leq 4 \sigma^2 \left[ \sqrt{p \Log{2\frac{n}{r\delta} }} + \Log{ 2\frac{n}{r\delta}}  \right]
	\right\} \enspace,
\end{aligned}
\end{align*}
holds with probability larger than $1 - \delta$.
\end{lemma}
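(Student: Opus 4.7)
The plan is to recognize the quantity $\frac{r}{2\sigma^2}\|\bar\Noise_{l,+r}-\bar\Noise_{l,-r}\|^2$ as a chi-squared random variable with $p$ degrees of freedom, apply the Laurent--Massart tail inequality, and then union-bound over the dyadic grid. Since the $\Noise_t$ are independent $\cN(0,\sigma^2\bI_p)$ vectors, the difference $\bar\Noise_{l,+r}-\bar\Noise_{l,-r}$ is itself a sum of $2r$ independent centered Gaussians of covariance $(\sigma^2/r^2)\bI_p$, so it has law $\cN(0,(2\sigma^2/r)\bI_p)$. Consequently $Z_{l,r}:=\sqrt{r/(2\sigma^2)}\,(\bar\Noise_{l,+r}-\bar\Noise_{l,-r})$ is standard normal in $\bbR^p$, and
\[
\frac{r}{2}\|\bar\Noise_{l,+r}-\bar\Noise_{l,-r}\|^2-\sigma^2 p \;=\; \sigma^2\bigl(\|Z_{l,r}\|^2-p\bigr),\qquad \|Z_{l,r}\|^2\sim\chi^2_p.
\]

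Second, I would invoke the Laurent--Massart deviation bound: for any $u>0$,
\[
\P\bigl(\bigl|\|Z_{l,r}\|^2-p\bigr|>2\sqrt{pu}+2u\bigr)\leq 2e^{-u}.
\]
Choosing the scale-dependent weight $u_{l,r}=2\Log{2n/(r\delta)}$, the per-pair deviation bound becomes $2\sqrt{pu_{l,r}}+2u_{l,r}=2\sqrt{2}\sqrt{p\Log{2n/(r\delta)}}+4\Log{2n/(r\delta)}$, which is dominated by $4\bigl[\sqrt{p\Log{2n/(r\delta)}}+\Log{2n/(r\delta)}\bigr]$ since $2\sqrt{2}\leq 4$. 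After multiplying by $\sigma^2$, this matches exactly the threshold appearing in the statement.

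The main delicate point is the union bound over the dyadic grid. With the above choice, the per-pair failure probability equals $2e^{-u_{l,r}}=r^2\delta^2/(2n^2)$. Summing over $l\in\cD_r$ and using the dyadic-grid estimate $|\cD_r|\leq 2n/r$ gives a per-scale contribution bounded by $r\delta^2/n$. Summing over $r\in\cR=\{1,2,4,\ldots,2^{\lfloor\log_2 n\rfloor-1}\}$, the geometric sum yields $\sum_{r\in\cR} r\leq n$, so the total failure probability is at most $\delta^2\leq\delta$. The subtlety to flag is that the naive weight $u_{l,r}=\Log{2n/(r\delta)}$ would produce an extraneous multiplicative factor $|\cR|\asymp\log n$ after summing over scales; the mild inflation to $u_{l,r}=2\Log{2n/(r\delta)}$ is precisely what the generous constant $4$ in the lemma is designed to absorb, while preserving the $\Log{2n/(r\delta)}$ dependence on the grid position.
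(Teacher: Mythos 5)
Your proposal is correct and follows essentially the same route as the paper: identify the statistic as a $\sigma^2\chi^2_p$ variable, apply the Laurent--Massart (Bernstein-type) tail bound, and union-bound with scale-dependent weights chosen so that the per-scale failure probabilities sum geometrically over the dyadic scales (the paper takes per-pair level $\delta_r = r^2\delta/(2n^2)$, you take $2e^{-u_{l,r}} = r^2\delta^2/(2n^2)$ — the same device). The constant bookkeeping ($2\sqrt{2}\le 4$, $|\cD_r|\le 2n/r$, $\sum_{r\in\cR} r\le n$) matches the paper's.
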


Set now
\begin{align*}
	\xDense := \xDense
		  = \xDense_1 \cap \xDense_2 \enspace.
\end{align*}
Note that
\begin{align*}
	\bbP(\xDense) \geq 1 - 2\delta \enspace.
\end{align*}

\paragraph*{Step 2: Study in the `no change-point' situation.}

Consider $r\in \cR, l\in \cD_r$ such that $\{\tau_k, k \in [K]\} \cap [l-r, l+r) = \emptyset$. Note that since $\{\tau_k, k \in [K]\} \cap [l-r, l+r) = \emptyset$, we have $\overline\theta_{l,-r} = \overline\theta_{l,+r}$ so that
\begin{align*}
	\frac{r}{2}\norm{\overline\theta_{l,-r} - \overline\theta_{l,+r}}^2
	=
	0 \enspace,
\end{align*}
and
\begin{align*}
	r \proscal<\overline{\Noise}_{l,+r} - \overline{\Noise}_{l,-r},\overline{\theta}_{l,+r} - \overline{\theta}_{l,-r}>  = 0.
\end{align*}
Moreover we have on $\xDense$ that - see \Cref{lem:purenoisegrid}
\begin{align*}
	\abs{\frac{r}{2}
	\norm{\overline{\Noise}_{l,+r} - \overline{\Noise}_{l,-r} }^2 - \sigma^2 p} \leq 4 \sigma^2
	\left[
		\sqrt{p \Log{2\frac{n}{r\delta} }} + \Log{2 \frac{n}{r\delta} }
	\right]
	= \sigma^2\tD_r \enspace.
\end{align*}
And so
\begin{align*}
	 \psiD_{l,r}\leq  \tD_r \enspace,
\end{align*}
so that
\begin{align*}
	\TD_{l,r} = 0\enspace,
\end{align*}
on $\xDense$.
This concludes the proof of the first part of the proposition.

\paragraph*{Step 3: Study in the `change-point' situation.}

Consider $k\in [K]$ $\tau_k$ is a $\kappa_{\mathrm{d}}$-dense high-energy change-point - that is Equation\eqref{eq:EnergyDenseGauss} holds.
We have from \Cref{eq:EnergyDenseGaussGrid} that for $\kappa_{\mathrm{d}}$ large enough,
\begin{align*}
	\frac{\rgD_k}{2}
	\norm{\overline\theta_{\tgD_k,-\rgD_k} - \overline\theta_{\tgD_k,+\rgD_k}}^2  &\geq \frac{9}{16\times 12}\kappa_{\mathrm{d}} \sigma^2\NoiseBound<p,n,\rgD_k,\delta>\\
	&> 4 \sigma^2 \tD_{\rgD_k} \enspace.
\end{align*}
So on $\xDense$ this implies that - see \Cref{lem:crossedtermsgrid}
\begin{align*}
	\rgD_k \abs{\proscal<\overline{\Noise}_{\tgD_k,+\rgD_k} - \overline{\Noise}_{\tgD_k,-\rgD_k},\overline{\theta}_{\tgD_k,+\rgD_k} - \overline{\theta}_{\tgD_k,-\rgD_k}>}
	\leq
	\frac{ \rgD_k}{4} \norm{ \overline\theta_{\tgD_k,+\rgD_k} - \overline\theta_{\tgD_k,-\rgD_k}}^2.
\end{align*}
Moreover we have on $\xDense$ that - see \Cref{lem:purenoisegrid}
\begin{align*}
	\abs{\frac{\rgD}{2}
	\norm{\overline{\Noise}_{\tgD_k,+\rgD_k} - \overline{\Noise}_{\tgD_k,-\rgD_k} }^2 - \sigma^2 p}
	\leq
	4 \sigma^2
	\left[
		\sqrt{p \Log{2\frac{n}{\rgD_k} \delta^{-1}}}
		+ \Log{ 2\frac{n}{\rgD_k} \delta^{-1}}
	\right]
	= \sigma^2\tD_{\rgD_k}.
\end{align*}
And so on $\xDense$, combining the three previous displayed equations implies
\begin{align*}
	\psiD_{\tgD_k,\rgD_k}
	\geq
	\frac{\frac{\rgD_k}{2} \norm{\overline\theta_{\tgD_k,+\rgD_k} - \overline\theta_{\tgD_k,-\rgD_k}}^2}{2\sigma^2} -  \tD_{\rgD_k}> (2 - 1)\tD_{\rgD_k} = \tD_{\rgD_k} \enspace,
\end{align*}
so that
\begin{align*}
	\TD_{\tgD_k,\rgD_k} = 1 \enspace.
\end{align*}
This concludes the proof of the second part of the proposition.

\begin{proof}[Proof of \Cref{lem:crossedtermsgrid}]

Let $k \in [K]$. Since the vectors $\Noise_t$ are i.i.d.~and distributed as $\cN(0, \sigma^2 \bI_p)$, it holds that
\begin{align*}
	\rgD_k \proscal<\overline{\Noise}_{\tgD_k,+\rgD_k} - \overline{\Noise}_{\tgD_k,-\rgD_k},\overline{\theta}_{\tgD_k,+\rgD_k} - \overline{\theta}_{\tgD_k,-\rgD_k}> \sim \cN\left(0, 2\rgD_k\sigma^2 \norm{ \overline\theta_{\tgD_k,+\rgD_k} - \overline\theta_{\tgD_k,-\rgD_k}}^2\right)
	\enspace.
\end{align*}
And so for $\delta_k>0$, it holds with probability larger than $1 - \delta_k$ it holds that
\begin{align*}
\begin{aligned}
	\rgD_k \abs{\proscal<\overline{\Noise}_{\tgD_k,+\rgD_k} - \overline{\Noise}_{\tgD_k,-\rgD_k},\overline{\theta}_{\tgD_k,+\rgD_k} - \overline{\theta}_{\tgD_k,-\rgD_k}>}\\
	\leq
	2\sigma \norm{ \overline\theta_{\tgD_k,+\rgD_k} - \overline\theta_{\tgD_k,-\rgD_k}}\sqrt{\rgD_k \log(2\delta_k^{-1})} \enspace.
\end{aligned}
\end{align*}
Let us set $\delta_k = \frac{(\rgD_k)^2\delta}{2n^2}$. Note that
\begin{align*}
	\sum_{k \in [K]} \delta_k
	=
	\sum_{r \in R} \sum_{k\in [K] : \rgD_k = r}
	\frac{(\rgD_k)^2\delta}{2n^2} \leq \sum_{r \in R} \sum_{l \in D_r}
	\frac{r^2\delta}{2n^2}
	\leq
	\sum_{r \in \cR}  \frac{r\delta}{2n} \leq \delta \enspace,
\end{align*}
since $r_k \geq \rgD_k$ and $|\cD_r| \leq 2n/r$, and also by definition of $\cR$ which implies $\sum_{r \in \cR} \frac{r}{n} \leq 1$. And so if $\delta \leq 1$, then with probability larger than $1-\delta$, for any $k \in [K]$, we have
\begin{align*}
	\rgD_k \abs{\proscal<\overline{\Noise}_{\tgD_k,+\rgD_k} - \overline{\Noise}_{\tgD_k,-\rgD_k},\overline{\theta}_{\tgD_k,+\rgD_k} - \overline{\theta}_{\tgD_k,-\rgD_k}>}
	\leq
	& 2\sigma \norm{ \overline\theta_{\tgD_k,+\rgD_k} - \overline\theta_{\tgD_k,-\rgD_k}}\\
	& \sqrt{2\rgD_k \log\left(2\frac{n}{\rgD_k }\delta^{-1}\right)}\enspace.
\end{align*}
This implies in particular that with probability larger than $1-\delta$, for any $k \in [K]$, we have
\begin{align*}
	\rgD_k \abs{\proscal<\overline{\Noise}_{\tgD_k,+\rgD_k} - \overline{\Noise}_{\tgD_k,-\rgD_k},\overline{\theta}_{\tgD_k,+\rgD_k} - \overline{\theta}_{\tgD_k,-\rgD_k}>}
	\leq
	& \frac{ \rgD_k}{2} \frac{\norm{ \overline\theta_{\tgD_k,+\rgD_k} - \overline\theta_{\tgD_k,-\rgD_k}}^2}{4}\\
	& + 16 \sigma^2\log\left(2\frac{n}{\rgD_k}\delta^{-1}\right) \enspace.
\end{align*}
\end{proof}

\begin{proof}[Proof of \Cref{lem:purenoisegrid}]

Let $r \in \cR$ and $l \in \cD_r$. Since the vectors $\Noise_t$ are i.i.d.~and distributed as $\cN(0, \sigma^2\bI_p)$, it holds that
\begin{align*}
	\frac{r}{2} \norm{\overline{\Noise}_{l,+r} - \overline{\Noise}_{l,-r} }^2 \sim \sigma^2\chi^2_p,
\end{align*}
which implies by properties of the $\chi^2_p$ distribution - see e.g. Lemma 1 of \cite{Laurent00} - that for any $\delta_r>0$ we have with probability larger than $1-\delta_r$
\begin{align*}
	\abs{\frac{r}{2} \norm{\overline{\Noise}_{l,+r} - \overline{\Noise}_{l,-r} }^2 - \sigma^2 p}
	\leq
	2\sigma^2\sqrt{p\log(2/\delta_r)} + 2\sigma^2 \log(2/\delta_r) \enspace.
\end{align*}
If we set, for $\delta>0$, $\delta_r = \frac{r^2\delta}{2n^2}$, we have that with probability larger than $1 - \frac{r\delta}{n}$, that $\forall l \in D_r$
\begin{align*}
	\abs{
		\frac{r}{2}
		\norm{\overline{\Noise}_{l,+r} - \overline{\Noise}_{l,-r} }^2
		- \sigma^2 p
		}
	\leq
	2\sigma^2\sqrt{p\log(2/\delta_r)} + 2\sigma^2 \log(2/\delta_r) \enspace,
\end{align*}
since $|\cD_r| \leq 2n/r$. And so with probability larger than $1 - \delta$, for all $r \in \cR$ and $l \in \cD_r$
\begin{align*}
	\abs{
		\frac{r}{2}
		\norm{\overline{\Noise}_{l,+r} - \overline{\Noise}_{l,-r} }^2
		- \sigma^2 p
		}
	\leq
	2 \sigma^2 \sqrt{p\log(2/\delta_r)} + 2\sigma^2 \log(2/\delta_r) \enspace,
\end{align*}
since $\sum_{r \in \cR} \frac{r}{n} \leq 1$. And so finally for $\delta \leq 1$ and with probability larger than $1-\delta$, for all $r \in \cR$ and $l \in \cD_r$
\begin{align*}
	\abs{
		\frac{r}{2}
		\norm{\overline{\Noise}_{l,+r} - \overline{\Noise}_{l,-r} }^2
		- \sigma^2 p
		}
	\leq
	4\sigma^2
	\left[
		\sqrt{p\Log{2\frac{n}{r}\delta^{-1}}} + \Log{2\frac{n}{r} \delta^{-1}}
	\right] \enspace.
\end{align*}
This concludes the proof.
\end{proof}
\subsubsection{Proof of  \Cref{prop:Sparse}}
\paragraph*{Step 1 : Analysis of the Berk-Jones statistics}
We first define a threshold $\tbj_{r,s}$ for the Berk-Jones statistics for all $r,s \geq 1$
\beq\label{eq:definition_t_s}
		\tbj_{r,s}=\min\left\{x\geq 2: \overline{\Phi}(x)\leq \frac{s^2}{28^2p\log(2\alphabj^{-1}_{x,r})}\right\}\enspace ,
		\eeq
where we recall that $\alphabj_{x,r}$ are the weights defined by \Cref{eq:def_alpha_tr}:
\begin{equation*}
\alphabj_{x,r} = \frac{6\delta r}{\pi^2x^2|\cD_r|n }\enspace .
\end{equation*}
Remark that $(\tbj_{r,s})$ is nonincreasing with $s$ and define for all $r \geq 1$
\beq\label{eq:condition_s_t_s}
	\sm_r= \min\left\{s \in \cZ ~:\quad s \geq \frac{28}{3}\Log{2\alphabj^{-1}_{\tbj_{r,s},r}}\right\}\enspace .
		\eeq
The second point of the following proposition ensures that if there exists $s \in \cZ$ such that $ U_{l,r,(s)} \geq t_s$ for some $s \geq \sm_r$, for $(l,r) = (\tgSp_k,\rgSp_k)$,  then $\Tbj_{l,r} = 1$ with high probability. We recall that $|U_{l,r,(1)}|\geq \dots \geq |U_{l,r,(p)}|$ are the sorted absolute values of the coordinate of $U_{l,r}$ and that $\cH_0$ is defined by \Cref{eq:Ho}.
\begin{proposition}\label{prop:BJ}
	There exists an event $\xBJ$ of probability larger than $1-2\delta$ such that the following holds:
	\begin{itemize}
	\item  $\Tbj_{l,r} = 0$ for any $(l,r) \in \cH_0 \cap G$.
	\item  For all $k \in [K]$, if there exists $s\in \cZ$ such that $s \geq \sm_{\rgSp_k}$ and $U_{\tgSp_k,\rgSp_k,(s)} > \tbj_{\rgSp_k,s}$, then $\Tbj_{\tgSp_k,\rgSp_k} = 1$.
	\end{itemize}
	\end{proposition}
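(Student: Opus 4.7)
I would take $\xBJ = A_1 \cap A_2$, where $A_1$ controls the family-wise error rate of $(\Tbj_{l,r})_{(l,r)\in\cG}$ under $\cH_0$ and $A_2$ is a Bernoulli concentration event ensuring that the signal coordinates survive the addition of noise. For $A_1$, note that for every $(l,r)\in\cG\cap\cH_0$ the vector $\bC_{l,r}$ is a standard Gaussian in $\mathbb{R}^p$, so $N_{x,l,r}\sim \cB(p,2\overline{\Phi}(x))$; by definition of the quantile function, $\Prob[N_{x,l,r}>\overline{Q}^{-1}(\alpha^{(BJ)}_{x,r},p,2\overline{\Phi}(x))]\leq \alpha^{(BJ)}_{x,r}$. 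A union bound together with the summation identity $\sum_r\sum_{l\in\cD_r}\sum_{x\in\bbN^*}\alpha^{(BJ)}_{x,r}\leq\delta$ already recorded in the text yields $\Prob(A_1)\geq 1-\delta$ and the no-false-positive conclusion is immediate on $A_1$.

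For $A_2$, the key observation is that whenever $|U_{l,r,i}|>\tbj_{r,s}$, writing $\bC_{l,r,i}=U_{l,r,i}+\xi_i$ with $\xi_i\sim\cN(0,1)$, the event $\{|\bC_{l,r,i}|>\tbj_{r,s}\}$ occurs with probability at least $1/2$ (pick $\xi_i$ of the same sign as $U_{l,r,i}$). If $U_{\tgSp_k,\rgSp_k,(s)}>\tbj_{\rgSp_k,s}$ there are at least $s$ such indices, so $N_{\tbj_{\rgSp_k,s},\tgSp_k,\rgSp_k}$ stochastically dominates a $\cB(s,1/2)$ variable, and a Chernoff bound yields $N_{\tbj_{\rgSp_k,s},\tgSp_k,\rgSp_k}\geq s/4$ with probability at least $1-e^{-cs}$ for an absolute $c>0$. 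I would then union-bound over $k\in[K]$ and $s\in\cZ$; the constraint $s\geq\sm_{\rgSp_k}\geq \tfrac{28}{3}\log(2(\alpha^{(BJ)}_{\tbj_{r,s},r})^{-1})$ forces $s$ to grow at least logarithmically in $n/\delta$, which absorbs the cost of the union bound and gives $\Prob(A_2)\geq 1-\delta$.

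On $A_1\cap A_2$ it only remains to check that the detection lower bound $s/4$ strictly exceeds the quantile threshold $\overline{Q}^{-1}(\alpha^{(BJ)}_{\tbj_{\rgSp_k,s},\rgSp_k},p,2\overline{\Phi}(\tbj_{\rgSp_k,s}))$ appearing in the test $\Tbj_{\tgSp_k,\rgSp_k}$. A Bernstein-type bound on a Binomial quantile controls this by $2\mu+\sqrt{2\mu\log(2/\alpha^{(BJ)})}+\log(2/\alpha^{(BJ)})$ with $\mu=2p\overline{\Phi}(\tbj_{r,s})$; plugging in $p\overline{\Phi}(\tbj_{r,s})\leq s^2/(28^2\log(2(\alpha^{(BJ)})^{-1}))$ from the definition \eqref{eq:definition_t_s} of $\tbj_{r,s}$ and $\log(2(\alpha^{(BJ)})^{-1})\leq 3s/28$ from the definition of $\sm_r$, each of the three quantile contributions becomes a small constant multiple of $s$, and the numerical constants $28^2$ and $28/3$ have been tuned so that their sum lies strictly below $s/4$. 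The hard part will be precisely this last book-keeping: three non-homogeneous powers of $s$ enter the quantile bound and one must verify they all jointly fall under the Chernoff floor uniformly over $s\in\cZ$ and over all $k\in[K]$, in both the regime where $\overline{\Phi}(\tbj_{r,s})$ is moderate and the regime where it is extremely small.
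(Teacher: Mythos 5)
Your event $A_1$ and the no-false-positive part coincide with the paper's argument and are fine. The gap is in the detection part: you lower-bound $N_{x,l,r}$ (with $x=\tbj_{r,s}$) by the count over the $s$ signal coordinates alone, obtaining $N_{x,l,r}\gtrsim s/4$, and then try to show that $s/4$ exceeds the full threshold $\gamma_{x,r}=\overline{Q}^{-1}(\alpha^{(BJ)}_{x,r},p,2\overline{\Phi}(x))$. But $\gamma_{x,r}$ is an upper quantile of $\cB(p,2\overline{\Phi}(x))$ and therefore sits above its mean $2p\overline{\Phi}(\tbj_{r,s})$. The definition \eqref{eq:definition_t_s} only guarantees $p\overline{\Phi}(\tbj_{r,s})\leq s^2/(28^2\log(2(\alpha^{(BJ)}_{\tbj_{r,s},r})^{-1}))$, which is a small multiple of $s$ only when $s\lesssim\log(2(\alpha^{(BJ)})^{-1})$; the hypothesis $s\geq\sm_{\rgSp_k}\geq\tfrac{28}{3}\log(2(\alpha^{(BJ)})^{-1})$ constrains $s$ from \emph{below}, not from above, so in general $2p\overline{\Phi}(\tbj_{r,s})$ can be of order $s^2/\log(2(\alpha^{(BJ)})^{-1})\gg s$. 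Concretely, for $s\asymp\sqrt{p\log(2(\alpha^{(BJ)})^{-1})}$ with $p$ large compared to $\log(2(\alpha^{(BJ)})^{-1})$ — squarely inside the regime the Berk--Jones test is meant to cover — the null coordinates alone push the threshold to order $p\overline{\Phi}(\tbj_{r,s})$, which dwarfs $s/4$. Your claim that ``each of the three quantile contributions becomes a small constant multiple of $s$'' is therefore false for the mean term $2\mu=4p\overline{\Phi}(\tbj_{r,s})$, and the final bookkeeping cannot be made to work.

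The missing idea is that one must not discard the $p-s$ null coordinates: with high probability they contribute roughly $2(p-s)\overline{\Phi}(x)$ to $N_{x,l,r}$, and it is precisely this contribution that cancels the mean term inside $\gamma_{x,r}$. The paper writes $N_{x,l,r}=N^{(1)}_{x,l,r}+N^{(2)}_{x,l,r}$, lower-bounds the null part by the quantile $\eta_{x,r,s}=\overline{Q}^{-1}(1-\alpha^{(BJ)}_{x,r}/2,\,p-s,\,2\overline{\Phi}(x))$ (at no extra union-bound cost, since the same weights $\alpha^{(BJ)}_{x,r}$ are reused), and then only requires the signal count to exceed the \emph{difference}
\[
\gamma_{x,r}-\eta_{x,r,s}\;\leq\; 2s\overline{\Phi}(x)+4\sqrt{p\overline{\Phi}(x)\log\bigl(2(\alpha^{(BJ)}_{x,r})^{-1}\bigr)}+\tfrac{4}{3}\log\bigl(2(\alpha^{(BJ)}_{x,r})^{-1}\bigr)\enspace,
\]
each term of which genuinely is a small multiple of $s$ under \eqref{eq:definition_t_s} and \eqref{eq:condition_s_t_s} (note $\overline{\Phi}(x)\leq\overline{\Phi}(2)$ since $x\geq 2$). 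Your signal-side computation — each of the $s$ coordinates with $|U_{\cdot,(s)}|>\tbj_{r,s}$ crosses the level with probability at least $1/2$, followed by a binomial deviation bound — is essentially the paper's and survives; it is the target it must beat that you have set too high.
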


\paragraph*{Step 2 : Analysis of the partial norm statistics}
Since it may happen that $\tau_k$ is a sparse high-energy change-point but there is no $s \geq \sm_{\rgSp_k}$ such that $U_{\tgSp_k,\rgSp_k,(s)} \geq \tbj_{\rgSp_k,s}$, we use the following proposition on the partial norm test statistic $\Tsup_{l,r}$:
	\begin{proposition}\label{prop:Sup}
		There exists an event $\xPN$ of probability larger than $1-2\delta$ such that the following holds:
		\begin{itemize}
			\item  $\Tsup_{l,r} = 0$ for any $(l,r) \in \cH_0 \cap G$.
			\item for any $k\in [K]$, if there exists $s \in \cZ$ such that
				\begin{equation}\label{EnergySparseGaussMax}
					\sum_{s' = 1}^s\left|U_{\tgSp_k,\rgSp_k,(s')}\right|^2 > 4\tsup_{\rgSp_k,s} \enspace ,
				\end{equation}
			\end{itemize}
			then $\Tsup_{\tgSp_k,\rgSp_k} = 1$.
		\end{proposition}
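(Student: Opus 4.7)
The plan is to decompose the CUSUM statistic as $\bC_{l,r} = U_{l,r} + \xi_{l,r}$, where $\xi_{l,r}$ is the CUSUM applied to the pure noise $\Noise$, so that its $p$ coordinates $\xi_{l,r,1},\ldots,\xi_{l,r,p}$ are independent standard Gaussians. I will then define $\xPN$ as the event
\begin{equation*}
\xPN = \bigcap_{(l,r)\in\cG_D}\bigcap_{s\in\cZ}\left\{\sum_{i=1}^s \xi_{l,r,(i)}^2 \leq \tsup_{r,s}\right\},
\end{equation*}
where $\xi_{l,r,(1)},\ldots,\xi_{l,r,(p)}$ are the coordinates of $\xi_{l,r}$ sorted by decreasing absolute value. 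The merit of this single event is that it will provide both the no-false-positive control (under $\cH_0$, $\bC_{l,r}=\xi_{l,r}$, so $\Ssup_{l,r,s}$ equals the quantity inside the event) and the deterministic noise control required in the detection argument.

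To show $\bbP(\xPN)\geq 1-2\delta$, I will rewrite $\sum_{i=1}^s \xi_{l,r,(i)}^2 = \max_{|J|=s}\sum_{i\in J}\xi_{l,r,i}^2$, note that for each fixed $J$ with $|J|=s$ the sum follows a $\chi^2_s$ distribution, and combine the Laurent-Massart inequality $\bbP(\chi^2_s\geq s+2\sqrt{su}+2u)\leq e^{-u}$ with a union bound over the $\binom{p}{s}\leq(ep/s)^s$ subsets. Summing the resulting bounds over $s\in\cZ$ and $(l,r)\in\cG_D$ using $|\cD_r|\leq 2n/r$, $\sum_{r\in\cR}r/n\leq 1$ (dyadic grid), and $|\cZ|\leq 1+\log_2 p$, one can tune $u$ of order $\log(n/(r\delta)) + \log\log_2 p$ so that the uniform deviation stays below $\tsup_{r,s}=4s\log(2ep/s)+4\log(n/(r\delta))$. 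The factor $4$ in the threshold is comfortably large enough to absorb the Laurent-Massart cross term via $2\sqrt{su}\leq s+u$ and the additional $\log\log p$ coming from $|\cZ|$.

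Both claims then follow on $\xPN$. For the first, if $(l,r)\in\cH_0$ then $U_{l,r}=0$, hence $\Ssup_{l,r,s}=\sum_{i=1}^s\xi_{l,r,(i)}^2\leq\tsup_{r,s}$ for every $s\in\cZ$, so $\Tsup_{l,r}=0$. For the detection claim, fix $k$ and $s\in\cZ$ with $\sum_{s'=1}^s U_{\tgSp_k,\rgSp_k,(s')}^2 > 4\tsup_{\rgSp_k,s}$, write $(l,r)=(\tgSp_k,\rgSp_k)$, and let $I\subset[p]$ be the indices of the $s$ largest absolute values of $U_{l,r}$. Using that the sum of the top $s$ squared coordinates of $\bC_{l,r}$ dominates the sum of the squared $\bC_{l,r,i}$ over $i\in I$, together with the elementary inequality $(a+b)^2\geq a^2/2-b^2$, I obtain
\begin{equation*}
\Ssup_{l,r,s} \geq \sum_{i\in I}\bC_{l,r,i}^2 \geq \frac{1}{2}\sum_{i\in I}U_{l,r,i}^2 - \sum_{i\in I}\xi_{l,r,i}^2 \geq \frac{1}{2}\cdot 4\tsup_{r,s} - \tsup_{r,s} = \tsup_{r,s},
\end{equation*}
where the last step uses $\sum_{i\in I}\xi_{l,r,i}^2\leq \sum_{i=1}^s\xi_{l,r,(i)}^2\leq\tsup_{r,s}$ on $\xPN$. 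The strict inequality in the hypothesis propagates, so $\Ssup_{l,r,s}>\tsup_{r,s}$ and $\Tsup_{l,r}=1$.

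The main obstacle I foresee is the bookkeeping in the chi-squared step: one must choose the Laurent-Massart deviation parameter $u$ carefully so that the combinatorial factor $\binom{p}{s}$ is exactly absorbed into the $4s\log(2ep/s)$ term, while the grid-level multiplicities $|\cG_D|$ and $|\cZ|$ are absorbed into $4\log(n/(r\delta))$, all without spoiling the universal constants. The rest of the argument, in particular the $(a+b)^2\geq a^2/2-b^2$ manipulation and the observation that $I$ is deterministic in $\xi_{l,r}$ (so sorting $\xi$ separately only increases the bound), is immediate.
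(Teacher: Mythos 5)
Your proposal is correct and follows essentially the same route as the paper: the paper's event $\xi^{(\mathrm{p})}_1$ is exactly your event (a uniform bound on $\max_{|S|=s}\sum_{i\in S}\xi_{l,r,i}^2$ obtained from a $\chi^2$ tail bound plus a union bound over the $\binom{p}{s}\leq(ep/s)^s$ subsets, the scales, the sparsities in $\cZ$ and the locations), and the detection step is the same $(a+b)^2\geq a^2/2-b^2$ argument applied to the support of the top $s$ coordinates of $U_{\tgSp_k,\rgSp_k}$. The only cosmetic differences are that the paper invokes Bernstein rather than Laurent--Massart and absorbs the sum over $s\in\cZ$ via the factor $(1/2)^s$ rather than a $\log\log p$ inflation of the deviation parameter.
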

\paragraph*{Step 3 : Combination of the two Statistics}
		Let us return to the proof of \Cref{prop:Sparse}. To conclude the proof, it suffices to show that if $\tau_k$ is a $\kappa_{\mathrm{s}}$-sparse high-energy change-point - see \Cref{eq:EnergySparseGauss} - for some large enough constant $\kappa_{\mathrm{s}}$, then the result of one of the two preceding propositions holds. This is precisely what the following lemma shows.
\begin{lemma}\label{lem:LocalisationEnergy}
	There exists a constant $\kappa_{\mathrm{s}}$ such that if $\tau_k$ is a $\kappa_{\mathrm{s}}$-sparse high-energy change-point, then one of the following propositions is true:
	\begin{itemize}
		\item There exists $s \in \cZ $ such that $s > \sm_{\rgSp_k}$ and $\left|U_{\tgSp_k,\rgSp_k,(s)}\right| > \tbj_{\rgSp_k,s}$.
		\item There exists $s \in \cZ$ such that $s \leq \sm_{\rgSp_k}$ and $\sum_{s' = 1}^s\left|U_{\tgSp_k,\rgSp_k,(s')}\right|^2 > 4\tsup_{\rgSp_k,s}.$
	\end{itemize}
\end{lemma}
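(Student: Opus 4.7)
The proof will proceed by contradiction. Write $r = \rgSp_k$, $\sm = \sm_r$, and let $U_j := |U_{\tgSp_k,r,(j)}|$ denote the sorted absolute values of $U_{\tgSp_k,r}$. Suppose both conclusions of the lemma fail, so that for every $s \in \cZ$ with $s > \sm$ one has $U_s \leq \tbj_{r,s}$, and for every $s \in \cZ$ with $s \leq \sm$ one has $\sum_{j=1}^{s} U_j^2 \leq 4\tsup_{r,s}$. The goal is to show that these constraints force $\|U\|^2$ to be smaller than the lower bound extracted from the energy assumption, provided $\kappa_{\mathrm{s}}$ is chosen large enough.

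For the lower bound on $\|U\|^2$, recall $|\tgSp_k - \tau_k| \leq \rgSp_k/4$ and $4(\rgSp_k-1)\leq r_k$. The interval $[\tgSp_k - \rgSp_k, \tgSp_k + \rgSp_k)$ therefore contains $\tau_k$ as the unique change-point, which entails that $\bar\theta_{\tgSp_k,+\rgSp_k} - \bar\theta_{\tgSp_k,-\rgSp_k}$ is a scalar multiple of $\mu_k - \mu_{k-1}$ with scalar at least $3/4$. Hence $\|U\|^2 \geq (9/32) \rgSp_k \Delta_k^2 \geq c_0 \kappa_{\mathrm{s}} \sigma^2 \bigl[s_k\log(p\log(n/(r\delta))/s_k^2) + \log(n/(r\delta))\bigr]$ for a universal $c_0>0$, by the very definition of $\rgSp_k$. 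Moreover, $U$ inherits the $s_k$-sparsity of $\mu_k - \mu_{k-1}$, so $U_j = 0$ for $j > s_k$.

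For the upper bound, a case split on the relative position of $s_k$ with respect to $\sm$ is needed. If $s_k \leq \sm$, pick $s^\star \in \cZ$ with $s_k \leq s^\star \leq 2s_k$ (or $s^\star = \sm$ if $s_k < \sm < 2s_k$); failure of the partial-norm inequality yields directly $\|U\|^2 \leq 4\tsup_{r,s^\star} \leq C(s_k\log(2ep/s_k) + \log(n/(r\delta)))$. In this regime $\sm \asymp \log(n/(r\delta))$ forces $s_k \lesssim \log(n/(r\delta))$, so $\log(ep/s_k) \lesssim \log(p\log(n/(r\delta))/s_k^2)$ and the upper bound is of strictly smaller order than the energy lower bound, giving the contradiction. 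If instead $s_k > \sm$, split $\|U\|^2 = \sum_{j\leq 2\sm}U_j^2 + \sum_{j>2\sm}U_j^2$: the first piece is at most a constant multiple of $\tsup_{r,\sm}$ using failure of the partial-norm condition at $s = \sm$ together with the Markov bound $U_{\sm+1}^2 \leq 4\tsup_{r,\sm}/\sm$; the second piece is handled by a dyadic decomposition along $\cZ$, where each block $(s,2s] \cap [1,s_k]$ contributes at most $s\,\tbj_{r,s}^2$ by failure of the Berk--Jones condition at $s$.

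To close the argument one needs the standard estimate $\tbj_{r,s}^2 \leq 2\log(p\log(n/(r\delta))/s^2)_+ + O(1)$, derived from $\bar\Phi(x)\leq e^{-x^2/2}$ together with the bound $\log(2/\alpha_{x,r})\lesssim \log(n/(r\delta))$ that follows from $|\cD_r|\leq 2n/r$. The dyadic sum $\sum_{s\in\cZ,\,2\sm\leq s\leq s_k^+} s\,\tbj_{r,s}^2$ is then dominated either by its top term $\asymp s_k\log(p\log(n/(r\delta))/s_k^2)$ when $s_k \leq \sqrt{p\log(n/(r\delta))}/e$, or by the maximum $\asymp \sqrt{p\log(n/(r\delta))}$ attained around $s\asymp\sqrt{p\log(n/(r\delta))}/e$; in both situations this quantity is controlled by $\psi^{(s)}_{n,r,s_k}$ up to a universal constant. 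I expect the main obstacle to be precisely this last matching: carefully tracking the $\log\log$ factors appearing in $\tbj_{r,s}$ and in $\sm$, and verifying that the leftover $\tsup_{r,\sm}\asymp \log(n/(r\delta))\log(p/\log(n/(r\delta)))$ term is uniformly absorbed into $\kappa_{\mathrm{s}} \psi^{(s)}_{n,r,s_k}$ across the full range $s_k \in (\sm, \sqrt{p\log(n/(r\delta))}]$, so that $\kappa_{\mathrm{s}}$ can be chosen as a fixed universal constant.
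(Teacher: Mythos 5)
Your proof follows essentially the same route as the paper's: both lower-bound $\norm{U_{\tgSp_k,\rgSp_k}}^2$ by a constant multiple of $\rgSp_k\Delta_k^2$ using $|\tgSp_k-\tau_k|\leq \rgSp_k/4$, control the tail $\sum_{s'>\sm_{\rgSp_k}}U_{\tgSp_k,\rgSp_k,(s')}^2$ by a dyadic sum of the Berk--Jones thresholds with $s\,(\tbj_{\rgSp_k,s})^2\lesssim s\log\left(ep\log(n/(\rgSp_k\delta))/s^2\right)$, and conclude that the partial sum up to $\sm_{\rgSp_k}\land s_k$ must exceed $4\tsup_{\rgSp_k,\sm_{\rgSp_k}\land s_k}$. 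The contradiction framing, the explicit case split on $s_k$ versus $\sm_{\rgSp_k}$, and the Markov-type bound for the block $(\sm_{\rgSp_k},2\sm_{\rgSp_k}]$ are organizational variants rather than a genuinely different argument.
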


	\begin{proof}[Proof of \Cref{prop:BJ}]
		The first part of the proposition is a simple consequence of the definition together with an union bound.
		\begin{eqnarray*}
			\P\left[\max_{(l,r) \in \cH_0}\Tbj_{l,r}= 1\right]&\leq& \sum_{r\in \cR}\sum_{ l\in \cD_r}\sum_{x\in \bbN^*} \alphabj^{(\mathrm{BJ})}_{x,r}\\
			&\leq& \sum_{r\in \cR}\sum_{ l\in \cD_r} \frac{\delta r}{|\cD_r|n}\leq \sum_{r\in \cR}\frac{\delta r}{n}\leq \delta.
			\end{eqnarray*}
		We focus on the second part of the proposition. To ease the reading, we introduce some notation
		\beqn
		\gamma_{x,r}= \overline{Q}^{-1}[\alphabj_{x,r},p,2\overline{\Phi}(x)]\ ; \quad \eta_{x,r,s}=\overline{Q}^{-1}[1- \alphabj_{x,r}/2,p-s,2\overline{\Phi}(x)]\enspace ; \\
		\psi_{x,r,s}(u)= \overline{Q}^{-1}[1- \alphabj_{x,r}/2,s,\overline{\Phi}(x-u)+ \overline{\Phi}(x+u)]\enspace ,
		\eeqn
		for $x\geq 0$.
		In fact, $\gamma_{x,r}$ is the threshold of the statistics $N_{x,l,r}$. As for $\eta_{x,r,s}$, it stands for the contribution to $N_{x,l,r}$ of the $(p-s)$ coordinates $i$ such that $\theta_{\cdot,i}$ is constant over $[l-r,l+r)$. Finally, $\psi_{x,r,s}(u)$ stands for the contribution to $N_{x,l,r}$ of the $s$ coordinates $i$ whose population CUSUM statistics $U_{l,r,i}$ is equal to $u$.

		\begin{lemma}\label{lem:comparaison_quantile}
		Consider any $r\in \cR$ and $l\in \cD_r$. If for some positive integers $s$ and $x$ we have
		\beq\label{eq:condition_quantile}
		\psi_{x,r,s}(|U_{l,r,(s)}|)> \gamma_{x,r}- \eta_{x,r,s}\ ,
		\eeq
			then $\P[\Tbj_{l,r}=1]\geq 1- \alphabj_{x,r}$.
		\end{lemma}

		Denote $\cH[\theta]$ the collection of $(l,r)$ with $r\in \cR$ and $l \in \cD_r$ that satisfy Condition~\eqref{eq:condition_quantile} for some $s$ and some $x$. We easily deduce from the above Lemma together with an union bound that, with probability higher than $1-\delta$, $\Tbj_{l,r}=1$ for all $(l,r)\in \cH[\theta]$.

		\medskip

		Let us now provide a more explicit characterisation of $\cH[\theta]$ with the following Lemma.

		\begin{lemma}\label{lem:t_s}
		For any $1\leq s\leq p$ and $r\in \cR$ define $x_s$ by
    \[
		x_{s}:= \tbj_{r,s}=\min\left\{x\geq 2: \overline{\Phi}(x)\leq \frac{s^2}{28^2p\log(2\alpha^{-1}_{x,r})}\right\}\enspace .\]
		We have $\psi_{x_s,r,s}(t_s)> \gamma_{x_s,r}- \eta_{x_s,r,s}$ provided that
		\beq\label{eq:inequality_s_t_s}
		s\geq \frac{28}{3}\log(2\alphabj^{-1}_{x_s,r})\enspace .
		\eeq
		\end{lemma}
	Combining \Cref{lem:t_s} and \Cref{lem:comparaison_quantile}, we conclude the proof of the proposition.
	\end{proof}
	\label{subsub:proof_of_propsparse}
\begin{proof}[Proof of \Cref{lem:comparaison_quantile}]
	Denote $S$ any subset of size $s$, such that for any $j\in S$, $|U_{l,r,j}|\geq |U_{l,r,(s)}|$. Define
	\beqn
	N^{(1)}_{x,l,r}= \sum_{i=1}^p \1_{i\notin S}\1_{|\bC_{l,r,i}|> x },\quad\quad N^{(2)}_{x,l,r}= \sum_{i=1}^p \1_{i\in S}\1_{|\bC_{l,r,i}|> x }
	\eeqn
	Since, for any $x>0$,  the function
	$u\mapsto \overline{\Phi}(x+u)+\overline{\Phi}(x-u)$ is non-decreasing. As a consequence, the random variable $N^{(1)}_{x,l,r}$ is stochastically dominated by a Binomial distribution with parameters $(p-s,2\overline{\Phi}(x))$. Besides,
	$N^{(2)}_{x,l,r}$ is stochastically dominated by a Binomial distribution with parameters $(s,\overline{\Phi}(x+|U_{l,r,(s)}|)+\overline{\Phi}(x-|U_{l,r,(s)}|))$. We obtain
	\beqn
	\P[T^{(BJ)}_{l,r}=0]&\leq& \P[N_{x,l,r}\leq  \gamma_{x,r}]\leq
	\P[N^{(1)}_{x,l,r} < \eta_{x,r,s} ] +\P[ N^{(2)}_{x,l,r}\leq \gamma_{x,r}-  \eta_{x,r,s} ] \\
	&\leq & \frac{\alphabj_{x,r}}{2}+ 1 - \overline{Q}[\gamma_{x,r}-  \eta_{x,r,s},s,\overline{\Phi}(x-|U_{l,r,(s)}|)+ \overline{\Phi}(x+|U_{l,r,(s)}|)]\\
	&\leq &  \frac{\alphabj_{x,r}}{2}+  \frac{\alphabj_{x,r}}{2}\leq \alphabj_{x,r}\enspace .
	\eeqn

\end{proof}
\begin{proof}[Proof of \Cref{lem:t_s}]
	From Bernstein inequality, we deduce that, for any positive integers $s$ and $x$,
	\beqn
	\gamma_{x,s}&\leq& 2p\overline{\Phi}(x)+ 2\sqrt{p\overline{\Phi}(x)\log(\alphabj^{-1}_{x,r})}+ \frac{2}{3} \log(\alphabj^{-1}_{x,r})\enspace ;\\
	\eta_{x,r,s}&\geq &2(p-s)\overline{\Phi}(x)-  2\sqrt{p\overline{\Phi}(x)\log(2\alphabj^{-1}_{x,r})} - \frac{2}{3} \log(2\alphabj^{-1}_{x,r})\enspace .
	\eeqn
	Hence, it follows that
	\beqn
	\gamma_{x,s}-\eta_{x,r,s}&\leq & 2s \overline{\Phi}(x)+4 \sqrt{p\overline{\Phi}(x)\log(2\alphabj^{-1}_{x,r})}+ \frac{4}{3} \log(2\alphabj^{-1}_{x,r})\enspace .
	\eeqn
	For $u=x$, we have $\overline{\Phi}(x-u)+ \overline{\Phi}(x+u)\geq \overline{\Phi}(0)= 1/2$ and we derive from Bernstein inequality that
	\beqn
	\psi_{x,r,s}(t)\geq \frac{s}{2} - \sqrt{s\log(2\alphabj^{-1}_{x,r})} - \frac{2}{3}\log(2\alphabj^{-1}_{x,r})\enspace .
	\eeqn
	As a ce, $\psi_{x,r,s}(t)> \gamma_{x,s}-\eta_{x,r,s}$
	as long as
	\[
	s(1- 4\overline{\Phi}(x))> 12\sqrt{p\overline{\Phi}(x)\log(2\alphabj^{-1}_{x,r})} + \frac{12}{3}\log(2\alphabj^{-1}_{x,r}) \enspace .
	\]
	Provided that we take $x\geq 2$, the latter holds if
	\beq\label{eq:cond_s_sparse}
	s\geq 14\sqrt{p\overline{\Phi}(x)\log(2\alphabj^{-1}_{x,r})} + \frac{14}{3}\log(2\alphabj^{-1}_{x,r})
	\eeq
	In view of the definition~\eqref{eq:definition_t_s} of $x_s$, we have $14\sqrt{p\overline{\Phi}(x_s)\log(2\alphabj^{-1}_{x_s,r})}\leq s/2$. Hence, under Condition~\eqref{eq:condition_s_t_s}, \eqref{eq:cond_s_sparse} holds and we conclude that
	 $\psi_{x_s,r,s}(x_s)> \gamma_{x_s,s}-\eta_{x_s,r,s}$.

\end{proof}




\begin{proof}[Proof of \Cref{prop:Sup}]

The following lemma ensures that the partial norm test returns $0$ with high probability jointly at all positions where there is no change-point. We write $\Choose_p^{s}$ for the set of all combinations of $s$ indices taken from $[p]$.
\begin{lemma}[concentration of the pure noise for the second sparse statistic]\label{lem:purenoise2}
If $1 \geq \delta > 0$, then the event
\begin{align*}
	\xPN_1 =
			 \Bigg\{
			 	\forall r\in \cR, l \in \cD_r, s \in \cZ \quad \max_{\SetS\in \Choose_p^{s}}
			 	\sum_{i\in \SetS} \frac{r}{2\sigma^2}
			 	\left(
			 		\bar \Noise_{l, +r,i}
			 		-  \bar\Noise_{l, -r,i}
		 		\right)^2\leq \tsup_{r,s}
		 	 \Bigg\} \enspace
\end{align*}
holds with probability higher than $1 - \delta$.
\end{lemma}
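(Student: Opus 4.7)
The starting observation is that for any fixed $(l,r)\in \cG_D$, the coordinates $\sqrt{r/(2\sigma^2)}(\bar\Noise_{l,+r,i} - \bar\Noise_{l,-r,i})$, $i=1,\ldots,p$, are i.i.d.\ standard normal random variables. This follows because in this Gaussian section $\Noise_t\sim \cN(0,\sigma^2\bI_p)$ with independence across $t$, so each such rescaled difference is a zero-mean Gaussian of unit variance, and independence across coordinates is inherited from the identity covariance structure. Hence, for any fixed subset $S\subset [p]$ of cardinality $s$, the statistic $\sum_{i\in S}\tfrac{r}{2\sigma^2}(\bar\Noise_{l,+r,i}-\bar\Noise_{l,-r,i})^2$ is distributed as $\chi^2_s$.

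The next step is to invoke the Laurent--Massart deviation inequality, which gives, for every $u>0$,
\[
\P\!\left[\chi^2_s \geq s + 2\sqrt{s u} + 2u\right] \leq e^{-u}\enspace ,
\]
together with the elementary bound $s + 2\sqrt{s u} + 2u \leq 2s + 3u$ (AM--GM). I then apply a union bound over the four sources of multiplicity: (i) the choice of $S\in \binom{[p]}{s}$, controlled through $\binom{p}{s}\leq (ep/s)^s$; (ii) the sparsity $s\in \cZ$, where $|\cZ|\leq \log_2(p)+1$; (iii) the location $l\in \cD_r$, with $|\cD_r|\leq 2n/r$; and (iv) the scale $r\in \cR$, exploiting the dyadic structure via $\sum_{r\in\cR} r/n \leq 1$. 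The total error budget $\delta$ is allocated by assigning to each quadruple $(r,l,s,S)$ the probability mass $\delta \cdot (r/n)\cdot(1/|\cD_r|)\cdot(1/|\cZ|)\cdot(1/\binom{p}{s})$, which sums to at most $\delta$. This pins down the exponent to $u_{r,s} = s\log(ep/s) + \log\!\bigl(n|\cD_r||\cZ|/(r\delta)\bigr)$.

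Substituting $u_{r,s}$ into $2s + 3u_{r,s}$ produces an upper bound on the largest partial $\chi^2$-sum of the form $C_1 s\log(2ep/s) + C_2 \log(n/(r\delta))$ plus a residual $O(\log|\cZ|)=O(\log\log p)$, which is absorbed by the generous constant $4$ in the definition of $\tsup_{r,s}=4s\log(2ep/s) + 4\log(n/(r\delta))$. The main steps are therefore the $\chi^2$ identification, the Laurent--Massart bound, and the carefully weighted union bound; the one subtlety is to verify that the $\log(|\cD_r||\cZ|)$ overhead generated by the multi-scale/multi-sparsity union bound is dominated by the $4\log(n/(r\delta))$ slack in $\tsup_{r,s}$, which it is since $|\cD_r|\leq 2n/r$ and $|\cZ|=O(\log p)$. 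Apart from this bookkeeping, the argument is essentially routine and mirrors closely the control already carried out for the dense statistic in \Cref{lem:purenoisegrid}.
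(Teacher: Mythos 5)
Your proof follows essentially the same route as the paper's: identify each partial sum over a fixed support $S$ as a $\chi^2_s$ variable, apply a Laurent--Massart/Bernstein deviation bound, and take a weighted union bound over $S$, $s\in\cZ$, $l\in\cD_r$ and $r\in\cR$ using $\binom{p}{s}\leq (ep/s)^s$ and $\sum_{r\in\cR}r/n\leq 1$. The probability accounting in your allocation is correct and sums to $\delta$.

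The one step that does not close as written is the final verification that your deviation bound fits under $\tsup_{r,s}=4s\log(2ep/s)+4\log(n/(r\delta))$. With your allocation the exponent is $u_{r,s}=s\log(ep/s)+\log\bigl(n|\cD_r||\cZ|/(r\delta)\bigr)$, and since $|\cD_r|$ can be as large as $2n/r$, the term $3\log|\cD_r|$ appearing in $2s+3u_{r,s}$ contributes roughly $3\log(2n/r)$; together with the $3\log(n/(r\delta))$ already present this gives about $6\log(n/r)+3\log(1/\delta)$, which exceeds the budget $4\log(n/(r\delta))$ whenever $n/r$ is large and $\delta$ is moderate --- the leftover slack from the $s$-part (about $1.77\,s$) does not rescue this when $s$ is small. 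So the assertion that the $\log(|\cD_r||\cZ|)$ overhead ``is dominated by the $4\log(n/(r\delta))$ slack'' is false for your choice of weights. The paper avoids this by taking $\delta_{r,s}=(r/n)^2(s/(2ep))^s\delta$, so that the subset multiplicity telescopes into a summable $2^{-s}$ and the location multiplicity is absorbed through $\sum_{l\in\cD_r}(r/n)^2\leq 2r/n$ rather than through an explicit $1/|\cD_r|$ factor inside the exponent; combined with the bound $s+2\sqrt{su}+u\leq 2(s+u)$ this lands exactly on $2\bigl(s+s\log(2ep/s)+2\log(n/r)+\log(1/\delta)\bigr)\leq \tsup_{r,s}$. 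Your argument is repaired either by adopting that allocation or by enlarging the universal constant $4$ in $\tsup_{r,s}$ (which would propagate harmlessly into $\kappa_{\mathrm{s}}$); the substance of the proof is otherwise correct.
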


We now state the following lemma, which ensures that the partial norm test returns $1$ with high probability jointly at relevant positions which are close to a change-point.
\begin{lemma}[concentration on the change-points for the second sparse statistic]\label{lem:cp2}
We write $\bar{\cK}^*$ for the set of $k \in [K]$ such that
\begin{itemize}
	\item $s_k \leq \sqrt{p \Log{\frac{n}{r_k\delta} }}$
	\item
	$\sum_{s' = 1}^s\left|U_{\tgSp_k,\rgSp_k,(s')}\right|^2 \geq 4\tsup_{\rgSp_k,s}$
\end{itemize}
If $1 \geq \delta >0$, the event
\begin{align*}
	\xPN_2 = \Bigg\{
		\forall k \in \bar{\cK}^* ~:
		\exists s \in \cZ~\mbox{ s.t.}~
		\Ssup_{\tgSp_k,\rgSp_k,s} > \tsup_{\rgSp_k,s}
	\Bigg\} \enspace,
\end{align*}
holds with probability higher than $1-\delta$.
\end{lemma}
\Cref{lem:purenoise2,lem:cp2} directly imply the result of the proposition.
\end{proof}

\begin{proof}[Proof of \Cref{lem:purenoise2}]
Let $r \in \cR, l \in \cD_r, s \leq \sm_r$ and $\SetS \in \Choose^{s}_p$.
Let $\delta > 0$, $\delta_{r,s} = \left(\frac{r}{n}\right)^2 \left(\frac{s}{2ep}\right)^{s}\delta$. Since $\sqrt{\frac{r}{2\sigma^2}}
\left(
	\bar \Noise_{l, +r,i}
	-  \bar\Noise_{l, -r,i}
\right)$
follows a $\cN (0,1)$ distribution for all $l,r,i$, we have by Bernstein's inequality that with probability larger than $1-\delta_{r,s},$

\begin{align*}
	\sum_{i\in \SetS}
	\left(
		\bar \Noise_{l,+r,i} - \bar \Noise_{l,-r,i}
	\right)^2
	& \leq
		s + 2\sqrt{s\Log{\frac{1}{\delta_{r,s}}}}
	    + \Log{\frac{1}{\delta_{r,s}}}\\
	& \leq
		2\left( s + \Log{\frac{1}{\delta_{r,s}}}\right)\\
	& =
		2\left(s + s\Log{\frac{2ep}{s}} +\Log{\frac{n^2}{r^2\delta}}\right)\\
	& \leq
		4\left( s\Log{\frac{2ep}{s}}+ \Log{\frac{n}{r\delta}}\right)\enspace.
\end{align*}

Since the number of such $\SetS$ is smaller than $\left(\frac{ep}{s}\right)^{s}$, a union bound gives
\begin{align*}
	\Proba{\xPN_1}
	& \geq 1 - \sum_{r \in \cR}\sum_{l \in \cD_r}\sum_{s \in \cZ}\abs{\Choose^s_p}\left(\frac{s}{2ep}\right)^s \left(\frac{r}{n}\right)^2 \delta\\
	& \geq 1 - \sum_{r \in \cR}\sum_{l \in \cD_r}\sum_{s \in \cZ}\left(\frac{1}{2}\right)^s \left(\frac{r}{n}\right)^2 \delta\\
	& \geq 1 - \delta \enspace,
\end{align*}
which yields the result.
\end{proof}

\begin{proof}[Proof of \Cref{lem:cp2}]
Let $k \in \bar{\cK^*}$, and $s \in \cZ$ such that
\begin{equation}
	\label{EnergyGridMax}
		\sum_{i =1}^{s}U^2_{\tgSp_k,\rgSp_k,(i)}
		>
		4\tsup_{\rgSp_k,s}\enspace.
	\end{equation}
	To ease the reading, we write $(\tau,r) = (\tgSp_k,\rgSp_k)$. Then on the event $\xPN_1$ which holds with probability $1 - \delta$, we have
\begin{align*}
	\Ssup_{\tau,r,s}
	& =
	\max_{\SetS\in \Choose_p^{s}} \sum_{i \in \SetS } \frac{r}{2\sigma^2}
	\left(
		\bar\theta_{\tau, +r,i}
		+ \bar \Noise_{\tau, +r,i}
		- \bar\theta_{\tau, -r,i}
		- \bar \Noise_{\tau, -r,i}
	\right)^2 \\
	& \geq
	\max_{\SetS\in \Choose_p^s} \sum_{i \in \SetS}
	\frac{1}{2}U^2_{\tau, r,i}
	- \frac{r}{2\sigma^2}
	  \left(
	  		\bar \Noise_{\tau, +r,i}
	  		- \bar \Noise_{\tau, -r,i}
	  \right)^2\\
	& >
	2\tsup_{r,s} - \tsup_{r,s}\\
	& =
	\tsup_{r,s} \enspace,
\end{align*}
where in the second inequality, we used the fact that $(a+b)^2 \geq \frac{1}{2}a^2 - b^2$ for all $a,b \in \bbR$.
\end{proof}

\begin{proof}[Proof of \Cref{lem:LocalisationEnergy}]
	First remark that there exists a large enough constant $C$ such that for all $r,s \geq 1$,
	\begin{align*}
		\left(\tbj_{r,s}\right)^2 &\leq C\Log{\frac{ep}{s^2}\Log{\frac{n}{r\delta}}}\\
		\sm_r &\leq C \Log{\Log{\frac{ep}{\sm_r^2}}\frac{n}{r\delta}}\enspace ,
	\end{align*}
	where we recall that $\sm_r$ is defined by \Cref{eq:condition_s_t_s} and $\tbj_{r,s}$ by \Cref{eq:definition_t_s}.
	These two inequalies come from the fact that for all $t \geq 2$ and all $A > 0$,  if $t \leq A + \Log{t}$ then $t \leq 2A$.
	Assume that for all $s' = \sm_{\rgSp_k}+1, \dots, s_k$ we have $|U_{\tgSp_k,\rgSp_k,(s')}| < \tbj_{\rgSp_k,s'}$. To ease the notation, we write $\bar s = \sm_{\rgSp_k}\land s_k$ and in what follows we prove that $\sum_{s' = 1}^{\bar s}|U_{\tgSp_k,\rgSp_k,(s')}|^2 > 4\tsup_{\rgSp_k,\bar s}$ when $\kappa_{\mathrm{s}}$ is a large enough constant.
	We have
	\begin{align*}
		\sum_{s' = \sm_{\rgSp_k}+1}^{s_k} U_{\tgSp_k,\rgSp_k,(s')}^2
		& \leq C_1\sum_{i = 0}^{\floor{\Log{s_k}}} 2^i
			   \Log{\frac{ep}{2^{2i}}\Log{\frac{n}{\rgSp_k\delta}}}\\
		& \leq C_1 s_k \Log{2e\Log{\frac{n}{\rgSp_k\delta}}}
				 + C_1\sum_{i = 0}^{\floor{\Log{s_k}}} 2^i \Log{\frac{p}{2^{2(i+1)}}}\enspace,
	\end{align*}
	for some universal constant $C_1$. To handle the second term remark that since $x \mapsto \Log{\frac{p}{x^2}}$ is decreasing, we have
	\begin{align*}
		\sum_{i = 0}^{\floor{\Log{s_k}}} 2^i \Log{\frac{p}{2^{2(i+1)}}}
		& \leq \int_1^{2s_k} \Log{\frac{p}{x^2}} dx\\
		& = 2s_k\Log{\frac{p}{(2s_k)^2}} + 2s_k - 1\\
		& \leq 2s_k \Log{\frac{p}{s_k^2}} \enspace,
	\end{align*}
	and thus
	\begin{align*}
		\sum_{s' = \sm_{\rgSp_k}+1}^{s_k} U_{\tgSp_k,\rgSp_k,(s')}^2
		& \leq 2C_1s_k \Log{2e \frac{p}{s_k^2}
			   \Log{\frac{n}{\rgSp_k \delta}}},
	\end{align*}
	which finally gives
	\begin{align*}
		\sum_{s' =1}^{\bar s} U_{\tgSp_k,\rgSp_k,(s')}^2
		& \geq
		\frac{9}{16}\rgSp_k \Delta_k^2 - 2C_1s_k \Log{ \frac{2ep}{s_k^2}
		\Log{\frac{n}{\rgSp_k\delta}}}\\
		& \geq
				 4\tsup_{\rgSp_k,\bar s}.
	\end{align*}
In the first inequality we used the fact that
$$\abs{\tgSp_k - \tau_k} \leq \frac{1}{4}\rgSp_k,$$
so that for all $i$,
	\begin{align*}
		\abs{\bar\theta_{\tgSp_k,+\rgSp_k,i} - \bar\theta_{\tgSp_k,-\rgSp_k,i}}
		& = \frac{1}{\rgSp_k}\abs{ \left(\rgSp_k + \tgSp_k - \tau_k\right)\mu_{k,i} - \left(\rgSp_k - \tgSp_k + \tau_k\right)\mu_{k-1,i}} \\
		& \geq \left(1 - \frac{\abs{\tgSp_k - \tau_k}}{\rgSp_k} \right)\abs{\mu_{k,i} - \mu_{k-1,i}}\\
		& > \frac{3}{4} \abs{\mu_{k,i} - \mu_{k-1,i}} = \frac{3}{4}U_{k,i} \enspace.
	\end{align*}
In the second inequality, we used the fact that
\begin{itemize}
	\item $8\rgSp_k{\Delta_k^2}
	\geq \kappa_{\mathrm{s}} \sigma^2\NoiseBoundsparse<p, n,\rgSp_k,\delta, s_k>$ for a large enough constant $\kappa_{\mathrm{s}}$ (see \Cref{eq:EnergySparseGauss}),
	\item $x \mapsto x\Log{\frac{ep}{x^2}}$ is increasing for $x \leq p$, so that $s_k$ can be replaced by $\bar s$,
	\item $\bar s \leq C \Log{\Log{\frac{ep}{\bar s^2}}\frac{n}{r\delta}}.$
\end{itemize}
This concludes the proof of the lemma.
\end{proof}

\subsubsection{Proof of \Cref{cor:gaus}}
Let $\xDense$ and $\xSparse$ be two events such that \Cref{prop:dense} and \Cref{prop:Sparse} hold respectively with constants $\kappa_{\mathrm{d}}, \kappa_{\mathrm{s}}$ and with probability $1-2\delta$ and $1 - 4\delta$, and write $\xi = \xDense \cap \xSparse$. From now on, we work on the event $\xi$, which holds with probability $1 - 6\delta$. Let us choose $\Csto \geq 2(\kappa_{\mathrm{d}} \lor \kappa_{\mathrm{s}})$ in \Cref{EnergyGauss}. For all $k$ such that $\tau_k$ is a $\Csto$-high-energy change-point, define
	\begin{equation*}\label{eq:rgrille}
	(\tg_k,\rg_k) = \left\lbrace
	\begin{aligned}
		(\tgD_k,\rgD_k) \text{ if } s_k &> \sqrt{p \Log{\frac{n}{r_k\delta}}}\\
		(\tgSp_k,\rgSp_k) \text{ if } s_k &\leq \sqrt{p \Log{\frac{n}	{r_k\delta}}}\enspace .
	\end{aligned}
	\right.
	\end{equation*}
	$(\rg_k,\tg_k)$ is well defined. Indeed,	If $s_k \leq \sqrt{p\Log{\frac{n}{r_k\delta}}}$ then

	$$ s_k \Log{1 + \frac{\sqrt{p}}{s_k} \sqrt{\Log{\frac{n}{r_k \delta }}}}
	+ \Log{\frac{ n}{r_k \delta}} \geq \frac{1}{2}\left(s_k\Log{\frac{p}{s_k^2} \Log{\frac{n}{r_k\delta} }}  + \Log{\frac{n}{r_k \delta}}\right) \enspace .$$
	Now if $s_k \geq \sqrt{p\Log{\frac{n}{r_k\delta}}}$ then using $\Log{1+x} \geq \frac{x}{2}$ for $x \in [0,1]$ we have
	$$s_k \Log{1 + \frac{\sqrt{p}}{s_k} \sqrt{\Log{\frac{n}{r_k \delta }}}}
	+ \Log{\frac{ n}{r_k \delta}} \geq \frac{1}{2}\NoiseBound<p,n,r_k\delta> \enspace .$$

	According to \Cref{th:g}, it is sufficient to prove that the event $\cA \left( \Theta, \Tg, \cK^*,(\tg_k,\rg_k)_{k\in \cK^*}\right)$ defined in \Cref{sub:general_analysis} holds on $\xi$:
\begin{enumerate}
	\item {\bf (No false positive):} for every $r \in \cR$ and $l \in \cD_r$, if $\Theta$ is constant on $[l - r, l + r)$ then $$\Tg_{l,r} = \TD_{l,r}\lor \Tsparse_{l,r} = 0,$$
	by \Cref{prop:dense} and \Cref{prop:Sparse}.
	\item {\bf (High-energy change-point detection):} for every $k$ such that $\tau_k$ has $\Csto$-high-energy, it holds by definition of $\rgD_k$ and $\rgSp_k$ that
	$$4(\rg_k - 1) \leq r_k.$$
	Moreover, $\Tsparse_{\tg_k,\rg_k} = 1$ if $(\tg_k, \rg_k) = (\tgD_k,\rgD_k)$ by \Cref{prop:Sparse} and $\TD_{\tg_k,\rg_k} = 1$  if $(\tg_k, \rg_k) = (\tgSp_k,\rgSp_k)$ by \Cref{prop:dense}.

\end{enumerate}
\Cref{th:g} ensures that for all $k \in [K]$ such that $\tau_k$ is a $\Csto$-high-energy change-point, there exists $k' \in [\hat{K}]$ such that
	$$|\hat\tau_{k'} - \tau_{k}| \leq \rg_{k} - 1.$$ 
It remains to show that $$\rg_{k} - 1 \leq \frac{r_k^*}{2},$$
where $r^*_k$ is define by  \Cref{eq:VitesseGauss}.
	Using $\Log{1+x} \geq \frac{x}{2}$ for $x \in [0,1]$ and $\Log{1+x} \geq \Log{x}$ for $x \geq 1$ we have
	$$8\rg_k{\Delta_k^2} \leq 4(\kappa_{\mathrm{d}} \lor \kappa_{\mathrm{s}})\left[s_k \Log{1 + \frac{\sqrt{p}}{s_k} \sqrt{\Log{\frac{n}{\rg_k \delta }}}}
	+ \Log{\frac{ n}{\rg_k \delta}}\right],$$
	when $\rg_k \geq 2$.
	Thus $2(\rg_k-1) \leq r^*_k$ for $\Csto \geq 2(\kappa_{\mathrm{d}} \lor \kappa_{\mathrm{s}})$. This concludes the proof of \Cref{cor:gaus}.


\subsection{Proofs for sub-Gaussian multivariate change-point detection}
\label{sub:proofs_in_the_sub_gaussian_setting}
We recall that in this section, we work on the complete grid $\cG_F = J_n$ defined in \Cref{sec:meta_algorithm_for_multi_scale_change_point_detection_on_a_grid}.

\subsubsection{Proof of \Cref{prop:densesg}}
\label{subsub:proof_of_propdensesg}

\paragraph*{Step 1: Introduction of useful high probability events.} We first introduce two events $\xsgD_1$ and $\xsgD_2$ on which the noise can be controlled. Remark that by a simple computation, the noise can be decomposed as follows :
\begin{align*}
	\frac{r}{2}
	\left[
		\norm{\overline{y}_{l,+r} - \overline{y}_{l,-r} }^2
		- \norm{ \overline\theta_{l,-r} - \overline\theta_{l,+r}}^2
	\right] - \sigma^2 p
	=
	r \proscal
	<\overline{\Noise}_{l,+r} - \overline{\Noise}_{l,-r},
	 \overline{\theta}_{l,+r} - \overline{\theta}_{l,-r}>
	+ \frac{r}{2} \norm{\overline{\Noise}_{l,+r} - \overline{\Noise}_{l,-r} }^2
	- \sigma^2 p \enspace.
\end{align*}

The first term written as
\begin{align*}
	r \proscal
	<\overline{\Noise}_{l,+r} - \overline{\Noise}_{l,-r},
	 \overline{\theta}_{l,+r} - \overline{\theta}_{l,-r}>
\end{align*}
is a crossed term between the noise and the mean vector $\theta$.
\Cref{lem:crossedterms} states that for $l$ equal to a true change-point $\tau_k$ and $r$ of order $r_k^*$, it is controlled on event $\xsgD_1$ with high probability.

\begin{lemma}[concentration of the crossed terms]\label{lem:crossedterms}
Assume that $\kappa$ is a large enough universal constant. The event
\begin{align*}
    \xsgD_1
    = \Bigg\{\forall k \in[K]~\mathrm{s.t.~Equation~\eqref{eq:Energydsg}~holds~for~}k,~ \\
    \rgD_k\abs{ \proscal<\overline{\Noise}_{\tau_k,+\rgD_k} - \overline{\Noise}_{\tau_k,-\rgD_k},\overline{\theta}_{\tau_k,+\rgD_k} - \overline{\theta}_{\tau_k,-\rgD_k}>}
\leq \frac{\rgD_k}{4}\norm{ \overline\theta_{\tau_k,+\rgD_k} - \overline\theta_{\tau_k,-\rgD_k}}^2 \Bigg\}
\end{align*}
holds with probability higher than $1 - \delta$.
\end{lemma}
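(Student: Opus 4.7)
The plan is to bound each cross term $Z_k := \rgD_k\proscal<\overline{\Noise}_{\tau_k,+\rgD_k} - \overline{\Noise}_{\tau_k,-\rgD_k},\overline{\theta}_{\tau_k,+\rgD_k} - \overline{\theta}_{\tau_k,-\rgD_k}>$ individually via a sub-Gaussian concentration inequality, then combine these bounds via a union bound. The first observation is that, because $\tau_k$ satisfies the energy condition \eqref{eq:Energydsg} and $\rgD_k$ is defined so that $4(\rgD_k-1)<r_k$, the segment $[\tau_k-\rgD_k,\tau_k+\rgD_k)$ contains no change-point other than $\tau_k$ itself. Consequently
\[
\overline{\theta}_{\tau_k,-\rgD_k}=\mu_{k-1},\qquad \overline{\theta}_{\tau_k,+\rgD_k}=\mu_k,\qquad v_k:=\mu_k-\mu_{k-1},\qquad \|v_k\|=\Delta_k,
\]
so that $Z_k=\sum_{i=1}^p v_{k,i}\bigl(\sum_{t=\tau_k}^{\tau_k+\rgD_k-1}\Noise_{t,i}-\sum_{t=\tau_k-\rgD_k}^{\tau_k-1}\Noise_{t,i}\bigr)$ is a linear combination of $2p\rgD_k$ independent $L$-sub-Gaussian scalar entries $\Noise_{t,i}$ with coefficients $\pm v_{k,i}$ whose squares sum to $2\rgD_k\|v_k\|^2=2\rgD_k\Delta_k^2$.

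Standard sub-Gaussian tail bounds (a Hoeffding-type inequality for sums of independent sub-Gaussian variables) then yield a universal constant $c>0$ such that, for every $t>0$,
\[
\Prob\bigl(|Z_k|>t\bigr)\leq 2\exp\!\left(-\frac{c\,t^2}{L^2\,\rgD_k\Delta_k^2}\right).
\]
Applied at the target level $t_k=(\rgD_k/4)\Delta_k^2$, this becomes
$\Prob(|Z_k|>t_k)\leq 2\exp(-c'\rgD_k\Delta_k^2/L^2)$
for another absolute constant $c'>0$. The definition of $\rgD_k$ guarantees in particular $4\rgD_k\Delta_k^2\geq \kappa L^2\Log{n/(\rgD_k\delta)}$, so
\[
\Prob\bigl(|Z_k|>t_k\bigr)\leq 2\left(\frac{\rgD_k\delta}{n}\right)^{c'\kappa/4}.
\]

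To conclude, choose $\kappa$ large enough that the exponent $c'\kappa/4$ is at least $1$ and the bound above is, say, at most $4\rgD_k\delta/n$. A union bound over the (at most $K$) indices $k$ satisfying \eqref{eq:Energydsg} yields a total failure probability of at most $4\delta \sum_k \rgD_k/n$. Since $r_k\leq (\tau_{k+1}-\tau_{k-1})/2$, telescoping gives $\sum_k r_k\leq n$, and the relation $\rgD_k\leq r_k/4$ then implies $\sum_k\rgD_k\leq n/4$. Hence the total failure probability is at most $\delta$, which establishes the claim.

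The main technical obstacle is keeping track of the constants in a way that allows both the exponent $c'\kappa/4$ to exceed $1$ (so that $2(\rgD_k\delta/n)^{c'\kappa/4}\lesssim \rgD_k\delta/n$) and the union-bound sum $\sum_k\rgD_k$ to be controlled by a fixed fraction of $n$; the rest of the argument is a direct application of a sub-Gaussian Hoeffding bound. Note that, unlike in the Gaussian/dyadic counterpart (\lemref{crossedtermsgrid}), the window is centered exactly at $\tau_k$ rather than at a grid approximation, so there is no additional $9/16$ factor to manage.
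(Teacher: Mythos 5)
Your proof is correct and follows essentially the same route as the paper's: reduce the cross term to a centered linear form in the independent sub-Gaussian noise entries with variance proxy of order $L^2\rgD_k\Delta_k^2$, apply a Hoeffding-type tail bound, convert the resulting exponent into a power of $\rgD_k\delta/n$ via the defining inequality of $\rgD_k$, and conclude with a union bound using $\sum_k \rgD_k\leq n$. If anything, your version is slightly more careful: the paper bounds the $\psi_2$-norm of the cross term by $CL\sqrt{r_k\Delta_k^2}$ and evaluates the tail at the level $r_k\Delta_k^2/4$, whereas you keep the sharper proxy $\rgD_k\Delta_k^2$ and evaluate the tail exactly at the threshold $\rgD_k\Delta_k^2/4$ that the event $\xsgD_1$ actually requires.
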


The second term written as
\begin{align*}
	\frac{r}{2}
	\norm{\overline{\Noise}_{l,+r} - \overline{\Noise}_{l,-r}}^2
	- \sigma^2 p \enspace,
\end{align*}
is a term of pure noise. \Cref{lem:purenoise} states that it is controlled on event $\xsgD_2$ with high probability.

\begin{lemma}[concentration of the pure noise]\label{lem:purenoise}
There exists a constant $\Cxi >0$ such that the event
\begin{align*}
	\xsgD_2 =
	\left\{
		\forall (l,r)\in J_n,
		~\abs{
			\frac{r}{2}
			\norm{\overline{\Noise}_{l,+r}
			      - \overline{\Noise}_{l,-r} }^2
			      - \sigma^2 p
			 }
		 \leq \Cxi L^2
		 	\left(
		 		\sqrt{p \Log{\frac{n}{r\delta}}}
		 		+ \Log{ \frac{n}{r\delta}}
	 		\right)
	\right\}
\end{align*}
holds with probability higher than $1 - 2\delta$.
\end{lemma}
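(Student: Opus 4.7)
The plan is to first establish a one-point Bernstein-type inequality and then upgrade to uniformity over $J_n$ by a peeling plus chaining argument. For any fixed $(l,r) \in J_n$, introduce
\[
V_{l,r} := \sqrt{r/2}\,\bigl(\overline{\Noise}_{l,+r} - \overline{\Noise}_{l,-r}\bigr) \in \bbR^p.
\]
Each coordinate of $V_{l,r}$ is a normalised signed sum of $2r$ independent $L$-sub-Gaussian scalars and is therefore itself $L$-sub-Gaussian with variance $\sigma^2$; moreover the $p$ coordinates are mutually independent. The target statistic equals $\|V_{l,r}\|^2$ with mean $\sigma^2 p$, so Bernstein's inequality applied to the sum $\sum_{i=1}^p V_{l,r,i}^2$ of independent sub-exponentials (equivalently, Hanson-Wright with the identity matrix) gives
\[
\bbP\Bigl[\,\bigl|\|V_{l,r}\|^2 - \sigma^2 p\bigr| > t\,\Bigr] \leq 2\exp\Bigl(-c\min\bigl(t^2/(pL^4),\, t/L^2\bigr)\Bigr)
\]
for a universal constant $c > 0$. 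Choosing $t \asymp L^2\bigl(\sqrt{p\log(1/\delta_{l,r})} + \log(1/\delta_{l,r})\bigr)$ thus produces per-point failure probability at most $\delta_{l,r}$.

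\textbf{From pointwise to uniform over $J_n$.} To reproduce the stated threshold one would need $\delta_{l,r} \asymp r\delta/n$, but $\sum_{(l,r)\in J_n} r\delta/n$ is of order $n\delta$, so a blanket union bound does not close. I would peel over dyadic scale classes $r \in [2^j, 2^{j+1})$ with probability budget $\delta_j = 6\delta/(\pi^2(j+1)^2)$, so that $\sum_j \delta_j \leq \delta$. Within class $j$, the one-point bound is first applied at positions in a coarse subgrid spaced by $\sim 2^j$, of cardinality $O(n/2^j)$; this correctly produces the anchor threshold $\Cxi L^2\bigl(\sqrt{p\log(n/(r\delta))}+\log(n/(r\delta))\bigr)$. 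For an arbitrary $(l,r)$ in the class I would locate the nearest anchor $(l_0,r_0)$ in that coarse subgrid and control the increment via the identity
\[
\|V_{l,r}\|^2 - \|V_{l_0,r_0}\|^2 = 2\langle V_{l_0,r_0},\, V_{l,r} - V_{l_0,r_0}\rangle + \|V_{l,r} - V_{l_0,r_0}\|^2,
\]
noting that $V_{l,r} - V_{l_0,r_0}$ comprises only boundary corrections plus a small rescaling, so its squared norm is sub-exponential at a strictly smaller scale.

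\textbf{Main obstacle.} The technical crux is the chaining control over the local $2^j \times 2^j$ neighborhood: one must absorb the cross-term $\langle V_{l_0,r_0}, V_{l,r}-V_{l_0,r_0}\rangle$, in which the anchor vector may itself be as large as the main threshold, while consuming only an extra $\delta$ of probability budget uniformly over all anchors and all dyadic classes. Generic chaining for mixed sub-Gaussian / sub-exponential processes applied to the product metric on the local neighborhood should yield the required increment bound, but tracking constants so that the anchor bound plus the chaining residual together saturate $\Cxi$ is delicate. Combining the anchor failure (total $\leq \sum_j \delta_j \leq \delta$) with the chaining residual failure ($\leq \delta$) then produces the claimed $1 - 2\delta$ probability.
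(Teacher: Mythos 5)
Your pointwise step and your diagnosis of why a blanket union bound over $J_n$ fails are both correct, and your overall plan (Hanson--Wright at anchors on a coarse grid of spacing $\sim r$, then an increment/chaining argument to cover the rest of the scale class) is exactly the architecture of the paper's proof. However, the proof does not close as written, for two reasons. First, the quantitative claim you use to dispatch the increment is false for a one-step approximation: if the anchor $(l_0,r_0)$ is at distance up to $\sim 2^j\asymp r$ from $(l,r)$, then $V_{l,r}-V_{l_0,r_0}$ is built from boundary sums containing up to order $r$ of the $2r$ noise terms, so its coordinates have variance of order $\sigma^2$ --- the \emph{same} scale as $V_{l,r}$ itself, not a strictly smaller one. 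Consequently the cross term $2\langle V_{l_0,r_0},V_{l,r}-V_{l_0,r_0}\rangle$ is of the same order as the main statistic, and a crude union bound over the $\sim r^2$ points near each anchor costs an extra $\log(nr/\delta)$ that the stated threshold $\sqrt{p\log(n/(r\delta))}+\log(n/(r\delta))$ does not tolerate when $r$ is large. Second, you explicitly leave the resolution of this issue ("generic chaining on the local neighborhood") as an unexecuted obstacle; but that step is the heart of the lemma, not a routine constant-tracking exercise.

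What actually makes the argument work in the paper is a full multi-level telescoping chain. One writes, for the unnormalised sums, $\frac{r}{2}\|\overline{\Noise}_{l,+r}-\overline{\Noise}_{l,-r}\|^2-\sigma^2 p$ as the anchor term at level $\kappa_r=\lfloor\log_2(n/r)\rfloor$ plus the sum over levels $v=\kappa_r,\dots,\kappa_1-1$ of increments between consecutive nets $N_v$ and $N_{v+1}$. Each increment is treated directly as a single quadratic form $\Noise^TB\Noise$ in the noise (avoiding your dependent cross-term decomposition), and the key quantitative input is the Frobenius-norm bound $\|B\|_F^2\lesssim(|l-l'|+|r-r'|)(r+r'+|l-l'|)\lesssim rn/2^v$. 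Hanson--Wright then gives a level-$v$ deviation of order $\sqrt{rn/2^v}\bigl(\sqrt{p\log(2^v/\delta)}+\log(2^v/\delta)\bigr)$, which after dividing by $2r$ and summing over $v\geq\kappa_r$ converges geometrically to the same order as the anchor term, while the union bound over the $\lesssim 2^{4v}$ pairs at level $v$ is absorbed by taking the constant in the exponent large enough. Without this explicit multi-scale decay of the increment matrices, your argument cannot recover the claimed threshold, so the gap is genuine.
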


Set now
\begin{align*}
	\xsgD :=  \xsgD_1 \cap \xsgD_2 \enspace.
\end{align*}
Note that
\begin{align*}
	\bbP(\xsgD) \geq 1 - 3\delta \enspace.
\end{align*}

\paragraph*{Step 2: Study in the `no change-point' situation.}
We remind that $\cH_0$ stands for elements $(l,r)$ such that there is no change-point in $[l-r,l+r)$ and that it is defined in \Cref{eq:Ho}.
Consider $(l,r) \in J_n\cap \cH_0$. Note that since $\{\tau_k, k \in [K]\} \cap [l-r, l+r) = \emptyset$, we have $\overline\theta_{l,-r} = \overline\theta_{l,+r}$ so that
\begin{align*}
	\frac{r}{2}
	\norm{\overline\theta_{l,-r} - \overline\theta_{l,+r}}^2
	= 0 \enspace,
\end{align*}
and
\begin{align*}
	r \proscal<\overline{\Noise}_{l,+r} - \overline{\Noise}_{l,-r},
			   \overline{\theta}_{l,+r} - \overline{\theta}_{l,-r}> = 0
			   \enspace.
\end{align*}
Moreover we have on $\xsgD$ that - see \Cref{lem:purenoise}
\begin{align*}
	\abs{
		\frac{r}{2}
		\norm{\overline{\Noise}_{l,+r} - \overline{\Noise}_{l,-r} }^2
		- \sigma^2 p
		}
	\leq \Cxi L^2 \left(
						\sqrt{p \Log{\frac{n}{r\delta}}} + \Log{ \frac{n}{r\delta}}
				  \right)
	\leq \sigma^2 \tDsg_r,
\end{align*}
for $\thresh \geq \Cxi$ - note that $\Cxi>0$ is a universal constant. And so
\begin{align*}
	\psiDsg_{l,r}\leq  \tDsg_r \enspace,
\end{align*}
so that
\begin{align*}
	\TDsg_{l,r} = 0 \enspace,
\end{align*}
on $\xsgD$. This concludes the proof of the first part of the proposition.

\paragraph*{Step 3: Study in the `change-point' situation.}

Consider $k\in [K]$ such that $\tau_k$ is a $\kappa$-dense high-energy change-point - see Equation~\eqref{eq:Energydsg}. We have
\begin{align*}
	\begin{aligned}
	\frac{\rgD_k}{2}
	\norm{\overline\theta_{\tau_k,-\rgD_k}
		  - \overline\theta_{\tau_k,+\rgD_k}
		  }^2
	& \geq \frac{\kappa}{8} L^2
		   \left(
		   		\sqrt{p \Log{\frac{n}{\rgD_k\delta}}}
		   		+ \Log{ \frac{n}{\rgD_k\delta}}
   		   \right).
	\end{aligned}
\end{align*}
So on $\xsgD$ choosing $\kappa$ large enough implies that - see \Cref{lem:crossedterms}
\begin{align*}
	\rgD_k
	\abs{
		\proscal<\overline{\Noise}_{\tau_k,+\rgD_k} - \overline{\Noise}_{\tau_k,-\rgD_k},
				 \overline{\theta}_{\tau_k,+\rgD_k} - \overline{\theta}_{\tau_k,-\rgD_k}>
		}
	\leq
	\frac{\rgD_k}{4} \norm{ \overline\theta_{\tau_k,+\rgD_k}
	- \overline\theta_{\tau_k,-\rgD_k}}^2\enspace.
\end{align*}
Moreover we have on $\xsgD$ that - see \Cref{lem:purenoise}
\begin{align*}
	\abs{\frac{\rgD_k}{2}
	\norm{
		\overline{\Noise}_{\tau_k,+\rgD_k} - \overline{\Noise}_{\tau_k,-\rgD_k}
		 }^2
	 - \sigma^2 p}
	 \leq  \Cxi L^2 \left(
	 					\sqrt{p \Log{\frac{n}{\rgD_k\delta}}} + \Log{ \frac{n}{\rgD_k\delta}}
 					\right)
	 \leq \sigma^2 \tDsg_{\rgD_k}\enspace,
\end{align*}
for $\thresh \geq \Cxi$ - note that $\Cxi>0$ is a universal constant. Thus on $\xsgD$, combining the three previous displayed equations implies
\begin{align*}
	\psiDsg_{\tau_k,\rgD_k}
	&\geq \frac{\rgD_k}{4 \sigma^2} \norm{
							  \overline\theta_{\tau_k,+\rgD_k}
		                      - \overline\theta_{\tau_k,-\rgD_k}
		 					  }^2
		 - \tDsg_{\rgD_k}\\
	&\geq \left(\frac{\Csto}{16} - \thresh\right) \frac{L^2}{\sigma^2}
		 \left(
		 	  \sqrt{p \Log{\frac{n}{\rgD_k\delta}}}
		 	  + \Log{ \frac{n}{\rgD_k\delta}}
		 \right)>
	\tDsg_{\rgD_k}\enspace,
\end{align*}
since $\kappa > 32\thresh$.
And so on $\xsgD$:
\begin{align*}
	\TDsg_{\tau_k,\rgD_k} = 1 \enspace.
\end{align*}
This concludes the proof of the second part of the proposition.

\begin{proof}[Proof of \Cref{lem:crossedterms}]

Let $k$ be in $[K]$ and such that Equation~\eqref{eq:Energydsg} is satisfied. Remark that $\theta$ is constant on $[\tau_k - \rgD_k,\tau_k)$ and is equal to $\mu_{k-1}$, and is also constant on $[\tau_k,\tau_k + \rgD_k)$ and is equal to $\mu_k$.
First, from the definition of the $\psi_2$-norm of a vector, there exists a universal constant $C >0$ such that for all $k = 1 \dots K$,
\begin{align*}
	\norm{\rgD_k\proscal<\overline{\Noise}_{\tau_k,+\rgD_k} - \overline{\Noise}_{\tau_k,-\rgD_k},\overline{\theta}_{\tau_k,+\rgD_k} - \overline{\theta}_{\tau_k,-\rgD_k}>}_{\psi_2} &\leq \rgD_k\norm{\overline{\Noise}_{\tau_k,+\rgD_k} - \overline{\Noise}_{\tau_k,-\rgD_k}}_{\psi_2} \abs{\mu_k - \mu_{k-1}}\\
	&\leq C\sqrt{\rgD_k} \norm{\Noise_1}_{\psi_2} \abs{\mu_k - \mu_{k-1}}\\
	&\leq C\sqrt{\rgD_k}L\abs{\mu_k - \mu_{k-1}}\\
	&\leq C L \sqrt{r_k \Delta_k^2} \enspace.
\end{align*}
Thus by definition of sub-Gaussianity, for all $t > 0$,
\begin{align*}
	\Proba{
		\rgD_k
		\abs{
			\proscal<\overline{\Noise}_{\tau_k,+\rgD_k} - \overline{\Noise}_{\tau_k,-\rgD_k},
					 \overline{\theta}_{\tau_k,+\rgD_k} - \overline{\theta}_{\tau_k,-\rgD_k}>
			 }\geq t
		   }
	\leq \exp \left(-c \frac{t^2}{L^2r_k \Delta_k^2} \right)\enspace,
\end{align*}
for some constant $c>0$.
Finally we apply the concentration inequality to $t = \frac{r_k\Delta_k^2}{4}$ - remembering that $\tau_k$ is a $\kappa$-dense high-energy change-point in the sense of Equation~\eqref{eq:Energydsg} - and sum over $k$ to obtain a union bound over $\xi_2^c$ :
\begin{align*}
	\Proba{\xi_2^c}
	& \leq \sum_{k=1}^K
		   \Proba{
		   		 r\abs{\proscal<\overline{\Noise}_{\tau_k,+\rgD_k}
		   		 				- \overline{\Noise}_{\tau_k,-\rgD_k},
		   		 				\overline{\theta}_{\tau_k,+\rgD_k}
		   		 				- \overline{\theta}_{\tau_k,-\rgD_k}>
	 				   }
			     \geq \frac{r_k \Delta_k^2}{4}
		   		 } \\
	& \leq \sum_{k=1}^K \exp \left( -c\frac{r_k \Delta_k^2}{16L^2} \right)\\
	& \leq \sum_{k=1}^K \exp
			\left(
				-c' \kappa\log \left(\frac{n}{\rgD_k} \delta^{-1} \right)
				\right) & (c' = c/16)\\
	& \leq \sum_{k=1}^K \left( \frac{\rgD_k}{n} \right)^{c'\kappa} \delta^{c'\kappa}\\
	& \leq \delta \enspace,
\end{align*}
where the last inequality comes from the fact that $\sum_{k=1}^K \rgD_k \leq n$ and the fact that $\kappa$ is chosen large enough so that $c'\kappa \geq 1$.
\end{proof}

\begin{proof}[Proof of \Cref{lem:purenoise}]
Remark first that by homogeneity, we can assume without loss of generality that $L = 1$. To provide a proof, we will use the Hanson-Wright inequality in high dimension, which is a way to control quadratic forms of the noise.

\begin{lemma}[Hanson-Wright inequality in high dimension] \label{HW}
Let $A = (a_{ij})$ be a $m\times m$ matrix and $\Noise_1, \dots, \Noise_m$ be sub-Gaussian vectors of dimension $p$ with norm smaller than $1$. Then
\begin{align*}
	\Proba{
		  \abs{\sum_{1\leq i,j \leq m} a_{i,j} \proscal<\Noise_i,\Noise_j>  - \Esper{\sum_{1\leq i,j \leq m} a_{i,j} \proscal<\Noise_i,\Noise_j>}} \geq t
	  	  }
  	\leq 2\exp \left(
  					 -c\min \left(\frac{t^2}{ p \|A\|_F^2},
  					 			  \frac{t}{ \|A\|_{op}}\right)
			   \right)\enspace,
\end{align*}
where $c$ is an absolute constant, $\|A\|_F^2 = \sum\limits_{i,j} a_{i,j}^2$ is the squared Frobenius norm of $A$ and $\|A\|_{op}$ is the operator norm of $A$.
\end{lemma}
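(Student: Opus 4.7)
The plan is to reduce this vector-valued Hanson-Wright inequality to the classical scalar version by decomposing along the $p$ coordinates of the $\Noise_i$'s, and then to combine the resulting $p$ independent quadratic forms via a Bernstein-type inequality for sums of sub-exponential random variables.

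First, I would reindex by introducing, for each $k=1,\ldots,p$, the vector $X_k = (\Noise_{1,k},\ldots,\Noise_{m,k})^\top \in \bbR^m$ made of the $k$-th coordinates of the $\Noise_i$'s. Since the coordinates of each $\Noise_i$ are independent sub-Gaussian (by the definition of $L$-sub-Gaussian vector used throughout the paper) and the $\Noise_i$'s are themselves mutually independent, the $p$ vectors $X_1,\ldots,X_p$ are independent, and each $X_k$ has independent centered entries with $\psi_2$-norm bounded by a universal constant. The key identity is then
\[
    \sum_{1 \leq i,j \leq m} a_{ij}\,\proscal<\Noise_i,\Noise_j> \;=\; \sum_{k=1}^p X_k^\top A\, X_k,
\]
so the problem reduces to concentrating a sum of $p$ independent centered quadratic forms $Z_k := X_k^\top A X_k - \bbE[X_k^\top A X_k]$.

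Second, for each fixed $k$, I would invoke the classical scalar Hanson-Wright inequality for a vector with independent sub-Gaussian entries to obtain
\[
    \Proba{|Z_k| \geq s} \;\leq\; 2\exp\!\left(-c\min\!\left(\frac{s^2}{\norm{A}_F^2},\,\frac{s}{\norm{A}}\right)\right).
\]
This is exactly the statement that $Z_k$ is a centered sub-exponential random variable with Bernstein parameters $(\norm{A}_F,\norm{A})$. Third, I would then apply Bernstein's inequality for sums of independent centered sub-exponential random variables to $\sum_{k=1}^p Z_k$: the variance proxy adds up to $p\norm{A}_F^2$ while the sub-exponential parameter stays at $\norm{A}$, which directly gives the desired tail bound $2\exp(-c\min(t^2/(p\norm{A}_F^2),\, t/\norm{A}))$.

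The main obstacle is essentially bookkeeping of universal constants: one needs to verify that the $\psi_2$-norm of each entry of $X_k$ is uniformly bounded by the $\psi_2$-norm of $\Noise_i$, and that the constants arising from the scalar Hanson-Wright inequality and from Bernstein's inequality can be merged into a single absolute constant $c$ in the final bound. The decomposition along coordinates is what makes the proof painless here; without the coordinate-wise independence assumed on each $\Noise_i$, one would have to redo the standard decoupling plus Gaussian chaos argument underlying the usual proof of Hanson-Wright, and the variance factor $p$ would need to be replaced by a more intricate quantity depending on the covariance of the $\Noise_i$'s.
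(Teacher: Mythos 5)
Your proof is correct and takes essentially the same route as the paper, which disposes of this lemma in one line by reducing it to the classical scalar Hanson--Wright inequality of Rudelson and Vershynin: your coordinate-wise decomposition $\sum_{i,j}a_{ij}\proscal<\Noise_i,\Noise_j>=\sum_{k=1}^p X_k^\top A X_k$ is the natural way to make that reduction explicit. The one delicate point --- which you do state correctly --- is that the aggregation over $k$ must use the two-parameter Bernstein/MGF bound $\bbE e^{\lambda Z_k}\leq e^{C\lambda^2\norm{A}_F^2}$ valid for $|\lambda|\leq c/\norm{A}$ (obtainable either from the tail-to-MGF equivalence for centered variables with a mixed Gaussian--exponential tail, or directly from the MGF estimate established inside the Rudelson--Vershynin proof), and not the single-parameter $\psi_1$ Bernstein inequality, which would degrade the linear term from $t/\norm{A}$ to $t/\norm{A}_F$.
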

The proof of this lemma relies on the classical Hanson Wright inequality that is proved for example in \cite{rudelson2013hanson}. To prove the proposition, we will use a chaining argument. To this end, we let $(N_{\nnu})_{\nnu\geq 0}$ be the following covering sets of $J_n$ :
\begin{align*}
	N_{\nnu}
	=
	J_n \cap\left\{i2^{\kappa_1 - {\nnu}}, i \in\bbN \right\}^2 \enspace,
\end{align*}
where we define $\kappa_1 = \left\lfloor\log_2 (n)\right\rfloor$, and more generally
$\kappa_r = \left\lfloor\log_2 (n/r)\right\rfloor$ for $r = 1, \dots n$.
Remark that the higher $u$ is, the finer the covering set $N_u$ is, and $N_{\kappa_1} = J_n$.
\poubelle{For simplicity, we write $N_u^1$ (resp. $N_u^2$) the projection of $N_u$ onto the first coordinate (resp. the second coordinate).}
For all $u \geq 0$, we define the projection map $\pi_{\nnu}$ from $J_n$ to $N_u$ by
\poubelle{$$
 \left\{
    \begin{array}{ll}
    	\pi_{\nnu}^1(l,r) =  \underset{\hat l \in N_{\nnu}^1}{\argmin} \abs{\hat l-l}\\
        \pi_{\nnu}^2(l,r) = \underset{\hat r \in N_{\nnu}^2}{\argmin} \abs{\hat r -r}.

    \end{array}
\right.
$$}
\begin{align*}
	\pi_u{(l,r)}
	=
	\argmin_{(\hat l,\hat r) \in N_u}
	\left(|\hat l - l| + |\hat r - r| \right) \enspace.
\end{align*}
In the sequel, we will use the slight abuse of notation for $(l,r)$ in $J_n$ :
\begin{align*}
	(l_u,r_u) = \pi_u (l,r) \enspace.
\end{align*}
A useful lemma to control the distance between $(l,r)$ and its projection $(l_u, r_u)$ can be stated as follow.
\begin{lemma}\label{lem:mesh}
For all $(l,r) \in J_n$ and $0 \leq u \leq \kappa_1$ such that $N_\nnu\neq \emptyset$,
\begin{align*}
	|l_\nnu - l| + |r_\nnu - r| \leq 2\frac{n}{2^\nnu} \enspace.
\end{align*}
\end{lemma}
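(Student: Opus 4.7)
The plan is to exhibit an explicit mesh point $(\hat l, \hat r) \in N_u$ within $\ell^1$ distance $2n/2^u$ of $(l,r)$. Set $M_u := 2^{\kappa_1 - u}$ for the mesh spacing; since $\kappa_1 = \lfloor \log_2(n) \rfloor$, one has $2^{\kappa_1} \leq n$ and therefore $M_u \leq n/2^u$. It thus suffices to construct $(\hat l, \hat r) \in N_u$ with $|\hat l - l| + |\hat r - r| \leq 2 M_u$.

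I would construct $(\hat l,\hat r)$ coordinate by coordinate, rounding to the closest admissible multiple of $M_u$ under the constraints defining $J_n$. First, define $\hat r$ as the multiple of $M_u$ closest to $r$ lying in $[1, \lfloor n/2\rfloor]$: if the unconstrained nearest multiple is $0$ (which occurs only when $r < M_u/2$), set $\hat r := M_u$ instead, at cost at most $M_u$; if it exceeds $\lfloor n/2\rfloor$, take the largest multiple of $M_u$ not exceeding $\lfloor n/2\rfloor$, again at cost at most $M_u$. In all remaining cases, the naive rounding is already admissible and costs at most $M_u/2$. In any case, $|\hat r - r| \leq M_u$.

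Next, define $\hat l$ as the multiple of $M_u$ closest to $l$ satisfying $\hat r + 1 \leq \hat l \leq n - \hat r$. Since $N_u \neq \emptyset$, the admissible range for $\hat l$ given $\hat r$ contains at least one mesh point (and if the particular $\hat r$ produced above fails to admit such an $\hat l$, one adjusts $\hat r$ by one further mesh step, since the admissibility region is a triangle whose boundaries are at integer offsets from $r$); the resulting rounding error satisfies $|\hat l - l| \leq M_u$. Summing, $|\hat l - l| + |\hat r - r| \leq 2M_u \leq 2n/2^u$, as claimed.

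The main obstacle is the careful bookkeeping in the boundary regime. When $(l,r)$ lies close to the edge of the admissible triangle $J_n$ -- i.e., $r$ is close to $1$ or $\lfloor n/2\rfloor$, or $l$ is close to $r+1$ or $n-r$ -- the naive coordinate-wise rounding to the nearest multiple of $M_u$ may push the point outside $J_n$, and one must show that a single-step adjustment in each coordinate both restores admissibility and stays within the budget $2 M_u$. The hypothesis $N_u \neq \emptyset$ is what makes this possible: it guarantees that the admissible horizontal stripe $\{\hat l : \hat r + 1 \leq \hat l \leq n - \hat r\}$ has length at least $M_u$ for a suitable choice of $\hat r$ on the mesh, so that a close mesh-valued $\hat l$ always exists after at most one step of correction in each coordinate.
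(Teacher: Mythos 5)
Your strategy --- coordinate-wise rounding to the mesh with corrections at the boundary of $J_n$ --- is the natural one, and it is essentially what the paper's (one-line) proof asserts without any bookkeeping. But the step where you claim each boundary correction costs ``at most $M_u$'' is a genuine gap, and it cannot be closed, because the two constraints interact: on the mesh, $\hat r\geq M_u$ and $\hat l\geq \hat r+1$ force $\hat l\geq \hat r+M_u\geq 2M_u$, so near the lower corner of the triangle the repair of $\hat l$ costs almost $2M_u$, not $M_u$, and your ``one further mesh step'' adjustments are likewise each worth an extra $M_u$ that is not in the budget. Concretely, take $n=2^{\kappa_1}$ and $2\leq u\leq\kappa_1-2$, so that $M_u=2^{\kappa_1-u}\geq 4$ and $N_u\neq\emptyset$, and consider $(l,r)=(2,1)\in J_n$. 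Every point of $N_u$ satisfies $\hat r\geq M_u$ and $\hat l\geq 2M_u$, and the closest one to $(2,1)$ is $(2M_u,M_u)$, at $\ell^1$-distance $(2M_u-2)+(M_u-1)=3M_u-3$, which exceeds both your target $2M_u$ and the lemma's bound $2n/2^u=2M_u$ (for $n=64$, $u=3$: distance $21>16$). So the difficulty you correctly flag in your last paragraph is not merely bookkeeping: the statement as literally written is false at such corner points, and no blind proof of it can succeed.

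What makes the lemma usable in the paper is that it is only ever invoked in the chaining argument for scales $u\geq\kappa_r=\lfloor\log_2(n/r)\rfloor$, which forces $M_u=2^{\kappa_1-u}\leq 2r$; in that regime $r$ is at least half a mesh width, the corner pathology above is excluded, and a careful version of your rounding (including the adjustment of $\hat r$ downward near $r\approx n/2$ so that some admissible $\hat l$ exists) does stay within $2n/2^u$. The fix is therefore to add the hypothesis $2^{\kappa_1-u}\leq 2r$ (equivalently $u\geq\kappa_r$) to the statement and then carry out your boundary analysis under that restriction, rather than for all of $J_n$.
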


Let $(l,r) \in J_n$. From know on, we write $\Noise_{l,+r}=r\bar\Noise_{l,+r} = \sum_{t = l}^{l+r-1}\Noise_{t}$ and $\Noise_{l,-r} =  r\bar\Noise_{l,+r}$.
The chaining relation can be written as
\begin{align*}
	\frac{r}{2} \Diffcarremoy{l}{r} - \sigma^2 p
	& = \frac{1}{2r} \left[\Diffcarre{l_{\kappa_r}}{r_{\kappa_r}}
	    - 2r_{\kappa_r}\sigma^2 p\right] \\
 	& \quad + \frac{1}{2r}\sum_{v = \kappa_r}^{\kappa_1}\left[
 	  \Diffcarre{l_{v+1}}{r_{v+1}} - \Diffcarre{l_v}{r_v}
 	  - 2(r_{v+1}-r_v)\sigma^2 p \right] \enspace.
\end{align*}
Remark that the chaining summation starts at scale $u = \kappa_r$ so that $\tfrac{n}{2^u} \asymp r$. The first term of the chaining is an approximation on the grid at level $u$ of the term $\frac{r}{2} \Diffcarremoy{l}{r} - \sigma^2 p$. The second term can be viewed as an error term, and we will show that it is of the same order as the first term. Since both terms are quadratic forms of the noise, we will need an upper bound on the norm of their corresponding matrix to apply the Hanson Wright inequality - see \Cref{HW}.
\begin{lemma}[Control of the Frobenius norm]\label{lem:controlnorme}
Let $(l,r)$ be a fixed element of $J_n$.
Let $A$ and $B$ be the corresponding matrix of the two following quadratic form :
\begin{align*}
	\Noise^T A \Noise = \Diffcarre{l}{r} \quad \mbox{and} \quad
	\Noise^T B \Noise = \Diffcarre{l}{r} - \Diffcarre{l'}{r'} \enspace.
\end{align*}
Then
\begin{align*}
	\norm{A}_F^2
	&\leq 16r^2\\
	\norm{B}_F^2
	&\leq 24 \left( \abs{l-l'} + \abs{r-r'} \right) (r +r' + |l-l'|) \enspace.
\end{align*}
\end{lemma}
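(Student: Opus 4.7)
The plan is to represent each quadratic form by an $n \times n$ matrix acting on the noise sequence and then compute Frobenius norms through rank-one structure. Define the signed indicator vector $v \in \mathbb{R}^n$ by $v_t = +1$ for $t \in [l, l+r)$, $v_t = -1$ for $t \in [l-r, l)$, and $v_t = 0$ otherwise, and let $v' \in \mathbb{R}^n$ be defined analogously from $(l', r')$. Then $\Noise_{l,+r} - \Noise_{l,-r} = \sum_t v_t \Noise_t$, which yields the identifications $A = vv^\top$ and $B = vv^\top - v'v'^\top$ in the sense that $\sum_{t,s}A_{ts}\langle \Noise_t,\Noise_s\rangle = \|\Noise_{l,+r}-\Noise_{l,-r}\|^2$ and analogously for $B$.

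The first bound is then immediate: $\|A\|_F^2 = \|v\|^4 = (2r)^2 = 4r^2 \leq 16r^2$. For the second, I would use the polarization-type identity
$$vv^\top - v'v'^\top = \tfrac{1}{2}\bigl[(v-v')(v+v')^\top + (v+v')(v-v')^\top\bigr],$$
combined with the triangle inequality and the identity $\|xy^\top\|_F = \|x\|\|y\|$, to obtain $\|B\|_F \leq \|v-v'\|\|v+v'\|$, thereby reducing everything to two scalar vector norms.

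The two factors are then estimated separately. The sum is easy: $\|v+v'\|^2 \leq 2(\|v\|^2+\|v'\|^2) = 4(r+r')$. For the difference, I would argue coordinate-by-coordinate. An index $t$ with $v_t \neq v'_t$ falls into one of two classes: (i) $t$ belongs to the symmetric difference of the supports $[l-r, l+r) \triangle [l'-r', l'+r')$, in which case $(v_t-v'_t)^2 = 1$; or (ii) $t$ lies in the intersection of supports but with opposite signs, contributing $(v_t-v'_t)^2 = 4$. Since the left and right endpoints of the support each shift by at most $|l-l'|+|r-r'|$, the symmetric difference has size at most $2(|l-l'|+|r-r'|)$; and, WLOG assuming $l \leq l'$, the set of sign-flipped overlap points sits in $[l,l')$ and hence has size at most $|l-l'|$. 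Summing yields $\|v-v'\|^2 \leq 2(|l-l'|+|r-r'|) + 4|l-l'| \leq 6(|l-l'|+|r-r'|)$, and combining with the bound on $\|v+v'\|^2$ gives $\|B\|_F^2 \leq 24(|l-l'|+|r-r'|)(r+r') \leq 24(|l-l'|+|r-r'|)(r+r'+|l-l'|)$.

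The main obstacle will be the combinatorial estimate of $\|v-v'\|^2$: one must jointly track the translation of the entire support of $v$ (which is responsible for the symmetric-difference contribution) and the displacement of the internal sign-change point from $l$ to $l'$ (which is responsible for the additional $4|l-l'|$ term from coordinates whose sign is flipped inside the overlap). Everything else reduces to the rank-one Frobenius identity $\|xy^\top\|_F = \|x\|\|y\|$ and standard algebraic manipulations.
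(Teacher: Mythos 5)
Your proof is correct, and it reaches both bounds with the stated constants, but it takes a genuinely different route from the paper. The paper works directly at the level of the matrix entries: writing $\Noise^T B\Noise=\sum b_{ij}\langle\Noise_i,\Noise_j\rangle$, it observes that $|b_{ij}|\le 2$ and then counts the indices $(i,j)$ at which $b_{ij}$ can be nonzero (namely when $i$ or $j$ lies in one of the three boundary strips $[\min(l+r,l'+r'),\max(l+r,l'+r'))$, $[\min(l-r,l'-r'),\max(l-r,l'-r'))$, or $[l,l')$), getting at most $\left(4(\abs{l-l'}+\abs{r-r'})+2\abs{l-l'}\right)(r+r'+\abs{l-l'})$ such pairs and hence the factor $24$ after multiplying by $b_{ij}^2\le 4$. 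You instead exploit the rank structure: $A=vv^\top$ gives the exact value $\norm{A}_F^2=\norm{v}^4=4r^2$ (tighter than the required $16r^2$), and the polarization identity $vv^\top-v'v'^\top=\tfrac12[(v-v')(v+v')^\top+(v+v')(v-v')^\top]$ reduces the second bound to the two scalar estimates $\norm{v+v'}^2\le 4(r+r')$ and $\norm{v-v'}^2\le 2(\abs{l-l'}+\abs{r-r'})+4\abs{l-l'}\le 6(\abs{l-l'}+\abs{r-r'})$, whose product recovers $24(\abs{l-l'}+\abs{r-r'})(r+r')$. Your combinatorial accounting for $\norm{v-v'}^2$ is sound, including in the degenerate case where the two supports are disjoint (there $\abs{l-l'}\ge r+r'$, so the symmetric-difference bound still holds), and the sign-flip set indeed lies in $[l,l')$ once one assumes $l\le l'$. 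What your approach buys is a cleaner factorization that also controls the operator norm for free (the paper separately uses $\norm{B}\le\norm{B}_F$), and a sharper constant for $A$; what the paper's entry-count buys is that it never needs the rank-two observation and generalizes mechanically to any sparse difference of quadratic forms.
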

The following lemma aims at upper bounding the first term of the chaining relation with high probability.

\begin{lemma}\label{lem:concentrationN}
There exists a constant $C_N$ such that for all $n$, the event
\begin{align*}
	\xsgD_N
	=
	\bigcap_{u \geq 0}
	\bigcap_{\substack{(l,r) \in N_u\\ r \leq 3\frac{n}{2^u}}}
	\left\{
		\abs{\Diffcarre{l}{r} - 2r \sigma^2 p}
		\leq C_Nr\NoiseBoundd<p,2^u \delta^{-1}>
	\right\} \enspace.
\end{align*}
holds with probability higher than $1 - \delta$.
\end{lemma}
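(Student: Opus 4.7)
The plan is to apply the high-dimensional Hanson--Wright inequality (\Cref{HW}) to each individual pair $(l,r) \in N_u$ and then close the argument with a carefully scale-dependent union bound over $N_u$ and over the index $u$.

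\emph{Step 1 (quadratic form).} I would first fix $u \geq 0$ and $(l,r) \in N_u$ with $r \leq 3n/2^u$, and write $\Diffcarre{l}{r} = \sum_{i,j=1}^{2r} a_{ij}\langle \Noise_i, \Noise_j\rangle$, where $A = (a_{ij}) = v v^\top$ for the sign vector $v = (1,\ldots,1,-1,\ldots,-1) \in \{\pm 1\}^{2r}$. The trace formula yields $\mathbb{E}[\Noise^\top A \Noise] = \mathrm{tr}(A)\sigma^2 p = 2r\sigma^2 p$, which is exactly the centering appearing in the event $\xsgD_N$; the operator norm is $\|A\|_{op} = \|v\|^2 = 2r$; and \Cref{lem:controlnorme} gives $\|A\|_F^2 \leq 16r^2$.

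\emph{Step 2 (concentration at one scale).} Using these bounds in \Cref{HW} with deviation $t_u = C_N\, r\, \NoiseBoundd<p,2^u \delta^{-1}>$, both terms in the minimum of the exponential bound are at least proportional to $\log(2^u/\delta)$: indeed $t_u^2/(p\|A\|_F^2) \geq (C_N^2/16)\log(2^u/\delta)$ and $t_u/\|A\|_{op} \geq (C_N/2)\log(2^u/\delta)$. Hence
\[
\P\!\bigl(|\Diffcarre{l}{r} - 2r\sigma^2 p| > t_u\bigr) \leq 2\bigl(\delta/2^u\bigr)^{\gamma(C_N)},
\]
where $\gamma(C_N) := c\,\min(C_N^2/16,C_N/2)$ can be made arbitrarily large by choosing $C_N$ large.

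\emph{Step 3 (counting).} Next I would bound the cardinality of the restricted grid. Since $N_u$ has mesh $2^{\kappa_1 - u}$, the number of admissible locations $l$ is at most $2^u+1$ and, because of the restriction $r \leq 3n/2^u = 3\cdot 2^{\kappa_1-u}$, the number of admissible scales $r$ is at most $4$; so $|\{(l,r)\in N_u : r \leq 3n/2^u\}| \lesssim 2^u$. In addition this set is empty once $2^u > 3n$, so the sum over $u$ has only finitely many non-empty terms.

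\emph{Step 4 (union bound).} Combining Steps 2--3,
\[
\P(\xsgD_N{}^c) \;\leq\; C\,\delta^{\gamma(C_N)} \sum_{u \geq 0} 2^{u(1-\gamma(C_N))},
\]
which is a convergent geometric series as soon as $\gamma(C_N) \geq 2$. To turn the bound into $\delta$ uniformly in $\delta \in (0,1)$, I would, equivalently, apply Step 2 with the scale-dependent tail probability $\eta_u \asymp \delta\, 2^{-2u}$: then $\sum_u |N_u^{\mathrm{res}}|\,\eta_u \lesssim \delta \sum_u 2^{-u} \leq \delta$, and $\log(1/\eta_u) \leq C(\log(1/\delta)+u) \leq C\log(2^u/\delta)$, so the threshold for Hanson--Wright is still of the form $C_N\, r\,\NoiseBoundd<p,2^u \delta^{-1}>$ after enlarging $C_N$ by a universal constant.

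The main obstacle is not a deep one: it is the book-keeping that matches the scale-dependent cardinality $|N_u^{\mathrm{res}}| \lesssim 2^u$ against the Hanson--Wright tail $(\delta/2^u)^{\gamma}$, so that the $u$-sum is controlled without inflating the threshold by a spurious $\log$ factor. Once this is arranged, the rest is a straightforward application of \Cref{HW} and \Cref{lem:controlnorme}.
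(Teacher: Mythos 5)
Your proposal is correct and follows essentially the same route as the paper: apply the Hanson--Wright inequality (\Cref{HW}) with the Frobenius-norm bound of \Cref{lem:controlnorme} at each fixed $(l,r)\in N_u$, then union bound over $N_u$ and over $u$, using that the tail $(\delta/2^u)^{\gamma(C_N)}$ beats the polynomial-in-$2^u$ cardinality once $C_N$ is large. The only cosmetic difference is that you exploit the restriction $r\leq 3n/2^u$ to count $\lesssim 2^u$ grid points per level, whereas the paper uses the cruder bound $|N_u|\leq 2^{2u+2}$; both are absorbed by the choice of $C_N$, and your explicit handling of the final constant in front of $\delta$ is if anything slightly more careful than the paper's.
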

For $u = \kappa_r$, $(l_u, r_u) \in N_u$ \Cref{lem:mesh} gives $r_u \leq r + 2\frac{n}{2^u} \leq  3\frac{n}{2^u}$. Consequently, on the event $\xsgD_N$, we obtain
\begin{align*}
	\abs{\frac{1}{2r}\Diffcarre{l_{\kappa_r}}{r_{\kappa_r}}
	- \frac{r_{\kappa_r}}{r}\sigma^2 p}
	\leq
	C_N' \NoiseBoundsg<p,n,r\delta>\enspace,
\end{align*}
for $C_N'$ a large absolute constant.
To upper bound the second term, we use the following lemma :

\begin{lemma}\label{lem:concentrationD}
For all $(l,r) \mbox{ and } (l',r')$ in $J_n$, set
{\small
\begin{align*}
	\xsgD_{\Delta,v}(l,r,l',r')
	=
	\left\{
		 \abs{\Diffcarre{l'}{r'}- \Diffcarre{l}{r} - 2(r'-r)\sigma^2 p}
		 \leq C_\Delta\sqrt{\frac{rn}{2^v}}\NoiseBoundd<p,2^v \delta^{-1}>
	\right\} \enspace.
\end{align*}
}
There exists a constant $C_\Delta$ such that, for all $n$, the event
\begin{align*}
	\xsgD_{\Delta}
	=
	\bigcap_{v \geq 0}
	\left\{
		\xsgD_{\Delta,v}
		\left( l,r,l', r' \right) \mbox{ holds for all }
		((l,r),(l',r')) \in N_v \times N_{v+1}
		\mbox{ s.t. } |l-l'| + |r-r'| \leq 3\frac{n}{2^v}
	\right\} \enspace.
\end{align*}
holds with probability higher than $1 - \delta$.
\end{lemma}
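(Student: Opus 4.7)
The plan is to apply the Hanson-Wright inequality of Lemma~\ref{HW} pointwise to each admissible pair $((l,r),(l',r'))\in N_v\times N_{v+1}$ and then take a union bound, first over the pairs at a fixed scale $v$ and then over $v\geq 0$.

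First, I will fix $v\geq 0$ and a pair $((l,r),(l',r'))\in N_v\times N_{v+1}$ with $|l-l'|+|r-r'|\leq 3n/2^v$. Writing the difference of squared norms as a centered quadratic form $\sum_{i,j}B_{ij}\langle\Noise_i,\Noise_j\rangle$ for a symmetric matrix $B$, a direct computation gives $\mathbb{E}\bigl[\sum_{i,j}B_{ij}\langle\Noise_i,\Noise_j\rangle\bigr]=\sigma^2 p\operatorname{tr}(B)=2(r'-r)\sigma^2p$, which matches the mean centering in the definition of $\xsgD_{\Delta,v}$. The second claim of Lemma~\ref{lem:controlnorme}, combined with the distance constraint, yields $\|B\|_F^2\lesssim (n/2^v)(r+r'+|l-l'|)\lesssim rn/2^v$ once we also control $r'+|l-l'|$ by a constant multiple of $r$ (see the last paragraph below), so that $\|B\|\leq\|B\|_F\lesssim\sqrt{rn/2^v}$.

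Second, plugging these norm bounds into Lemma~\ref{HW} with $t=c_1\sqrt{rn/2^v}\bigl(\sqrt{pu}+u\bigr)$ for a large enough constant $c_1$ yields pointwise failure probability at most $2e^{-cu}$, whose shape already matches the target $\sqrt{rn/2^v}\bigl(\sqrt{p\log(2^v/\delta)}+\log(2^v/\delta)\bigr)$. I will then union-bound over pairs at scale $v$: by construction $|N_v|\lesssim 2^{2v}$, and for each $(l,r)\in N_v$ only $O(1)$ candidates $(l',r')\in N_{v+1}$ lie within distance $3n/2^v$ (since the mesh of $N_{v+1}$ is $n/2^{v+1}$). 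Choosing $u \asymp v+\log(1/\delta)$ makes the failure probability at scale $v$ bounded by $\delta\cdot 2^{-v-1}$, and summing the geometric series over $v\geq 0$ delivers $\mathbb{P}(\xsgD_\Delta)\geq 1-\delta$ after adjusting $C_\Delta$.

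The main obstacle will be the first step: Lemma~\ref{lem:controlnorme} naturally produces $\|B\|_F^2\lesssim (n/2^v)(r+r'+|l-l'|)$, whereas the target carries only the factor $r$. I will resolve this by exploiting the fact that the lemma is designed for use inside the telescoping chaining of Lemma~\ref{lem:purenoise}, where the summation starts at $v=\kappa_r$ and the iterates $(l_v,r_v)=\pi_v(l,r)$ satisfy, by Lemma~\ref{lem:mesh}, $r_v,r_{v+1}\in[r-2n/2^v,\,r+2n/2^v]$, so that $r'+|l-l'|$ stays within a universal multiple of $r$ at every invoked scale. Equivalently, if preferred, the statement can be rephrased symmetrically with $r\lor r'$ in place of $r$ in the announced bound; in the chaining this reduces back to $r$, so no generality is lost.
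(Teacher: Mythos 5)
Your overall strategy coincides with the paper's: write the increment as a centered quadratic form, bound its Frobenius and operator norms via \Cref{lem:controlnorme}, apply the Hanson--Wright inequality of \Cref{HW} with $t\asymp\sqrt{rn/2^v}\bigl(\sqrt{p\log(2^v/\delta)}+\log(2^v/\delta)\bigr)$, and union-bound over the pairs in $N_v\times N_{v+1}$ and then over $v$ (your count of admissible pairs is even a bit sharper than the paper's crude $|N_v\times N_{v+1}|\le 2^{4v+6}$, but both suffice).

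There is, however, one concrete gap: your treatment of the step $r+r'+|l-l'|\lesssim r$. You declare this the main obstacle and propose to resolve it either by appealing to the way the lemma is used inside the chaining of \Cref{lem:purenoise} (which would only establish the bound along the particular iterates $(l_u,r_u)=\pi_u(l,r)$, not for all admissible pairs in $N_v\times N_{v+1}$ as the lemma asserts) or by weakening the statement to carry $r\lor r'$. Neither yields the lemma as stated. The missing observation is elementary: any $(l,r)\in N_v$ has $r$ equal to a \emph{positive} multiple of the mesh $2^{\kappa_1-v}$, hence $r\ge 2^{\lfloor\log_2 n\rfloor-v}\ge n/2^{v+1}$. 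Combined with the constraint $|l-l'|+|r-r'|\le 3n/2^v\le 6r$, this gives $r+r'+|l-l'|\le 2r+|r-r'|+|l-l'|\le 8r$ directly, for every admissible pair, so $\norm{B}_F^2\lesssim rn/2^v$ with no reference to the chaining context and no modification of the statement. With that one line inserted, the rest of your argument goes through.
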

For $v \geq \kappa_r$, $((l_v,r_v),(l_{v+1},r_{v+1})) \in N_v \times N_{v+1}$ and by \Cref{lem:mesh},
\begin{align*}
	|r_v - r_{v+1}| + |l_v - l_{v+1}|
	& \leq |r_v - r| + |l_v - l| + |r - r_{v+1}| + |l - l_{v+1}|\\
	& \leq 3\frac{n}{2^v}.
\end{align*}
Therefore, on the event $\xsgD_\Delta$,
\begin{align*}
	& \quad \abs{\frac{1}{2r}\sum_{v = \kappa_r}^{\kappa_1-1 }\left[ \Diffcarre{l_{v+1}}{r_{v+1}} - \Diffcarre{l_v}{r_v} - 2(r_{v+1}-r_v)\sigma^2 p\right]} \\
	& \leq C_\Delta \frac{1}{2r}\sum_{v=\kappa_r}^{\kappa_1 -1} \sqrt{\frac{r_v n}{2^v}}\NoiseBoundd<p,2^v\delta^{-1}>\\
	& \leq C_\Delta' \sum\limits_{v'\geq 0} \frac{1}{2^{v'}}\NoiseBoundsg<p,n2^{v'},r\delta>\\
	& \leq C_\Delta'' \NoiseBoundsg<p,n,r\delta>,
\end{align*}
where $C_\Delta' ,C_\Delta''$ are large absolute constants. Hence, letting $\Cxi = C_N' + C_\Delta''$ we obtain
\begin{align*}
	\xsgD_N \cap \xsgD_\Delta \subset \xsgD_2 \enspace,
\end{align*}
which must be of probability higher than $1 - 2\delta$.
\end{proof}

\begin{proof}[Proof of \Cref{lem:mesh}]
	Since the mesh of the grid $N_u$ is equal to $2^{\kappa_1 - u} \leq \frac{n}{2^u}$, there exists $(\tilde{l}, \tilde{r}) \in N_u$ such that
	\begin{align*}
		|l- \tilde{l}|
		\leq \frac{n}{2^u} \quad \text{and} \quad |r- \tilde{r}|
		\leq \frac{n}{2^u} \enspace.
	\end{align*}
	\end{proof}

\begin{proof}[Proof of \Cref{lem:controlnorme}]
Let us write
\begin{align*}
	\Noise^T A \Noise
	=
	\sum_{l-r \leq i,j < l+r}a_{ij}\proscal<\Noise_i,\Noise_j> \quad \mbox{and} \quad
	\Noise^T B \Noise
	=
	\sum_{ m_1\leq i,j < m_2 }b_{ij}\proscal<\Noise_i,\Noise_j>,
\end{align*}
where $m_1 =\min(l-r, l'-r')$, $m_2 = \max(l+r,l'+r')$. Remark that for all $i,j$ in $[l-r,l+r)$, $a_{ij} \leq 2$. This gives the first inequality. \\
For the second inequality, assume without loss of generality that $l \leq l'$. As for the first inequality, $b_{ij} \leq 2$ for all $i,j \in [m_1,m_2)$. Remark that $b_{ij}$ can be non zero only if $(i,j)$ is in one of the following cases :
\begin{enumerate}
	\item $i \mbox{ or } j$ is in $[\min(l+r,l'+r'), \max(l+r,l'+r'))$
	\item $i \mbox{ or } j$ is in $[\min(l-r,l'-r'), \max(l-r,l'-r'))$
	\item $i \mbox{ or } j$ is in $[l, l').$
\end{enumerate}
Hence there is at most $(4( |l-l'| + |r-r'|) + 2|l-l'|)(r+r' +|l-l'|)$ non zero $b_{ij}$, and we obtain the second inequality.
\end{proof}

\begin{proof}[Proof of \Cref{lem:concentrationN}]
The probability of $(\xsgD_N)^c$ can be written as :
\begin{align*}
	\begin{aligned}
	\Proba{(\xsgD_N)^c}
	= \bbP
	\Bigg(
		\exists u \geq 0,\exists (l,r) \in N_u  ~\text{ s.t. }~ r \leq 3\frac{n}{2^u} \mbox{ and }\\ \abs{\Diffcarre{l}{r} - 2r \sigma^2 p} \leq C_N r\NoiseBoundd<p,2^u\delta^{-1}>
	\Bigg) \enspace.
	\end{aligned}
\end{align*}
First, fix $u \geq 0$ and $(l,r) \in N_u$ such that $r \leq 3\frac{n}{2^u}$.

Applying the first inequality of \Cref{lem:controlnorme} and the Hanson-Wright inequality - see \Cref{HW}, we obtain for all $t \geq 0$
\begin{align*}
	\Proba{\abs{\Diffcarre{l}{r} - 2r \sigma^2 p} \geq t}
	&
	\leq 2 \exp
		 \left(
		 	-c\min \left(\frac{t^2}{pr^2}, \frac{t}{r}\right)
	 	\right) \enspace,
\end{align*}
where $c$ is an absolute constant.
Choosing
\begin{align*}
	t &= C_N r\NoiseBoundd<p,2^u \delta^{-1}> \enspace,
\end{align*}
we obtain
\begin{align*}
	\Proba{\abs{\Diffcarre{l}{r} - 2r \sigma^2 p}
	\geq C_N r\NoiseBoundd<p,2^u\delta^{-1}>}
	\leq C\left( \frac{1}{2^u} \right)^{cC_N} \delta^{cC_N} \enspace,
\end{align*}
where $c,C$ are absolute constants. Since the cardinal of $N_u$ is upper bounded by $ 2^{2u +2}$,
A union bound on each $N_u$ for each $u \geq 0$ gives :
\begin{align*}
	\Proba{(\xsgD_N)^c}
	& \leq \sum_{u \geq 0} C\abs{N_u}
	       \left(\frac{1}{2^u} \right)^{cC_N} \delta^{cC_N}\\
	& \leq \sum_{u \geq 0} 4C
		   \left(\frac{1}{2^u} \right)^{2 - cC_N} \delta^{cC_N}\enspace,
\end{align*}
which is convergent.
For $C_N$ large enough, we obtain $\Proba{\xi_N^c} \leq 1 - \delta$.
\end{proof}

\begin{proof}[Proof of \Cref{lem:concentrationD}]

\begin{align*}
	\Proba{(\xsgD_\Delta)^c}
	& = \Proba{
			   \exists v \geq 0, \exists ((l,r),(l',r')) \in N_v \times N_{v+1}
			   \mbox{ s.t. } |l-l'| + |r-r'| \leq 4 \frac{n}{2^v}
			   \mbox{ and } (\xsgD_{\Delta,v}\left( l,r,l', r' \right))^c
			   \mbox{ holds }
			   } \enspace.
\end{align*}
First fix $v \geq 0 $ and $((l,r),(l',r')) \in N_v \times N_{v+1}$.
Remark that by definition of $N_v$,
\begin{align*}
	r \geq \frac{n}{2^{v+1}} \enspace.
\end{align*}
Thus,
\begin{align*}
	r + r' + |l-l'| \leq 2r + |l-l'| + |r-r'| \leq 10r \enspace.
\end{align*}

Then by \Cref{lem:controlnorme}, letting $B$ be the matrix such that $\Noise^T B\Noise = \Diffcarre{l'}{r'} - \Diffcarre{l}{r}$, we obtain
\begin{align*}
	\norm{B}^2 \leq \norm{B}_F^2 \leq 40r\frac{n}{2^v} \enspace.
\end{align*}
Thus, by the Hanson Wright inequality - see \Cref{HW},
\begin{align*}
	\Proba{
		  \abs{\Noise^T B_{\nnu} \Noise - \Esper{\Noise^T B_{\nnu} \Noise}}
		  \geq t
		  }
	\leq
	2\exp\left(
			  -c \min \left(\frac{2^v }{pnr}t^2,\sqrt{\frac{2^v }{nr}}t\right)
		  \right) \enspace.
\end{align*}
From now on, we choose
\begin{align*}
	t & = C_\Delta\sqrt{\frac{rn}{2^v}}\NoiseBoundd<p,2^v\delta^{-1}> \enspace.
\end{align*}
There are at most $2^{4v + 6}$ elements in $N_v \times N_{v+1}$.
Therefore, a union bound on $v \geq 0$ and $N_v \times N_{v+1}$ gives
\begin{align*}
	\Proba{(\xsgD_\Delta)^c} & \leq \sum\limits_{u \geq 0}2|N_v \times N_{v+1}|\left( 2^v \right)^{-cC_\Delta}\delta^{cC_\Delta}\\
	& \leq \sum_{u \geq 0}2^7 \left( 2^v \right)^{4 - cC_\Delta}\delta^{cC_\Delta}\\
	& \leq C\delta^{cC_\Delta},
\end{align*}
where the last inequality holds if $C_\Delta$ is large enough, for $c,C$ universal constants.
\end{proof}

\subsubsection{Proof of \Cref{prop:sparsesg}}
\label{subsub:proof_of_prop:sparsesg}

\paragraph*{Step 1: Introduction of useful high probability events.} Let $s\leq p$ and consider $S \in \Choose_p^s$. In what follows and for an vector $u \in \bbR^p$, we write $u^{(S)}$ for the vector $u$ restricted to the set $S$.

Remark that by a simple computation, the noise can be decomposed as follows :
\begin{align*}
	\frac{r}{2}
	\left[
		\norm{\yS_{l,+r} - \yS_{l,-r} }^2 - \norm{ \tS_{l,-r} - \tS_{l,+r}}^2
	\right] - \sigma^2 s\\
	=
	r \proscal<\NS_{l,+r} - \NS_{l,-r},\tS_{l,+r} - \tS_{l,-r}>
	+ \frac{r}{2} \norm{\NS_{l,+r} - \NS_{l,-r} }^2
	- \sigma^2 s \enspace.
\end{align*}
The first term written as
\begin{align*}
	r \proscal<\NS_{l,+r} - \NS_{l,-r},\tS_{l,+r} - \tS_{l,-r}> \enspace,
\end{align*}
is a crossed term between the noise and the mean vector $\theta$. \Cref{lem:crossedterms} states that for $l$ equal to a true change-point $\tau_k$, $r$ of order $r_k^*$, and $S$ being the corresponding support of the change-point, it is controlled on event $\xsgSp_1$ with high probability.

\begin{lemma}\label{lem:crossedtermsm}
For $k\in[K]$, let us write $S_k\subset[K]$ for the support of $\mu_k- \mu_{k-1}$. Assume that $\Csto$ is a large enough universal constant. The event
\begin{align*}
    \xsgSp_1
    := \xsgSp_1(\delta)
     = \Bigg\{\forall k \in[K]~\mathrm{s.t.~Equation~\eqref{eq:Energymsg}~holds~for~}k,~ \\
    \rgD_k\abs{ \proscal<\NSk_{\tau_k,+\rgD_k} - \NSk_{\tau_k,-\rgSp_k},\tSk_{\tau_k,+\rgSp_k} - \tSk_{\tau_k,-\rgSp_k}>}
	\leq
	\frac{\rgD_k}{4}\norm{ \overline\theta_{\tau_k,+\rgSp_k} - \overline\theta_{\tau_k,-\rgSp_k}}^2 \Bigg\}
\end{align*}
holds with probability higher than $1 - \delta$.
\end{lemma}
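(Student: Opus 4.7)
The proof will mirror that of \Cref{lem:crossedterms} in the dense case almost line for line, with a single observation: restricting to the support $S_k$ of $\mu_k-\mu_{k-1}$ preserves the signal norm while only decreasing the effective dimension of the noise vector. First, I would argue on the signal side: because $\tau_k$ is a $\kappa$-sparse high-energy change-point, its associated scale satisfies $4(\rgSp_k - 1)\leq r_k$, so the window $[\tau_k-\rgSp_k,\tau_k+\rgSp_k)$ is contained in $[\tau_{k-1},\tau_{k+1})$. Therefore $\overline{\theta}_{\tau_k,-\rgSp_k}=\mu_{k-1}$ and $\overline{\theta}_{\tau_k,+\rgSp_k}=\mu_{k}$, and since $S_k$ is the support of $\mu_k-\mu_{k-1}$, one has $\tSk_{\tau_k,+\rgSp_k}-\tSk_{\tau_k,-\rgSp_k}=(\mu_k-\mu_{k-1})^{(S_k)}$ with Euclidean norm exactly $\Delta_k$.

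Next I would compute the sub-Gaussian norm of the scalar
$$Z_k := \rgSp_k\,\langle \NSk_{\tau_k,+\rgSp_k}-\NSk_{\tau_k,-\rgSp_k},\ \tSk_{\tau_k,+\rgSp_k}-\tSk_{\tau_k,-\rgSp_k}\rangle.$$
Since $\langle \NSk_{\tau_k,+\rgSp_k}-\NSk_{\tau_k,-\rgSp_k},v\rangle$ is, for any fixed $v\in\mathbb{R}^{S_k}$, a linear combination of $2\rgSp_k|S_k|$ independent mean-zero $L$-sub-Gaussian coordinates with total squared weight at most $C\|v\|^2/\rgSp_k$, we get $\|Z_k\|_{\psi_2}\leq CL\sqrt{\rgSp_k}\,\Delta_k$ for a universal constant $C$. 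Sub-Gaussian tail inequalities then yield, for any $t>0$,
$$\mathbb{P}(|Z_k|>t)\leq 2\exp\!\left(-\frac{c\,t^2}{L^2\,\rgSp_k\,\Delta_k^2}\right)$$
for some absolute $c>0$.

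Now I would apply this with $t=\rgSp_k\Delta_k^2/4$, which equals the right-hand side of the event since $\|\overline\theta_{\tau_k,+\rgSp_k}-\overline\theta_{\tau_k,-\rgSp_k}\|^2=\Delta_k^2$. This gives $\mathbb{P}(|Z_k|>t)\leq 2\exp(-c\,\rgSp_k\Delta_k^2/(16L^2))$. Plugging in the defining property of $\rgSp_k$, namely $4\rgSp_k\Delta_k^2\geq \kappa L^2\bigl[s_k\log(ep/s_k)+\log(n/(\rgSp_k\delta))\bigr]$, the bound becomes
$$\mathbb{P}\bigl(|Z_k|>\rgSp_k\Delta_k^2/4\bigr)\leq 2\exp\!\left(-c'\kappa\bigl[s_k\log(ep/s_k)+\log(n/(\rgSp_k\delta))\bigr]\right),$$
for $c'=c/64$.

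Finally I would take a union bound over the indices $k$ for which \eqref{eq:Energymsg} holds. Dropping the harmless factor $\exp(-c'\kappa s_k\log(ep/s_k))\leq 1$, the sum is at most $2\sum_k(\rgSp_k/n)^{c'\kappa}\delta^{c'\kappa}$. Using the telescoping bound $\sum_k \rgSp_k\leq \sum_k r_k\leq 2n$ and choosing $\kappa$ large enough that $c'\kappa\geq 1$, this sum is bounded by $\delta$, which closes the argument. The only mildly delicate point is tracking the constant $\kappa$ so that the union bound goes through; this is the same bookkeeping already carried out in \Cref{lem:crossedterms} and poses no conceptual difficulty.
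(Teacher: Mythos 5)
Your proposal is correct and follows exactly the route the paper intends: the paper's own "proof" of this lemma is a one-line remark that it follows from \Cref{lem:crossedterms} by restricting everything to the support $S_k$, and your argument is precisely that dense-case proof carried out with $\rgSp_k$, the restricted signal of norm $\Delta_k$, and the energy condition \eqref{eq:Energymsg} in place of \eqref{eq:Energydsg}. If anything your version is slightly cleaner, since you apply the tail bound at the threshold $\rgSp_k\Delta_k^2/4$ actually required by the event (the paper's dense-case proof somewhat loosely uses $r_k\Delta_k^2/4$), and the final union bound over $k$ with $\sum_k \rgSp_k \leq \sum_k r_k \leq n$ and $\kappa$ large enough closes the argument as you describe.
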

The proof of this lemma follows directly from the one of \Cref{lem:crossedterms}, restricting the term corresponding to change-point $k$ to $S_k$ - and diminishing the deviation by doing so.

The second term written as
$$\frac{r}{2} \norm{\NS_{l,+r} - \NS_{l,-r} }^2 - \sigma^2 s$$
is a term of pure noise. \Cref{lem:purenoiseS} states that it is controlled on event $\xsgSp_2(S)$ with high probability.
\begin{lemma}\label{lem:purenoiseS}
There exists a constant $\Cxi >0$ such that the event
\begin{align*}
\begin{aligned}
	\xsgSp_2(S)
	:= \xsgSp_2(S,\delta)
	 = \Bigg\{
	 \forall (l,r)\in J_n,~\abs{\frac{r}{2} \norm{\NS_{l,+r} - \NS_{l,-r} }^2 - \sigma^2 s} \\
	\leq
	\Cxi L^2 \left(
			 	  \sqrt{s \Log{\frac{n}{r\delta}}} + \Log{ \frac{n}{r\delta}}
		 	  \right) \Bigg\}
\end{aligned}
\end{align*}
holds with probability higher than $1 - 2\delta$.
\end{lemma}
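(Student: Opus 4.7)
The plan is to follow exactly the same chaining argument used to prove \Cref{lem:purenoise}, with the only modification being that the ambient dimension $p$ is replaced by $s$ throughout the Hanson--Wright step. The key structural observation is that the restricted noise vectors $(\Noise_t^{(S)})_{t=1,\dots,n}$ are independent sub-Gaussian vectors in $\bbR^s$ with $\|\Noise_t^{(S)}\|_{\psi_2}\leq L$, since the coordinates of $\Noise_t$ are independent and $L$-sub-Gaussian. By homogeneity I may again assume $L=1$.

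The first step is to introduce the same covering sets $N_u = J_n \cap \{i 2^{\kappa_1 - u}, i \in \bbN\}^2$ and projections $\pi_u$ as in the proof of \Cref{lem:purenoise}, and to write the chaining decomposition
\begin{align*}
\frac{r}{2}\|\NS_{l,+r}-\NS_{l,-r}\|^2 - \sigma^2 s
&= \frac{1}{2r}\Bigl[\|\Noise^{(S)}_{l_{\kappa_r},+r_{\kappa_r}} - \Noise^{(S)}_{l_{\kappa_r},-r_{\kappa_r}}\|^2 - 2r_{\kappa_r}\sigma^2 s\Bigr] \\
&\quad + \frac{1}{2r}\sum_{v=\kappa_r}^{\kappa_1 -1}\Bigl[\|\Noise^{(S)}_{l_{v+1},+r_{v+1}} - \Noise^{(S)}_{l_{v+1},-r_{v+1}}\|^2 - \|\Noise^{(S)}_{l_v,+r_v} - \Noise^{(S)}_{l_v,-r_v}\|^2 - 2(r_{v+1}-r_v)\sigma^2 s\Bigr],
\end{align*}
where $(l_u,r_u)= \pi_u(l,r)$, and to control the anchor term and each chaining increment separately.

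The second step is to re-run the analogues of \Cref{lem:concentrationN} and \Cref{lem:concentrationD}. The bounds on the Frobenius and operator norms of the relevant quadratic form matrices (\Cref{lem:controlnorme}) do not depend on the ambient dimension of the noise vectors and thus remain valid verbatim. Applying the Hanson--Wright inequality (\Cref{HW}) to the $s$-dimensional vectors $(\Noise_t^{(S)})$ gives, for the anchor term,
\[
\bbP\Bigl[\,\bigl|\|\Noise^{(S)}_{l,+r}-\Noise^{(S)}_{l,-r}\|^2 - 2 r \sigma^2 s\bigr| \geq t\,\Bigr] \leq 2\exp\!\left(-c\min\!\left(\tfrac{t^2}{sr^2},\tfrac{t}{r}\right)\right),
\]
and analogously for the increments with $p$ replaced by $s$. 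Choosing $t$ of the right order and performing the same union bound over $(l,r)\in N_u$ and over $u\geq 0$ yields, with probability $\geq 1-\delta$ each, uniform controls
\[
\Bigl|\|\Noise^{(S)}_{l,+r}-\Noise^{(S)}_{l,-r}\|^2 - 2r\sigma^2 s\Bigr| \lesssim r\bigl(\sqrt{s\log(2^u/\delta)} + \log(2^u/\delta)\bigr),
\]
and the matching chaining-increment inequality. Summing the geometric series in $v$ as in the dense proof reconstructs the final bound $\Cxi L^2(\sqrt{s\log(n/(r\delta))} + \log(n/(r\delta)))$ on the full quantity, uniformly in $(l,r)\in J_n$, with probability at least $1-2\delta$.

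There is no real obstacle here beyond bookkeeping: the proof is a verbatim copy of \Cref{lem:purenoise} once $\Noise_t$ is replaced by $\Noise_t^{(S)}$, and the only point to verify is that the ambient dimension entering Hanson--Wright is indeed $s$ (which follows from $\Noise_t^{(S)}$ being an $s$-dimensional independent-coordinate sub-Gaussian vector with the same $\psi_2$-norm as the individual components of $\Noise_t$). Consequently the same chain of union bounds goes through with $p$ replaced by $s$ in the final deviation term, proving the lemma.
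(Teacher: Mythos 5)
Your proposal is correct and coincides with the paper's own argument: the paper proves this lemma by stating that it is exactly the proof of \Cref{lem:purenoise} with all noise vectors restricted to $S$, which is precisely the substitution you carry out. Your added verifications (that the Frobenius/operator norm bounds of \Cref{lem:controlnorme} are dimension-independent and that the restricted noise is an $s$-dimensional independent-coordinate sub-Gaussian vector, so Hanson--Wright applies with ambient dimension $s$) are exactly the points needed to justify that shortcut.
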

The proof of this lemma is exactly the same as the one of \Cref{lem:purenoise}, restricting all vectors to $S$.

Set $\delta_s = \delta/(2^s {\binom{p}{s}})$. \Cref{lem:purenoiseS} implies that with probability larger than $1 - 2 \delta$, $\forall (l,r)\in J_n$, $\forall S \subset [p]$
$$\abs{\frac{r}{2} \norm{\NS_{l,+r} - \NS_{l,-r} }^2 - \sigma^2 s}
\leq  \Cxi L^2 \left( \sqrt{s \Log{\frac{n}{r\delta_s}}} + \Log{ \frac{n}{r\delta_s}}  \right).$$
And so since ${\binom{p}{s}} \leq \left(\frac{ep}{s}\right)^s$, we have probability larger than $1 - 2\delta$, $\forall (l,r)\in J_n$, $\forall S \subset [p]$
\begin{align*}
	\begin{aligned}
	\abs{\frac{r}{2} \norm{\NS_{l,+r} - \NS_{l,-r} }^2 - \sigma^2 s}
	&\leq  \Cxi L^2 \left( \sqrt{s \Log{\frac{n}{r\delta}} + s\log\left(\frac{2ep}{s}\right)} + \Log{ \frac{n}{r\delta}} + s\log\left(\frac{2ep}{s}\right) \right)\\
	&\leq  4\Cxi L^2 \left(\Log{ \frac{n}{r\delta}} + s\log\left(\frac{2ep}{s}\right) \right).
	\end{aligned}
\end{align*}
And so the event
\begin{equation}\label{eq:purenoiseS}
\begin{aligned}
    \xsgSp_2:= \xsgSp_2(\delta) = \Bigg\{  \forall (l,r)\in J_n, \forall S \subset [p], \abs{\frac{r}{2} \norm{\NS_{l,+r} - \NS_{l,-r} }^2 - \sigma^2 s} \\
\leq  4\Cxi L^2 \left(\Log{ \frac{n}{r\delta}} + s\log\left(\frac{2ep}{s}\right) \right)\Bigg\}
\end{aligned}
\end{equation}
has probability larger than $1 - 2\delta$.

Set now
$$\xsgSp :=  \xsgSp_1 \cap \xsgSp_2.$$
Note that
$$\bbP(\xsgSp) \geq 1 - 3\delta.$$

\paragraph*{Step 2: Study in the `no change-point' situation.}

Consider $(l,r) \in J_n$ such that $\{\tau_k, k \in [K]\} \cap [l-r, l+r) = \emptyset$, and $S \subset [p]$. Note that since $\{\tau_k, k \in [K]\} \cap [l-r, l+r) = \emptyset$, we have $\tS_{l,-r} = \tS_{l,+r}$ so that
$$\frac{r}{2}\norm{ \tS_{l,-r} - \tS_{l,+r}}^2  = 0,$$
and
$$r \proscal<\NS_{l,+r} - \NS_{l,-r},\tS_{l,+r} - \tS_{l,-r}>  = 0.$$
Moreover we have on $\xsgSp$ that - see Equation~\eqref{eq:purenoiseS}
$$\abs{\frac{r}{2} \norm{\NS_{l,+r} - \NS_{l,-r} }^2 - \sigma^2 s} \leq 4\Cxi L^2 \left(\Log{ \frac{n}{r\delta}} + s\log\left(\frac{2ep}{s}\right) \right) \leq  \sigma^2\tDmsg_r,$$
for $\thresh \geq 4\Cxi$ - note that $\Cxi>0$ is a universal constant. And so
$$ \psiDmsg_{l,r}\leq  \tDmsg_r,$$
so that on $\xsgD$,
$$\TDmsg_{l,r} = 0 \enspace .$$
This concludes the proof of the first part of the proposition.

\paragraph*{Step 3: Study in the `change-point' situation.}

Consider $k\in [K]$ such that $\tau_k$ is a $\kappa$-sparse high-energy change-point, - see Equation~\eqref{eq:Energymsg}. Since $S_k$ is the support of $\mu_k - \mu_{k-1}$ - and therefore of $\overline{\theta}_{\tau_k,-\rgSp_k} - \overline\theta_{\tau_k,+\rgSp_k}$ - we have
\begin{align*}
	\begin{aligned}
	\frac{\rgSp_k}{2}\norm{ \tSk_{\tau_k,-\rgSp_k} - \tSk_{\tau_k,+\rgSp_k}}^2  &\geq \frac{\kappa}{8} L^2  \left( s_k\Log{ \frac{2ep}{s_k} } + \Log{ \frac{n}{\rgSp_k\delta} }  \right) \enspace .
	\end{aligned}
\end{align*}
So on $\xsgSp$ this implies that - see \Cref{lem:crossedtermsm}
 $$\rgD_k\abs{ \proscal<\NSk_{\tau_k,+\rgSp_k} - \NSk_{\tau_k,-\rgSp_k},\tSk_{\tau_k,+\rgSp_k} - \tSk_{\tau_k,-\rgSp_k}>}  \leq \frac{\rgSp_k}{4} \norm{ \overline\theta_{\tau_k,+\rgSp_k} - \overline\theta_{\tau_k,-\rgSp_k}}^2.$$
Moreover we have on $\xsgSp$ that - see Equation~\eqref{eq:purenoiseS}
$$\abs{\frac{\rgSp_k}{2} \norm{\NSk_{\tau_k,+\rgSp_k} - \NSk_{\tau_k,-\rgSp_k} }^2 - \sigma^2 s} \leq   4\Cxi L^2 \left(\Log{ \frac{n}{\rgSp_k\delta}} + 2s_k\Log{\frac{2ep}{s_k} } \right) \leq  \sigma^2\tDmsg_{\rgSp_k} \enspace,$$
for $\thresh \geq 4\Cxi$ - note that $\Cxi>0$ is a universal constant. And so on $\xsgSp$, combining the three previous displayed equations implies
\begin{align*}
    \psiDmsg_{\tau_k,\rgSp_k}&\geq  \frac{\rgD_k}{4 \sigma^2} \norm{ \tSk_{\tau_k,+\rgSp_k} - \tSk_{\tau_k,-\rgSp_k}}^2 -  \tDmsg_{\rgSp_k}\\
    &\geq \left(\frac{\kappa}{16} - \thresh\right) \frac{L^2}{\sigma^2}  \left(\Log{ \frac{n}{\rgSp_k\delta}} + s_k\Log{\frac{2ep}{s_k} } \right) > \tDmsg_{\rgSp_k}\enspace,
\end{align*}
since $\kappa > 32\thresh$. And so on $\xsgSp$
$$\TDmsg_{\tau_k,\rgSp_k} = 1.$$
This concludes the proof of the second part of the proposition.

\subsubsection{Proof of \Cref{cor:subgaus}}
Let $\xDense$ and $\xSparse$ be two events such that \Cref{prop:densesg} and \Cref{prop:sparsesg} both hold with probability $1-3\delta$, and write $\xi = \xDense \cap \xsgSp$. From now on, we work on the event $\xi$, which holds with probability $1 - 6\delta$. Define here simply $\tg_k = \tau_k$. Note that by definition of $\rg_k$ in the sub-Gaussian regime:
	\begin{equation*}\label{eq:rgrillesg}
	\rg_k = \left\lbrace
	\begin{aligned}
		\rgD_k \text{ if } s_k \log\left( \frac{ep}{s_k}\right) &> \sqrt{p \Log{\frac{n}{r_k\delta}}}\\
		\rgSp_k \text{ if } s_k\log\left( \frac{ep}{s_k}\right) &\leq \sqrt{p \Log{\frac{n}	{r_k\delta}}}
	\end{aligned}
	\right.
	\end{equation*}
	According to \Cref{th:g}, it is sufficient to prove that $\cA \left( \Theta, \Tg, \cK^*,(\tg_k, \rg_k)_{k\in \cK^*}\right)$ holds.
\begin{enumerate}
	\item {\bf (No false positive):} $\Tg_{l,r} = \TDmsg_{l,r}\lor \TDsg_{l,r} = 0$ for any $(l,r) \in \cG_F \cap \cH_0$.
	by \Cref{prop:densesg} and \Cref{prop:sparsesg}.
	\item {\bf (Significant change-point detection):} for every $k \in  \cK^*$ (see \Cref{eq:Energysg}), we have by definition of $\rg_k$ :
	$$4(\rg_k - 1) \leq r_k.$$
	Now if $s_k \log\left( \frac{ep}{s_k}\right) \geq \sqrt{p\Log{\frac{n}{r_k\delta}}}$, we have $\TD_{\tg_k,\rg_k} = 1$ by \Cref{prop:densesg}, by definition of $\Csto$, and for $\thresh^{(\mathrm{d})}$ as in \Cref{prop:densesg}.

	If $s_k \log\left( \frac{ep}{s_k}\right) \leq \sqrt{p\Log{\frac{n}{r_k\delta}}}$, we have $\TDmsg_{\tg_k,\rg_k} = 1$ by \Cref{prop:sparsesg}, by definition of $\Csto$, and for $\thresh^{(\mathrm{p})}$ as in \Cref{prop:sparsesg}.
\end{enumerate}
\Cref{th:g} ensures that for all $k \in \cK^*$, there exists $k' \in [\hat K]$ such that
	$$|\hat\tau_{k'} - \tau_{k}| \leq \rg_{k} - 1.$$
	This concludes the proof since $4(\rg_{k} - 1) \leq r_k$ for $k \in \cK^*$.


\subsection{ Proof of \Cref{th:borneinf}}
    Let us fix $(r,s) \in [1,n/4] \times [1,p]$. 
    Let $\Delta$ be such that $$r\Delta^2 = \frac{1}{2}\sigma^2\left[s\Log{1+u\frac{\sqrt{p}}{s} \sqrt{\Log{ \frac{n}{r }}}} +  u\Log{\frac{n}{r}}\right],$$
    for some $u \leq \frac{1}{8}$.

    In what follows, we consider any change-point detection method that outputs an estimator $\hat \tau$ of the change-points, associated to a number $\hat K$ of detected change-points, i.e.~the length of $\hat \tau$. We also write $\Prob_\Theta$ for the distribution of the data when the mean parameter or the time series is fixed to a $n\times p$ matrix $\Theta$, i.e.~of $\Theta+\Noise$ where the noise entries $(\Noise_t)_j$ are i.i.d.~and follow $\cN(0,\sigma^2)$ as in \Cref{sec:multi_scale_change_point_estimation_in_gaussian_noise}. Also abusing slightly notations, we write $\Prob_0$ for the distribution of the data when the parameter is constant and equal to $0$. \\
    Consider also any prior $\pi$ over the set of $n\times p$ matrices $\Theta$ such that the number of true change-points over the support of the prior is larger than $1$ - i.e.~the prior puts mass only on problems where more than one change-point occurs. Let $\bar \Prob_{\pi}$ be the corresponding distribution of the data, namely the distribution of the matrix of data when the mean parameter of the time series is the random matrix $\Thetarv \sim \pi$. Otherwise said,  $\bar \Prob_{\pi}$ is the distribution of $\Thetarv + \Noise$ where $\Thetarv \sim \pi$.

    We remind that in our setting $K$ is the number of true change-points in a given problem - which would be either $0$ under $\Prob_0$, or more than $1$ under $\bar \Prob_{\pi}$. If the support of $\pis$ is included in $\cP(r,s)$, then
    \begin{align}
    \underset{\Theta \in \cP(r,s)}{\sup}\Prob_\Theta( \hat K \neq K) &\geq \frac{1}{2}\left(\bar \Prob_{\pi}( \hat K = 0) + \Prob_{0}(\hat K \neq 0)\right) \nonumber\\
    &\geq \frac{1}{2} (1- d_{TV} (\bar \Prob_{\pi}, \Prob_0 )), \label{eq:TVLB}
    \end{align}
   where $d_{TV}$ is the total variation distance. From the Cauchy-Schwarz inequality, we have
    \begin{align}\label{eq:TVLBchi}
    d_{TV} (\bar \Prob_{\pi}, \Prob_0) \leq \frac{1}{2}\sqrt{\chi^2 ( \bar \Prob_{\pi}, \Prob_0 )},
	\end{align}
	where $\chi^2$ is the divergence between probability distributions:
	$$ \chi^2 ( \bar \Prob_{\pi}, \Prob_0 ) = \Esp_{\Prob_0}\left[ \left( \frac{\mathrm{d}\bar \Prob_{\pi}}{\mathrm{d}\Prob_0} - 1\right)^2\right] \enspace .$$
    By a simple computation that can be found for example in \cite{wu2017lecture}
    \begin{align}\label{eq:TVLBchi2}
    \chi^2 ( \bar\Prob_{\pi}, \Prob_0 ) = \Esp_{\Thetarv,\Thetarv'} \left[ e^{\frac{1}{\sigma^2}\proscal<\Thetarv,\Thetarv'>} \right] - 1,
    \end{align}
    where $\Thetarv$ and $\Thetarv'$ are i.i.d. and distributed according to $\pi$, $\proscal<\Thetarv,\Thetarv'> = \Tr (\Thetarv'\Thetarv^T)$ is the standard scalar product, and $\Esp_{\Thetarv,\Thetarv'}$ is the expectation according to $\Thetarv$ and $\Thetarv'$.

    Let us consider the three following cases for the couple $(r,s)$:
    \begin{align*}
    &\mbox{ {\bf Case 1}}: u\Log{\frac{n}{r}} \leq s \Log{1 + u\frac{\sqrt p}{s}\sqrt{\Log{\frac{n}{r}}}}  \quad \mbox{and} \quad s \leq u\sqrt{p \Log{\frac{n}{r}}}, \\
    &\mbox{ {\bf Case 2}}: u\Log{\frac{n}{r}} \leq s \Log{1 + u\frac{\sqrt p}{s}\sqrt{\Log{\frac{n}{r}}}}  \quad\mbox{and} \quad s > u\sqrt{p \Log{\frac{n}{r}}}, \\
    &\mbox{ {\bf Case 3}}: u\Log{\frac{n}{r}} > s \Log{1 + u\frac{\sqrt p}{s}\sqrt{\Log{\frac{n}{r}}}}.
    \end{align*}
    Each case corresponds to the regime of detection of one of the three statistics. The first one corresponds to the Berk-Jones statistic, the second one to the dense statistic and the last one to the partial norm statistic.

    \medskip

    {\bf Case 1} : In that case, $r\Delta^2 \leq \sigma^2 s\Log{4u\frac{p}{s^2}\Log{\frac{n}{r}}}$.
    Let us define a probability distribution on the parameter $\Theta \in \cP(r,s)$.
    For $\zeta = \floor{\frac{n}{r}} - 1$ and $l \in \tilde{\cD}_r= \{1,r+1, 2r+1, \ldots \zeta r +1 \}, $
    define the column vector $v_l =  \sum_{j=l}^{l+r-1}e_j$, where $e_j$ is the $j^{th}$ element of the canonical basis of $\bbR^n$.
    Let  $\Crvs$ be a random variable uniformly distributed in $\{x \in \{0,1\}^p,  |x|_0 = s\}$ and $\Lrv$
    be a random variable independent from $\Crvs$ and uniformly distributed on $\{v_l~:~ l \in \tilde{\cD}_r\}$.
    Let
    $$\Thetarvs = \frac{\Delta}{\sqrt s}\Crvs\Lrv^T \in \bbR^{p\times n},$$
     and $\pis$ be the distribution of the random variable $\Thetarvs$, and $\bar \Prob_{\pis}$ be the corresponding distribution of the data.

Consider two independent copies $\Thetarvs$ and $\Thetarvs'$ that are distributed like $\pis$. The probability that $\Thetarvs$ and $\Thetarvs'$ have the same support is exactly $\frac{1}{\zeta+1}$. Hence, from Equation~\eqref{eq:TVLBchi2}
    \begin{align}
    \chi^2 ( \bar \Prob_{\pis}, \Prob_0 ) = \frac{1}{\zeta+1}\left(\Esp_{\Crvs, \Crvs'} \left[ e^{\frac{r\Delta^2}{s\sigma^2}\proscal<\Crvs, \Crvs'>} -1 \right]\right) \enspace, \label{eq:LBpos}
    \end{align}
    where $\Crvs'$ is an independent copy of $\Crvs$, and $\Esp_{\Crvs, \Crvs'}$ is the expectation according to $\Crvs, \Crvs'$.
    Remark by symmetry that $\proscal<\Crvs, \Crvs'>$  has the same law as $\sum_{i=1}^s \Crvs_i$. Hence
    \begin{align*}
    \Esp_{\Crvs, \Crvs'} \left[ e^{\frac{r\Delta^2}{s\sigma^2}\proscal<\Crvs,\Crvs'>} \right] &= \Esp_{\Crvs} \left[ e^{\frac{r\Delta^2}{s\sigma^2} \sum\limits_{i = 1}^s \Crvs_i }\right]\enspace,
    \end{align*}
    where $\Esp_{\Crvs}$ is the expectation according to $\Crvs$.

    Remark that $(\Crvs_1, \ldots, \Crvs_p)$ has the same distribution as a random sampling without replacement of the list of length $p$ containing $(1, \ldots, 1,0, \ldots, 0)$ - the list containing exactly $s$ times the quantity $1$ and otherwise only $0$. The following lemma allows us to replace the variables $\Crvs_i$ by independent Bernoulli random variables $Z_i \sim \cB(s/p)$.
    \begin{lemma}
    Let $c = (c_1, \ldots, c_p) \in \bbR^p$. We associate to the list $c$ two random sampling processes: (i) the sampling process without replacement $(X_i)_{i= 1\dots s}$ of $s$ elements uniformly on the list $c$ and (ii) the sampling process with replacement $(Z_i)_{i= 1\dots s}$ of $s$ elements uniformly in the list. Then for any convex function $f$,
    $$\Esper{f\left(\sum\limits_{i=1}^s X_i\right)} \leq \Esper{f\left(\sum\limits_{i=1}^s Z_i \right)}\enspace.$$
	\end{lemma}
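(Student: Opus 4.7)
This is Hoeffding's classical sampling inequality (Hoeffding, 1963, Theorem 4). The plan is to establish the stronger convex-order dominance $\sum_{i=1}^s X_i \leq_{\mathrm{cvx}} \sum_{i=1}^s Z_i$, which by Strassen's theorem is equivalent to the existence of a martingale coupling and hence yields the claim for every convex $f$ by Jensen's inequality.

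Concretely, I aim to exhibit a joint distribution of $(U,V)$ on a common probability space with $U \stackrel{d}{=} \sum_{i=1}^s X_i$, $V \stackrel{d}{=} \sum_{i=1}^s Z_i$, and $\Esper{V \mid U} = U$ almost surely. As a preliminary sanity check, both sums share the common mean $s\bar c$ with $\bar c = p^{-1}\sum_i c_i$ (by linearity for the with-replacement sum and by exchangeability for the without-replacement one), so the martingale condition is at least consistent. Given such a coupling, Jensen applied conditionally yields $f(U) = f(\Esper{V \mid U}) \leq \Esper{f(V) \mid U}$, and integrating delivers the inequality.

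To construct the coupling, my plan is to start from the with-replacement sample $(Z_1, \ldots, Z_s)$, encode its outcome by the random multiset $\mathcal{M}$ of drawn values with multiplicities, and then define $U$ as the without-replacement sum associated with an appropriate completion of $\mathcal{M}$ to a uniform size-$s$ subset of $\{1,\ldots,p\}$ (with a randomization reallocating duplicate entries when the multiset has fewer than $s$ distinct values). The key combinatorial fact is that all tuples in $\{1,\ldots,p\}^s$ sharing a prescribed multiset image produce the same sum, so by exchangeability of both sampling schemes the averaging over arrangements produces exactly the without-replacement law for $U$ while preserving the conditional mean identity $\Esper{V \mid U} = U$.

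The hard part is verifying this construction rigorously, i.e.~simultaneously checking the correct marginal for $U$ and the martingale property; this requires combinatorial bookkeeping between multinomial and hypergeometric counts. Should that prove delicate, I would fall back on the equivalent formulation through the test family $f_a(x) = (x - a)_+$ for $a \in \mathbb{R}$, which generates all convex functions up to affine terms via positive combinations and limits, and directly compare the integrated upper tails $\int_a^{\infty} \Proba{\sum_{i=1}^s X_i > t}\,dt$ and $\int_a^{\infty} \Proba{\sum_{i=1}^s Z_i > t}\,dt$ via an explicit exchangeability argument.
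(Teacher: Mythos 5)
Your proposal is correct: the coupling you describe (complete the distinct values of the with-replacement draw to a uniform size-$s$ subset, check that the without-replacement sum is the conditional expectation of the with-replacement sum, and apply conditional Jensen) is precisely Hoeffding's original proof of his Theorem 4, and the paper itself gives no proof but simply cites that same reference \cite{hoeffding1994probability}. So you have identified and reconstructed exactly the argument the paper delegates to the literature; nothing further is needed.
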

	The proof of this lemma can be found in \cite{hoeffding1994probability}. Thus, if $(Z_i)_{i = 1 \dots s}$ is an i.i.d sequence of Bernoulli variables with parameter $\frac{s}{p}$ as described above, we obtain
    \begin{align}
    \chi^2 ( \bar \Prob_{\pis}, \Prob_0 ) &\leq \frac{1}{\zeta+1}\left(\Esp_Z \left[ e^{\frac{r\Delta^2}{s\sigma^2} \sum\limits_{i=1}^s Z_i} \right] - 1\right)\label{eq:LBpos2}\\
    &= \frac{1}{\zeta+1}\left[\left( \frac{s}{p} e^{\frac{r\Delta^2}{s\sigma^2}} + 1 - \frac{s}{p} \right)^s - 1\right]
    \leq  \frac{1}{\zeta+1}\left[e^{\frac{s^2}{p} \left( e^{\frac{r\Delta^2}{s\sigma^2}} - 1\right)} - 1\right] \nonumber\\
    &\leq 2\frac{r}{n} e^{\frac{s^2}{p} \left( e^{\Log{4u^2\frac{p}{s^2}\Log{\frac{n}{r}}}}\right)}
    \leq 2\left(\frac{r}{n}\right)^{1-4u^2} \leq 1 \enspace , \label{eq:LB1}
    \end{align}
    where $\Esp_Z$ is the expectation according to the $(Z_i)_i$ and where in the last inequality we used $u \leq 1/3$ and $n \geq 4r$.
    \medskip

    {\bf Case 2} : In that case, $r\Delta^2 \leq \sigma^2 u\sqrt{p\Log{\frac{n}{r}}}$. 
    Let $s_0 = \ceil{u\sqrt{p \Log{\frac{n}{r}}}}$
	and $\Crvd$ be a random variable uniformly distributed in $\{x \in \{0,1\}^p,  |x|_0 = s_0\}$ and $\nu$ be defined as in {\bf Case 1}. Let
	$$\Thetarvd = \frac{\Delta}{\sqrt p} \Crvd \Lrv^T,$$
 let $\pid$ be the distribution of $\Thetarvd$ and $\bar \Prob_{\pid}$ be the associated probability distribution of the data. Doing the same reasoning and similar computations as for {\bf Case 1}, see in particular the steps of Equations~\eqref{eq:LBpos} and~\eqref{eq:LBpos2} - replacing $s$ by $s_0$ and $\Crvs$ by $\Crvd$ - we have
    \begin{align}
        \chi^2(\bar \Prob_{\pid}, \Prob_0) &= \bbE_{\Thetarvd, \Thetarvd'} \left[e^{\frac{1}{\sigma^2}\proscal<\Thetarvd, \Thetarvd'>}\right] - 1
        = \frac{1}{\zeta+1} \bbE_{\Crvd, \Crvd'} \left[e^{\frac{r\Delta^2}{p \sigma^2}\proscal<\Crvd, \Crvd'>}-1\right]
        \leq  \frac{1}{\zeta+1} \left[e^{\frac{s_0^2}{p} \left( e^{\frac{r\Delta^2}{s_0\sigma^2}}-1 \right)} - 1\right] \nonumber
        \\
        &\leq \frac{1}{\zeta+1}e^{2\frac{s_0r\Delta^2}{p\sigma^2}}
        \leq 2\frac{r}{n}e^{4u\log{\frac{n}{r}}} = 2\left(\frac{r}{n}\right)^{1- 4u } \leq 1 \enspace ,\label{eq:LB2}
        \end{align}
where $\bbE_{\Thetarvd, \Thetarvd'}$ is the expectation according to $\Thetarvd, \Thetarvd'$ (where $\Thetarvd'$ is an independent copy of $\Thetarvd$) and where $\bbE_{\Crvd, \Crvd'} $ is the expectation according to $\Crvd, \Crvd'$ (where $\Crvd'$ is an independent copy of $\Crvd$), and where in the last step we used $u \leq 1/8$ and $n \geq 4r$.
        \medskip

    {\bf Case 3} : In that case, $r\Delta^2 \leq u\Log{\frac{n}{r}}$.
    Let $\Crvm = (1,0,0,\dots,0)$ be the vector with $0$ entries except the first one. Let $\nu$ be the random vector defined as in {\bf Case 1}. Let $$\Thetarvm = \Delta \Crvm \Lrv^T,$$
    and $\pim$ be the distribution of the random variable $\Thetarvm$ - and $\bar \Prob_{\pim}$ be the associated probability distribution of the data. Doing the same reasoning as in {\bf Case 1} - see in particular the step of Equation~\eqref{eq:LBpos} - replacing $\Crvs$ by $\Crvm$ and $s$ by $1$ - for the prior $\pim$, we obtain
    \begin{align}\label{eq:LB3}
        \chi^2(\bar \Prob_{\pim}, \Prob_0) &= \bbE_{\Thetarvm, \Thetarvm'} \left[e^{\frac{1}{\sigma^2}\proscal<\Thetarvm, \Thetarvm'>}\right] - 1
        = \frac{1}{\zeta+1}e^{\frac{r\Delta^2}{\sigma^2}}
        \leq 2\frac{r}{n}e^{u\Log{\frac{n}{r}}}
        \leq 2\left(\frac{r}{n}\right)^{1-u}
        \leq 1 \enspace,
        \end{align}
    where $\bbE_{\Thetarvm, \Thetarvm'}$ is the expectation according to $\Thetarvm, \Thetarvm'$ (where $\Thetarvm'$ is an independent copy of $\Thetarvm$) and where in the last step we used $n \geq 4r$ and $u \leq 1/2$.

    \medskip

    Thus, in all cases - combining Equations~\eqref{eq:TVLB} and~\eqref{eq:TVLBchi} with Equations~\eqref{eq:LB1}, \eqref{eq:LB2} and~\eqref{eq:LB3} - we obtain in all three cases
    \begin{align*}
        \underset{\Theta \in \cP(r,s)}{\sup}\Prob_\Theta( \hat K \neq K) \geq \frac{1}{4} \enspace .
    \end{align*}
        and this concludes the proof.

\subsection{Proofs for covariance and nonparametric change-point detection}

\begin{proof}[Proof of \Cref{prop:covariance}]
	Consider an $r$-sample $(z_1,\ldots z_r)$ with mean zero and covariance matrix $\Sigma$ and Orlicz norm $B$. Koltchinskii and Lounici~\cite{koltchinskii2017concentration} have proved that, for any $x>0$, the empirical covariance matrix $\widehat{\Sigma}= r^{-1}(\sum_{i=1}^r z_iz_i^T)$ satistifies
	\[
	\|\widehat{\Sigma}-\Sigma\|_{op}\leq c'B^2 \left[ \sqrt{\frac{p}{r}}+ \frac{p}{r}+ \sqrt{\frac{x}{r}}+  \frac{x}{r}\right] \ ,
	\]
	with probability higher than $1-\exp(-x)$. Here $c'$ is a suitable positive constant. Considering a union bound over all $(l,r)\in\cG_D$ such that $\Sigma_t$ is constant over $[l-r,l+r)$, we have, with probability higher than $1-\delta/2$, that simultaneously on all such $r \in \cR$ and $l\in \cD_r$,
	\[
	\|\widehat{\Sigma}_{l,r}-\widehat{\Sigma}_{l,-r}\|_{op}\leq
	\|\widehat{\Sigma}_{l,r}-\Sigma_{l}\|_{op}+ \|\widehat{\Sigma}_{l,-r}-\Sigma_{l}\|_{op}
	\leq 8c'B^2 \left[ \sqrt{\frac{p}{r}}+ \frac{p}{r}+ \sqrt{\frac{\log(2n/(r\delta))}{r}}+  \frac{\log(2n/(r\delta))}{r}\right] \ ,
	\]
 where the constant $8$ comes from the union bound on all elements of the grid.
As a consequence, the FWER of the multiple testing collection is at most $\delta/2$ provided that we choose $c_0\leq 8c'$. 

Conversely, consider any high-energy change-point $\tau_k$. Let $\overline{r}_k$ be the smallest radius $r\in\cR$ such that
	\beq\label{eq:high_energy_covariance_2}
	 r \|\Sigma_{\tau_{k}}- \Sigma_{\tau_{k-1}}\|^2_{op}\geq 0.25 c_1 B^4 \left[\left(p+ \log\left(\frac{2n}{r\delta}\right)\right) \wedge  r \right]\ .
	\eeq
	and consider  the closest location $l\in \cD_r$ of $\tau_k$ so that $|l-\tau_k|\leq r/2$. To ease the notation, we still write $r$ for $\overline{r}_k$.
	 Without loss of generality, we assume that $l\geq \tau_k$.
	Let us decompose the statistic $\widehat{\Sigma}_{l,-r}= \frac{r-l+\tau_k}{r}\widehat{\Sigma}_{\tau_k,-(r-l+\tau_k)}+ \frac{l-\tau_k}{r}\widehat{\Sigma}_{l,-(l-\tau_k)}$. Since $r\leq r_k/2$, $\Sigma_{t}$ is constant over $[l-r,\tau_k)$ and over $[\tau_k,l+r)$. Then, we apply  three times the deviation inequality of Koltchinskii and Lounici~\cite{koltchinskii2017concentration} to get
	\begin{eqnarray*}
		\|\widehat{\Sigma}_{l,r}-\widehat{\Sigma}_{l,-r}\|_{op}
		&\geq &\frac{r-l+\tau_k}{r}\|\Sigma_{\tau_k}-\Sigma_{\tau_{k-1}}\|_{op}
		- \|\widehat{\Sigma}_{l,r} -\Sigma_{\tau_k}\|_{op}  \\ &&
		- \frac{l-\tau_k}{r}\|\widehat{\Sigma}_{l,-(l-\tau_k)} -\Sigma_{\tau_{k}}\|_{op}
		-  \frac{r-l+\tau_k}{r} \|\widehat{\Sigma}_{\tau_{k},-(r-l+\tau_k)} -\Sigma_{\tau_{k-1}}\|_{op} \\
		&\geq & \frac{1}{2}\|\Sigma_{\tau_k}-\Sigma_{\tau_{k-1}}\|_{op} -c''B^2 \left[ \sqrt{\frac{p}{r}}+ \frac{p}{r}+ \sqrt{\frac{\log(2n/(r\delta))}{r}}+  \frac{\log(2n/(r\delta))}{r}\right] \ ,
	\end{eqnarray*}
	with probability higher than $1-0.5\delta[r/(2n)]^2$.  As a consequence, we have $T_{l,r}=1$ provided that
	\[
		\|\Sigma_{\tau_k}-\Sigma_{\tau_{k-1}}\|_{op}\geq 2(c''+c_0)B^2 \left[ \sqrt{\frac{p}{r}}+ \frac{p}{r}+ \sqrt{\frac{\log(2n/(r\delta))}{r}}+  \frac{\log(2n/(r\delta))}{r}\right] \enspace .
	\]
	Since $\|\Sigma_{\tau_{k}}- \Sigma_{\tau_{k-1}}\|_{op}\leq 2B^2$ and if we choose $c_1\geq 17\vee 32(c''+c_0)$, the bound~\eqref{eq:high_energy_covariance_2} is achievable only if $r\geq p+ \log(2n/(r\delta))$ and we deduce from~\eqref{eq:high_energy_covariance_2} that $T_{l,r}=1$.

	Taking a union bound over all high-energy change-points, we deduce from Theorem~\ref{th:g} that, with probability higher than $1-\delta$, $\widehat{\tau}$ achieves ({\bf NoSp}) and {\bf detects} all high-energy change-points. Besides, the localization error~\eqref{eq:error_localization} is a consequence of the definition~\eqref{eq:high_energy_covariance_2} together with Theorem~\ref{th:g}.
	\end{proof}

\begin{proof}[Proof of Proposition~\ref{prp:lower:covariance}]
As in the proof of Theorem~\ref{th:borneinf}, we only consider a specific setting where one aims at testing $K=0$ with $\Sigma_1=I_p$ versus  $K=2$ with  $\tau_1\in (n/4;3n/4)$, $\tau_2=\tau_1+r$,
$\Sigma_{1}=\Sigma_{\tau_2}=I_p$ and $\Sigma_{\tau_1}=I_p + \zeta u u^{T}$ for some unknown  unit vector $u$ in $\mathbb{R}^p$. Obviously, we have $r_1=r_2=r$ and $\|\Sigma_{\tau_1}-\Sigma_{\tau_0}\|_{op}= \|\Sigma_{\tau_2}-\Sigma_{\tau_1}\|_{op}= \zeta$ so that it suffices to prove that the sum of the type I and type II error probabilities of any test of these hypotheses is bounded away from zero.
We consider two subcases:

\noindent 
{\bf Case 1}: $\zeta \leq c'\sqrt{p/r}\wedge \frac{1}{\sqrt{2}}$. Then, we focus on the specific alternative hypothesis where $\tau_1= \lfloor n/2 \rfloor$ and $\tau_2=\tau_1+r$, so that the problem reduces exactly to testing whether the covariance matrix $\Sigma$ of a $r$-sample satisfies $\Sigma=I_p$ or whether $\Sigma=I_p+ \zeta u u^T$. This hypothesis testing problem for covariance matrices is well understood. In particular, one can deduce from Theorem 5.1 in~\cite{berthet2013optimal} that, as soon as $\zeta\leq c'[\sqrt{p/r}\wedge  1]$, for some $c'$ sufficiently small, one has
\[
	\underset{\hat \tau}{\inf}\underset{\Theta \in \bar{\cP}(r,\zeta)}{\sup}\Prob_\Theta( \hat K \neq K) \geq \frac{1}{4}\enspace .
\]
{\bf Case 2}: $\zeta \leq c'\sqrt{\log(n/r)/r}\wedge 1/\sqrt{2}$. Here, we consider another specific class of alternative hypotheses where we fix $u=(1,0,\ldots, 0)$ but $\tau_1$ can take different values, i.e. $\tau_1\in \{\lfloor n/4\rfloor,\lfloor n/4\rfloor+ r,\ldots, \lfloor n/4\rfloor + r\lfloor n/2r \rfloor\}$. It turns out that this is equivalent to a univariate variance testing problem where one observes $q=\lfloor n/(2r)\rfloor$ samples of size $r$ with distributions $\cN(0,\sigma_1^2)$, \ldots, $\cN(0,\sigma_q^2)$. Under the null, we have $\sigma_1=\sigma_2 =\ldots = \sigma_q=1$. Under the alternative, for some $j\in [q]$, we have $\sigma_j=\sqrt{1+\zeta}$ and  $\sigma_l= 1$ for $l\neq j$.
		For $j=1,\ldots, q$, write $\mathbb{P}_j$ for the distributions of the $j$-th sample of size $r$ when $\sigma^2_j=1+\zeta$ and $\sigma_l=1$ for $l\neq j$. Besides, we write $L_j$ for the corresponding likelihood ratio with the null distribution $\mathbb{P}_0$. Then, the mixture distribution is defined as $\overline{\mathbb{P}}=\frac{1}{q}\sum_{j=1}^q \mathbb{P}_j$ whereas $\overline{L}$ stands for the mean likelihood ratio. Following the classical path of Le Cam's method we obtain that, for any test $T$,
		\[
		\P_0[T=1]+ \sup_{j=1,\ldots, q}\P_j[T=0]\geq \P_0[T=1]+ \overline{\P}[T=0]\geq 1- \|\P_0-\overline{\P}\|_{TV}\ ,
		\]
		where  $\|.\|_{TV}$ is the total variation norm.  Using Cauchy-Schwarz inequality, we bound this total variation distance between the covariates
		\[
			\|\P_0- \overline{\P}\|_{TV} \leq  \mathbb{E}_0\left[\overline{L}^2\right]-1 = \frac{1}{q}\left(\mathbb{E}_0\left[L_i^2\right]- 1\right) = \frac{1}{q}\left[(1-\zeta^2)^{-r/2}-1 \right]\leq \frac{1}{q}\left[e^{r\zeta^2}-1 \right] \ ,
		\]
		since $\zeta\in (0,1/2)$. As a consequence, we derive that
		$\|\P_0- \overline{\P}\|_{TV} \leq 1/4$  as long as $r \zeta^2 \leq c' \log(q)\wedge 1$. The result follows.
	\end{proof}

\begin{proof}[Proof of Proposition~\ref{prop:non_parametric}]
	The proof is based on an application of Dvoretzky–Kiefer–Wolfowitz (DKW) inequality~\cite{book_concentration} together with an union bound. For a $q$ sample of a univariate distribution with empirical distribution function $\widehat{F}$ and true distribution function $F$, DKW inequality ensures that
	\[
	\P\left[\|\widehat{F}-F\|_{\infty} \geq \sqrt{\frac{x}{2q}}\right]\leq 2 e^{-x}.
	\]
	Applying two-times DKW inequality to each statistic $T_{l,r}$ such that no-change-point occurs on $(l-r,l+r)$, we deduce that, setting $c_1$ sufficiently larger, the FWER of $(T_{l,r})$ is at most $\delta/2$ by summing the probabilities over all scales $r\in \cR$ and by a union bound on all $l \in \cD_r$.

	Turning to the high-energy change points, we consider $\tau_k$ satisfying~\eqref{eq:high_energy_non_param}.	
	Let $\overline{r}_k$ be the smallest radius $r\in\cR$ such that
	\beq\label{eq:high_energy_non_param2}
	 r  \|F_{\tau_{k}}- F_{\tau_{k-1}}\|^2_{\infty}\geq 0.25c_1
	 \frac{\log\left(\frac{n}{r\delta}\right)}{m}\ ,
	\eeq
	and consider  the closest location $l\in \cD_r$ of $\tau_k$ so that $|l-\tau_k|\leq r/2$ and $2r\leq r_k$.   To ease the notation, we still write $r$ for $\overline{r}_k$. As in the proof of Proposition~\ref{prop:covariance}, we decompose  the statistic
	\[
	\sum_{t=l}^{l+r-1}\widehat{F}_{t}- \sum_{t=l-r}^{l-1}\widehat{F}_{t}= \sum_{t=l}^{l+r-1}\widehat{F}_{t}- \sum_{t=l-r}^{\tau_k-1}\widehat{F}_{t}-\sum_{t=\tau_k}^{l-1}\widehat{F}_{t},
	\]
	and apply DKW inequality to each  of three sums. Taking the union bound over all possible $T_{l,r}$ we deduce that, with probability higher than $1-\delta/2$
	\[
	r^{-1}	\|\sum_{t=l}^{l+r-1}\widehat{F}_{t}- \sum_{t=l-r}^{l-1}\widehat{F}_{t}\|_{\infty}\geq \frac{1}{2}\|F_{\tau_k}-F_{\tau_{k-1}}\|_{\infty}	- c''\sqrt{\frac{\log(4n/r\delta)}{mr}}\ ,
	\]
	so that in view of Condition~\eqref{eq:high_energy_non_param2} implies that $T_{l,r}=1$. Applying Theorem~\ref{th:g} allows us to conclude.
	\end{proof}

\begin{proof}[Proof of Proposition~\ref{prop:lower_non_parametric}]
As in the proof of Proposition~\ref{prp:lower:covariance}, we focus on a simpler testing problem. Write $U$ for the cumulative distribution function of the uniform distribution on $[0,1]$, i.e.~$U(x)=x$ for any $x\in [0,1]$. Given $\zeta\in (0,1/4)$, we define the cumulative distribution function $U_{\zeta}$ by $U_{\zeta}(x)= (1+2\zeta)x$ for $x\in [0,1/2]$ and  $U_{\zeta}(x)= (1/2+\zeta)+ (1-2\zeta)(x-1/2)$ for $x\in[1/2,1]$. Note that $\|U_{\zeta}-U\|_{\infty}=\zeta$.

We focus on a testing problem where, under the null, $F_t=U$ for all $t=1,\ldots, n$, whereas under the alternative there exists $\tau_1\in \{\lfloor n/4\rfloor , \lfloor n/4\rfloor+ r ,\ldots, \lfloor n/4\rfloor+ (r-1) \lfloor n/(2r)\rfloor\}$ such that $F_t=U_\zeta$ for $t=\tau_1,\ldots, \tau_{1}+r-1$ and $F_t=U$ otherwise. Defining $q= \lfloor n/(2r)\rfloor$, we observe that this amounts to testing whether $q$ samples of size $rm$ are distributed according the null distribution or whether exactly one of them is distributed according to $U_\zeta$. Arguing again in the proof of Proposition~\ref{prp:lower:covariance}, we only need to bound the total variation distance between the distribution $\mathbb{P}_0$ under the null and the mixture distribution $q^{-1}\sum_{j=1}^q\mathbb{P}_j$ of the $q$ possible alternatives - here $\mathbb{P}_0 = \otimes_{k=1}^q U^{\otimes (rm)}$ is the distribution of the samples when $F_t = U$ and $\mathcal P_j= \Big[\otimes_{k=1}^{j-1} U^{\otimes (rm)}\Big] \otimes U_\zeta^{\otimes (rm)}\otimes \Big[\otimes_{k=j+1}^{q} U^{\otimes (rm)}\Big]$, is for $j \geq 1$ the distribution of the samples when $F_t = U$ except for $t \in [jr, (j+1)r)$, in which case $F_t = U_\zeta$.

Let $z$ be a uniform random variable over $[0,1]$ and $w$ be an independent Bernoulli random variable with parameter $1/2$. Then, one easily checks that $z/2+ w/2$ is uniformly distributed on $[0,1]$. If $w$ is a Bernoulli random variable with parameter $1/2- 2\zeta$, then one easily checks that the cumulative distribution function of  $z/2+ w/2$ is $F_{\zeta}$. As a consequence, by a standard data-processing inequality~\cite{wu2017lecture}, one derives that 
\[
\|\mathbb{P}_0- q^{-1}\sum_{j=1}^q\mathbb{P}_j\|_{TV}\leq \|\widetilde{\mathbb{P}}_0- q^{-1}\sum_{j=1}^q\widetilde{\mathbb{P}}_j\|_{TV}\ ,
\]
where under $\widetilde{\mathbb{P}}_0$ one observes $q$ independent Binomial random variables with parameter $(mr, 1/2)$, whereas under $\widetilde{\mathbb{P}}_j$, the $j$-th observation follows a Binomial distribution with parameter $(mr, 1/2-2\zeta)$. Using Cauchy-Schwarz inequality, we upper bound the square of the  total variation distance by the $\chi^2$ distance and then compute it explicitly. This leads us to
\[
	\|\widetilde{\mathbb{P}}_0- q^{-1}\sum_{j=1}^q\widetilde{\mathbb{P}}_j\|^2_{TV}\leq \frac{1}{q}\left[ (1+16\zeta^2)^{rm} - 1\right]\ ,
\]
which is smaller than $1/4$ provided that $16rm \zeta^2\leq \log(q/4+1)$. If we choose $c'$ small enough in the statement of the proposition, this last condition holds and the result follows.
\end{proof}

\bibliographystyle{plain}

\bibliography{bibliographie}

\begin{thebibliography}{10}

\bibitem{Arlot2019}
Sylvain Arlot, Alain Celisse, and Zaid Harchaoui.
\newblock A kernel multiple change-point algorithm via model selection.
\newblock {\em J. Mach. Learn. Res.}, 20, 2019.

\bibitem{baranowski2019narrowest}
Rafal Baranowski, Yining Chen, and Piotr Fryzlewicz.
\newblock Narrowest-over-threshold detection of multiple change points and
  change-point-like features.
\newblock {\em Journal of the Royal Statistical Society: Series B (Statistical
  Methodology)}, 81(3):649--672, 2019.

\bibitem{berthet2013optimal}
Quentin Berthet and Philippe Rigollet.
\newblock Optimal detection of sparse principal components in high dimension.
\newblock {\em Annals of Statistics}, 41(4):1780--1815, 2013.

\bibitem{book_concentration}
St{\'e}phane Boucheron, G{\'a}bor Lugosi, and Pascal Massart.
\newblock {\em {Concentration inequalities}}.
\newblock Oxford University Press, Oxford, 2013.
\newblock A nonasymptotic theory of independence, With a foreword by Michel
  Ledoux.

\bibitem{chan2017multi}
Hock-Peng Chan and Hao Chen.
\newblock Multi-sequence segmentation via score and higher-criticism tests.
\newblock {\em arXiv preprint arXiv:1706.07586}, 2017.

\bibitem{chan2015optimal}
Hock~Peng Chan, Guenther Walther, et~al.
\newblock Optimal detection of multi-sample aligned sparse signals.
\newblock {\em Annals of Statistics}, 43(5):1865--1895, 2015.

\bibitem{cho2021data}
Haeran Cho and Claudia Kirch.
\newblock Data segmentation algorithms: Univariate mean change and beyond.
\newblock {\em Econometrics and Statistics}, 2021.

\bibitem{chu2019asymptotic}
Lynna Chu and Hao Chen.
\newblock Asymptotic distribution-free change-point detection for multivariate
  and non-euclidean data.
\newblock {\em The Annals of Statistics}, 47(1):382--414, 2019.

\bibitem{dette2018relevant}
Holger Dette, Josua G{\"o}smann, et~al.
\newblock Relevant change points in high dimensional time series.
\newblock {\em Electronic Journal of Statistics}, 12(2):2578--2636, 2018.

\bibitem{dette2020estimating}
Holger Dette, Guangming Pan, and Qing Yang.
\newblock Estimating a change point in a sequence of very high-dimensional
  covariance matrices.
\newblock {\em Journal of the American Statistical Association}, pages 1--11,
  2020.

\bibitem{jin2004}
David Donoho and Jiashun Jin.
\newblock {Higher criticism for detecting sparse heterogeneous mixtures}.
\newblock {\em Ann. Statist.}, 32(3):962--994, 2004.

\bibitem{EnikeevaHarchaoui2019}
Farida Enikeeva and Zaid Harchaoui.
\newblock High-dimensional change-point detection under sparse alternatives.
\newblock {\em Ann. Statist.}, 47(4):2051--2079, 2019.

\bibitem{frick2014multiscale}
Klaus Frick, Axel Munk, and Hannes Sieling.
\newblock Multiscale change point inference.
\newblock {\em Journal of the Royal Statistical Society: Series B (Statistical
  Methodology)}, 76(3):495--580, 2014.

\bibitem{fryzlewicz2014wild}
Piotr Fryzlewicz.
\newblock Wild binary segmentation for multiple change-point detection.
\newblock {\em The Annals of Statistics}, 42(6):2243--2281, 2014.

\bibitem{fryzlewicz2018tail}
Piotr Fryzlewicz.
\newblock Tail-greedy bottom-up data decompositions and fast multiple
  change-point detection.
\newblock {\em The Annals of Statistics}, 46(6B):3390--3421, 2018.

\bibitem{Garreau2018}
Damien Garreau and Sylvain Arlot.
\newblock Consistent change-point detection with kernels.
\newblock {\em Electron. J. Stat.}, 12(2):4440--4486, 2018.

\bibitem{gibberd2017multiple}
Alex~J Gibberd and Sandipan Roy.
\newblock Multiple changepoint estimation in high-dimensional gaussian
  graphical models.
\newblock {\em arXiv preprint arXiv:1712.05786}, 2017.

\bibitem{gretton2012kernel}
Arthur Gretton, Karsten~M Borgwardt, Malte~J Rasch, Bernhard Sch{\"o}lkopf, and
  Alexander Smola.
\newblock A kernel two-sample test.
\newblock {\em The Journal of Machine Learning Research}, 13(1):723--773, 2012.

\bibitem{hoeffding1994probability}
Wassily Hoeffding.
\newblock Probability inequalities for sums of bounded random variables.
\newblock In {\em The Collected Works of Wassily Hoeffding}, pages 409--426.
  Springer, 1994.

\bibitem{hu2021sparsity}
Shouri Hu, Jingyan Huang, Hao Chen, and Hock~Peng Chan.
\newblock Sparsity likelihood for sparse signal and change-point detection.
\newblock {\em arXiv preprint arXiv:2105.07137}, 2021.

\bibitem{jirak2015uniform}
Moritz Jirak.
\newblock Uniform change point tests in high dimension.
\newblock {\em The Annals of Statistics}, 43(6):2451--2483, 2015.

\bibitem{jula2021multiscale}
Laura Jula~Vanegas, Merle Behr, and Axel Munk.
\newblock Multiscale quantile segmentation.
\newblock {\em Journal of the American Statistical Association}, pages 1--14,
  2021.

\bibitem{koltchinskii2017concentration}
Vladimir Koltchinskii and Karim Lounici.
\newblock Concentration inequalities and moment bounds for sample covariance
  operators.
\newblock {\em Bernoulli}, 23(1):110--133, 2017.

\bibitem{kovacs2020seeded}
Solt Kov{\'a}cs, Housen Li, Peter B{\"u}hlmann, and Axel Munk.
\newblock Seeded binary segmentation: A general methodology for fast and
  optimal change point detection.
\newblock {\em arXiv preprint arXiv:2002.06633}, 2020.

\bibitem{kovacs2020optimistic}
Solt Kov{\'a}cs, Housen Li, Lorenz Haubner, Axel Munk, and Peter B{\"u}hlmann.
\newblock Optimistic search strategy: Change point detection for large-scale
  data via adaptive logarithmic queries.
\newblock {\em arXiv preprint arXiv:2010.10194}, 2020.

\bibitem{Laurent00}
B.~Laurent and P.~Massart.
\newblock {Adaptive estimation of a quadratic functional by model selection}.
\newblock {\em Annals of Statistics}, 28(5):1302--1338, 2000.

\bibitem{li2019multiscale}
Housen Li, Qinghai Guo, Axel Munk, et~al.
\newblock Multiscale change-point segmentation: Beyond step functions.
\newblock {\em Electronic Journal of Statistics}, 13(2):3254--3296, 2019.

\bibitem{liu2019minimax}
Haoyang Liu, Chao Gao, and Richard~J Samworth.
\newblock Minimax rates in sparse, high-dimensional changepoint detection.
\newblock {\em arXiv preprint arXiv:1907.10012}, 2019.

\bibitem{moscovich2016exact}
Amit Moscovich, Boaz Nadler, and Clifford Spiegelman.
\newblock On the exact berk-jones statistics and their $ p $-value calculation.
\newblock {\em Electronic Journal of Statistics}, 10(2):2329--2354, 2016.

\bibitem{nemirovskiy1985nonparametric}
AS~Nemirovskiy.
\newblock Nonparametric estimation of smooth regression function.
\newblock {\em Soviet Journal of Computer and Systems Sciences}, 23(6):1--11,
  1985.

\bibitem{Niu2016}
Yue~S Niu, Ning Hao, and Heping Zhang.
\newblock Multiple change-point detection: A selective overview.
\newblock {\em Statistical Science}, 31(4):611--623, 2016.

\bibitem{olshen2004circular}
Adam~B Olshen, ES~Venkatraman, Robert Lucito, and Michael Wigler.
\newblock Circular binary segmentation for the analysis of array-based dna copy
  number data.
\newblock {\em Biostatistics}, 5(4):557--572, 2004.

\bibitem{padilla2019optimal1}
Oscar Hernan~Madrid Padilla, Yi~Yu, Daren Wang, and Alessandro Rinaldo.
\newblock Optimal nonparametric change point detection and localization.
\newblock {\em arXiv preprint arXiv:1905.10019}, 2019.

\bibitem{rinaldo2021localizing}
Alessandro Rinaldo, Daren Wang, Qin Wen, Rebecca Willett, and Yi~Yu.
\newblock Localizing changes in high-dimensional regression models.
\newblock In {\em International Conference on Artificial Intelligence and
  Statistics}, pages 2089--2097. PMLR, 2021.

\bibitem{rudelson2013hanson}
Mark Rudelson and Roman Vershynin.
\newblock Hanson-wright inequality and sub-gaussian concentration.
\newblock {\em Electronic Communications in Probability}, 18, 2013.

\bibitem{truong2020selective}
Charles Truong, Laurent Oudre, and Nicolas Vayatis.
\newblock Selective review of offline change point detection methods.
\newblock {\em Signal Processing}, 167:107299, 2020.

\bibitem{vershynin}
Roman Vershynin.
\newblock {\em High-dimensional probability: An introduction with applications
  in data science}, volume~47.
\newblock Cambridge university press, 2018.

\bibitem{verzelenoptimal_change_point}
Nicolas Verzelen, Magalie Fromont, Matthieu Lerasle, and Patricia
  Reynaud-Bouret.
\newblock {Optimal Change-Point Detection and Localization}.
\newblock {\em arXiv preprint arXiv:2010.11470}, 2020.

\bibitem{Wald1945}
Abraham Wald.
\newblock Sequential tests of statistical hypotheses.
\newblock {\em The annals of mathematical statistics}, 16(2):117--186, 1945.

\bibitem{wang2017optimal}
Daren Wang, Yi~Yu, and Alessandro Rinaldo.
\newblock Optimal covariance change point localization in high dimension.
\newblock {\em arXiv preprint arXiv:1712.09912}, 2017.

\bibitem{wang2018optimal}
Daren Wang, Yi~Yu, and Alessandro Rinaldo.
\newblock Optimal change point detection and localization in sparse dynamic
  networks.
\newblock {\em arXiv preprint arXiv:1809.09602}, 2018.

\bibitem{wang2020univariate}
Daren Wang, Yi~Yu, and Alessandro Rinaldo.
\newblock Univariate mean change point detection: {P}enalization, {CUSUM} and
  optimality.
\newblock {\em Electron. J. Stat.}, 14(1):1917--1961, 2020.

\bibitem{wang2019localizing}
Daren Wang, Yi~Yu, Alessandro Rinaldo, and Rebecca Willett.
\newblock Localizing changes in high-dimensional vector autoregressive
  processes.
\newblock {\em arXiv preprint arXiv:1909.06359}, 2019.

\bibitem{wang2020dating}
Runmin Wang and Xiaofeng Shao.
\newblock Dating the break in high-dimensional data.
\newblock {\em arXiv preprint arXiv:2002.04115}, 2020.

\bibitem{wang2019inference}
Runmin Wang, Stanislav Volgushev, and Xiaofeng Shao.
\newblock Inference for change points in high dimensional data.
\newblock {\em arXiv preprint arXiv:1905.08446}, 2019.

\bibitem{Wang2018}
Tengyao Wang and Richard~J. Samworth.
\newblock High dimensional change point estimation via sparse projection.
\newblock {\em J. R. Stat. Soc. Ser. B. Stat. Methodol.}, 80(1):57--83, 2018.

\bibitem{wu2017lecture}
Yihong Wu.
\newblock Lecture notes for ece598yw: Information-theoretic methods for
  high-dimensional statistics, 2017.

\bibitem{yu2017finite}
Mengjia Yu and Xiaohui Chen.
\newblock Finite sample change point inference and identification for
  high-dimensional mean vectors.
\newblock {\em Journal of the Royal Statistical Society: Series B (Statistical
  Methodology)}, 83(2):247--270, 2021.

\end{thebibliography}
\end{document}